\documentclass[11pt]{article}

\usepackage[margin=1in]{geometry}
\usepackage{amsmath,amssymb,amsthm}
\usepackage{mathtools}
\usepackage{booktabs}
\usepackage{enumitem}
\usepackage{tikz}
\usepackage{pgfplots}
\usepackage[hidelinks,pdfencoding=auto,psdextra]{hyperref}
\usepackage[round,authoryear]{natbib}
\pgfplotsset{compat=1.17}

\usepackage{graphicx}
\usepackage{array}

\usepackage{bbm}

\usepackage{threeparttable}

\newcommand{\NN}{\ensuremath{\mathbb{N}}}

\newcommand{\RR}{\ensuremath{\mathbbm{R}}}
\newcommand{\CC}{\ensuremath{\mathbb{C}}}

\newcommand{\norm}[1]{\lVert #1 \rVert}
\newcommand{\scal}[2]{\langle #1, #2 \rangle}

\newcommand{\pCRM}{\ensuremath{\mathrm{pCRM}}}
\DeclareMathOperator{\dist}{dist}
\DeclareMathOperator{\Fix}{Fix}
\DeclareMathOperator{\aff}{aff}
\DeclareMathOperator{\circum}{circumcenter}
\DeclareMathOperator{\Span}{span}

\DeclareMathOperator{\Id}{Id}

\newtheorem{theorem}{Theorem}[section]
\newtheorem{proposition}[theorem]{Proposition}
\newtheorem{lemma}[theorem]{Lemma}
\newtheorem{corollary}[theorem]{Corollary}
\newtheorem{definition}[theorem]{Definition}
\newtheorem{fact}[theorem]{Fact}
\theoremstyle{definition}
\newtheorem{example}[theorem]{Example}
\theoremstyle{remark}
\newtheorem{remark}[theorem]{Remark}

\title{On the sharp linear convergence rate of the circumcentered--reflection method on subspaces}
\author{Yunier Bello-Cruz\thanks{School of Mathematical and
   Statistical Sciences, Northern Illinois University, DeKalb, IL 60115, USA.
   E-mail: \texttt{yunierbello@niu.edu}.
   ORCID: \url{https://orcid.org/0000-0002-7877-5688}.}}
\date{\today}

\begin{document}
\maketitle
\begin{abstract}
\noindent
For two subspaces $U,V\subseteq\RR^n$, the circumcentered--reflection
method (CRM) of Behling, Bello-Cruz, and Santos~\citeyearpar{BBS2018}
computes the projection onto $U\cap V$ using only the reflections
across $U$ and $V$, with known linear-convergence rate equal to
the cosine of the Friedrichs angle. We prove that, when CRM is
initialized in $V$, it contracts at the strictly smaller rate
$\rho_V=(\sin^2\theta_p-\sin^2\theta_F)/(\sin^2\theta_p+\sin^2\theta_F)$,
where $\theta_F\in(0,\pi/2]$ is the Friedrichs angle, whose
cosine we denote by $c_F\coloneqq\cos\theta_F$, and
$\theta_p\in[\theta_F,\pi/2]$ is the largest principal angle
between $U$ and $V$. The bound is sharp, attained on an explicit ray in
$V$, and optimal among parameter-free single-step iterations. The
constant itself is not new: Bauschke, Bello-Cruz, Nghia, Phan, and
Wang~\citeyearpar{BBNPW2016} identified it as the optimal rate of the
relaxed alternating-projection family and of their adaptive
linesearch map $B_T$. Our contribution is that the parameter-free
geometric circumcenter attains it as well, via Kantorovich's
inequality applied to a single self-adjoint operator on $V$.
Restricted to $V$, CRM coincides pointwise with the linesearch
maps $A_T$ and $B_T$ from the Gubin--Polyak--Raik
framework~\citep{GPR1967}. We further prove $\rho_V<c_F^2$ whenever
$\theta_F<\pi/2$, with one-step convergence exactly when
$\theta_F=\theta_p$. Over-reflecting either or both of $R_U$, $R_V$ inside
the circumcenter does not help. Going faster than $\rho_V$
universally requires memory: Chebyshev semi-iteration applied to
$P_VP_U$ attains a strictly smaller rate, beating $\rho_V$ by a
factor at most $2$, attained in the limit $\theta_F\to\theta_p$.
\end{abstract}
\noindent\textbf{Keywords.}\
Circumcentered--reflection method;
alternating projections;
principal angles;
Friedrichs angle;
linear convergence rate.

\smallskip
\noindent\textbf{Mathematics Subject Classification (2020).}\
Primary 65K05, 65K10, 65F10;
secondary 41A65, 41A25, 47H09, 47J25, 90C25.

\section{Introduction}\label{sec:intro}

\subsection{The problem and the known rates}\label{ssec:intro-problem}

The computation of the best approximation
$P_{U\cap V}(x_0)$ to a point $x_0$ from the intersection
$U\cap V$ of two subspaces of $\RR^n$, using only the individual
projections $P_U$ and $P_V$, is a model problem of numerical
analysis. Iterative projection methods solve it; the rate at
which those iterations converge is the subject of this paper. The
same computation underlies numerical linear algebra,
approximation theory, statistics, and signal processing, with
applications to image and signal
reconstruction~\citep{Combettes1996}, compressed
sensing~\citep{ESQB2007}, phase retrieval, matrix completion, and
convex feasibility. The two-subspace case is the cleanest setting
for studying convergence rates of iterative best-approximation
algorithms: the geometry is governed entirely by the principal
angles between $U$ and $V$, and explicit sharp bounds are
accessible. The principal-angle framework, going back to Jordan
and developed for projection methods by Aronszajn, von Neumann,
and Friedrichs, is central to the approximation theory of
orthogonal projections; we refer to the monographs of
Deutsch~\citep{Deutsch} and Escalante and
Raydan~\citep{EscalanteRaydan2011} for systematic treatments.

For $x_0\in\RR^n$, write
\begin{equation}\label{eq:best-approximation}
   \bar x_0  \coloneqq  P_{U\cap V}(x_0)
\end{equation}
for the best-approximation point. A range of iterative schemes
solve~\eqref{eq:best-approximation} using only the projections
$P_U,P_V$ and the corresponding reflections $R_U=2P_U-\Id$,
$R_V=2P_V-\Id$. Five of them are relevant here. The method of
alternating projections (MAP) iterates the linear operator
$T\coloneqq P_VP_U$. The Douglas--Rachford method (DRM) iterates
the average $T_{\mathrm{DR}}\coloneqq\tfrac12(\Id+R_VR_U)$, whose
linear-convergence rate on two subspaces was identified as
$c_F=\cos\theta_F$ by Bauschke, Bello-Cruz, Nghia, Phan, and
Wang~\citep{BCNPW2014}. Both methods admit one-parameter
relaxations: the relaxed alternating-projection family
\begin{equation}\label{eq:S-mu-intro}
   S_\mu(x)  \coloneqq  x+\mu\bigl(T(x)-x\bigr)
     =  (1-\mu)x+\mu T(x),\quad\mu\in\RR,
\end{equation}
recovers MAP at $\mu=1$, and the generalized Douglas--Rachford
family
\begin{equation}\label{eq:gDR-intro}
   T_{\mathrm{DR},\kappa}(x)  \coloneqq
     x+\kappa\bigl(T_{\mathrm{DR}}(x)-x\bigr)
     =  (1-\kappa)x+\kappa\,T_{\mathrm{DR}}(x),\quad\kappa\in\RR,
\end{equation}
recovers DRM at $\kappa=1$. The optimal-rate analysis of both
relaxations on two subspaces is the subject of~\citep{BBNPW2016}.
Finally, the circumcentered--reflection method introduced
in~\citep{BBS2018} iterates the nonlinear operator
\begin{equation}\label{eq:CT-intro}
   C_T(x)\coloneqq\circum\{x,\,R_Ux,\,R_VR_Ux\},
\end{equation}
defined as the unique point of $W_x\coloneqq\aff\{x,R_Ux,R_VR_Ux\}$
equidistant from the three vertices when these are non-collinear,
and extended to degenerate configurations by the
projection-of-equidistant-point convention
\eqref{eq:circum-convention} below.

Convergence rates of the four linear methods are well understood
in terms of the principal angles between $U$ and $V$. Set
$p\coloneqq\dim V$ and $s\coloneqq\dim(U\cap V)$, so that $0\le s<p$
by our standing assumption $V\not\subset U$. The principal angles
are $0\le\theta_1\le\cdots\le\theta_p\le\pi/2$, the first $s$ of
them equal to zero. The Friedrichs angle is
$\theta_F\coloneqq\theta_{s+1}\in(0,\pi/2]$, the smallest positive
principal angle, and we write $c_F\coloneqq\cos\theta_F$.
Aronszajn~\citep{Aronszajn1950} established convergence of MAP to
$P_{U\cap V}$ in the two-subspace setting; the sharp rate
$\norm{T^k-P_{U\cap V}}=c_F^{2k-1}$ was obtained by Kayalar and
Weinert~\citep{Kayalar1988}, so MAP contracts at asymptotic rate
$c_F^2$. See also \citep[Theorem~9.31]{Deutsch}. The
Douglas--Rachford rate is
$\norm{T_{\mathrm{DR}}^k-P_{\Fix T_{\mathrm{DR}}}}=c_F$
\citep[Theorem~4.1]{BCNPW2014}, and the original CRM analysis
\citep{BBS2018} obtains the same rate $c_F$.

For the relaxed family $\{S_\mu\}$, Bauschke, Bello-Cruz, Nghia,
Phan, and Wang~\citep[Theorem~3.7]{BBNPW2016} identified the
optimal parameter and rate. Writing $r(\mu)$ for the operator
norm of $S_\mu-P_{U\cap V}$ restricted to $V\cap(U\cap V)^\perp$,
the unique minimizer is
\begin{equation}\label{eq:mu-star-intro}
   \mu^\star=\frac{2}{\sin^2\theta_F+\sin^2\theta_p},
   \quad
   r(\mu^\star)=\rho_V\coloneqq
   \frac{\sin^2\theta_p-\sin^2\theta_F}{\sin^2\theta_p+\sin^2\theta_F}.
\end{equation}
A short calculation gives the hierarchy
\begin{equation}\label{eq:rate-hierarchy}
   \rho_V\le\frac{c_F^2}{2-c_F^2}\le c_F^2\le c_F<1,
\end{equation}
proved in Propositions~\ref{prop:strict}
and~\ref{prop:worst-case}, with the first inequality strict iff
$\theta_p<\pi/2$ and the next two strict iff $\theta_F<\pi/2$.
In particular, $\rho_V$ is strictly smaller than the rates of
MAP, DRM, and the original CRM analysis~\citep{BBS2018} whenever
$\theta_F<\pi/2$ and $\theta_p<\pi/2$.

Two further families admit smaller worst-case rates than $\rho_V$
when their parameters are tuned to the Friedrichs angle. The
generalized alternating-projections iteration of F\"alt and
Giselsson~\citep{FaltGiselsson2017} and the averaged alternating
modified reflections method of Arag\'on Artacho and
Campoy~\citep{AragonCampoy2018,AragonCampoy2019} both attain the
worst-case rate
$(1-\sin\theta_F)/(1+\sin\theta_F)$ at parameters fixed to
$\theta_F$. At $\theta_p=\pi/2$, this is strictly smaller than
$\rho_V=(1-\sin^2\theta_F)/(1+\sin^2\theta_F)$. The trade-off is
parametric: both methods require advance knowledge of the
Friedrichs angle to set the parameters, while $C_T$ does not
(Section~\ref{ssec:single-step-opt}); both rates also depend on
$\theta_F$ alone, while $\rho_V$ depends on both principal
angles and vanishes when $\theta_F=\theta_p$. The detailed
comparison is in Table~\ref{tab:method-comparison} and
Section~\ref{ssec:aamr-comparison}.

\subsection{Related work: adaptive linesearches and the CRM literature}\label{ssec:intro-related}

Adaptive stepsize acceleration of alternating projections has a
long history, originating with Gubin, Polyak, and
Raik~\citep{GPR1967}, with refinements
in~\citep{GearhartKoshy1989,BDHP2003,BCK2006}. Bauschke et
al.~\citep{BBNPW2016} bring it to bear on the rate-$\rho_V$
analysis, introducing two adaptive maps. In our order convention
(project onto $U$ first, then onto $V$), these read
\begin{align}
   A_T(x)&=(1-\lambda_x)\,x+\lambda_x\,P_VP_Ux,
   &\lambda_x&=\frac{\scal{x-P_VP_Ux}{x}}{\norm{x-P_VP_Ux}^2},\label{eq:AT-our-order}\\
   B_T(x)&=(1-\mu_x^{\mathrm B})\,P_Vx+\mu_x^{\mathrm B}\,P_VP_Ux,
   &\mu_x^{\mathrm B}&=\frac{\scal{P_Vx-P_VP_Ux}{x}}{\norm{P_Vx-P_VP_Ux}^2},\label{eq:BT-our-order}
\end{align}
with the convention $\lambda_x=\mu_x^{\mathrm B}=1$ when the
respective denominators vanish. Bauschke et al.\ define $A_T$ and
$B_T$ in the opposite order and show that the two maps coincide
on the ``project last'' subspace; restated in our order, they
coincide on $V$. The map $B_T$ achieves the rate $\rho_V$ on
$\RR^n$ after one warm-start projection
\citep[Theorem~4.5]{BBNPW2016}.

The CRM was introduced in~\citep{BBS2018} for two subspaces, with
linear rate $c_F$ established in~\citep{BBS2018-ORL}, and was
later extended to block-wise variants~\citep{BBS2020}, to the
convex feasibility problem with multiple sets~\citep{BBS2021} and
with approximate reflections~\citep{AABBIS2022}, and to nonsubspace
settings via centralization
\citep{Behling-MP2024,Behling-SIOPT2024,Behling-COA2024}. CRM has
also been shown to achieve strictly better rates than
alternating projections~\citep{ABBIS2021}; in a related vein,
\citep{BBS2021-IEB} establishes finite convergence of alternating
projections under infeasibility and an error bound. Geometric
extensions to cones appear in~\citep{BBLOS2023,BC-Cones2026}. A
broader framework, in which CRM is a special case of a
circumcentered method induced by a family of isometries, is
developed by Bauschke, Ouyang, and
Wang~\citep{BauschkeOuyangWang2020}, with the corresponding
linear-convergence theory in~\citep{BauschkeOuyangWang2021c}.

A parallelized variant ($\pCRM$) of CRM, in which the composition
$R_VR_U$ is replaced by the independent reflections
$\{R_U,R_V\}$, was recently proposed by Barros, Behling, Guigues,
and Santos~\citeyearpar{BBGS2025-pCRM}. The operator $\pCRM$ lies
outside the scope of this paper, which focuses on the sequential
operator $C_T$ and the constant $\rho_V$.

\subsection{Contributions}\label{ssec:intro-contributions}

In the setting above, we prove a sharp linear-convergence rate
for the circumcentered--reflection method. The original analysis
of~\citep{BBS2018} gives the rate $c_F$; we sharpen it by a
strict factor whenever the iteration is started from a point of
$V$, and locate the resulting constant in the principal-angle
geometry. The new rate is parameter-free: $C_T$ achieves it
using only $P_U$ and $P_V$, with no input from the principal
angles. The principal contribution of the paper is the following
convergence theorem. Section~\ref{ssec:intro-strategy} describes
the strategy of its proof; Figure~\ref{fig:roadmap} places the
new rate in the landscape of the known ones, with each relation
labeled by the statement that proves it; and
Table~\ref{tab:contributions} collects the results built on it,
serving as a roadmap to the paper.

\begin{theorem}[Optimal linear rate of CRM on $V$]\label{thm:main}
For every $x_0\in\RR^n$, define $v_0\coloneqq P_V(x_0)$ and
$v_{k+1}\coloneqq C_T(v_k)$ for $k\in\NN$. Then $v_k\in V$ and
$P_{U\cap V}(v_k)=\bar x_0$ for every $k\in\NN$, and
\begin{equation}\label{eq:main-rate}
   \norm{v_{k+1}-\bar x_0}\le\rho_V\,\norm{v_k-\bar x_0},
\end{equation}
with $\rho_V$ as in \eqref{eq:mu-star-intro}. The bound is sharp:
there exists $x_0\in\RR^n$ with $P_V(x_0)\notin U\cap V$ such that
\begin{equation}\label{eq:main-rate-sharp}
   \norm{v_1-\bar x_0}  =  \rho_V\,\norm{v_0-\bar x_0},
\end{equation}
namely any $x_0$ with
$P_V(x_0)=\bar x_0+\sin\theta_p\,f_{s+1}+\sin\theta_F\,f_p$,
where $\{f_k\}$ is the principal-angle frame of $V$ from
Theorem~\ref{lem:frame}. In particular, $\rho_V\le c_F^2$, with
strict inequality whenever $\theta_F<\pi/2$.
\end{theorem}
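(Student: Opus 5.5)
The plan is to compute $C_T$ explicitly on $V$, identify it with the linesearch map $A_T$ (equivalently $B_T$), and then bound the resulting one-step contraction with Kantorovich's inequality.

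\emph{Step 1: invariance of $V$ and of the best approximation.} Take $v\in V$. Then $R_Uv=2P_Uv-v$, and since $v\in V$ we have $R_VR_Uv=2P_VR_Uv-R_Uv$. Hence every point of $W_v=\aff\{v,R_Uv,R_VR_Uv\}$ has the form $v+\alpha(P_Uv-v)+\beta(P_VP_Uv-P_Uv)$. The circumcenter is characterized by equidistance from the three vertices. Equidistance from $v$ and $R_Uv$ forces the circumcenter onto the bisecting hyperplane, which contains $U$. Equidistance from $R_Uv$ and $R_VR_Uv$ forces it onto the bisecting hyperplane of that pair, which contains $V$. Solving these two conditions inside $W_v$, I expect to obtain $C_T(v)=(1-\lambda_v)v+\lambda_vP_VP_Uv$ with
\[
\lambda_v=\frac{\scal{v-P_VP_Uv}{v}}{\norm{v-P_VP_Uv}^2},
\]
that is, $C_T=A_T$ on $V$. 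In particular $C_T(v)\in V$. The degenerate collinear cases are handled by the convention \eqref{eq:circum-convention}. For the fixed-point statement: $P_{U\cap V}$ commutes with $P_U$ and $P_V$ and annihilates $v-P_VP_Uv$. Therefore $P_{U\cap V}C_T(v)=P_{U\cap V}v$, and induction starting from $P_{U\cap V}P_V x_0=\bar x_0$ gives $P_{U\cap V}(v_k)=\bar x_0$ for all $k$.

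\emph{Step 2: reduction to a self-adjoint operator.} Write $w=v-\bar x_0\in V\cap(U\cap V)^\perp$ and note that $C_T(v)-\bar x_0=C_T(w)$. Set $M\coloneqq P_V(\Id-P_U)P_V$ restricted to $V_0\coloneqq V\cap(U\cap V)^\perp$. This operator is self-adjoint and positive semidefinite. In the principal-angle frame of Theorem~\ref{lem:frame} it is diagonal, with eigenvalues $\sin^2\theta_{s+1},\dots,\sin^2\theta_p$, all lying in $[m,L]$ where $m=\sin^2\theta_F>0$ and $L=\sin^2\theta_p$. On $V_0$ we have $w-P_VP_Uw=Mw$ and $\lambda_w=\scal{Mw}{w}/\norm{Mw}^2$. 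Consequently
\[
C_T(w)=w-\lambda_wMw,
\]
which is exactly one step of minimal-residual-type steepest descent for $M$. Since $\lambda_w$ is chosen so that $C_T(w)\perp Mw$, we get
\[
\norm{C_T(w)}^2=\norm{w}^2-\frac{\scal{Mw}{w}^2}{\norm{Mw}^2}.
\]

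\emph{Step 3: Kantorovich, which is the key estimate.} Substitute $y=M^{1/2}w$. Then
\[
\frac{\scal{Mw}{w}^2}{\norm{w}^2\norm{Mw}^2}=\frac{\norm{y}^4}{\scal{M^{-1}y}{y}\,\scal{My}{y}}\ge\frac{4mL}{(m+L)^2}
\]
by Kantorovich's inequality for $M$ on $V_0$. This yields $\norm{C_T(w)}^2\le\bigl((L-m)/(L+m)\bigr)^2\norm{w}^2$, which is \eqref{eq:main-rate}.

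\emph{Sharpness and the comparison with $c_F^2$.} Equality in Kantorovich holds when $y$ has equal weight on the extreme eigenvectors, i.e. $y\propto f_{s+1}/\sqrt m+f_p/\sqrt L$ up to the correct scaling. Translating back to $w$ gives $w\propto\sin\theta_p f_{s+1}+\sin\theta_F f_p$. I would verify this directly: $\lambda_w=2/(m+L)$, and a two-coordinate computation gives the ratio $\rho_V$. Finally, $\rho_V\le c_F^2$ is the hierarchy \eqref{eq:rate-hierarchy} established in Propositions~\ref{prop:strict} and~\ref{prop:worst-case}, which I would cite. A direct check is also available: $(L-m)/(L+m)\le(1-m)/(1+m)\le 1-m$, with strict inequality whenever $m<1$.

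\emph{Main obstacle.} The hardest part will be Step 1: the explicit circumcenter computation and its identification with $A_T$, including the degenerate cases $Mw=0$ and collinear vertices. I would handle it by first showing that the circumcenter lies in $v+\Span\{P_Uv-v,\,P_VP_Uv-P_Uv\}$. Then, within that plane, I would impose the two equidistance conditions using $\scal{P_Uv-v}{P_Uv}=0$ and $\scal{P_VP_Uv-P_Uv}{P_VP_Uv}=0$.
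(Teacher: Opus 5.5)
Your proposal is correct and follows essentially the paper's route. You derive the linesearch identity $C_T(v)=v+\mu_v(T(v)-v)$ on $V$: your bisector conditions in the basis $\{P_Uv-v,\,P_VP_Uv-P_Uv\}$ are the same $2\times 2$ computation the paper does with the BBS identity in the basis $\{s_U,s_V\}$. From there you identify $C_T$ with $A_T$ on $V$, use the residual identity for $M=(\Id-P_VP_U)|_V$, and apply Kantorovich's inequality; your form $\langle My,y\rangle\langle M^{-1}y,y\rangle$ with $y=M^{1/2}w$ is equivalent to the paper's two-moment form. Sharpness at the extremal ray and $\rho_V\le c_F^2$ then follow as in the paper.

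There is one small slip in the sharpness paragraph. Equal weights on the extreme eigenvectors means $y\propto f_{s+1}+f_p$. It is $w=M^{-1/2}y$, not $y$, that is proportional to $f_{s+1}/\sqrt m+f_p/\sqrt L\propto\sin\theta_p f_{s+1}+\sin\theta_F f_p$. Your final ray and your direct verification are still correct.
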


\begin{figure}[htbp]
\centering
\small
\begin{gather*}
C_T(v)
   \;\overset{\text{Thm.~\ref{thm:linesearch}}}{=}\;
   v+\mu_v\bigl(T(v)-v\bigr)
   \;\overset{\text{Thm.~\ref{thm:three-agree-on-V}}}{=}\;
   A_T(v)\;=\;B_T(v)
   \qquad(v\in V),
   \qquad
   \mu_v=\frac{\dist^2(v,U)}{\norm{T(v)-v}^2}
\\[5pt]
\underset{\substack{\text{two-step memory}\\
   \text{(Thm.~\ref{thm:cheb-strict})}}}
   {\frac{\sin\theta_p-\sin\theta_F}{\sin\theta_p+\sin\theta_F}}
\;\;<\;\;
\underset{\substack{\text{$C_T$ on $V$, sharp at $v^\star$ (Thm.~\ref{thm:main})}\\
   \text{optimal single-step (Prop.~\ref{prop:single-step-opt})}}}
   {\rho_V=\frac{\sin^2\theta_p-\sin^2\theta_F}{\sin^2\theta_p+\sin^2\theta_F}}
\;\;\overset{\text{Prop.~\ref{prop:worst-case}}}{\le}\;\;
\frac{c_F^2}{2-c_F^2}
\;\;\overset{\text{Prop.~\ref{prop:strict}}}{<}\;\;
\underset{\text{MAP}}{c_F^2\vphantom{\frac{c_F^2}{2}}}
\;\;<\;\;
\underset{\text{DRM, CRM on $\RR^n$}}{c_F\vphantom{\frac{c_F^2}{2}}}
\\[5pt]
\rho_V=0\iff\theta_F=\theta_p
   \;\;\text{(Cor.~\ref{cor:one-step})},
\qquad
C_T^{(\lambda,\mu)}\ \text{on}\ V:\ \text{rate}\ \ge\rho_V,
   \ \text{with equality iff }(\lambda,\mu)=(1,1)
   \;\;\text{(Prop.~\ref{prop:over-reflection-double})}
\end{gather*}
\caption{The rate landscape of Theorem~\ref{thm:main}, with each
relation labeled by the statement that proves it. Top row: on
$V$, the geometric circumcenter, the adaptive linesearch step,
and the two Gubin--Polyak--Raik-type maps
of~\eqref{eq:AT-our-order}--\eqref{eq:BT-our-order} produce the
same iterate. Middle row: the contraction-rate hierarchy for
$\theta_F\in(0,\pi/2)$ and $\theta_p\in(\theta_F,\pi/2]$; the
inequality $\rho_V\le c_F^2/(2-c_F^2)$ is an equality exactly at
$\theta_p=\pi/2$, and the first (Chebyshev) rate requires
two-step memory and knowledge of both angles. Bottom row: the
degenerate end of the scale and the over-reflection no-go, in
which $C_T^{(\lambda,\mu)}$ replaces $R_U,R_V$ by
$R_U^{(\lambda)},R_V^{(\mu)}$ inside the circumcenter.}
\label{fig:roadmap}
\end{figure}

\begin{table}[htbp]
\centering
\scriptsize
\renewcommand{\arraystretch}{1.6}
\setlength{\tabcolsep}{3pt}
\begin{threeparttable}
\begin{tabular}{l >{$}l<{$} l l}
\toprule
Method & \textrm{Iteration} & Worst-case rate (tuning) & Reference\\
\midrule
MAP & P_VP_U & $c_F^2$ \textrm{(pf)} & \tnote{a}\\
DRM & \tfrac12(\Id+R_VR_U) & $c_F$ \textrm{(pf)} & \tnote{b}\\
CRM on $\RR^n$ & C_T & $c_F$ \textrm{(pf)} & \tnote{c}\\
Relaxed AP & (1{-}\mu)\Id+\mu P_VP_U & $\rho_V$ at $\mu^\star{=}\tfrac{2}{s_F^2+s_p^2}$ \textrm{(t)} & \tnote{d}\\
Linesearch $B_T$ & \textrm{see }\eqref{eq:BT-our-order} & $\rho_V$ after one projection \textrm{(pf)} & \tnote{d}\\
GAP & (1{-}\alpha)\Id+\alpha\,P_V^{[\alpha_1]}P_U^{[\alpha_2]} & $\tfrac{1-s_F}{1+s_F}$ at $\alpha^\star_i{=}\tfrac{2}{1+s_F}$ \textrm{(t)} & \tnote{e}\\
AAMR & (1{-}\alpha)\Id+\alpha(2\beta P_V{-}\Id)(2\beta P_U{-}\Id) & $\tfrac{1-s_F}{1+s_F}$ at $\beta^\star{=}\tfrac{1}{1+s_F}$ \textrm{(t)} & \tnote{f}\\
\midrule
\multicolumn{4}{l}{\emph{This paper}}\\[-1pt]
\midrule
CRM on $V$ & C_T=A_T=B_T \textrm{ on } V & $\rho_V=\tfrac{s_p^2-s_F^2}{s_p^2+s_F^2}$ \textrm{(\textbf{pf})} & Thms.~\ref{thm:main}, \ref{thm:linesearch}, \ref{thm:three-agree-on-V}\\
$\quad$sharpness & \textrm{worst-case ray } v^\star & $\sup$ attained, maximizers classified & Thm.~\ref{thm:rate}, Prop.~\ref{prop:rho-sharp}\\
$\quad$hierarchy & & $\rho_V\le\tfrac{c_F^2}{2-c_F^2}\le c_F^2$, sharp at $\theta_p=\tfrac{\pi}{2}$ & Props.~\ref{prop:strict}, \ref{prop:worst-case}\\
$\quad$one-step convergence & & $\rho_V=0\iff\theta_F=\theta_p$ & Cor.~\ref{cor:one-step}\\
Single-step class on $V$ & v+c(v)(T(v)-v) & $\ge\rho_V$: $C_T$ optimal in the class & Prop.~\ref{prop:single-step-opt}\\
Over-reflected CRM & R_U^{(\lambda)},\,R_V^{(\mu)}\textrm{ inside }C_T & $\ge\rho_V$, $=$ iff $(\lambda,\mu)=(1,1)$ & Prop.~\ref{prop:over-reflection-double}\\
Linear CDR family on $\RR^n$ & (1{-}\gamma{-}\beta)\Id{+}\beta R_VR_U{+}\gamma R_U & $\Gamma^\star\ge\tfrac13$ \textrm{(t)} & Thm.~\ref{thm:cdr-rate}\\
Projected CDR family & C_{T,\gamma,\beta}\,P_V & $\rho_V$ at $\gamma{+}\beta{=}\tfrac{1}{s_F^2+s_p^2}$ \textrm{(t)} & Thm.~\ref{thm:cdr-PV-rate}\\
Chebyshev semi-iteration & \textrm{two-step memory} & $\rho_{\mathrm{Cheb}}=\tfrac{s_p-s_F}{s_p+s_F}<\rho_V$ \textrm{(t)} & Thm.~\ref{thm:cheb-strict}\\
Warm starts from $\RR^n$ & P_V, \textrm{ then } C_T & $\rho_V$ \textrm{(pf)} & Props.~\ref{prop:strategy-A}--\ref{prop:strategy-C}\\
\bottomrule
\end{tabular}
\begin{tablenotes}
\footnotesize
\item[a] \citet{Aronszajn1950}; \citet{Kayalar1988};
\item[b] \citet{BCNPW2014};
\item[c] \citet{BBS2018,BBS2018-ORL};
\item[d] \citet{BBNPW2016};
\item[e] \citet{FaltGiselsson2017};
\item[f] \citet{AragonCampoy2018,AragonCampoy2019}.
\end{tablenotes}
\end{threeparttable}
\caption{Worst-case linear-convergence rates on two subspaces:
known results (upper block) and the results of this paper (lower
block). Notation: $s_F=\sin\theta_F$, $s_p=\sin\theta_p$,
$c_F=\cos\theta_F$, and
$P_W^{[t]}\coloneqq(1-t)\Id+t\,P_W$ for the relaxed projection
onto a subspace $W$. Parameter-free (pf) means the worst-case
rate is achieved without prior knowledge of the principal
angles; tuned (t) means the optimal parameter requires the
Friedrichs angle
(and, for relaxed AP, projected CDR, and Chebyshev, also the
largest principal angle). The listed rates obey the hierarchy
$\rho_{\mathrm{Cheb}}<\rho_V\le c_F^2/(2-c_F^2)\le c_F^2\le
c_F<1$ of~\eqref{eq:rate-hierarchy} whenever
$\theta_F<\theta_p<\pi/2$. Two section-level summaries
complement this table: Table~\ref{tab:improvement} records which
structural relaxations of $C_T$ can and cannot improve on
$\rho_V$, and Table~\ref{tab:strategies} compares the warm-start
strategies of Section~\ref{sec:outside-V}.}
\label{tab:contributions}\label{tab:method-comparison}
\end{table}

The link between $C_T$ and the adaptive maps
of~\eqref{eq:AT-our-order}--\eqref{eq:BT-our-order} is sharper
than analogy. On $V$, the three maps coincide:
\begin{equation}\label{eq:three-agree-on-V}
   A_T(v)  =  B_T(v)  =  C_T(v)
   \quad(v\in V),
\end{equation}
proved as Theorem~\ref{thm:three-agree-on-V}. The maps $A_T$ and
$B_T$ are linesearches by definition; the map $C_T$ is a
circumcenter by definition. Restricted to $V$, the three
constructions produce the same point on the same line at the
same step. Off $V$ they differ structurally: $A_T$ is an
unrestricted relaxation, $B_T$ projects the relaxation output
back onto $V$, and $C_T$ is a circumcenter of three reflection
points, with no analogous projection formula. The linesearch
formula for $C_T$ on $V$ is, in this sense, forced by the
circumcenter geometry; this is what lets Kantorovich's
inequality recover the rate $\rho_V$ by a route different from
the matrix-power asymptotics of \citep[Section~2]{BBNPW2016}. The
detailed comparison is in Section~\ref{ssec:bt-comparison}.

The constant $\rho_V$ itself was identified
in~\citep{BBNPW2016} as the optimal rate of $\{S_\mu\}$ and of
$B_T$, building on the linesearch lineage
\citep{GPR1967,GearhartKoshy1989,BDHP2003,BCK2006}. What this
paper adds is that the parameter-free geometric circumcenter
$C_T$ attains the same rate, that the rate is sharp on the
explicit ray $v^\star$ of Theorem~\ref{thm:main}, and that no
parameter-free single-step method converges faster
(Proposition~\ref{prop:single-step-opt}). Since $C_T$ coincides
with $B_T$ on $V$ by~\eqref{eq:three-agree-on-V}, the rate
inequality~\eqref{eq:main-rate} could alternatively be deduced
from the known rate of $B_T$
in~\citep[Theorem~4.5]{BBNPW2016}. That route, however, yields
neither the sharp identity
\[
   \sup_{\substack{v\in V\\v\notin U\cap V}}\frac{\norm{C_T(v)-P_{U\cap V}(v)}}{\norm{v-P_{U\cap V}(v)}}  =  \rho_V
\]
(Proposition~\ref{prop:rho-sharp}), nor the strict inequality
$\rho_V<c_F^2$ for $\theta_F<\pi/2$
(Proposition~\ref{prop:strict}), nor the worst-case envelope
$\rho_V\le c_F^2/(2-c_F^2)$ attained at $\theta_p=\pi/2$
(Proposition~\ref{prop:worst-case}). These require the
independent spectral analysis of Section~\ref{sec:rate}, which
the matrix-power asymptotics of \citep[Section~2]{BBNPW2016} do
not produce.

One-step convergence for $C_T$ has a clean geometric
characterization: $\rho_V=0$ if and only if $\theta_F=\theta_p$
(Corollary~\ref{cor:one-step}), covering the line,
codimension-one hyperplane \citep[Theorem~6.1]{BBS2020}, and
$V\subset U^\perp$ configurations; the corresponding termination
of $S_{\mu^\star}$ appears in~\citep[Remark~3.10]{BBNPW2016},
and $C_T$ achieves it without requiring the user to compute
$\mu^\star$.

The lower block of Table~\ref{tab:contributions} maps the
improvability landscape. Within the single-step class
$v\mapsto v+c(v)(T(v)-v)$ the constant $\rho_V$ is optimal
(Proposition~\ref{prop:single-step-opt}), and over-reflecting
either or both reflections inside the circumcenter cannot beat
it: the same Kantorovich extremizer that pins the rate of $C_T$
obstructs every choice of reflection coefficient
(Remark~\ref{rem:over-reflection-CRM},
Proposition~\ref{prop:over-reflection-double}). A strict
improvement, by the factor
$\rho_V/\rho_{\mathrm{Cheb}}=(\sqrt a+\sqrt b)^2/(a+b)\in(1,2]$
with $a\coloneqq\sin^2\theta_F$ and $b\coloneqq\sin^2\theta_p$,
requires two-step memory and angle knowledge: Chebyshev
semi-iteration on $T$ attains
$\rho_{\mathrm{Cheb}}=(\sqrt b-\sqrt a)/(\sqrt b+\sqrt a)$
(Theorem~\ref{thm:cheb-strict}). The linear two-parameter family
$C_{T,\gamma,\beta}$ compares non-monotonically to $\rho_V$,
with a $1/3$-floor on $\RR^n$ removed by composition with $P_V$
(Section~\ref{ssec:parametric-CDR}); strategies for initializing
outside $V$ are analyzed in Section~\ref{sec:outside-V}.

\subsection{Strategy of the proof of Theorem~\ref{thm:main}}\label{ssec:intro-strategy}

The operator $C_T$ is nonlinear, a rational function of $x$,
$R_Ux$, $R_VR_Ux$, so the standard sharp-rate notion (the
operator norm of a linear map) does not apply directly. The
proof of Theorem~\ref{thm:main} proceeds in three steps.

The first step disposes of the degenerate configurations. If
$T(v)=v$ at some iterate $v\in V$, then $v\in U\cap V$, the
three CRM vertices collapse to $v$, and $C_T(v)=v=\bar x_0$
(Theorem~\ref{thm:linesearch}), so~\eqref{eq:main-rate} holds
trivially. Moreover, $C_T$ commutes with translations by
elements of $U\cap V$, because $R_U$ and $R_V$ fix $U\cap V$
pointwise; the contraction estimate therefore only needs to be
proved at points $v$ with $P_{U\cap V}(v)=0$, that is, on
$V\cap(U\cap V)^\perp$.

The second step removes the nonlinearity. For $v\in V$,
\begin{equation}\label{eq:CT-on-line-intro}
   C_T(v)=v+\mu_v\bigl(T(v)-v\bigr),
   \quad
   \mu_v=\frac{\dist^2(v,U)}{\norm{T(v)-v}^2},
\end{equation}
proved as Theorem~\ref{thm:linesearch}. Started in $V$, $C_T$
moves along the line $\{v+\mu(T(v)-v):\mu\in\RR\}$ with a
data-driven step $\mu_v$ depending only on $v$, $T(v)$, and
$\dist(v,U)$. The line is the same one the relaxed family
$\{S_\mu\}$ traces; the optimal constant $\mu^\star$ from
\eqref{eq:mu-star-intro} requires the principal angles, while
$\mu_v$ does not.

The third step is a single spectral estimate. Setting
$w=v-\bar x_0$, the squared residual after one step is
\[
   \frac{\norm{C_T(v)-\bar x_0}^2}{\norm{v-\bar x_0}^2}
   =1-\frac{\scal{w}{Mw}^2}{\norm{w}^2\,\scal{w}{M^2w}},
\]
where $M=(\Id-P_VP_U)|_V$ has spectrum in
$[\sin^2\theta_F,\sin^2\theta_p]$ on $V\cap(U\cap V)^\perp$
(Lemma~\ref{lem:spectrum}). Kantorovich's
inequality~\citep{Kantorovich1948} bounds the fraction by
$1-\rho_V^2$, yielding~\eqref{eq:main-rate}; its extremizer gives
the worst-case ray
$v^\star=\bar x_0+\sin\theta_p\,f_{s+1}+\sin\theta_F\,f_p$
(Proposition~\ref{prop:rho-sharp}), on which the inequality is
tight at every iterate (Theorem~\ref{thm:rate}). We are not
aware of an earlier appearance of Kantorovich's inequality in
projection-method rate analysis.

\subsection{Organization}\label{ssec:intro-organization}

Section~\ref{sec:prelim} fixes notation and collects the
principal-angle material and the projection identity for $C_T$.
Section~\ref{sec:linesearch} establishes the geometric
linesearch \eqref{eq:CT-on-line-intro} and proves
Theorem~\ref{thm:main} via Kantorovich's inequality.
Section~\ref{sec:strict-and-special} treats the strict
inequality $\rho_V<c_F^2$, one-step convergence, and the
worst-case rate at fixed $\theta_F$.
Section~\ref{sec:linear-approx} develops the
linear-approximation viewpoint and the detailed comparison with
$B_T$. Section~\ref{sec:improvement} covers the improvability
landscape summarized in the lower block of
Table~\ref{tab:contributions}. Section~\ref{sec:outside-V}
analyzes initialization outside $V$. Section~\ref{sec:numerics}
gives the analytic rate plots, an explicit sharp instance in
$\RR^4$, the numerical verification of $\rho_V$ to unit
roundoff, and the comparison with AAMR.
Section~\ref{sec:conclusion} concludes with open questions.

\section{Preliminaries}\label{sec:prelim}

The following symbols recur throughout. $\RR^n$ is the ambient
Euclidean space, with inner product $\scal{\cdot}{\cdot}$ and
norm $\norm{\cdot}$. For a subspace $W\subseteq\RR^n$: $P_W$ is
the orthogonal projection onto $W$; $R_W=2P_W-\Id$ the
corresponding reflection; $W^\perp$ the orthogonal complement;
and $\dist(x,W)=\norm{x-P_Wx}$. $U$ and $V$ are the two
subspaces, with the standing assumption $V\not\subset U$; we
write $p=\dim V$, $s=\dim(U\cap V)\in\{0,\ldots,p-1\}$, and
$\bar x\coloneqq P_{U\cap V}(x)$, which specializes to
$\bar x_0=P_{U\cap V}(x_0)$ at the starting point and to
$\bar v=P_{U\cap V}(v)$ for $v\in V$, and satisfies
$\bar v_k=\bar x_0$ along the iteration of
Theorem~\ref{thm:main} by the projection-invariance property
established there. The principal angles between $U$ and $V$ are
$0\le\theta_1\le\cdots\le\theta_p\le\pi/2$
(Definition~\ref{def:principal-angles}), the first $s$ equal to
zero; the Friedrichs angle is
$\theta_F\coloneqq\theta_{s+1}\in(0,\pi/2]$
(Definition~\ref{def:friedrichs}), $c_F\coloneqq\cos\theta_F$,
and throughout we abbreviate
\[
   a\coloneqq\sin^2\theta_F,\qquad b\coloneqq\sin^2\theta_p.
\]
The principal-angle frame (Theorem~\ref{lem:frame}) is an
orthonormal basis $\{f_1,\ldots,f_p\}$ of $V$, paired with
orthonormal vectors $\{e_k\}\subset U$ for indices $k$ with
$\cos\theta_k>0$.

The basic operators are the MAP operator $T\coloneqq P_VP_U$,
the Douglas--Rachford operator
$T_{\mathrm{DR}}\coloneqq\tfrac12(\Id+R_VR_U)$, the relaxed-MAP
family $S_\mu\coloneqq(1-\mu)\Id+\mu T$ for $\mu\in\RR$, and the
circumcentered--reflection operator
\begin{equation}\label{eq:CT-def}
   C_T(x)\coloneqq\circum\{x,R_Ux,R_VR_Ux\},
\end{equation}
the main object of the paper. For each $x\in\RR^n$, the CRM
vertices, side vectors, and affine plane are
\begin{equation}\label{eq:y-z-W}
   y\coloneqq R_Ux,\quad z\coloneqq R_VR_Ux,
\end{equation}
\begin{equation}\label{eq:s-vectors}
   s_U\coloneqq y-x=2(P_Ux-x),\quad
   s_V\coloneqq z-x,\quad
   W_x\coloneqq\aff\{x,y,z\}.
\end{equation}
The adaptive linesearch step is
$\mu_v\coloneqq\dist^2(v,U)/\norm{T(v)-v}^2$
(Theorem~\ref{thm:linesearch}); the optimal constant step is
$\mu^\star=2/(a+b)$, with optimal-relaxation rate
$\rho_V=(b-a)/(a+b)$~\citep[Theorem~3.7]{BBNPW2016}; the
Chebyshev rate is
$\rho_{\mathrm{Cheb}}=(\sqrt b-\sqrt a)/(\sqrt b+\sqrt a)$
(Section~\ref{ssec:chebyshev}); the linear two-parameter family
$C_{T,\gamma,\beta}\coloneqq(1-\gamma-\beta)\Id+\beta R_VR_U+\gamma R_U$
of Section~\ref{ssec:parametric-CDR} has rate
$\Gamma(\gamma,\beta)$ with optimum $\Gamma^\star$; and the
class of single-step methods $\Phi_c(v)=v+c(v)(T(v)-v)$ on $V$
is studied in Section~\ref{ssec:single-step-opt}. The
self-adjoint operator
\begin{equation}\label{eq:M-definition}
   M\coloneqq(\Id-P_VP_U)|_V
\end{equation}
on $V$, with spectrum $\{\sin^2\theta_k\}_{k=1}^p$, drives the
rate analysis from Section~\ref{sec:rate} onward.

With notation in place, the rest of this section develops the
structural material we will need: the basic identities for
$C_T$; principal angles and the
Friedrichs angle; and the
principal-angle frame, the SVD of $P_U|_V$, and the operator
$M$. The frame
diagonalizes $P_VP_U$ on $V$ and reduces every later
computation to a trigonometric identity on a two-dimensional
invariant subspace.

The standing assumption $V\not\subset U$ makes the Friedrichs
angle positive and the rate $\rho_V$
in~\eqref{eq:mu-star-intro} smaller than~$1$; if $V\subset U$
then $\bar x_0=P_V(x_0)$ solves~\eqref{eq:best-approximation} in
a single projection. The dimension of $U$ is arbitrary; only $p$
and $s$ enter the analysis.

\begin{remark}[Convention on which space is iterated]\label{rem:convention}
We take $V$ as the iterate space, so that the iteration of
Theorem~\ref{thm:main} produces a sequence in $V$. The order of
composition matters: $C_T(x)=\circum\{x,R_Ux,R_VR_Ux\}$ applies
$R_U$ before $R_V$ and yields iterates in $V$ via the
linesearch identity of Theorem~\ref{thm:linesearch}. Swapping
the roles of $U$ and $V$ amounts to replacing the circumcenter
by $\circum\{x,R_Vx,R_UR_Vx\}$, which iterates the product
$P_UP_V$ and produces iterates in $U$ at the same sharp rate. The
quantities $\theta_F$, $\theta_p$, and $\rho_V$ are symmetric in
$(U,V)$, so the choice of labeling does not affect the rate; we
adopt the present convention because the asymmetric form
$T=P_VP_U$ produces iterates in $V$ directly via $C_T$. The
companion paper~\citep{BBNPW2016} adopts the reverse labeling,
$\dim U\le\dim V$, iteration in $U$, paired with the
order-swapped $B_T$ on the right of~\eqref{eq:BT-our-order}.
\end{remark}

The generalized circumcenter is fixed by the following
convention, which makes $C_T$ well-defined even in degenerate
vertex configurations.

\begin{definition}[Generalized circumcenter]\label{def:circum}
Let $S\subset\RR^n$ be a finite set such that some $s\in\RR^n$
is equidistant from all elements of $S$. We define
\begin{equation}\label{eq:circum-convention}
   \circum(S)  \coloneqq  P_{\aff(S)}(s),
\end{equation}
the orthogonal projection of $s$ onto the affine hull $\aff(S)$.
\end{definition}

The convention~\eqref{eq:circum-convention} produces a single
well-defined point, independent of the choice of equidistant $s$.

\begin{lemma}[Well-definedness of $\circum$]\label{lem:circum-well-defined}
If $s_1,s_2\in\RR^n$ are both equidistant from every element of
a finite set $S\subset\RR^n$, then
$P_{\aff(S)}(s_1)=P_{\aff(S)}(s_2)$.
\end{lemma}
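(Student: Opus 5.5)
The plan is to show that $s_1-s_2$ is orthogonal to the direction space of $\aff(S)$. Since $P_{\aff(S)}$ is an affine map whose linear part is the orthogonal projection onto that direction space, this forces the two projections to agree.

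Write $S=\{a_0,a_1,\ldots,a_m\}$ and fix the base point $a_0$. The affine hull is $\aff(S)=a_0+L$, where $L\coloneqq\Span\{a_i-a_0: i=1,\ldots,m\}$. Orthogonal projection onto this affine subspace is
\[
   P_{\aff(S)}(x)=a_0+P_L(x-a_0),
\]
so
\[
   P_{\aff(S)}(s_1)-P_{\aff(S)}(s_2)=P_L(s_1-s_2).
\]
It therefore suffices to prove $s_1-s_2\in L^\perp$, i.e., $\scal{s_1-s_2}{a_i-a_0}=0$ for each $i$.

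For the orthogonality, use equidistance. For $j\in\{1,2\}$ and each $i$, we have $\norm{s_j-a_i}^2=\norm{s_j-a_0}^2$. Expanding both sides, the $\norm{s_j}^2$ terms cancel, which gives the linear relation
\[
   2\scal{s_j}{a_i-a_0}=\norm{a_i}^2-\norm{a_0}^2 .
\]
The right-hand side does not depend on $j$. Subtracting the relation for $j=2$ from the one for $j=1$ yields $\scal{s_1-s_2}{a_i-a_0}=0$, as required.

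There is no real obstacle here. The only points to handle explicitly are the trivial case $m=0$, where $L=\{0\}$ and both projections equal $a_0$, and the remark that $S$ need not be affinely independent. Neither matters, because the argument only uses the spanning set of $L$ and never a basis.
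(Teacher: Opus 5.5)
Your proof is correct and follows the paper's route: expand the squared equidistance relations, subtract the $s_1$ and $s_2$ versions to get $s_1-s_2\perp(a_i-a_0)$ for every $i$, and conclude because the projection onto $\aff(S)$ ignores components orthogonal to its direction space. Your explicit formula $P_{\aff(S)}(x)=a_0+P_L(x-a_0)$ only makes precise the final step, which the paper states in words.
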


\begin{proof}
Enumerate $S=\{p_0,p_1,\ldots,p_m\}$. The equidistance
hypothesis gives $\norm{s_i-p_0}=\norm{s_i-p_j}$ for $i=1,2$ and
$j=1,\ldots,m$, so subtracting the two squared equalities yields,
for $j=1,\ldots,m$,
\[
   \scal{s_1-s_2}{\,2p_0-2p_j}=\norm{p_0}^2-\norm{p_j}^2-(\norm{p_0}^2-\norm{p_j}^2)=0,
\]
i.e., $s_1-s_2\perp(p_j-p_0)$ for every $j$. The vectors
$\{p_j-p_0\}_{j=1}^m$ span the linear part of $\aff(S)$, so
$s_1-s_2\perp\aff(S)$. Orthogonal projections kill components
orthogonal to the target affine subspace, so
$P_{\aff(S)}(s_1)=P_{\aff(S)}(s_2)$.
\end{proof}

For the CRM configuration, the equidistance condition always
holds: any $s\in U\cap V$ is equidistant from $\{x,R_Ux,R_VR_Ux\}$,
since the reflections fix $U$ and $V$ pointwise. On
non-degenerate vertex triples, Definition~\ref{def:circum}
agrees with the classical Euclidean circumcenter; on degenerate
configurations (two vertices coinciding, or three distinct
collinear vertices), it returns the midpoint of the two extreme
distinct vertices. Both regimes satisfy the
Behling--Bello-Cruz--Santos identity~\citep[Lemma~3]{BBS2018}.

\begin{lemma}[BBS projection identity for $C_T$]\label{lem:BBS}
For every $x\in\RR^n$ and every $s\in U\cap V$,
\begin{equation}\label{eq:BBS}
   C_T(x)  =  P_{W_x}(s),
\end{equation}
or equivalently $C_T(x)\in W_x$ with
$s-C_T(x)\perp\Span\{s_U,s_V\}$. In particular,
\eqref{eq:BBS} characterizes $C_T(x)$ as the orthogonal
projection onto $W_x$ of any point in $U\cap V$.
\end{lemma}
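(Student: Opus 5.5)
The plan is to reduce the statement to Definition~\ref{def:circum} together with Lemma~\ref{lem:circum-well-defined}. Fix $x\in\RR^n$ and $s\in U\cap V$. First we check that $s$ is equidistant from the three vertices $x$, $y=R_Ux$, $z=R_VR_Ux$. Because $R_U$ and $R_V$ are linear isometries fixing $U$ and $V$ pointwise, we have $R_Us=s$ and $R_Vs=s$. Hence
\[
\norm{y-s}=\norm{R_U(x-s)}=\norm{x-s},\qquad
\norm{z-s}=\norm{R_V(y-s)}=\norm{y-s}.
\]
So $s$ is an admissible equidistant point for $S=\{x,y,z\}$. By Definition~\ref{def:circum}, $C_T(x)=P_{\aff(S)}(s')$ for any equidistant $s'$. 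Lemma~\ref{lem:circum-well-defined} shows this value does not depend on the choice of $s'$, so we may take $s'=s$. Since $\aff(S)=W_x$ by \eqref{eq:s-vectors}, this gives $C_T(x)=P_{W_x}(s)$, which is \eqref{eq:BBS}.

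For the equivalent formulation we use the standard characterization of the projection onto an affine subspace. The linear part of $W_x=x+\Span\{y-x,z-x\}$ is $\Span\{s_U,s_V\}$. A point $q$ equals $P_{W_x}(s)$ if and only if $q\in W_x$ and $s-q\perp\Span\{s_U,s_V\}$. This follows from the Pythagorean identity $\norm{s-w}^2=\norm{s-q}^2+\norm{q-w}^2$ for $w\in W_x$, together with uniqueness of the minimizer. This holds whether the three vertices are affinely independent, collinear, or coincident, since $\Span\{s_U,s_V\}$ is in every case exactly the direction space of $W_x$. The final sentence of the lemma is then a restatement: every $s\in U\cap V$ yields the same projection, namely $C_T(x)$.

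I do not expect any step to be a real obstacle. The only point needing care is the degenerate configurations, and the convention of Definition~\ref{def:circum} was designed to handle them: the argument never uses non-collinearity, only that $\aff(S)=W_x$ and that $s$ is equidistant. If one wants to connect with the classical circumcenter in the non-degenerate case, one can note that the equidistant points of three affinely independent points form an affine set orthogonal to $W_x$ through the circumcenter. Projecting any of them onto $W_x$ therefore returns the circumcenter, consistent with \citep[Lemma~3]{BBS2018}.
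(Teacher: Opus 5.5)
Your proof is correct and follows the paper's argument: first equidistance of $s$ from $x,y,z$ via the isometries $R_U$ and $R_V$ that fix $U\cap V$, then Definition~\ref{def:circum} combined with Lemma~\ref{lem:circum-well-defined}, then the orthogonality characterization of the projection onto $W_x$, whose direction space is $\Span\{s_U,s_V\}$. The only minor difference is that you justify both directions of that characterization via Pythagoras, whereas the paper records only the forward implication.
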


\begin{proof}
The argument has two pieces: every $s\in U\cap V$ is equidistant
from the three vertices $\{x,y,z\}$, and the
well-definedness lemma~\ref{lem:circum-well-defined} then forces
$C_T(x)=P_{W_x}(s)$.

Let $s\in U\cap V$, so $P_Us=s$ and $P_Vs=s$, hence
$R_Us=R_Vs=s$ and $R_VR_Us=s$. Since $R_U$ is an isometry,
\[
   \norm{s-y}  =  \norm{R_Us-R_Ux}  =  \norm{s-x}.
\]
The same isometry argument with $R_VR_U$ in place of $R_U$ gives
$\norm{s-z}=\norm{s-x}$, so $s$ is equidistant from $\{x,y,z\}$.
By Definition~\ref{def:circum} applied to $S=\{x,y,z\}$ with any
equidistant point,
$C_T(x)=\circum\{x,y,z\}=P_{\aff\{x,y,z\}}(s)=P_{W_x}(s)$. The
right-hand side is independent of the choice of $s\in U\cap V$
by Lemma~\ref{lem:circum-well-defined}.

The point $C_T(x)=P_{W_x}(s)$ is the closest point to $s$ in the
affine subspace $W_x$, so the displacement $s-C_T(x)$ is
orthogonal to the linear part of $W_x$, which is
$W_x-x=\Span\{y-x,z-x\}=\Span\{s_U,s_V\}$. Hence
$s-C_T(x)\perp\Span\{s_U,s_V\}$.
\end{proof}

Identity~\eqref{eq:BBS} is used repeatedly in the rate
arguments below.

We use the Bj\"orck--Golub formulation of principal angles in
terms of the singular values of $P_U|_V$~\citep{BG1973},
equivalent to~\citep[Definition~3.1]{BBNPW2016}.

\begin{definition}[Principal angles]\label{def:principal-angles}
Let $\sigma_1\ge\cdots\ge\sigma_p\ge 0$ be the singular values of
$P_U|_V$ viewed as a linear map $V\to\RR^n$. The principal
angles between $V$ and $U$ are
\begin{equation}\label{eq:principal-angles}
   \theta_k\coloneqq\arccos\sigma_k\in[0,\pi/2]
   \quad(k=1,\ldots,p),
\end{equation}
ordered $0\le\theta_1\le\cdots\le\theta_p\le\pi/2$. Equivalently,
the cosines $\sigma_k=\cos\theta_k$ admit the recursive
variational characterization
\begin{equation}\label{eq:variational}
   \cos\theta_k
   =\max\bigl\{\scal{u}{v}:u\in U,\,v\in V,\,\norm{u}=\norm{v}=1,\
   u\perp u_1,\ldots,u_{k-1},\ v\perp v_1,\ldots,v_{k-1}\bigr\},
\end{equation}
with $(u_j,v_j)$ the maximizer at step $j$, and the convention
$\theta_k=\pi/2$ when $k>\dim U$ (no admissible $u$).
\end{definition}

The non-zero singular values of $P_U|_V$ and $P_V|_U$ coincide,
so the principal angles are symmetric in $(U,V)$ on their
non-trivial part.

\begin{definition}[Friedrichs angle]\label{def:friedrichs}
The Friedrichs angle between $U$ and $V$, introduced
in~\citep{Friedrichs1937}, is
\begin{equation}\label{eq:friedrichs-def}
   \theta_F\coloneqq\theta_{s+1}\in(0,\pi/2],\quad
   c_F\coloneqq\cos\theta_F\in[0,1).
\end{equation}
Equivalently~\citep[Definition~3.2]{BBNPW2016},
\begin{equation}\label{eq:friedrichs-variational}
   c_F=\sup\bigl\{|\scal{u}{v}|:u\in U\cap(U\cap V)^\perp,\
   v\in V\cap(U\cap V)^\perp,\
   \norm{u}\le 1,\ \norm{v}\le 1\bigr\}.
\end{equation}
The first $s$ principal angles are zero
(Theorem~\ref{lem:frame}), and
\citep[Proposition~3.3]{BBNPW2016} shows that $\theta_{s+1}$
matches the right-hand side of~\eqref{eq:friedrichs-variational}.
\end{definition}

The Friedrichs angle controls operator-norm convergence of the
classical projection methods.

\begin{fact}[{Friedrichs angle and operator norms;
\citealp[Theorem~9.31, Lemma~9.5]{Deutsch};
\citealp[Remark~3.5]{BBNPW2016};
\citealp[Theorem~4.1]{BCNPW2014}}]\label{fact:friedrichs-norms}
For every $k\in\NN$,
\begin{equation}\label{eq:friedrichs-norms}
   c_F=\norm{P_VP_U-P_{U\cap V}}=\norm{P_UP_V-P_{U\cap V}},
   \quad
   \norm{(P_VP_U)^k-P_{U\cap V}}=c_F^{2k-1}.
\end{equation}
The corresponding rate for the Douglas--Rachford operator is
$\norm{T_{\mathrm{DR}}-P_{\Fix T_{\mathrm{DR}}}}=c_F$.
\end{fact}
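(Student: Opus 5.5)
This is a cited fact, so the plan is to recover the two projection identities directly from the variational characterization \eqref{eq:friedrichs-variational}, and to cite \citep[Theorem~4.1]{BCNPW2014} for the Douglas--Rachford part, with only a sketch of how that proof goes.

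\textbf{Reduction.} Set $W\coloneqq U\cap V$, $U_0\coloneqq U\cap W^\perp$ and $V_0\coloneqq V\cap W^\perp$. Since $W\subseteq U$, the projection $P_W$ commutes with $P_U$ and $P_U=P_W+P_{U_0}$; likewise $P_V=P_W+P_{V_0}$. The cross terms vanish because $P_WP_{U_0}=P_{V_0}P_W=0$. Hence
\[
P_VP_U-P_W=P_{V_0}P_{U_0},
\qquad
(P_VP_U)^k-P_W=(P_{V_0}P_{U_0})^k .
\]

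\textbf{First identity.} I compute the norm by duality:
\[
\norm{P_{V_0}P_{U_0}}=\sup_{\norm x,\norm y\le1}\scal{P_{U_0}x}{P_{V_0}y}.
\]
As $x$ and $y$ range over the unit ball, $P_{U_0}x$ and $P_{V_0}y$ range over the unit balls of $U_0$ and $V_0$. By \eqref{eq:friedrichs-variational} this supremum is exactly $c_F$. Taking adjoints gives $\norm{P_{U_0}P_{V_0}}=c_F$ as well, which is the statement for $P_UP_V$.

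\textbf{Powers.} Put $X\coloneqq(P_{V_0}P_{U_0})^k$ and $S\coloneqq P_{U_0}P_{V_0}P_{U_0}$, which is self-adjoint and positive semidefinite. Since
\[
X^*X=(P_{U_0}P_{V_0})^k(P_{V_0}P_{U_0})^k
\]
and every projection is idempotent, adjacent repeated factors collapse, leaving the alternating word $P_{U_0}P_{V_0}P_{U_0}\cdots P_{U_0}$ with $2k$ copies of $P_{U_0}$. This word equals $S^{2k-1}$. Because $S\succeq0$, we have $\norm{S^{2k-1}}=\norm S^{2k-1}$. Also $\norm S=\norm{(P_{V_0}P_{U_0})^*(P_{V_0}P_{U_0})}=\norm{P_{V_0}P_{U_0}}^2=c_F^2$. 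Therefore
\[
\norm X^2=\norm{X^*X}=c_F^{2(2k-1)},
\]
that is, $\norm{(P_VP_U)^k-P_W}=c_F^{2k-1}$. The only point needing care here is the index count in the collapse of $X^*X$.

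\textbf{Douglas--Rachford part.} This is the main obstacle, since $T_{\mathrm{DR}}$ is not a product of projections and its fixed-point set $W\oplus(U^\perp\cap V^\perp)$ is larger than $W$. I would use the canonical decomposition of $\RR^n$ into the subspaces $U\cap V$, $U\cap V^\perp$, $U^\perp\cap V$, $U^\perp\cap V^\perp$, and two-dimensional planes $\Span\{e_k,f_k\}$ with $\theta_k\in(0,\pi/2)$ (plus their orthogonal copies). On each such plane $R_VR_U$ is a rotation by $2\theta_k$, so $T_{\mathrm{DR}}$ acts there as $\cos\theta_k$ times a rotation, with norm $\cos\theta_k$. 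On the four intersection pieces $T_{\mathrm{DR}}$ is either the identity (on $\Fix T_{\mathrm{DR}}$) or $0$. Taking the maximum over the non-fixed blocks gives $c_F$. Since this decomposition is only developed later in the paper (Theorem~\ref{lem:frame}), I would simply cite \citep[Theorem~4.1]{BCNPW2014} for this part.
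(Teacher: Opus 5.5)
Your proposal is correct. The paper gives no proof of this Fact; it only cites it. Your argument for the projection identities is, in substance, the standard Kayalar--Weinert argument behind \citep[Theorem~9.31]{Deutsch}:
\begin{itemize}
\item reduce to $U_0=U\cap(U\cap V)^\perp$ and $V_0=V\cap(U\cap V)^\perp$ via $(P_VP_U)^k-P_{U\cap V}=(P_{V_0}P_{U_0})^k$;
\item compute $\norm{P_{V_0}P_{U_0}}$ as the supremum of a bilinear form;
\item apply the $C^*$-identity to the positive operator $S=P_{U_0}P_{V_0}P_{U_0}$.
\end{itemize}
Your index count is right. $X^*X$ collapses to the alternating word with $2k$ factors $P_{U_0}$ and $2k-1$ factors $P_{V_0}$, which is $S^{2k-1}$.

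For the Douglas--Rachford part, \citep{BCNPW2014} argues operator-theoretically in Hilbert space (it is the source of the normality relations \eqref{eq:TT-star-key}). Your block computation is a finite-dimensional shortcut. It is legitimate in the paper's $\RR^n$ setting, and it exhibits $\Fix T_{\mathrm{DR}}=(U\cap V)\oplus(U^\perp\cap V^\perp)$ and the extremal block explicitly.

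Three small corrections.

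(i) Your duality step rests on \eqref{eq:friedrichs-variational}, which the paper imports from \citep{BBNPW2016}. You can bypass it and work directly from the paper's definition $c_F=\cos\theta_{s+1}$. For $v\in V_0=\Span\{f_{s+1},\dots,f_p\}$ one has $P_Uv=P_{U_0}v$, so $\norm{P_{U_0}P_{V_0}}=\norm{P_U|_{V_0}}$. By Theorem~\ref{lem:frame}, the singular values of $P_U|_{V_0}$ are $\cos\theta_{s+1}\ge\dots\ge\cos\theta_p$.

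(ii) Drop the ``orthogonal copies.'' The four intersections $U\cap V$, $U\cap V^\perp$, $U^\perp\cap V$, $U^\perp\cap V^\perp$, together with the mutually orthogonal planes $\Span\{e_k,f_k\}$ for $0<\theta_k<\pi/2$, already exhaust $\RR^n$ by a dimension count. On each such plane $R_VR_U$ is a rotation by $2\theta_k\in(0,\pi)$. Hence $P_{\Fix T_{\mathrm{DR}}}$ vanishes there, and the block norm is exactly $\cos\theta_k$. The maximum over these blocks is $\cos\theta_{s+1}=c_F$, or $0=c_F$ when there is no such plane.

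(iii) Your proof of the power identity needs $k\ge 1$; in particular, the reduction step fails at $k=0$. That is also the only range where the identity holds. With the paper's convention $0\in\NN$, at $k=0$ the left side is $\norm{\Id-P_{U\cap V}}=1$, whereas $c_F^{-1}$ exceeds $1$ or is undefined.
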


The largest principal angle $\theta_p$ satisfies
$\theta_F\le\theta_p\le\pi/2$, with both extremes possible. When
$\theta_F=\theta_p$, $C_T$ converges in a single step
(Section~\ref{sec:special}); when $\theta_p=\pi/2$, which
requires $V\cap U^\perp\neq\{0\}$, the rate attains its worst
case for fixed $\theta_F$ (Proposition~\ref{prop:worst-case}).

The operator $M=(\Id-P_VP_U)|_V$ introduced
in~\eqref{eq:M-definition} is the workhorse of the rate
analysis. It is self-adjoint on $V$, with spectrum
$\{\sin^2\theta_k\}_{k=1}^p$; its eigenstructure, shared with
$P_VP_U|_V$ and $P_U|_V$, is the principal-angle frame, which we
now identify as the SVD of $P_U|_V$. The classical
numerical-linear-algebra setting for this material is the
cosine--sine (CS) decomposition of orthogonal
projections~\citep{PaigeWei1994,StewartSun}; we use the equivalent
vector form, which streamlines the rate analysis.

\begin{theorem}[Principal-angle frame as SVD of $P_U|_V$]\label{lem:frame}
The map $P_U|_V\colon V\to U$ admits a singular value
decomposition. There exists an orthonormal basis
$\{f_1,\ldots,f_p\}$ of $V$ and an orthonormal family
$\{e_k\}_{k\in K}\subset U$, indexed by
$K\coloneqq\{k\in\{1,\ldots,p\}:\theta_k<\pi/2\}$ with
$|K|\le\min(\dim U,p)$, such that
\begin{equation}\label{eq:frame}
   P_Uf_k=\cos\theta_k\,e_k\quad(k\in K),
   \quad
   P_Uf_k=0\quad(k\notin K),
   \quad
   P_Ve_k=\cos\theta_k\,f_k\quad(k\in K),
\end{equation}
and $\scal{e_j}{f_k}=\cos\theta_k\,\delta_{jk}$ for $j,k\in K$.
The cosines $\cos\theta_1\ge\cdots\ge\cos\theta_p\ge 0$ are the
singular values of $P_U|_V$. The extreme angles correspond to
extreme geometric configurations:
\begin{equation}\label{eq:frame-extremes}
   \theta_k=0  \Longleftrightarrow  f_k=e_k\in U\cap V,
   \quad
   \theta_k=\pi/2  \Longleftrightarrow  k\notin K\text{ and }f_k\in V\cap U^\perp.
\end{equation}
This vector form is equivalent to the matrix block
decomposition~\citep[Proposition~3.4]{BBNPW2016} and to the CS
decomposition of the pair
$(P_U,P_V)$~\citep{PaigeWei1994,StewartSun}.
\end{theorem}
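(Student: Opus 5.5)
The plan is to build the frame from the singular value decomposition of the linear map $A\coloneqq P_U|_V\colon V\to\RR^n$, whose range lies in $U$, and then read off every identity in \eqref{eq:frame} from self-adjointness of $P_U$ and $P_V$.

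\emph{Step 1: the right singular vectors.} The operator $A^*A$ on $V$ is self-adjoint and positive semidefinite. For $v,w\in V$ we have $\scal{Av}{Aw}=\scal{P_Uv}{P_Uw}=\scal{P_Uv}{w}$, so $A^*A=P_VP_U|_V$. By the spectral theorem there is an orthonormal eigenbasis $\{f_1,\ldots,f_p\}$ of $V$ with eigenvalues $\sigma_k^2$. We order them so that $\sigma_1\ge\cdots\ge\sigma_p\ge0$; these $\sigma_k$ are the singular values of Definition~\ref{def:principal-angles}. Set $\theta_k=\arccos\sigma_k$ and $K=\{k:\sigma_k>0\}$, which is exactly the set where $\theta_k<\pi/2$.

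\emph{Step 2: the left singular vectors.} For $k\in K$ define $e_k\coloneqq P_Uf_k/\sigma_k\in U$. Orthonormality follows from
\[
   \scal{P_Uf_j}{P_Uf_k}=\scal{P_VP_Uf_j}{f_k}=\sigma_j^2\delta_{jk}.
\]
The $e_k$ are orthonormal vectors in $U$ and the $f_k$ are orthonormal vectors in $V$, so $|K|\le\min(\dim U,p)$. For $k\notin K$ we get $\norm{P_Uf_k}^2=\sigma_k^2=0$, hence $P_Uf_k=0$. Next,
\[
   P_Ve_k=P_VP_Uf_k/\sigma_k=\sigma_kf_k,
\]
and for $j,k\in K$,
\[
   \scal{e_j}{f_k}=\scal{e_j}{P_Uf_k}=\sigma_k\delta_{jk}.
\]
This establishes every identity in \eqref{eq:frame}.

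\emph{Step 3: the extreme cases in \eqref{eq:frame-extremes}.} Suppose $\theta_k=0$. Then $\norm{P_Uf_k}=\norm{f_k}=1$, so $f_k\in U$, hence $e_k=P_Uf_k=f_k\in U\cap V$. Conversely, if $f_k=e_k$ then $\sigma_k=\scal{e_k}{f_k}=1$. Suppose $\theta_k=\pi/2$. Then $P_Uf_k=0$, i.e.\ $f_k\in V\cap U^\perp$, and $k\notin K$ by the definition of $K$. The converse is immediate.

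\emph{Step 4: consistency with the variational form.} The remaining checks are the recursive characterization \eqref{eq:variational} and the claim that the first $s$ angles vanish. For the latter, the multiplicity of the eigenvalue $1$ of $A^*A$ equals $\dim\{v\in V:P_Uv=v\}=\dim(U\cap V)=s$.

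For \eqref{eq:variational}, Cauchy--Schwarz gives, for unit $u\in U$ and unit $v\in V$,
\[
   \scal{u}{v}=\scal{u}{P_Uv}\le\norm{P_Uv},
\]
and $\max\norm{P_Uv}$ over unit $v\in V$ orthogonal to $f_1,\ldots,f_{k-1}$ is $\sigma_k$ by Courant--Fischer. Equality holds at $(e_k,f_k)$. Running this by induction shows the recursion can be realized with $(u_j,v_j)=(e_j,f_j)$.

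I expect the main obstacle to lie in this last step. When singular values repeat, the maximizers in \eqref{eq:variational} are not unique, so the argument must be phrased as ``some choice of maximizers coincides with the frame.'' The cases $k>\dim U$ must also be reconciled with the convention $\theta_k=\pi/2$. The equivalences with \citep[Proposition~3.4]{BBNPW2016} and the CS decomposition I would simply cite.
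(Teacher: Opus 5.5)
Your Steps 1--3 are correct and follow essentially the paper's route: an SVD of $P_U|_V$ combined with self-adjointness of $P_U,P_V$, with the extreme cases read off by Pythagoras. The only difference is that you build the SVD from the spectral theorem for $P_VP_U|_V$ and set $e_k=P_Uf_k/\sigma_k$, which makes $P_Ve_k=\sigma_kf_k$ immediate from the eigen-equation; the paper instead expands $P_Ve_k$ in the basis $\{f_j\}$. Step 4 goes beyond what this theorem requires, since the paper takes \eqref{eq:variational} as part of Definition~\ref{def:principal-angles} via Bj\"orck--Golub, so the non-uniqueness of maximizers you flag is not an obstacle here. Your observation that the eigenvalue-$1$ eigenspace of $P_VP_U|_V$ is exactly $U\cap V$ does usefully supply the fact, which the paper later attributes to this theorem, that the first $s$ angles vanish.
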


\begin{proof}
The linear map $P_U|_V\colon V\to U$ has a singular value
decomposition: there exist an orthonormal basis
$\{f_1,\ldots,f_p\}$ of $V$ and orthonormal vectors
$\{e_k\}_{k\in K}$ in $U$, with $K=\{k:\sigma_k>0\}$, such that
\begin{equation}\label{eq:svd-step}
   P_Uf_k=\sigma_ke_k\quad(k\in K),
   \quad
   P_Uf_k=0\quad(k\notin K),
\end{equation}
where $\sigma_1\ge\cdots\ge\sigma_p\ge 0$ are the singular
values. The substitution $\sigma_k=\cos\theta_k$, valid by
Definition~\ref{def:principal-angles}, identifies $K$ with
$\{k:\theta_k<\pi/2\}$ and gives the first two identities
in~\eqref{eq:frame}. Since $\{e_k\}_{k\in K}$ is orthonormal in
$U$, $|K|\le\dim U$.

For the third identity in~\eqref{eq:frame}, fix $k\in K$. Since
$f_j\in V$ for every $j$,
$P_Ve_k=\sum_{j=1}^p\scal{P_Ve_k}{f_j}f_j=\sum_j\scal{e_k}{f_j}f_j$
by self-adjointness of $P_V$. We compute the inner products.
For $j\in K$, $e_j\in U$ implies $P_Ue_j=e_j$ and
self-adjointness of $P_U$ gives
$\scal{e_k}{f_j}=\scal{P_Ue_k}{f_j}=\scal{e_k}{P_Uf_j}=\sigma_j\scal{e_k}{e_j}=\sigma_j\delta_{jk}=\sigma_k\delta_{jk}$.
For $j\notin K$, $P_Uf_j=0$ gives
$\scal{e_k}{f_j}=\scal{P_Ue_k}{f_j}=\scal{e_k}{P_Uf_j}=0$.
Hence $P_Ve_k=\sigma_kf_k=\cos\theta_k\,f_k$.

For~\eqref{eq:frame-extremes}: $\theta_k=0$ means $\sigma_k=1$,
so $\norm{P_Uf_k}=\norm{f_k}=1$ and Pythagoras forces
$f_k=P_Uf_k\in U$; combined with $f_k\in V$ this gives
$f_k\in U\cap V$, and~\eqref{eq:svd-step} reads $f_k=e_k$. The
converse is immediate. The case $\theta_k=\pi/2$ is equivalent
to $\sigma_k=0$, i.e.\ $k\notin K$, equivalently $P_Uf_k=0$,
i.e.\ $f_k\in V\cap U^\perp$.
\end{proof}

\begin{lemma}[The operator $M$]\label{lem:M}
The operator $M=(\Id-P_VP_U)|_V$ defined
in~\eqref{eq:M-definition} maps $V$ into $V$ and is self-adjoint
on $V$. For every $v\in V$,
\begin{equation}\label{eq:M-quadratic}
   Mv=v-T(v),\quad
   \scal{v}{Mv}=\dist^2(v,U),\quad
   \scal{v}{M^2v}=\norm{v-T(v)}^2.
\end{equation}
\end{lemma}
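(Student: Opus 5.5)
The plan is to verify the three claims directly from the definitions, using only that $P_U$ and $P_V$ are self-adjoint idempotents and that $v\in V$ means $P_Vv=v$.

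\emph{Invariance and self-adjointness.} For $v\in V$, the vector $Mv=v-P_VP_Uv$ is a difference of two elements of $V$, so $M$ maps $V$ into $V$. For $v,w\in V$ we have $\scal{P_VP_Uv}{w}=\scal{P_Uv}{P_Vw}=\scal{P_Uv}{w}$, and this is symmetric in $(v,w)$ because $P_U$ is self-adjoint. Hence $\scal{Mv}{w}=\scal{v}{Mw}$, so $M$ is self-adjoint on $V$.

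\emph{The three identities in \eqref{eq:M-quadratic}.} The first, $Mv=v-T(v)$, is the definition of $M$ with $T=P_VP_U$. For the second, the computation above with $w=v$ gives $\scal{v}{T(v)}=\scal{v}{P_Uv}=\norm{P_Uv}^2$, using idempotence of $P_U$. Therefore
\[
\scal{v}{Mv}=\norm{v}^2-\norm{P_Uv}^2=\norm{v-P_Uv}^2=\dist^2(v,U),
\]
where the middle equality is Pythagoras, since $P_Uv\perp v-P_Uv$. For the third, self-adjointness of $M$ on $V$, together with $Mv\in V$, gives
\[
\scal{v}{M^2v}=\scal{Mv}{Mv}=\norm{v-T(v)}^2.
\]

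None of these steps is a real obstacle. The only point needing care is that we must use $Mv\in V$ before applying self-adjointness on $V$ in the last identity.

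As a consistency check, the quadratic-form identity shows that $\scal{v}{Mv}$ lies between $0$ and $\norm{v}^2$ on $V$, so $M$ is positive semidefinite with spectrum in $[0,1]$. This matches the claimed spectrum $\{\sin^2\theta_k\}_{k=1}^p$. Indeed, by Theorem~\ref{lem:frame}, $Mf_k=f_k-\cos^2\theta_k\,f_k=\sin^2\theta_k\,f_k$, although this eigenvalue computation is not needed for the lemma itself.
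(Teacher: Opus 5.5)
Your proof is correct and follows essentially the same route as the paper. You get invariance from $P_Vv=v$ and self-adjointness from the symmetry of $\scal{P_VP_Uv}{w}=\scal{P_Uv}{w}$ on $V$. You get the two quadratic forms from idempotence plus Pythagoras, and from $\scal{v}{M^2v}=\scal{Mv}{Mv}$ using $Mv\in V$.
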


\begin{proof}
For $v\in V$, $P_Vv=v$ gives $Mv=v-P_VP_Uv=v-T(v)\in V$. For
self-adjointness on $V$, take $v,w\in V$ and use $P_Vv=v$,
$P_Vw=w$, together with self-adjointness of $P_U$ and $P_V$:
\[
   \scal{Mv}{w}=\scal{v-P_VP_Uv}{w}=\scal{v}{w}-\scal{P_Uv}{w},
\]
and the same computation with $v$ and $w$ swapped gives
$\scal{v}{Mw}=\scal{v}{w}-\scal{P_Uv}{w}$. The first quadratic
form is
$\scal{v}{Mv}=\norm{v}^2-\scal{v}{P_Uv}=\norm{v}^2-\norm{P_Uv}^2=\dist^2(v,U)$,
where the second equality uses idempotence of $P_U$ and the
third the Pythagorean identity. The second quadratic form is
$\scal{v}{M^2v}=\scal{Mv}{Mv}=\norm{v-T(v)}^2$.
\end{proof}

\begin{lemma}[Eigendecomposition of $M$]\label{lem:spectrum}
With the principal-angle frame of Theorem~\ref{lem:frame},
\begin{equation}\label{eq:M-eigen}
   Mf_k  =  \sin^2\theta_k\,f_k
   \quad(k=1,\ldots,p).
\end{equation}
The spectrum of $M$ on $V$ is $\{\sin^2\theta_k:k=1,\ldots,p\}$,
its kernel is
\[
   \ker M|_V  =  U\cap V  =  \Span\{f_1,\ldots,f_s\},
\]
and the spectrum of $M|_{V\cap(U\cap V)^\perp}$
is~$\{\sin^2\theta_{s+1},\ldots,\sin^2\theta_p\}$, with extremes
$\sin^2\theta_F$ and $\sin^2\theta_p$.
\end{lemma}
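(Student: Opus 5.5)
The plan is to compute $Mf_k$ directly in the principal-angle frame of Theorem~\ref{lem:frame}, and then read off the spectrum, the kernel, and the restricted spectrum from the resulting diagonal form.

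For the eigen-relation, fix $k\in\{1,\ldots,p\}$ and split into the two cases of \eqref{eq:frame}. If $k\in K$, then $P_Uf_k=\cos\theta_k\,e_k$, and the third identity of \eqref{eq:frame} gives $P_VP_Uf_k=\cos\theta_k\,P_Ve_k=\cos^2\theta_k\,f_k$. Hence
\[
Mf_k=f_k-\cos^2\theta_k\,f_k=\sin^2\theta_k\,f_k .
\]
If $k\notin K$, then $P_Uf_k=0$, so $Mf_k=f_k=\sin^2\theta_k\,f_k$, because $\theta_k=\pi/2$ in this case. This establishes \eqref{eq:M-eigen}.

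Since $\{f_k\}$ is an orthonormal basis of $V$ consisting of eigenvectors, $M|_V$ is diagonal in this basis. Its spectrum is therefore exactly $\{\sin^2\theta_k\}_{k=1}^p$.

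For the kernel, a vector $v=\sum_k\alpha_kf_k$ satisfies $Mv=0$ if and only if $\alpha_k=0$ whenever $\sin\theta_k\neq0$. By definition of $s$, the zero angles are exactly $\theta_1,\ldots,\theta_s$, so $\ker M|_V=\Span\{f_1,\ldots,f_s\}$. To identify this span with $U\cap V$, I would argue in two directions.
\begin{itemize}
\item By \eqref{eq:frame-extremes}, each $f_k$ with $k\le s$ lies in $U\cap V$, which gives the inclusion $\Span\{f_1,\ldots,f_s\}\subseteq U\cap V$.
\item Conversely, if $v\in U\cap V$, then $P_Uv=v$ and hence $Mv=v-P_Vv=0$.
\end{itemize}
So $\ker M|_V=U\cap V=\Span\{f_1,\ldots,f_s\}$. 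As a consistency check, this is also compatible with $s=\dim(U\cap V)$.

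Finally, $V\cap(U\cap V)^\perp$ is the orthogonal complement of $\Span\{f_1,\ldots,f_s\}$ inside $V$, which is $\Span\{f_{s+1},\ldots,f_p\}$. This subspace is invariant under $M$, and the restricted spectrum is $\{\sin^2\theta_{s+1},\ldots,\sin^2\theta_p\}$. Because $\sin^2$ is increasing on $[0,\pi/2]$ and the angles are ordered, the extremes are $\sin^2\theta_{s+1}=\sin^2\theta_F$ and $\sin^2\theta_p$.

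The only step that needs any care is matching the count of zero angles with $s=\dim(U\cap V)$. The two-way inclusion above handles this cleanly, so I expect no genuine obstacle.
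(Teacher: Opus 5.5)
Your proposal is correct and follows essentially the same route as the paper: diagonalize $M$ in the principal-angle frame via $P_VP_Uf_k=\cos^2\theta_k\,f_k$, then identify the kernel through the two inclusions $\Span\{f_1,\ldots,f_s\}\subseteq U\cap V\subseteq\ker M|_V$. Your separate treatment of $k\notin K$, where $e_k$ is undefined, and your remark that the two-way inclusion itself forces the number of zero angles to equal $\dim(U\cap V)$ are slightly more careful than the paper's write-up. The paper instead writes $P_Uf_k=\cos\theta_k\,e_k$ for every $k$ and takes the count of zero angles from the Friedrichs-angle definition.
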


\begin{proof}
By Theorem~\ref{lem:frame}, $P_Uf_k=\cos\theta_k\,e_k$ and
$P_Ve_k=\cos\theta_k\,f_k$, hence
\[
   Tf_k  =  P_VP_Uf_k  =  \cos\theta_k\,P_Ve_k  =  \cos^2\theta_k\,f_k,
\]
so $Mf_k=f_k-Tf_k=\sin^2\theta_k\,f_k$. The spectrum of $M|_V$ is
therefore $\{\sin^2\theta_1,\ldots,\sin^2\theta_p\}$.

The kernel: $Mf_k=0$ if and only if $\theta_k=0$, equivalently
$k\le s$ by Definition~\ref{def:friedrichs}; for these indices,
$f_k\in U\cap V$ by Theorem~\ref{lem:frame}. Conversely, every
$u\in U\cap V$ satisfies $P_Uu=u$ and $P_Vu=u$, so $Mu=0$.
Hence $\ker M|_V=U\cap V=\Span\{f_1,\ldots,f_s\}$, and the
orthogonal complement of the kernel inside $V$ is
$\Span\{f_{s+1},\ldots,f_p\}$, on which $M$ acts diagonally with
eigenvalues $\{\sin^2\theta_{s+1},\ldots,\sin^2\theta_p\}$.
\end{proof}

\begin{remark}[$M$ as the central object of the rate analysis]\label{rem:M-central}
The Kantorovich-type bound that drives Theorem~\ref{thm:main} is
a two-moment inequality at $w=v-\bar v$ involving only
$\scal{w}{Mw}$ and $\scal{w}{M^2w}$; by Lemma~\ref{lem:M}, both
are computable from the geometry as $\dist^2(v,U)$ and
$\norm{v-T(v)}^2$ without ever diagonalizing $M$. The
linesearch step in Theorem~\ref{thm:linesearch} below is the
ratio of the two, recovering the optimal contraction parameter
$\mu^\star=2/(a+b)$ adaptively.
\end{remark}

\section{The action of $C_T$ on $V$ and the rate via Kantorovich's inequality}\label{sec:linesearch}

The geometric identity at the heart of Theorem~\ref{thm:main} is
the following: for every $v\in V$, the iterate $C_T(v)$ lies on
the affine line through $v$ in the direction $T(v)-v$, and its
position on that line is given in closed form by $\dist(v,U)$ and
$\norm{T(v)-v}$. This is the geometric counterpart of the
linesearch formula~\eqref{eq:BT-our-order} for $B_T$
in~\citep[Section~4]{BBNPW2016}; here it comes out of the BBS
identity~\eqref{eq:BBS} together with the equidistance property
of the circumcenter.

Two side-length identities for $s_U$ and $s_V$, used repeatedly
in what follows, prepare the proof.

\begin{lemma}[Equal sides]\label{lem:equal-sides}
For every $v\in V$, $\norm{s_U}=\norm{s_V}$.
\end{lemma}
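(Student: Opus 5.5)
The plan is to write $s_U$ and $s_V$ explicitly for $v\in V$ and compare their norms directly, using only that reflections are isometries and that $P_V$ fixes $V$. Set $y=R_Uv$ and $z=R_VR_Uv$. Then $s_U=y-v=2(P_Uv-v)$, so $\norm{s_U}=2\dist(v,U)$.

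For $s_V$, the key observation is that $v=R_Vv$ because $v\in V$. Hence
\[
   s_V=z-v=R_V(R_Uv)-R_V(v)=R_V(R_Uv-v)=R_V(s_U),
\]
where the middle equality uses linearity of $R_V=2P_V-\Id$. Since $R_V$ is an orthogonal reflection, and hence an isometry, $\norm{s_V}=\norm{R_Vs_U}=\norm{s_U}$, which is the claim.

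As a cross-check, and because it will be needed next, I would also record the explicit forms. We have $s_V=R_Vs_U=2s_U^V-s_U$, where $s_U^V\coloneqq P_Vs_U=2(T(v)-v)$. This shows $s_V=4(T(v)-v)-s_U$. Expanding $\norm{s_V}^2=\norm{s_U}^2$ then gives the identity $\scal{s_U}{T(v)-v}=2\norm{T(v)-v}^2$. This is a consistency check against Lemma~\ref{lem:M}, since $\scal{P_Uv-v}{P_VP_Uv-v}$ can be evaluated with $P_Vv=v$.

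There is no real obstacle here. The only point needing care is to use $v\in V$ to replace $v$ by $R_Vv$ before invoking the isometry, because for general $x\notin V$ the equality of sides fails.
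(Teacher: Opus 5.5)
Your proof is correct, but it takes a slightly different route from the paper.

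The paper works with squared norms. It expands $\norm{s_U}^2=2\norm{v}^2-2\scal{R_Uv}{v}$ and $\norm{s_V}^2=2\norm{v}^2-2\scal{R_VR_Uv}{v}$, using that $R_U$ and $R_VR_U$ are isometries. It then matches the two inner products through self-adjointness of $R_V$ and $R_Vv=v$.

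You instead prove the vector identity $s_V=R_Vs_U$, using only linearity of $R_V$ and $R_Vv=v$, and then apply the isometry of $R_V$ once. This gives you more than the paper's scalar computation, which yields only the norm equality:
\begin{itemize}
\item It uses nothing about $R_U$. The relation $z-v=R_V(y-v)$ holds for any $y$ with $z=R_Vy$ whenever $v\in V$, so equal sides persist when $R_U$ is replaced by $R_U^{(\lambda)}$ as in Remark~\ref{rem:over-reflection-CRM}.
\item It gives Lemma~\ref{lem:s-sum} for free, since $s_U+s_V=(\Id+R_V)s_U=2P_Vs_U=4(T(v)-v)$. The paper proves that lemma separately by expanding $R_VR_Uv$.
\item It shows the geometry behind Theorem~\ref{thm:linesearch}: $s_U$ and $s_V$ are mirror images across $V$, so their sum lies in $V$ and their difference in $V^\perp$.
\end{itemize}

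Your closing cross-check is also correct, but the lemma does not need it.
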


\begin{proof}
Since $R_U$ is an isometry, $\norm{R_Uv}=\norm{v}$ and
\[
   \norm{s_U}^2  =  \norm{R_Uv-v}^2  =  \norm{R_Uv}^2-2\scal{R_Uv}{v}+\norm{v}^2
     =  2\norm{v}^2-2\scal{R_Uv}{v}.
\]
The composition $R_VR_U$ is also an isometry, and the same
expansion gives
$\norm{s_V}^2=2\norm{v}^2-2\scal{R_VR_Uv}{v}$.

For $v\in V$ we have $R_Vv=v$. Self-adjointness of $R_V$ then
yields $\scal{R_VR_Uv}{v}=\scal{R_Uv}{R_Vv}=\scal{R_Uv}{v}$, so
the two expressions for $\norm{s_U}^2$ and $\norm{s_V}^2$
coincide.
\end{proof}

The second identity expresses the sum $s_U+s_V$ in terms of
$T(v)-v$, the direction along which $C_T(v)$ will move.

\begin{lemma}[Sum of sides]\label{lem:s-sum}
For every $v\in V$,
\begin{equation}\label{eq:s-sum}
   s_U+s_V=4\bigl(T(v)-v\bigr).
\end{equation}
\end{lemma}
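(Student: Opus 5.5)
Since the identity is linear in the data, the plan is a direct computation from the definitions \eqref{eq:s-vectors}, using only $P_Vv=v$ for $v\in V$. First I expand $s_U=R_Uv-v=2P_Uv-2v$. Next I expand $s_V=R_VR_Uv-v$, writing $R_V=2P_V-\Id$ and $R_Uv=2P_Uv-v$:
\[
   R_VR_Uv = 2P_V(2P_Uv-v)-(2P_Uv-v) = 4P_VP_Uv-2P_Vv-2P_Uv+v .
\]
Substituting $P_Vv=v$ gives $R_VR_Uv=4T(v)-2P_Uv-v$, and hence $s_V=4T(v)-2P_Uv-2v$.

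Adding the two expressions, the $\pm2P_Uv$ terms cancel:
\[
   s_U+s_V=(2P_Uv-2v)+(4T(v)-2P_Uv-2v)=4T(v)-4v=4\bigl(T(v)-v\bigr),
\]
which is \eqref{eq:s-sum}. There is no real obstacle. The only place the hypothesis $v\in V$ enters is the replacement of $P_Vv$ by $v$ inside $R_VR_Uv$, so this substitution is the one step to check carefully. No appeal to the principal-angle frame or to Lemma~\ref{lem:equal-sides} is needed.
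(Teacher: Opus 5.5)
Your proposal is correct and is essentially the paper's proof. Both expand $R_VR_Uv$ via $R_W=2P_W-\Id$ and add $s_U=2P_Uv-2v$; the only difference is that you substitute $P_Vv=v$ before adding, whereas the paper adds first and substitutes at the end.
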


\begin{proof}
Apply $R_V$ to $R_Uv=2P_Uv-v$, using linearity of $R_V$ and
$R_Vw=2P_Vw-w$:
\[
   z=R_VR_Uv=2(2P_VP_Uv-P_Vv)-(2P_Uv-v)=4T(v)-2P_Vv-2P_Uv+v.
\]
Therefore $s_V=z-v=4T(v)-2P_Vv-2P_Uv$, and adding
$s_U=2P_Uv-2v$ gives $s_U+s_V=4T(v)-2P_Vv-2v$. For $v\in V$,
$P_Vv=v$, so the right-hand side simplifies to $4(T(v)-v)$.
\end{proof}

We also record the identity
\begin{equation}\label{eq:sU-norm}
   \norm{s_U}^2=4\norm{P_Uv-v}^2=4\dist^2(v,U),
\end{equation}
which follows from $s_U=2(P_Uv-v)$.

The next lemma says that $\bar v=P_{U\cap V}(v)$ is equidistant
from the three CRM vertices $v$, $y$, $z$; this lets us apply
the BBS identity~\eqref{eq:BBS} with $s=\bar v$.

\begin{lemma}[Equidistance from $\bar v$]\label{lem:equidistance}
For every $v\in V$ with $\bar v=P_{U\cap V}(v)$,
\begin{equation}\label{eq:equidistance}
   \norm{\bar v-v}  =  \norm{\bar v-y}  =  \norm{\bar v-z}.
\end{equation}
Equivalently,
\begin{equation}\label{eq:equidistance-inner}
   \scal{\bar v-v}{s_U}=\tfrac12\norm{s_U}^2,\quad
   \scal{\bar v-v}{s_V}=\tfrac12\norm{s_V}^2.
\end{equation}
\end{lemma}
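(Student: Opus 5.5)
The plan is to obtain \eqref{eq:equidistance} from the isometry argument used for Lemma~\ref{lem:BBS}, and then convert it into \eqref{eq:equidistance-inner} by expanding squared norms. The first step records that $\bar v=P_{U\cap V}(v)$ lies in $U\cap V$, so $P_U\bar v=P_V\bar v=\bar v$. Consequently $R_U\bar v=\bar v$ and $R_VR_U\bar v=\bar v$.

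Since $R_U$ is a linear isometry,
\[
\norm{\bar v-y}=\norm{R_U\bar v-R_Uv}=\norm{R_U(\bar v-v)}=\norm{\bar v-v}.
\]
The composition $R_VR_U$ is also a linear isometry, so the same argument gives $\norm{\bar v-z}=\norm{\bar v-v}$. This establishes \eqref{eq:equidistance}. The hypothesis $v\in V$ is not actually needed for this part; any $s\in U\cap V$ works, as already observed before Lemma~\ref{lem:BBS}.

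For the equivalence with \eqref{eq:equidistance-inner}, write $y=v+s_U$ and expand:
\[
\norm{\bar v-y}^2=\norm{(\bar v-v)-s_U}^2=\norm{\bar v-v}^2-2\scal{\bar v-v}{s_U}+\norm{s_U}^2.
\]
Hence $\norm{\bar v-y}=\norm{\bar v-v}$ holds if and only if $\scal{\bar v-v}{s_U}=\tfrac12\norm{s_U}^2$. The identical computation with $z=v+s_V$ gives the second identity. Both directions of each step are reversible, which yields the stated equivalence.

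No step poses a real obstacle. The only point requiring care is that the reflections are linear, so the isometry property may be applied to the difference $\bar v-v$; this holds because $U$ and $V$ are subspaces.
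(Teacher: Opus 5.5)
Your proposal is correct and follows essentially the same route as the paper: use $\bar v\in U\cap V$ to get $R_U\bar v=R_VR_U\bar v=\bar v$, apply the isometry property of $R_U$ and $R_VR_U$, and then expand $\norm{(\bar v-v)-s_U}^2$ (and its $s_V$ analogue) to pass to the inner-product form. Your additional remarks are also accurate: the expansion gives a genuine biconditional, and the hypothesis $v\in V$ is not needed for this lemma.
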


\begin{proof}
Since $\bar v\in U\cap V$, $P_U\bar v=\bar v$ and
$P_V\bar v=\bar v$, hence $R_U\bar v=2P_U\bar v-\bar v=\bar v$
and similarly $R_V\bar v=\bar v$, giving
$R_VR_U\bar v=\bar v$. Both $R_U$ and $R_VR_U$ are isometries,
so
\[
   \norm{\bar v-y}  =  \norm{R_U\bar v-R_Uv}  =  \norm{\bar v-v},
   \quad
   \norm{\bar v-z}  =  \norm{R_VR_U\bar v-R_VR_Uv}  =  \norm{\bar v-v},
\]
which is~\eqref{eq:equidistance}.

For the equivalent form: squaring $\norm{\bar v-y}=\norm{\bar v-v}$
and expanding $\bar v-y=(\bar v-v)-s_U$:
\[
   \norm{\bar v-v}^2  =  \norm{(\bar v-v)-s_U}^2  =  \norm{\bar v-v}^2-2\scal{\bar v-v}{s_U}+\norm{s_U}^2,
\]
so $\scal{\bar v-v}{s_U}=\tfrac12\norm{s_U}^2$. The identity for
$s_V$ follows analogously.
\end{proof}

On $V$ the nonlinear circumcenter collapses to a one-dimensional
linesearch: $C_T(v)$ moves from $v$ along the single direction
$T(v)-v$, with a step $\mu_v$ computable from $\dist(v,U)$ and
$\norm{T(v)-v}$ alone. This reduction drives both the rate proof
of Section~\ref{sec:rate} and the on-$V$ agreement
$A_T=B_T=C_T$ (Theorem~\ref{thm:three-agree-on-V}).

\begin{theorem}[Linesearch formula on $V$]\label{thm:linesearch}
Let $v\in V$ and $\bar v=P_{U\cap V}(v)$. Either
\begin{equation}\label{eq:linesearch-degenerate}
   T(v)=v
   \quad
   (\text{in which case }v\in U\cap V\text{ and }C_T(v)=v=\bar v),
\end{equation}
or $T(v)\neq v$ and
\begin{equation}\label{eq:linesearch}
   C_T(v)=v+\mu_v\bigl(T(v)-v\bigr),
   \quad
   \mu_v\coloneqq\frac{\dist^2(v,U)}{\norm{T(v)-v}^2}.
\end{equation}
In either case $C_T(v)\in V$.
\end{theorem}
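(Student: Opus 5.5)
The plan is to compute $C_T(v)$ directly from the BBS identity (Lemma~\ref{lem:BBS}) with $s=\bar v$, using the side identities of Lemmas~\ref{lem:equal-sides}, \ref{lem:s-sum} and~\ref{lem:equidistance}.

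\emph{Degenerate case.} Suppose $T(v)=v$. Then $\scal{v}{Mv}\le\norm{v}\,\norm{Mv}=0$ by Lemma~\ref{lem:M}, so $\dist(v,U)=0$ and $v\in U\cap V$. Then $R_Uv=v$ and $R_VR_Uv=v$, the three vertices coincide, and the convention~\eqref{eq:circum-convention} returns $v=\bar v$.

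\emph{Nondegenerate case, step 1.} Assume $T(v)\neq v$ and write $d\coloneqq T(v)-v\neq0$. By Lemma~\ref{lem:BBS}, $C_T(v)=v+\alpha s_U+\beta s_V$ for some scalars $\alpha,\beta$. Moreover $w\coloneqq\bar v-C_T(v)$ is orthogonal to both $s_U$ and $s_V$. Combining this with~\eqref{eq:equidistance-inner} gives the Gram system
\[
\alpha\norm{s_U}^2+\beta\scal{s_U}{s_V}=\tfrac12\norm{s_U}^2,\qquad
\alpha\scal{s_U}{s_V}+\beta\norm{s_V}^2=\tfrac12\norm{s_V}^2 .
\]

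\emph{Step 2: symmetry of the solution.} By Lemma~\ref{lem:equal-sides}, $\norm{s_U}=\norm{s_V}\eqqcolon L$, so the system is symmetric. Subtracting the two equations gives $(\alpha-\beta)(L^2-\scal{s_U}{s_V})=0$.

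\begin{itemize}
\item If $s_U\neq s_V$, then $L^2-\scal{s_U}{s_V}>0$ by the equality case of Cauchy--Schwarz, so $\alpha=\beta$.
\item If $s_U=s_V$, the affine hull is a line and only the sum $\alpha+\beta$ matters, so I may also take $\alpha=\beta$.
\end{itemize}

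In either case $C_T(v)=v+\alpha(s_U+s_V)=v+4\alpha d$ by Lemma~\ref{lem:s-sum}.

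\emph{Step 3: the step length.} I would avoid the Gram system altogether and use the orthogonality $\scal{\bar v-v-4\alpha d}{s_U+s_V}=0$. By~\eqref{eq:equidistance-inner} and equal sides, $\scal{\bar v-v}{s_U+s_V}=L^2=4\dist^2(v,U)$ by~\eqref{eq:sU-norm}. Since $s_U+s_V=4d$, this becomes $4\dist^2(v,U)=16\alpha\norm{d}^2$. Hence
\[
4\alpha=\frac{\dist^2(v,U)}{\norm{d}^2}=\mu_v .
\]
This argument is valid even when $s_U,s_V$ are collinear, provided $s_U+s_V\neq0$, which holds because $d\neq0$. In the collinear case $W_v$ is the line through $v$ in direction $d$, and $C_T(v)$ is the projection of $\bar v$ onto it, which is again the displayed formula.

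\emph{Membership in $V$ and the main obstacle.} Finally, $C_T(v)\in V$ because $v\in V$ and $d=T(v)-v\in V$, since $T=P_VP_U$ maps into $V$. The only delicate point I expect is the degenerate-geometry bookkeeping in Step 2. The symmetric step sidesteps it, since the projection onto $W_v$ of $\bar v$ is characterized by orthogonality to the linear part, and that linear part is spanned by $s_U,s_V$ and contains $d$.
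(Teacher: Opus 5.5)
Your proof is correct and follows the paper's route: you use the BBS identity with $s=\bar v$, the equal-sides and sum-of-sides lemmas, and the symmetric Gram system forcing $\alpha=\beta$, and then read off the step length from the summed orthogonality condition, which is equivalent to the paper's use of the parallelogram identity. The only difference is cosmetic: you absorb the collinear case $s_U=s_V$ into the same orthogonality computation, whereas the paper treats it as a separate sub-case by showing $P_Uv\in U\cap V$ and $\mu_v=1$ directly.
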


\begin{proof}
The proof splits according to whether $T(v)-v$ vanishes.

Case 1: $T(v)=v$. Self-adjointness of $P_V$, combined with
$P_Vv=v$, gives
\[
   \norm{v}^2 = \scal{v}{P_VP_Uv} = \scal{P_Vv}{P_Uv} = \scal{v}{P_Uv} = \norm{P_Uv}^2,
\]
the last equality from
$\scal{v}{P_Uv}=\scal{P_Uv}{P_Uv}$ (self-adjointness and
idempotence of $P_U$). The Pythagorean identity then gives
$\norm{v-P_Uv}^2=\norm{v}^2-\norm{P_Uv}^2=0$, so
$v=P_Uv\in U$ and hence $v\in U\cap V$. Then $\bar v=v$ and the
three CRM vertices collapse to $v$, so $C_T(v)=v$, which
is~\eqref{eq:linesearch-degenerate}.

Case 2: $T(v)\ne v$. We claim that $\norm{s_U}=\norm{s_V}>0$: the
equality is Lemma~\ref{lem:equal-sides}, and $s_U=0$ would force
$v=P_Uv\in U$, hence $T(v)=P_VP_Uv=P_Vv=v$, contradicting the
assumption. Introduce
\[
   \sigma\coloneqq\norm{s_U}^2=\norm{s_V}^2>0,\quad
   c\coloneqq\scal{s_U}{s_V}\in[-\sigma,\sigma],
\]
with $|c|=\sigma$ iff $s_U,s_V$ are parallel. The boundary case
$c=-\sigma$ would give $s_U+s_V=0$, hence $T(v)=v$ by
Lemma~\ref{lem:s-sum}; so $c>-\sigma$. Two sub-cases remain
according to whether $W_v$ is one- or two-dimensional.

Sub-case 2a: $c=\sigma$ (collinear vertices). Then $s_V=s_U$ and
$W_v=v+\Span\{s_U\}$ is one-dimensional. The equality $s_V=s_U$
rewrites as $R_VR_Uv=R_Uv$, that is, $R_Uv\in V$, so
$P_Uv-v=\tfrac12(R_Uv-v)\in V$ and consequently $P_Uv\in V$. This
places $P_Uv\in U\cap V$, so $T(v)=P_VP_Uv=P_Uv$ and
$\dist^2(v,U)=\norm{v-T(v)}^2$, giving $\mu_v=1$. The three CRM
vertices reduce to two, $v$ and $y=z=R_Uv$, and the circumcenter
convention~\eqref{eq:circum-convention} returns the midpoint
$C_T(v)=v+\tfrac12s_U=P_Uv$, consistent
with~\eqref{eq:linesearch} at $\mu_v=1$.

Sub-case 2b: $|c|<\sigma$ (generic two-dimensional case). Now
$s_U,s_V$ are linearly independent and
$W_v=v+\Span\{s_U,s_V\}$ is two-dimensional. Write
$C_T(v)=v+\alpha s_U+\beta s_V$. The BBS
identity~\eqref{eq:BBS} with $s=\bar v$ gives
$\bar v-C_T(v)\perp\Span\{s_U,s_V\}$, and the equidistance inner
products~\eqref{eq:equidistance-inner} combine with this
orthogonality into the $2\times 2$ system
\begin{equation}\label{eq:linear-system}
   \alpha\sigma+\beta c=\tfrac{\sigma}{2},\quad
   \alpha c+\beta\sigma=\tfrac{\sigma}{2}.
\end{equation}
Subtracting the equations gives $(\alpha-\beta)(\sigma-c)=0$,
and $\sigma-c>0$ forces $\alpha=\beta$. Adding then yields
$2\alpha(\sigma+c)=\sigma$, and the parallelogram identity
$\norm{s_U+s_V}^2=2(\sigma+c)$ rewrites this as
\begin{equation}\label{eq:alpha-formula}
   \alpha=\frac{\sigma}{2(\sigma+c)}=\frac{\norm{s_U}^2}{\norm{s_U+s_V}^2}.
\end{equation}
Therefore $C_T(v)=v+\alpha(s_U+s_V)$. Substituting
$s_U+s_V=4(T(v)-v)$ from Lemma~\ref{lem:s-sum} and
$\norm{s_U}^2=4\dist^2(v,U)$
from~\eqref{eq:sU-norm} gives~\eqref{eq:linesearch}. Both $v$
and $T(v)$ lie in $V$, so $C_T(v)\in V$.
\end{proof}

The parameter $\mu_v$ in~\eqref{eq:linesearch} is also the
optimal relaxation parameter at $v$, as the next lemma records.

\begin{lemma}[$\mu_v$ is the optimal relaxation]\label{lem:mu-optimal}
Let $v\in V$ with $T(v)\neq v$. Among the relaxations
$\{S_\mu(v):\mu\in\RR\}$ of~\eqref{eq:S-mu-intro},
\begin{equation}\label{eq:mu-linesearch}
   \arg\min_{\mu\in\RR}\norm{S_\mu(v)-\bar v}^2
     =  \frac{\dist^2(v,U)}{\norm{T(v)-v}^2}
     =  \mu_v,
\end{equation}
where $\mu_v$ is the parameter from Theorem~\ref{thm:linesearch}.
Equivalently, $C_T(v)$ is the closest point to $\bar v$ on the
affine line $\{v+\mu(T(v)-v):\mu\in\RR\}\subset W_v$.
\end{lemma}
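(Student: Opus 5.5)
The minimization is a one-variable quadratic in $\mu$. The plan is to compute the minimizer explicitly and then identify the two inner products that appear in it using Lemma~\ref{lem:M}.

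Set $w\coloneqq v-\bar v$ and $d\coloneqq T(v)-v$; note $d\neq 0$ by hypothesis. Then $S_\mu(v)-\bar v=w+\mu d$, so
\[
   \norm{S_\mu(v)-\bar v}^2=\norm{w}^2+2\mu\scal{w}{d}+\mu^2\norm{d}^2 .
\]
This is a strictly convex quadratic in $\mu$. Its unique minimizer is
\[
   \mu=-\frac{\scal{w}{d}}{\norm{d}^2}=\frac{\scal{v-\bar v}{v-T(v)}}{\norm{T(v)-v}^2}.
\]

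It remains to show that the numerator equals $\dist^2(v,U)$. By Lemma~\ref{lem:M}, $v-T(v)=Mv$, and $M$ is self-adjoint on $V$. Since $\bar v\in U\cap V=\ker M|_V$ (Lemma~\ref{lem:spectrum}), we have $M\bar v=0$. Hence
\[
   \scal{v-\bar v}{Mv}=\scal{v}{Mv}-\scal{M\bar v}{v}=\scal{v}{Mv}=\dist^2(v,U),
\]
where the last equality is again Lemma~\ref{lem:M}. This gives \eqref{eq:mu-linesearch}, and comparison with Theorem~\ref{thm:linesearch} identifies the minimizer as $\mu_v$.

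For the equivalent formulation, the line $\{v+\mu d\}$ is exactly $\{S_\mu(v):\mu\in\RR\}$. By Theorem~\ref{thm:linesearch}, $C_T(v)=S_{\mu_v}(v)$, so $C_T(v)$ is the point of this line closest to $\bar v$. The line lies in $W_v$ because $d=\tfrac14(s_U+s_V)$ by Lemma~\ref{lem:s-sum}.

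There is no real obstacle here. The only step needing care is killing the cross term $\scal{\bar v}{Mv}$, and that follows from self-adjointness of $M$ together with $\bar v\in\ker M|_V$. Alternatively, one can use the equidistance identities \eqref{eq:equidistance-inner}: summing them gives $\scal{\bar v-v}{s_U+s_V}=\norm{s_U}^2$, that is, $4\scal{\bar v-v}{d}=4\dist^2(v,U)$, which is the same conclusion.
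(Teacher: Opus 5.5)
Your proof is correct and follows the paper's argument: you minimize the strictly convex quadratic in $\mu$ and then show $\scal{v-\bar v}{T(v)-v}=-\dist^2(v,U)$. The only difference is cosmetic. You get this inner product from the self-adjointness of $M$ and $M\bar v=0$ (Lemmas~\ref{lem:M} and~\ref{lem:spectrum}), whereas the paper does the same computation directly using self-adjointness and idempotence of $P_V$ and $P_U$.
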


\begin{proof}
The map $\mu\mapsto\norm{S_\mu(v)-\bar v}^2$ is a convex
quadratic in $\mu$ with positive leading coefficient
$\norm{T(v)-v}^2>0$, so its unique minimizer $\hat\mu$ is
determined by the first-order condition
$\hat\mu\,\norm{T(v)-v}^2=-\scal{v-\bar v}{T(v)-v}$. The proof
reduces to computing the inner product on the right.

For $v\in V$, $T(v)-v=P_VP_Uv-P_Vv=P_V(P_Uv-v)$. Self-adjointness
of $P_V$, together with $\bar v\in U\cap V\subset V$ giving
$P_V(v-\bar v)=v-\bar v$, yields
\begin{equation}\label{eq:inner-product-step}
   \scal{v-\bar v}{T(v)-v}=\scal{v-\bar v}{P_V(P_Uv-v)}=\scal{P_V(v-\bar v)}{P_Uv-v}=\scal{v-\bar v}{P_Uv-v}.
\end{equation}
Expanding as a difference,
\begin{equation}\label{eq:expand-inner}
   \scal{v-\bar v}{P_Uv-v}=\scal{v}{P_Uv-v}-\scal{\bar v}{P_Uv-v}.
\end{equation}
For the first term, idempotence of $P_U$ and the Pythagorean
identity give
\[
   \scal{v}{P_Uv-v}=\scal{v}{P_Uv}-\norm{v}^2=\norm{P_Uv}^2-\norm{v}^2=-\dist^2(v,U).
\]
The second vanishes: $\bar v\in U$ gives $P_U\bar v=\bar v$, and
self-adjointness of $P_U$ then implies
\[
   \scal{\bar v}{P_Uv-v}=\scal{P_U\bar v}{P_Uv-v}=\scal{\bar v}{P_U^2v-P_Uv}=0,
\]
using $P_U^2=P_U$. Hence $\scal{v-\bar v}{T(v)-v}=-\dist^2(v,U)$,
and $\hat\mu=\dist^2(v,U)/\norm{T(v)-v}^2=\mu_v$. By
Theorem~\ref{thm:linesearch}, $C_T(v)=S_{\mu_v}(v)$, which is
therefore the closest point on the affine line.
\end{proof}

\begin{remark}[Parallel with~\citep{BBNPW2016}]\label{rem:bbnpw-linesearch}
Lemma~\ref{lem:mu-optimal} is the analog, for our family
$\{S_\mu\}$ on $V$, of~\citep[Lemma~4.4]{BBNPW2016}, which proves
the same statement for the BBNPW partial-relaxation family
$\{(1-\mu)P_Ux+\mu P_UP_Vx\}_\mu$ on $U$. Both lemmas drive the
respective convergence proofs of $C_T$ (here) and $B_T$
(in~\citep{BBNPW2016}): a built-in linesearch realizes the
optimal-relaxation rate without explicit knowledge of the
principal angles.
\end{remark}

The linesearch step $\mu_v$ is what the projection-based maps
$A_T$ and $B_T$ of~\eqref{eq:AT-our-order}--\eqref{eq:BT-our-order}
also produce on $V$.

\begin{theorem}[The three linesearch maps agree on $V$]\label{thm:three-agree-on-V}
For every $v\in V$,
\begin{equation}\label{eq:three-agree}
   A_T(v)  =  B_T(v)  =  C_T(v).
\end{equation}
The common value is that of Theorem~\ref{thm:linesearch}: when
$T(v)\ne v$,
\begin{equation}\label{eq:three-agree-formula}
   A_T(v)=B_T(v)=C_T(v)=v+\mu_v(T(v)-v),
   \quad
   \mu_v=\frac{\dist^2(v,U)}{\norm{T(v)-v}^2}.
\end{equation}
\end{theorem}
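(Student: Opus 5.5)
The plan is to reduce both $A_T$ and $B_T$ on $V$ to the same relaxation of $T$ and then invoke Theorem~\ref{thm:linesearch} for $C_T$. Fix $v\in V$ and split into the two cases of that theorem.

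\emph{Degenerate case $T(v)=v$.} By Theorem~\ref{thm:linesearch}, $v\in U\cap V$ and $C_T(v)=v$.
\begin{itemize}
\item For $A_T$, the denominator $\norm{v-P_VP_Uv}^2$ vanishes. The convention $\lambda_v=1$ then gives $A_T(v)=P_VP_Uv=v$.
\item For $B_T$, we have $P_Vv=v$, so $P_Vv-P_VP_Uv=v-T(v)=0$. The convention $\mu_v^{\mathrm B}=1$ gives $B_T(v)=P_VP_Uv=v$.
\end{itemize}
All three maps therefore return $v$.

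\emph{Generic case $T(v)\neq v$.} Because $v\in V$, we have $P_Vv=v$. Hence the $B_T$ formula~\eqref{eq:BT-our-order} reads $(1-\mu^{\mathrm B}_v)v+\mu^{\mathrm B}_vT(v)$, with
\[
\mu^{\mathrm B}_v=\frac{\scal{v-T(v)}{v}}{\norm{v-T(v)}^2}.
\]
This is literally the $A_T$ formula~\eqref{eq:AT-our-order}, so $A_T(v)=B_T(v)=v+\lambda_v(T(v)-v)$.

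It remains to identify $\lambda_v$ with $\mu_v$, which needs a single identity. By Lemma~\ref{lem:M}, $v-T(v)=Mv$, so
\[
\scal{v-T(v)}{v}=\scal{Mv}{v}=\dist^2(v,U),
\qquad
\norm{v-T(v)}^2=\scal{v}{M^2v}.
\]
Thus $\lambda_v=\dist^2(v,U)/\norm{T(v)-v}^2=\mu_v$. Comparing with~\eqref{eq:linesearch} gives $A_T(v)=B_T(v)=C_T(v)$ together with the formula~\eqref{eq:three-agree-formula}.

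As a cross-check, the inner-product computation in Lemma~\ref{lem:mu-optimal} with $\bar v$ replaced by $0$ gives the same value. That computation only used $\scal{\bar v}{P_Uv-v}=0$, which holds trivially when $\bar v=0$.

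There is no real obstacle. The only points needing care are the degenerate-denominator conventions and checking that $P_Vv=v$ collapses $B_T$ exactly onto $A_T$. Using $\scal{P_Uv}{v}=\norm{P_Uv}^2$ then turns the numerator into $\dist^2(v,U)$.
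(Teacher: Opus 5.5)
Your proposal is correct and follows essentially the same route as the paper's proof. Both arguments collapse $B_T$ onto $A_T$ via $P_Vv=v$, identify the common step $\lambda_v=\mu_v^{\mathrm B}$ with $\mu_v$ through Lemma~\ref{lem:M}, invoke Theorem~\ref{thm:linesearch} for $C_T$, and dispose of the case $T(v)=v$ through the unit-step conventions.
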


\begin{proof}
For $v\in V$, $P_Vv=v$ makes the outer projection
in~\eqref{eq:BT-our-order} trivial, so the formulas for
$A_T(v)$ and $B_T(v)$ collapse to
$(1-\lambda_v)v+\lambda_v P_VP_Uv$ with the common step
\[
   \lambda_v=\mu_v^{\mathrm B}=\frac{\scal{v-P_VP_Uv}{v}}{\norm{v-P_VP_Uv}^2}=\frac{\scal{v}{Mv}}{\norm{Mv}^2},
\]
using $v-T(v)=Mv$ (Lemma~\ref{lem:M}). The numerator equals
$\dist^2(v,U)$ and the denominator equals $\norm{T(v)-v}^2$,
again by Lemma~\ref{lem:M}; thus
$\lambda_v=\mu_v^{\mathrm B}=\mu_v$,
and~\eqref{eq:three-agree-formula} follows from
Theorem~\ref{thm:linesearch}.

When $T(v)=v$, $P_VP_Uv=v$, the convention
$\lambda_v=\mu_v^{\mathrm B}=1$ gives $A_T(v)=B_T(v)=v$, and
Theorem~\ref{thm:linesearch} gives $C_T(v)=v$.
\end{proof}

\noindent
The three maps coincide on $V$ and therefore share the rate
$\rho_V$ there. Off $V$ they differ:
see Section~\ref{ssec:bt-comparison}.

\subsection{Rewriting the linesearch formula via $M$}\label{sec:M}

Combining the linesearch formula~\eqref{eq:linesearch} with the
operator $M$ of \eqref{eq:M-definition} gives a
compact form for $C_T(v)$ that prepares the Kantorovich-type
estimate of the next section. For $v\in V$ with $Mv\ne 0$
(equivalently $T(v)\ne v$),
\begin{equation}\label{eq:CT-via-M}
   C_T(v)  =  v  -  \frac{\scal{v-\bar v}{M(v-\bar v)}}{\scal{v-\bar v}{M^2(v-\bar v)}}\,M(v-\bar v),
\end{equation}
using $T(v)-v=-Mv$ from Lemma~\ref{lem:M} and
$M(v-\bar v)=Mv$ (since $M\bar v=0$ by
Lemma~\ref{lem:spectrum}). The ratio in~\eqref{eq:CT-via-M} is
the optimal step length: it recovers the relaxation parameter
$\mu^\star=2/(a+b)$ of the linear $S_\mu$-family at the
worst-case ray, with $a$ and $b$ the extreme nonzero eigenvalues
of $M|_V$.

\subsection{The main rate theorem via Kantorovich's inequality}\label{sec:rate}

The rate inequality~\eqref{eq:main-rate} follows from
Theorem~\ref{thm:three-agree-on-V} and the on-$V$ specialization
of~\citep[Theorem~4.5]{BBNPW2016}; this section gives an
independent spectral proof via Kantorovich's inequality applied
to $M=(\Id-P_VP_U)|_V$. The direct argument identifies the
worst-case ray $v^\star$ as the Kantorovich extremizer, gives
the sharp Lipschitz factor of
Proposition~\ref{prop:rho-sharp}, and makes the strict
inequality $\rho_V<c_F^2$ (Proposition~\ref{prop:strict}) and
the worst-case envelope $\rho_V\le c_F^2/(2-c_F^2)$
(Proposition~\ref{prop:worst-case}) visible as algebraic
consequences. It is self-contained, with no appeal to the
operator-theoretic machinery of~\citep[Section~2]{BBNPW2016}.

We start by recalling the form of Kantorovich's
inequality~\citep{Kantorovich1948} we use: for a self-adjoint
positive-semidefinite $A$ with nonzero spectrum in $[a,b]$, the
ratio $\scal{x}{Ax}^2/(\norm{x}^2\scal{x}{A^2x})$ is bounded
below by a function of $a,b$ alone. The proof is a one-line
consequence of $(\lambda-a)(\lambda-b)\le 0$ on $[a,b]$; we
include it for self-containment.

\begin{lemma}[Kantorovich]\label{lem:kantorovich}
Let $A$ be a self-adjoint positive-semidefinite operator on a
Euclidean space, with spectrum contained in $\{0\}\cup[a,b]$,
$0<a\le b$. For every nonzero $x\in(\ker A)^\perp$,
\begin{equation}\label{eq:kantorovich}
   \frac{\scal{x}{Ax}^2}{\norm{x}^2\,\scal{x}{A^2x}}
   \ge\frac{4ab}{(a+b)^2}.
\end{equation}
If both $a$ and $b$ are eigenvalues of $A$, with corresponding
unit eigenvectors $u_a$ and $u_b$, then equality holds at
$x^\star=\sqrt{b}\,u_a+\sqrt{a}\,u_b$.
\end{lemma}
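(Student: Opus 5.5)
The plan is to diagonalize $A$ and reduce the inequality to a scalar statement about a probability distribution supported on $[a,b]$.

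\emph{Reduction.} Let $\{u_i\}$ be an orthonormal eigenbasis of $A$ with eigenvalues $\lambda_i$. For nonzero $x\in(\ker A)^\perp$, only the components of $x$ along eigenvectors with $\lambda_i\ne0$ survive, and each such $\lambda_i$ lies in $[a,b]$. Write $x=\sum_i\xi_iu_i$ and set the weights $w_i\coloneqq\xi_i^2/\norm{x}^2$, which sum to $1$. Then
\[
   m_1\coloneqq\frac{\scal{x}{Ax}}{\norm{x}^2}=\sum_i w_i\lambda_i,
   \qquad
   m_2\coloneqq\frac{\scal{x}{A^2x}}{\norm{x}^2}=\sum_i w_i\lambda_i^2 .
\]
The claim becomes $m_1^2/m_2\ge 4ab/(a+b)^2$. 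Here $m_2>0$ because $x\ne0$ lies outside the kernel.

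\emph{Key estimate.} On $[a,b]$ each eigenvalue satisfies $(\lambda-a)(\lambda-b)\le0$, that is, $\lambda^2\le(a+b)\lambda-ab$. Multiplying by $w_i$ and summing gives
\[
   m_2\le(a+b)m_1-ab .
\]
Since $m_2>0$, this forces $(a+b)m_1-ab>0$. It therefore suffices to show
\[
   \frac{m_1^2}{(a+b)m_1-ab}\ge\frac{4ab}{(a+b)^2}.
\]
Clearing the positive denominators, this is equivalent to $(a+b)^2m_1^2-4ab(a+b)m_1+4a^2b^2\ge0$. The left side is the perfect square $\bigl((a+b)m_1-2ab\bigr)^2$, so the inequality holds. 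I expect this completing-the-square step to be the only real content; the rest is bookkeeping. Equality requires both that every weighted eigenvalue lies in $\{a,b\}$ and that $m_1=2ab/(a+b)$.

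\emph{Equality.} At $x^\star=\sqrt b\,u_a+\sqrt a\,u_b$, where $u_a\perp u_b$ if $a<b$, a direct computation gives
\[
   \norm{x^\star}^2=a+b,\qquad
   \scal{x^\star}{Ax^\star}=ba+ab=2ab,\qquad
   \scal{x^\star}{A^2x^\star}=ba^2+ab^2=ab(a+b).
\]
The ratio is therefore $4a^2b^2/\bigl((a+b)\cdot ab(a+b)\bigr)=4ab/(a+b)^2$, as claimed.

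If $a=b$ and $u_a=u_b$, the vector $x^\star$ is a multiple of the single eigenvector $u_a$, and the ratio equals $1=4a^2/(2a)^2$. Equality holds trivially in this case, and I would mention it separately.
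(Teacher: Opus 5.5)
Your proposal is correct and follows essentially the same route as the paper. Both arguments diagonalize on $(\ker A)^\perp$, pass to probability weights, bound the second moment by $q\le(a+b)m-ab$ using $(\lambda-a)(\lambda-b)\le 0$, close with the perfect square $\bigl((a+b)m-2ab\bigr)^2\ge 0$, and finish with the same direct computation at $x^\star$. The differences are cosmetic: you divide by $(a+b)m_1-ab$ instead of expanding $(a+b)^2m^2-4abq$, and you add a remark on $a=b$, a case the paper's step $\norm{x^\star}^2=a+b$ tacitly excludes by assuming $u_a\perp u_b$.
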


\begin{proof}
On $(\ker A)^\perp$ the spectrum of $A$ lies in $[a,b]$, so the
spectral theorem gives an orthonormal eigenbasis
$\{u_i\}_{i\in I}$ of $(\ker A)^\perp$ with eigenvalues
$\lambda_i\in[a,b]$. Write $x=\sum_i\xi_iu_i$ with
$\xi_i=\scal{x}{u_i}$; since $x\ne 0$,
$\norm{x}^2=\sum_i\xi_i^2>0$. Set the probability weights
$\pi_i\coloneqq\xi_i^2/\norm{x}^2$ and the moments
\[
   m\coloneqq\sum_i\pi_i\lambda_i\in[a,b],\quad
   q\coloneqq\sum_i\pi_i\lambda_i^2\in[a^2,b^2].
\]
Then $\scal{x}{Ax}=\norm{x}^2m$ and
$\scal{x}{A^2x}=\norm{x}^2q$, so the left-hand side
of~\eqref{eq:kantorovich} is $m^2/q$, and $q\ge a^2>0$ makes it
well defined.

Every $\lambda_i\in[a,b]$ satisfies
$(\lambda_i-a)(\lambda_i-b)\le 0$, i.e.,
$\lambda_i^2\le(a+b)\lambda_i-ab$. Taking the $\pi$-expectation,
\begin{equation}\label{eq:second-moment-bound}
   q\le(a+b)m-ab.
\end{equation}
Substituting into $(a+b)^2m^2-4abq$ gives a perfect square:
\[
   (a+b)^2m^2-4abq\ge(a+b)^2m^2-4ab[(a+b)m-ab]=[(a+b)m-2ab]^2\ge 0.
\]
Rearranging and dividing by $q(a+b)^2>0$
yields~\eqref{eq:kantorovich}.

For the equality case, take $x^\star=\sqrt{b}\,u_a+\sqrt{a}\,u_b$
as in the statement. Then $\norm{x^\star}^2=a+b$, and
\[
   \scal{x^\star}{Ax^\star}=ab+ab=2ab,\quad
   \scal{x^\star}{A^2x^\star}=a^2b+ab^2=ab(a+b),
\]
so
$\dfrac{\scal{x^\star}{Ax^\star}^2}{\norm{x^\star}^2\scal{x^\star}{A^2x^\star}}
=\dfrac{4a^2b^2}{(a+b)\cdot ab(a+b)}=\dfrac{4ab}{(a+b)^2}$.
\end{proof}

With Kantorovich's inequality in hand, the proof of
Theorem~\ref{thm:main} reduces to recognizing that the ratio
$\norm{C_T(v)-\bar v}^2/\norm{v-\bar v}^2$ is, after a single
substitution, a Kantorovich-type ratio for the operator $A=M|_V$.

\begin{theorem}[Per-step contraction of $C_T$ on $V$]\label{thm:rate}
For every $v\in V$ and $\bar v=P_{U\cap V}(v)$,
\begin{equation}\label{eq:rate-bound}
   \norm{C_T(v)-\bar v}  \le  \rho_V\,\norm{v-\bar v},
   \quad
   \rho_V  =  \frac{\sin^2\theta_p-\sin^2\theta_F}{\sin^2\theta_p+\sin^2\theta_F}.
\end{equation}
The bound is attained: at the worst-case ray
\begin{equation}\label{eq:worst-ray}
   v^\star  \coloneqq  \bar v+\sin\theta_p\,f_{s+1}+\sin\theta_F\,f_p,
\end{equation}
$\norm{C_T(v^\star)-\bar v}=\rho_V\,\norm{v^\star-\bar v}$. In
particular, iterating $v_{k+1}=C_T(v_k)$ from any $v_0\in V$
yields Q-linear convergence to $P_{U\cap V}(v_0)$ with rate at
most $\rho_V$, sharp at any $v_0$ on the worst-case ray.
\end{theorem}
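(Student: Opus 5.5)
The plan is to reduce everything to the compact form \eqref{eq:CT-via-M} and then apply Lemma~\ref{lem:kantorovich} to $A=M|_V$. First dispose of the degenerate case: if $Mv=0$, equivalently $T(v)=v$, then Theorem~\ref{thm:linesearch} gives $v\in U\cap V$ and $C_T(v)=v=\bar v$, so \eqref{eq:rate-bound} holds trivially. Otherwise set $w\coloneqq v-\bar v$. By Lemma~\ref{lem:spectrum}, $w\in V\cap(U\cap V)^\perp=(\ker M|_V)^\perp$, and $w\neq0$ since $Mw=Mv\ne0$. By \eqref{eq:CT-via-M}, $C_T(v)-\bar v=w-t\,Mw$ with $t=\scal{w}{Mw}/\scal{w}{M^2w}$.

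Expanding the square gives
\[
\norm{w-tMw}^2=\norm{w}^2-2t\scal{w}{Mw}+t^2\scal{w}{M^2w}=\norm{w}^2-\frac{\scal{w}{Mw}^2}{\scal{w}{M^2w}}.
\]
This is also the Pythagorean statement of Lemma~\ref{lem:mu-optimal}. Dividing by $\norm{w}^2$ yields the ratio $1-\scal{w}{Mw}^2/(\norm{w}^2\scal{w}{M^2w})$.

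Lemma~\ref{lem:spectrum} shows that the nonzero spectrum of $M|_V$ lies in $[a,b]$ with $a=\sin^2\theta_F>0$ and $b=\sin^2\theta_p$. Lemma~\ref{lem:kantorovich} then bounds the ratio by
\[
1-\frac{4ab}{(a+b)^2}=\Bigl(\frac{b-a}{a+b}\Bigr)^2=\rho_V^2,
\]
and taking square roots gives \eqref{eq:rate-bound}.

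For attainment, observe that $f_{s+1}$ and $f_p$ are unit eigenvectors of $M$ with eigenvalues $a$ and $b$. The vector $w^\star=\sqrt b\,f_{s+1}+\sqrt a\,f_p$ is exactly the Kantorovich extremizer $x^\star$, so equality holds at $v^\star$. One small point needs checking: $P_{U\cap V}(v^\star)=\bar v$, which holds because $f_{s+1},f_p\perp U\cap V$. If $s+1=p$, then $a=b$, $\rho_V=0$, and equality is still consistent; in that case $w^\star=2\sqrt a f_p$ and the ratio is exactly $1$ in Kantorovich.

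The iteration statement follows by induction. By Theorem~\ref{thm:linesearch} each iterate stays in $V$. I also need the invariance $P_{U\cap V}(C_T(v))=\bar v$, which I expect to be the only genuinely new step. Since $C_T(v)-\bar v=w-tMw$, and $M$ maps $V\cap(U\cap V)^\perp$ into itself ($M$ is self-adjoint on $V$ and kills $U\cap V$), the difference $C_T(v)-\bar v$ lies in $(U\cap V)^\perp$. Hence $P_{U\cap V}C_T(v)=\bar v$. Applying the one-step bound repeatedly then gives Q-linear convergence with factor $\rho_V$.

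For sharpness along the whole trajectory it suffices to note the first step from $v^\star$, as the statement requires. Pushing further would mean computing $C_T(v^\star)-\bar v\propto \sqrt b\,f_{s+1}-\sqrt a\,f_p$, which is again an extremizer up to sign, so equality persists at every step.
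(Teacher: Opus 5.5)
Your proposal is correct and follows the paper's proof essentially step for step: the residual identity from \eqref{eq:CT-via-M}, Kantorovich's inequality applied to $M|_V$, and the Kantorovich extremizer $\sqrt b\,f_{s+1}+\sqrt a\,f_p$ for sharpness. The differences are minor refinements. You work directly with $w=v-\bar v$ instead of translating to $\bar v=0$. You include the $P_{U\cap V}$-invariance argument, which the paper gives in the proof of Theorem~\ref{thm:main}. You also check the degenerate case $s+1=p$, where the paper's extremizer computation tacitly assumes $f_{s+1}\perp f_p$.
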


\begin{proof}
Set $w\coloneqq v-\bar v\in V$. The operator $C_T$ commutes with
translation by elements of $U\cap V$: both $R_U$ and $R_V$ fix
$U\cap V$ pointwise, so for $u\in U\cap V$ the three CRM
vertices of $v+u$ are those of $v$ shifted by $u$, and their
circumcenter shifts by $u$ as well. We may therefore assume
$\bar v=0$ without loss of generality, in which case $w=v$.

By Lemma~\ref{lem:M}, $Mw=v-T(v)$. If $Mw=0$ then $T(v)=v$ and
Theorem~\ref{thm:linesearch} gives $C_T(v)=v=\bar v$, so
\eqref{eq:rate-bound} is trivial; assume $Mw\ne 0$. The
condition $\bar v=P_{U\cap V}(v)=0$ places $w$ in
$V\cap(U\cap V)^\perp=\Span\{f_{s+1},\ldots,f_p\}$.

By~\eqref{eq:CT-via-M},
\[
   C_T(w)=w-\mu\,Mw,\quad
   \mu=\frac{\scal{w}{Mw}}{\scal{w}{M^2w}}.
\]
Expanding
$\norm{w-\mu Mw}^2=\norm{w}^2-2\mu\scal{w}{Mw}+\mu^2\scal{w}{M^2w}$
and using $\mu\scal{w}{M^2w}=\scal{w}{Mw}$,
\begin{equation}\label{eq:residual}
   \frac{\norm{C_T(w)}^2}{\norm{w}^2}
   =1-\frac{\scal{w}{Mw}^2}{\norm{w}^2\,\scal{w}{M^2w}}.
\end{equation}

By Lemmas~\ref{lem:M} and~\ref{lem:spectrum}, $M|_V$ is
self-adjoint positive-semidefinite with spectrum in
$\{0\}\cup[a,b]$ and $\ker(M|_V)=U\cap V$. Since
$w\in(\ker M|_V)^\perp\setminus\{0\}$,
Lemma~\ref{lem:kantorovich} applied to $A=M|_V$ gives
\begin{equation}\label{eq:kant-applied}
   \frac{\scal{w}{Mw}^2}{\norm{w}^2\,\scal{w}{M^2w}}\ge\frac{4ab}{(a+b)^2}=1-\rho_V^2,
\end{equation}
the algebraic identity using $4ab=(a+b)^2-(b-a)^2$ and
$\rho_V=(b-a)/(a+b)$.
Combining~\eqref{eq:residual} and~\eqref{eq:kant-applied}
yields $\norm{C_T(w)}^2\le\rho_V^2\norm{w}^2$, which
is~\eqref{eq:rate-bound}.

For sharpness, take the Kantorovich extremizer
$x^\star=\sqrt{b}\,u_a+\sqrt{a}\,u_b$ with $u_a=f_{s+1}$ and
$u_b=f_p$. Using $\sqrt{\sin^2\theta}=\sin\theta$ on
$[0,\pi/2]$, this becomes
$x^\star=\sin\theta_p\,f_{s+1}+\sin\theta_F\,f_p\in
V\cap(U\cap V)^\perp\setminus\{0\}$; translating back,
$v^\star=\bar v+x^\star$ satisfies~\eqref{eq:rate-bound} with
equality.
\end{proof}

\section{The strict inequality and special configurations}\label{sec:strict-and-special}

The new rate $\rho_V$ is bounded above by $c_F^2$ in the rate
hierarchy of~\eqref{eq:rate-hierarchy}. The next proposition
shows that the gap is strictly positive in all but one boundary
configuration, namely $\theta_F=\theta_p=\pi/2$.

\begin{proposition}\label{prop:strict}
With $a=\sin^2\theta_F\in(0,1]$ and $b=\sin^2\theta_p\in[a,1]$,
\begin{equation}\label{eq:cF-rhoV-gap}
   c_F^2-\rho_V  =  \frac{a(2-a-b)}{a+b}  \ge  0,
\end{equation}
with equality iff $a=b=1$ (i.e., $\theta_F=\theta_p=\pi/2$). In
particular, $\rho_V<c_F^2$ whenever $\theta_F<\pi/2$.
\end{proposition}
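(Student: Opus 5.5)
The plan is a direct algebraic computation in the variables $a=\sin^2\theta_F$ and $b=\sin^2\theta_p$. Since $c_F^2=\cos^2\theta_F=1-a$ and $\rho_V=(b-a)/(a+b)$ by \eqref{eq:mu-star-intro}, I will put both terms over the common denominator $a+b>0$, which is positive because $a>0$ under the standing assumption $V\not\subset U$ (Definition~\ref{def:friedrichs}).

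The computation is
\[
   c_F^2-\rho_V=\frac{(1-a)(a+b)-(b-a)}{a+b}.
\]
I will then expand the numerator:
\[
   (1-a)(a+b)-(b-a)=a+b-a^2-ab-b+a=2a-a^2-ab=a(2-a-b).
\]
This gives the claimed identity \eqref{eq:cF-rhoV-gap}.

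For the sign, the factor $a>0$ and the denominator $a+b>0$ are both positive. The factor $2-a-b$ is nonnegative because $a,b\le 1$, since both are squared sines. Equality therefore holds iff $2-a-b=0$. Given $a,b\le1$, this forces $a=b=1$, which is $\theta_F=\theta_p=\pi/2$. The converse is immediate.

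For the final claim, if $\theta_F<\pi/2$ then $a<1$. Hence $2-a-b\ge 1-a>0$, and the gap is strictly positive.

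There is no real obstacle here. The only care needed is to justify $a>0$, so that both the division and the strict positivity of the factor $a$ are valid; this follows from $\theta_F\in(0,\pi/2]$. One should also note that the range $b\in[a,1]$ is guaranteed by $\theta_F\le\theta_p\le\pi/2$ together with the monotonicity of $\sin^2$ on $[0,\pi/2]$.
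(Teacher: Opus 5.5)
Your proposal is correct and matches the paper's proof: you put both terms over the common denominator $a+b$ and expand the numerator to $a(2-a-b)$. You then read off the sign and the equality case from $a>0$, $a+b>0$, and $a+b\le 2$ (with equality iff $a=b=1$), exactly as the paper does.
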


\begin{proof}
A direct calculation gives
\[
   c_F^2-\rho_V
   =(1-a)-\frac{b-a}{a+b}
   =\frac{(1-a)(a+b)-(b-a)}{a+b}
   =\frac{2a-a^2-ab}{a+b}
   =\frac{a(2-a-b)}{a+b},
\]
which is~\eqref{eq:cF-rhoV-gap}. Since $a>0$ and $a+b>0$, the
sign of $c_F^2-\rho_V$ is the sign of $2-a-b$, and $a+b\le 2$
(with equality iff $a=b=1$) gives the inequality and its
equality case.
\end{proof}

\begin{remark}[The boundary case $\theta_F=\pi/2$]\label{rem:cF-zero}
The extreme value $c_F=0$ corresponds to $\theta_F=\pi/2$, which
forces $\theta_p=\pi/2$ as well (since
$\theta_F\le\theta_p\le\pi/2$). In this configuration $a=b=1$,
$\rho_V=0$, and the rate hierarchy~\eqref{eq:rate-hierarchy}
collapses to $0\le 0\le 0\le 0<1$ with every inequality except
the last non-strict. The conclusion matches
Corollary~\ref{cor:one-step}: CRM started from $V$ converges in
a single step, $C_T(P_Vx_0)=\bar x_0$ for every $x_0\in\RR^n$.
The spectral proof of Theorem~\ref{thm:rate} goes through
verbatim: the Kantorovich bound $4ab/(a+b)^2=1$ forces the
residual~\eqref{eq:residual} to vanish. The strict statement
$\rho_V<c_F^2$ in Theorem~\ref{thm:main} is correctly
conditional on $\theta_F<\pi/2$.
\end{remark}

\begin{proof}[{\bf Proof of Theorem~\ref{thm:main}}]
The proof has three parts: $(v_k)$ stays in $V$,
$P_{U\cap V}(v_k)=\bar x_0$ for every $k$, and the
rate~\eqref{eq:main-rate} holds.

Inductively, $v_0=P_V(x_0)\in V$, and if $v_k\in V$ then
Theorem~\ref{thm:linesearch} gives $v_{k+1}=C_T(v_k)\in V$.

For the projection identity, $U\cap V\subset V$ gives
$P_{U\cap V}=P_{U\cap V}\circ P_V$, so
$P_{U\cap V}(v_0)=P_{U\cap V}(x_0)=\bar x_0$. For the inductive
step, the increment $v_{k+1}-v_k=\mu_{v_k}(T(v_k)-v_k)$
from~\eqref{eq:linesearch} is a scalar multiple of
$T(v_k)-v_k=-M(v_k-\bar x_0)$ (using $M\bar x_0=0$). Since $M$
maps $V$ into $V\cap(U\cap V)^\perp$ by
Lemma~\ref{lem:spectrum}, the increment is orthogonal to
$U\cap V$ and so preserves $P_{U\cap V}$. The degenerate case
$T(v_k)=v_k$ gives $v_{k+1}=v_k$, with the same conclusion.

With $P_{U\cap V}(v_k)=\bar x_0$ established,
Theorem~\ref{thm:rate} applied to $v=v_k$ (so $\bar v=\bar x_0$)
gives
\[
   \norm{v_{k+1}-\bar x_0}=\norm{C_T(v_k)-\bar x_0}\le\rho_V\norm{v_k-\bar x_0},
\]
which is~\eqref{eq:main-rate}. Sharpness is inherited from
Theorem~\ref{thm:rate}: the worst-case ray
$v^\star=\bar x_0+\sin\theta_p\,f_{s+1}+\sin\theta_F\,f_p\in V$
attains equality, and any $x_0$ with $P_V(x_0)=v^\star$ realizes
the bound. The inequality $\rho_V\le c_F^2$ is
Proposition~\ref{prop:strict}.
\end{proof}

The closed form
$\rho_V=(\sin^2\theta_p-\sin^2\theta_F)/(\sin^2\theta_p+\sin^2\theta_F)$
answers two further questions directly: which configurations of
$(U,V)$ make CRM converge in a single step from $V$, and how
large the rate can be once the Friedrichs angle is fixed. We
take them in order.

\subsection{One-step convergence}\label{sec:special}

One-step convergence is the degenerate end of the rate scale,
$\rho_V=0$. Since $\rho_V$ is a quotient, it vanishes exactly
when its numerator does, i.e., $\theta_F=\theta_p$; the corollary
below records this alongside the equivalent operator-theoretic
and geometric formulations.

\begin{corollary}[Characterization of one-step convergence]\label{cor:one-step}
The following are equivalent:
\begin{enumerate}[label={\rm(\alph*)},leftmargin=2.5em,topsep=2pt,itemsep=2pt]
\item\label{it:one-step-a} $\theta_F=\theta_p$;
\item\label{it:one-step-b} $\rho_V=0$;
\item\label{it:one-step-c} $C_T$ started from $V$ converges in
one step: $C_T(P_Vx_0)=\bar x_0$ for every $x_0\in\RR^n$;
\item\label{it:one-step-d} $M|_{V\cap(U\cap V)^\perp}$ is a
positive scalar multiple of the identity.
\end{enumerate}
A sufficient (but not necessary) geometric condition is
$\dim V-\dim(U\cap V)=1$.
\end{corollary}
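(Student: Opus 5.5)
The plan is to prove the cycle (a)$\Leftrightarrow$(b)$\Leftrightarrow$(d) by elementary algebra, and then close it with (b)$\Rightarrow$(c)$\Rightarrow$(a).

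\emph{(a)$\Leftrightarrow$(b).} Since $a=\sin^2\theta_F>0$, the denominator $a+b$ is positive. Hence $\rho_V=(b-a)/(a+b)$ vanishes iff $b=a$. Because $\sin^2$ is strictly increasing on $[0,\pi/2]$, this holds iff $\theta_F=\theta_p$.

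\emph{(a)$\Leftrightarrow$(d).} By Lemma~\ref{lem:spectrum}, $M$ acts on $V\cap(U\cap V)^\perp=\Span\{f_{s+1},\dots,f_p\}$ diagonally with eigenvalues $\sin^2\theta_{s+1}\le\cdots\le\sin^2\theta_p$. These are all positive, since $\theta_{s+1}=\theta_F>0$. The operator is a scalar multiple of the identity iff all eigenvalues coincide. Because the angles are ordered, this happens iff the extremes agree, i.e.\ $\sin^2\theta_F=\sin^2\theta_p$, which is (a). The multiple is then $a>0$.

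\emph{(b)$\Rightarrow$(c).} Given $x_0$, set $v_0=P_Vx_0$. Theorem~\ref{thm:main} gives $P_{U\cap V}(v_0)=\bar x_0$ and $\norm{C_T(v_0)-\bar x_0}\le\rho_V\norm{v_0-\bar x_0}=0$, so $C_T(P_Vx_0)=\bar x_0$.

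\emph{(c)$\Rightarrow$(a).} I argue by contraposition via sharpness. If $\theta_F<\theta_p$, then $\rho_V>0$. The worst-case ray of Theorem~\ref{thm:rate}, $v^\star=\bar x_0+\sin\theta_p f_{s+1}+\sin\theta_F f_p$, satisfies $\norm{C_T(v^\star)-\bar x_0}=\rho_V\norm{v^\star-\bar x_0}>0$; note $v^\star\ne\bar x_0$ since $s+1\le p$ and the sines are positive. Taking $x_0=v^\star$ gives $P_Vx_0=v^\star$, so $C_T(P_Vx_0)\ne\bar x_0$ and (c) fails.

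There is one subtlety: if $s+1=p$, then $f_{s+1}=f_p$ and the ray formula degenerates. In that case, however, $\theta_F=\theta_p$ automatically, so the contrapositive hypothesis $\theta_F<\theta_p$ forces $s+1<p$ and the two frame vectors are distinct. Checking that this degenerate case is correctly excluded is the only point needing care; the rest is bookkeeping.

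\emph{Sufficient condition.} If $\dim V-\dim(U\cap V)=p-s=1$, then the only positive principal angle is $\theta_{s+1}=\theta_p$, so $\theta_F=\theta_p$, which is (a).

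To see that the condition is not necessary, I would give an example with $p-s\ge2$ and all positive angles equal. One choice is $V\subset U^\perp$ with $\dim V\ge2$, so that $s=0$ and every $\theta_k=\pi/2$. Another is $U=\Span\{e_1,e_3\}$, $V=\Span\{\cos\theta\,e_1+\sin\theta\,e_2,\ \cos\theta\,e_3+\sin\theta\,e_4\}$ in $\RR^4$, whose two principal angles both equal $\theta$.
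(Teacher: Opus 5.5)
Your proof is correct and follows essentially the same route as the paper: (a)$\Leftrightarrow$(b)$\Leftrightarrow$(d) by the same algebra and Lemma~\ref{lem:spectrum}, (b)$\Rightarrow$(c) from the rate bound, and the converse via the equality case at the worst-case ray $v^\star$, together with the same $V\subset U^\perp$ counterexample to necessity; the only difference is that the paper argues (c)$\Rightarrow$(b) directly rather than by contraposition. Your extra $\RR^4$ example, with both principal angles equal to some $\theta\in(0,\pi/2)$, is a valid and slightly more informative non-necessity witness, and the degenerate case $s+1=p$ you flag is harmless either way, since there $v^\star-\bar x_0=2\sin\theta_F\,f_p\neq 0$.
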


\begin{proof}
(a)$\Leftrightarrow$(b) is immediate from
$\rho_V=(b-a)/(b+a)=0\Leftrightarrow a=b$. For
(a)$\Leftrightarrow$(d), the spectrum of
$M|_{V\cap(U\cap V)^\perp}$ is
$\{\sin^2\theta_{s+1},\ldots,\sin^2\theta_p\}$ by
Lemma~\ref{lem:spectrum}, and a self-adjoint operator is a
scalar multiple of the identity iff its spectrum is a singleton,
which here means $\theta_F=\theta_p$. The implication
(b)$\Rightarrow$(c) is immediate from Theorem~\ref{thm:rate}.
For (c)$\Rightarrow$(b), apply the equality case of
Theorem~\ref{thm:rate} at the worst-case ray $v^\star$: equality
$\norm{C_T(v^\star)-\bar v}=\rho_V\norm{v^\star-\bar v}$ with
$v^\star\neq\bar v$ forces $\rho_V=0$.

For the sufficient condition, $\dim V-\dim(U\cap V)=p-s=1$
implies the only positive principal angle is
$\theta_p=\theta_{s+1}=\theta_F$, so (a) applies. The converse
fails: $V\subset U^\perp$ with $\dim V\ge 2$ has $p-s\ge 2$ but
all positive principal angles equal $\pi/2$, so still
$\theta_F=\theta_p$.
\end{proof}

\begin{remark}[Comparison with $S_{\mu^\star}$]\label{rem:bbnpw-onestep}
\citep[Remark~3.10]{BBNPW2016} proves the parallel statement for
the linear $S_\mu$: it has rate $0$ if and only if
$\theta_F=\theta_p$ and
$\mu=\mu^\star=2/(\sin^2\theta_F+\sin^2\theta_p)$. Compared
with Corollary~\ref{cor:one-step}, this shows the practical
advantage of $C_T$ over $S_{\mu^\star}$: the same one-step
convergence is achieved without prior knowledge of the
principal angles, since $\mu_v$ in~\eqref{eq:linesearch} is
computed directly from $v$.
\end{remark}

The sufficient condition $\dim V-\dim(U\cap V)=1$ in
Corollary~\ref{cor:one-step} is satisfied in three classical
configurations, which we record as separate corollaries. The
first is the line case.

\begin{corollary}[$V$ is a line]\label{cor:V-line}
If $\dim V=1$ and $V\not\subset U$, then $\rho_V=0$ and CRM
converges in one step.
\end{corollary}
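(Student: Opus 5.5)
The plan is to reduce the statement to the sufficient condition of Corollary~\ref{cor:one-step}. It suffices to check that $\dim V-\dim(U\cap V)=1$.

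Since $\dim V=1$, the intersection $U\cap V$ is a subspace of the line $V$, so it is either $\{0\}$ or $V$ itself. The second option would give $V\subset U$, which the hypothesis excludes. Hence $s=\dim(U\cap V)=0$ and $p-s=1$.

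With $p=1$ there is exactly one principal angle, $\theta_1$. Because $s=0$, it is simultaneously the Friedrichs angle $\theta_F=\theta_{s+1}=\theta_1$ and the largest angle $\theta_p=\theta_1$. So condition~(a) of Corollary~\ref{cor:one-step} holds, and therefore $\rho_V=(\sin^2\theta_p-\sin^2\theta_F)/(\sin^2\theta_p+\sin^2\theta_F)=0$.

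The denominator is positive because $\theta_F\in(0,\pi/2]$. Item~(c) of the corollary, equivalently Theorem~\ref{thm:rate} with $\rho_V=0$, then gives $C_T(P_Vx_0)=\bar x_0$ for every $x_0\in\RR^n$. That is one-step convergence.

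As a sanity check, the spectral picture is immediate: $V\cap(U\cap V)^\perp=V$ is one-dimensional. So $M$ is a scalar $\sin^2\theta_1>0$ on it, and the residual~\eqref{eq:residual} equals $1-m^2/m^2=0$.

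There is no real obstacle here. The only point needing care is ruling out $U\cap V=V$, which the standing assumption $V\not\subset U$ does.
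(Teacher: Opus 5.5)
Your proposal is correct and follows essentially the same route as the paper. Both arguments note that $V\not\subset U$ forces $U\cap V=\{0\}$, hence $s=0$ and $\theta_F=\theta_1=\theta_p$, and then invoke Corollary~\ref{cor:one-step}.
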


\begin{proof}
Here $p=\dim V=1$, so the principal angle vector is the
singleton $(\theta_1)$. Since $V\not\subset U$, we have
$V\cap U=\{0\}$ (else $V=V\cap U\subset U$), so $s=0$ and
$\theta_F=\theta_{s+1}=\theta_1=\theta_p$.
Corollary~\ref{cor:one-step} applies.
\end{proof}

The second is the hyperplane case, which recovers a result of
Behling, Bello-Cruz, and Santos~\citep[Theorem~6.1]{BBS2020}.

\begin{corollary}[Two distinct hyperplanes]\label{cor:hyperplanes}
If $\dim U=\dim V=n-1$ and $U\neq V$, then $\rho_V=0$ and CRM
converges in one step.
\end{corollary}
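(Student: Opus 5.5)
The plan is to reduce to the sufficient condition of Corollary~\ref{cor:one-step}, namely $\dim V-\dim(U\cap V)=1$, exactly as in the proof of Corollary~\ref{cor:V-line}. Once $p-s=1$ is in hand, the only positive principal angle is $\theta_{s+1}=\theta_p$. Hence $\theta_F=\theta_p$, so $\rho_V=0$, and Corollary~\ref{cor:one-step}(c) gives $C_T(P_Vx_0)=\bar x_0$ for every $x_0$.

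The dimension count comes first. Since $U\neq V$ and both have dimension $n-1$, neither contains the other. In particular $V\not\subset U$, so the standing assumption holds. Hence $U+V$ strictly contains $U$, and as $\dim U=n-1$ this forces $U+V=\RR^n$. The Grassmann formula $\dim(U\cap V)=\dim U+\dim V-\dim(U+V)$ then gives $s=\dim(U\cap V)=(n-1)+(n-1)-n=n-2$. Since $p=\dim V=n-1$, we get $p-s=1$.

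It then remains to confirm that the hypotheses of Corollary~\ref{cor:one-step} apply. We have $p=n-1\ge1$ as soon as $n\ge2$. If $n=1$, both subspaces are $\{0\}$ and so cannot be distinct, so this case does not arise. The principal angles $\theta_1,\dots,\theta_s$ vanish, and $\theta_F=\theta_{s+1}=\theta_{n-1}=\theta_p$. This yields $a=b$ and $\rho_V=(b-a)/(a+b)=0$.

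There is no substantial obstacle. The only point requiring care is the step $U+V=\RR^n$, which uses $V\not\subset U$; this in turn follows from $U\neq V$ together with the equality of dimensions. For context, in terms of normals $U=u^\perp$ and $V=v^\perp$, the common angle $\theta_F=\theta_p$ is the angle between the hyperplanes. This matches \citep[Theorem~6.1]{BBS2020}, although the argument does not need that explicit value.
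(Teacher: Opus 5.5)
Your proposal is correct and follows the paper's proof: the Grassmann dimension count gives $s=n-2$, hence $p-s=1$, and Corollary~\ref{cor:one-step} applies. Your explicit justification that $U+V=\RR^n$ (via $V\not\subset U$) and your exclusion of the case $n=1$ fill in details the paper leaves implicit.
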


\begin{proof}
$U+V=\RR^n$ since $U\neq V$ are both of codimension $1$, so
$\dim(U\cap V)=\dim U+\dim V-\dim(U+V)=2(n-1)-n=n-2$. Hence
$p-s=(n-1)-(n-2)=1$, and Corollary~\ref{cor:one-step} applies.
\end{proof}

A different sufficient condition, not implied by
$\dim V-\dim(U\cap V)=1$, is that $V$ be entirely orthogonal
to $U$.

\begin{corollary}[$V$ is orthogonal to $U$]\label{cor:V-perp-U}
If $V\subset U^\perp$ (so in particular $U\cap V=\{0\}$), then
$\rho_V=0$ and CRM converges in one step. This holds regardless
of $\dim V$.
\end{corollary}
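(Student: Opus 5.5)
The plan is to reduce the claim to Corollary~\ref{cor:one-step} by checking condition~\ref{it:one-step-a}, namely $\theta_F=\theta_p$. The hypothesis $V\subset U^\perp$ gives $P_U|_V=0$. Hence every singular value of $P_U|_V$ vanishes, and every principal angle equals $\pi/2$ by Definition~\ref{def:principal-angles}.

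First we dispose of the intersection. If $w\in U\cap V$, then $w\in U\cap U^\perp$, so $w=0$. Thus $s=0$, and the standing assumption $V\not\subset U$ holds automatically as long as $V\neq\{0\}$; this is implicit, since $p\ge 1$. Consequently $\theta_F=\theta_1=\pi/2$ and $\theta_p=\pi/2$. This gives $a=b=1$ and $\rho_V=(b-a)/(a+b)=0$.

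Corollary~\ref{cor:one-step} then yields $C_T(P_Vx_0)=\bar x_0=0$ for every $x_0$. This is exactly the one-step convergence claimed, and nothing in the argument depends on $\dim V$.

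As a sanity check, one can also verify the conclusion directly from Theorem~\ref{thm:linesearch}. For $v\in V$ we have $T(v)=P_VP_Uv=0$ and $\dist^2(v,U)=\norm{v}^2=\norm{T(v)-v}^2$, so $\mu_v=1$ and $C_T(v)=T(v)=0$. Equivalently, $M|_V=\Id$, which is condition~\ref{it:one-step-d}.

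The only point requiring care is the degenerate bookkeeping: confirming $s=0$, and confirming that the convention in Definition~\ref{def:principal-angles} assigns $\pi/2$ to all $p$ angles even when $\dim U<p$. Both follow immediately from $P_U|_V=0$, so I expect no real obstacle.
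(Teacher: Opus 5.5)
Your proposal is correct and is essentially the paper's proof. The paper observes that $P_Uv=0$ on $V$, so $M|_V=\Id_V$ with spectrum $\{1\}$, and then gets $\theta_F=\theta_p=\pi/2$ from Lemma~\ref{lem:spectrum} before invoking Corollary~\ref{cor:one-step}. You read the angles directly off the vanishing singular values of $P_U|_V$, which is the same reduction.
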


\begin{proof}
For every $v\in V\subset U^\perp$, $P_Uv=0$, so $T(v)=0$ and
$Mv=v$. Hence $M|_V=\Id_V$ with spectrum $\{1\}=\{\sin^2(\pi/2)\}$,
so by Lemma~\ref{lem:spectrum} all principal angles equal
$\pi/2$, giving $\theta_F=\theta_p=\pi/2$, and
Corollary~\ref{cor:one-step} applies.
\end{proof}

The instances listed for $S_{\mu^\star}$
in~\citep[Remark~3.10]{BBNPW2016}, two distinct lines through
the origin in $\RR^2$, a line and a hyperplane in $\RR^3$ with
the line not in the hyperplane, and two distinct hyperplanes in
$\RR^3$, all satisfy $p-s=1$ and are covered by
Corollaries~\ref{cor:V-line}--\ref{cor:hyperplanes};
Corollary~\ref{cor:V-perp-U} adds the orthogonal case not
captured by $p-s=1$.

\subsection{Worst-case rate for fixed Friedrichs angle}\label{ssec:worst-case}

The Friedrichs angle is the natural geometric parameter: it is
symmetric in $(U,V)$ and governs the rates of MAP, DRM, and the
original CRM analysis. For fixed $\theta_F$, the rate $\rho_V$
depends additionally on $\theta_p$, and the worst case is
achieved at the boundary $\theta_p=\pi/2$.

\begin{proposition}[Worst-case rate for fixed $\theta_F$]\label{prop:worst-case}
For fixed $\theta_F\in(0,\pi/2)$,
\begin{equation}\label{eq:worst-case}
   \sup_{\theta_p\in[\theta_F,\pi/2]}\rho_V
     =
   \frac{1-\sin^2\theta_F}{1+\sin^2\theta_F}
     =
   \frac{c_F^2}{2-c_F^2},
\end{equation}
attained at $\theta_p=\pi/2$ (which requires
$V\cap U^\perp\neq\{0\}$). The worst-case rate satisfies
\begin{equation}\label{eq:worst-bound}
   \frac{c_F^2}{2-c_F^2}  <  c_F^2
   \quad\text{whenever }\theta_F\in(0,\pi/2),
\end{equation}
with explicit gap
\begin{equation}\label{eq:gap-factored}
   c_F^2-\frac{c_F^2}{2-c_F^2}
     =  \frac{c_F^2(1-c_F^2)}{2-c_F^2}.
\end{equation}
\end{proposition}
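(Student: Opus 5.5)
The plan is to treat $\rho_V$ as a function of $b=\sin^2\theta_p$ with $a=\sin^2\theta_F\in(0,1)$ held fixed, and check that it is increasing. Write
\[
   \rho_V(b)=\frac{b-a}{b+a}=1-\frac{2a}{a+b},
\]
which is strictly increasing in $b>0$ since $a>0$. As $\theta_p$ ranges over $[\theta_F,\pi/2]$, the quantity $b$ ranges over $[a,1]$, because $\sin^2$ is increasing on $[0,\pi/2]$. Hence the supremum is the value at $b=1$, that is, at $\theta_p=\pi/2$:
\[
   \sup\rho_V=\frac{1-a}{1+a}=\frac{1-\sin^2\theta_F}{1+\sin^2\theta_F}.
\]
Substituting $\sin^2\theta_F=1-c_F^2$ gives $c_F^2/(2-c_F^2)$, which is the claimed value.

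For the parenthetical claim about attainment, I would cite \eqref{eq:frame-extremes} from Theorem~\ref{lem:frame}. It gives $\theta_p=\pi/2$ if and only if $f_p\in V\cap U^\perp$, so the configuration at $\theta_p=\pi/2$ requires $V\cap U^\perp\neq\{0\}$. Conversely, such configurations exist for every $\theta_F$, for example $U=\Span\{e_1\}$ and $V=\Span\{\cos\theta_F e_1+\sin\theta_F e_2,\,e_3\}$ in $\RR^3$. Strictly speaking the supremum is over admissible angle values, so it is enough to observe that $b=1$ lies in the admissible interval.

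For \eqref{eq:worst-bound} and \eqref{eq:gap-factored}, I would compute the difference directly:
\[
   c_F^2-\frac{c_F^2}{2-c_F^2}=\frac{c_F^2(2-c_F^2)-c_F^2}{2-c_F^2}=\frac{c_F^2(1-c_F^2)}{2-c_F^2}.
\]
When $\theta_F\in(0,\pi/2)$ we have $c_F^2\in(0,1)$, so both numerator factors and the denominator are positive, and the strict inequality follows. As a consistency check, this gap is the case $b=1$ of Proposition~\ref{prop:strict}: there the gap is $a(1-a)/(1+a)$, which equals the expression above after the substitution $a=1-c_F^2$.

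The argument is essentially routine. The only point needing care is the attainment and geometric-realizability remark, which is where the frame characterization is used.
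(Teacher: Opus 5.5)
Your proposal is correct and follows essentially the same route as the paper: fix $a=\sin^2\theta_F$, show $\rho_V=(b-a)/(b+a)$ is strictly increasing in $b\in[a,1]$ (you via $1-2a/(a+b)$, the paper via the derivative $2a/(a+b)^2$), evaluate at $b=1$, identify $\theta_p=\pi/2$ with $V\cap U^\perp\neq\{0\}$ through the frame characterization, and compute the gap directly. Your explicit realizing configuration in $\RR^3$ and your cross-check against Proposition~\ref{prop:strict} at $b=1$ are correct extras that the paper's proof does not include.
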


\begin{proof}
Fix $a=\sin^2\theta_F\in(0,1)$ and view $\rho_V(b)=(b-a)/(b+a)$
as a function of $b\in[a,1]$. The derivative
$d\rho_V/db=2a/(a+b)^2>0$ shows $\rho_V$ is strictly increasing
in $b$, so the supremum is attained at $b=1$ with value
$(1-a)/(1+a)$. The substitution $\sin^2\theta_F=1-c_F^2$
rewrites this as $c_F^2/(2-c_F^2)$, giving the second form
in~\eqref{eq:worst-case}. The supremum point $b=1$ corresponds
to $\theta_p=\pi/2$, equivalently $V\cap U^\perp\neq\{0\}$
by~\eqref{eq:frame-extremes}.

For the strict gap to $c_F^2$, write $t=c_F^2\in(0,1)$:
\[
   c_F^2-\frac{c_F^2}{2-c_F^2}=t\Bigl(1-\frac{1}{2-t}\Bigr)=\frac{t(1-t)}{2-t}>0,
\]
which is~\eqref{eq:gap-factored}.
\end{proof}

\subsection{Summary of special cases}\label{ssec:summary-table}

Table~\ref{tab:special-cases} classifies the configurations by
convergence type: finite (one-step termination, $\rho_V=0$)
versus linear (rate $0<\rho_V<1$).

\begin{table}[htbp]
\centering
\renewcommand{\arraystretch}{1.15}
\begin{tabular}{lccl}
\toprule
Configuration & $\rho_V$ & Convergence & Reason\\
\midrule
$\theta_F=\theta_p$ (general) & $0$ & finite (1 step) & $a=b\Rightarrow(b-a)/(b+a)=0$\\
$\dim V-\dim(U\cap V)=1$ & $0$ & finite (1 step) & single positive principal angle\\
$V$ a line, $V\not\subset U$ & $0$ & finite (1 step) & $p=1$, $s=0\Rightarrow\theta_F=\theta_p$\\
$U$, $V$ hyperplanes, $U\neq V$ & $0$ & finite (1 step) & $p=n-1$, $s=n-2$\\
$V\subset U^\perp$ & $0$ & finite (1 step) & $\theta_F=\theta_p=\pi/2$\\
\midrule
$\theta_F<\theta_p<\pi/2$ & $<c_F^2/(2-c_F^2)$ & linear, rate $\rho_V$ & generic case\\
$\theta_p=\pi/2$, $\theta_F<\pi/2$ & $c_F^2/(2-c_F^2)$ & linear (worst rate) & worst case at fixed $\theta_F$\\
\bottomrule
\end{tabular}
\caption{Special configurations of $(U,V)$ and the corresponding
convergence behavior of $C_T$ on $V$. The first block lists
equivalent or sufficient conditions for finite convergence; the
second block records the regime of linear convergence at rate
$\rho_V$.}
\label{tab:special-cases}
\end{table}

The hierarchy $\rho_V\le c_F^2/(2-c_F^2)<c_F^2<c_F<1$
from~\eqref{eq:rate-hierarchy} holds whenever $\theta_F<\pi/2$
(Propositions~\ref{prop:strict} and~\ref{prop:worst-case}).

\section{Best linear approximation of $C_T$ in $W_x$}\label{sec:linear-approx}

In this section we make precise the sense in which $\rho_V$ is
the rate of the best linear approximation of $C_T$, and place
CRM alongside the projection-based adaptive map $B_T$
of~\citep[Section~4]{BBNPW2016}.

\subsection{The linear family in $W_x\cap V$}\label{ssec:linear-family}

CRM places $C_T(x)$ inside the affine plane
$W_x=x+\Span\{s_U,s_V\}$. Among the linear operators with image
at $x$ inside $W_x$, the natural one-parameter family is the
relaxed-MAP family $\{S_\mu:\mu\in\RR\}$
from~\eqref{eq:S-mu-intro}: by Lemma~\ref{lem:s-sum},
$T(x)-x=\tfrac14(s_U+s_V)$ for $x\in V$, so $S_\mu(x)\in W_x$
for every $\mu$ and every $x\in V$. The restriction of this
family to $V$ traces the line $v+\Span\{T(v)-v\}$, a
one-dimensional affine subspace inside the typically
two-dimensional $W_v$. The optimal rate of this family was
identified by Bauschke et al.\
in~\citep[Theorem~3.7]{BBNPW2016}; we restate the result and
give a proof for completeness.

\begin{theorem}[Optimal rate of $\{S_\mu\}$; {\citep[Theorem~3.7]{BBNPW2016}}]\label{thm:BBNPW-optimal}
The operator-norm contraction factor of $S_\mu$ on
$V\cap(U\cap V)^\perp$ is
\begin{equation}\label{eq:r-mu}
   r(\mu)  =  \max\bigl\{|1-\mu\sin^2\theta_F|,\,|1-\mu\sin^2\theta_p|\bigr\}.
\end{equation}
The minimum of $r(\mu)$ over $\mu\in\RR$ is
\begin{equation}\label{eq:r-mu-star}
   r(\mu^\star)  =  \rho_V,
   \quad
   \mu^\star  =  \frac{2}{\sin^2\theta_F+\sin^2\theta_p},
\end{equation}
attained uniquely at $\mu=\mu^\star$.
\end{theorem}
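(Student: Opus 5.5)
The plan is to diagonalize $S_\mu$ on $V\cap(U\cap V)^\perp$ with the principal-angle frame, reduce the operator norm to a maximum over the eigenvalues, and then solve a one-dimensional minimax problem in $\mu$.

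First, restrict to $V$. For $v\in V$ we have $S_\mu(v)=v-\mu Mv=(\Id-\mu M)v$ by Lemma~\ref{lem:M}. So $S_\mu$ maps $V$ into $V$, and on $V$ it equals the self-adjoint operator $\Id-\mu M|_V$.

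By Lemma~\ref{lem:spectrum}, the frame vectors $f_{s+1},\ldots,f_p$ form an orthonormal eigenbasis of $V\cap(U\cap V)^\perp$. On this subspace $S_\mu-P_{U\cap V}=\Id-\mu M$, and its eigenvalues are $1-\mu\sin^2\theta_k$ for $k=s+1,\ldots,p$. For a self-adjoint operator the operator norm is the largest absolute eigenvalue, so
\[
r(\mu)=\max_{s+1\le k\le p}|1-\mu\sin^2\theta_k|.
\]
The function $\lambda\mapsto|1-\mu\lambda|$ is convex in $\lambda$. Its maximum over the set $\{\sin^2\theta_k\}\subset[a,b]$ is therefore attained at an endpoint, and both endpoints $a=\sin^2\theta_F$ and $b=\sin^2\theta_p$ are eigenvalues. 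This gives \eqref{eq:r-mu}.

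Next, minimize $r(\mu)=\max\{|1-\mu a|,|1-\mu b|\}$ over $\mu$. This is the classical Richardson-step argument.
\begin{itemize}
\item For $\mu\le 0$, $r(\mu)\ge1$.
\item For $\mu>0$, the two affine functions $1-\mu a\ge 1-\mu b$ give $r(\mu)=\max\{1-\mu a,\ \mu b-1\}$, since the absolute-value maximum is achieved by the larger of $1-\mu a$ and $-(1-\mu b)$.
\item The first term is strictly decreasing in $\mu$ and the second strictly increasing, so the maximum is minimized exactly where they cross: $1-\mu a=\mu b-1$, i.e. $\mu^\star=2/(a+b)$.
\item There the common value is $1-2a/(a+b)=(b-a)/(a+b)=\rho_V$.
\end{itemize}
For uniqueness: if $\mu<\mu^\star$ then $r(\mu)\ge 1-\mu a>\rho_V$, and if $\mu>\mu^\star$ then $r(\mu)\ge\mu b-1>\rho_V$. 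This uses $b\ge a>0$. In the degenerate case $a=b$ both branches are still strictly monotone with slopes $\mp a\neq0$, so uniqueness persists.

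The main obstacle is the bookkeeping rather than any single estimate. One must make sure the sup defining $r(\mu)$ is taken over $V\cap(U\cap V)^\perp$, where the spectrum has extremes exactly $a$ and $b$. The kernel direction $U\cap V$ is fixed by both $S_\mu$ and $P_{U\cap V}$, so it contributes nothing. One must also check that both endpoints are genuine eigenvalues, so that the max formula is an equality and not just an upper bound. Lemma~\ref{lem:spectrum} supplies both facts.
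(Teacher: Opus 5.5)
Your proposal is correct and follows the paper's proof. Both write $S_\mu=\Id-\mu M$ diagonally on $\Span\{f_{s+1},\ldots,f_p\}$ via Lemma~\ref{lem:spectrum}, use convexity to reduce the norm to the endpoint eigenvalues $a,b$, and locate $\mu^\star$ by the equioscillation $1-\mu^\star a=\mu^\star b-1$. Your explicit monotonicity check for uniqueness, covering $\mu\le 0$ and the case $a=b$, is slightly more careful than the paper's one-line appeal to convexity, but the argument is the same.
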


\begin{proof}
By Lemma~\ref{lem:spectrum}, $S_\mu=\Id-\mu M$ acts diagonally
in the principal-angle frame with eigenvalues
$1-\mu\sin^2\theta_k$; on
$V\cap(U\cap V)^\perp=\Span\{f_{s+1},\ldots,f_p\}$ the operator
norm is $\max_{\lambda\in[a,b]}|1-\mu\lambda|$, attained at an
endpoint, which is~\eqref{eq:r-mu}. The convex piecewise-affine
$r$ is minimized where the endpoint values equioscillate,
$1-\mu^\star a=\mu^\star b-1$, giving $\mu^\star=2/(a+b)$ and
$r(\mu^\star)=(b-a)/(a+b)=\rho_V$; when $a=b$ the same
$\mu^\star=1/a$ gives $r(\mu^\star)=0$.
\end{proof}

\subsection{$C_T$ matches the optimal linear rate via adaptive $\mu_v$}\label{ssec:CT-linear-match}

No fixed relaxation parameter minimizes
$\norm{S_\mu(v)-\bar v}$ uniformly over $V$: the optimal $\mu$
depends on the point $v$. The next theorem shows that $C_T$
needs no parameter at all — at every $v\in V$, it matches the
best relaxed step, realizing the optimal $\mu_v$ implicitly
through the circumcenter geometry.

\begin{theorem}[CRM matches the optimal linear contraction]\label{thm:linear-match}
For every $v\in V$ and every $\mu\in\RR$,
\begin{equation}\label{eq:CT-vs-S-mu}
   \norm{C_T(v)-\bar v}
     \le  
   \norm{S_\mu(v)-\bar v},
\end{equation}
with equality at $\mu=\mu_v=\dist^2(v,U)/\norm{T(v)-v}^2$.
Consequently,
\begin{equation}\label{eq:CT-vs-S-mu-star}
   \frac{\norm{C_T(v)-\bar v}}{\norm{v-\bar v}}
     \le  
   \frac{\norm{S_{\mu^\star}(v)-\bar v}}{\norm{v-\bar v}}
     \le  \rho_V.
\end{equation}
The two inequalities tighten at the same point: on the
worst-case ray $v^\star$ of~\eqref{eq:worst-ray},
$\mu_{v^\star}=\mu^\star$,
$C_T(v^\star)=S_{\mu^\star}(v^\star)$, and both ratios equal
$\rho_V$ exactly.
\end{theorem}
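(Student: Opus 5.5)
The argument is a repackaging of Lemma~\ref{lem:mu-optimal}, Theorem~\ref{thm:BBNPW-optimal}, and the Kantorovich extremizer of Theorem~\ref{thm:rate}.

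\textbf{Step 1: inequality \eqref{eq:CT-vs-S-mu}.} First dispose of the degenerate case $T(v)=v$. Then Theorem~\ref{thm:linesearch} gives $v\in U\cap V$ and $C_T(v)=v=\bar v$, so the left side vanishes and the claim is trivial; here $S_\mu(v)=v$ for every $\mu$, so equality holds for all $\mu$.

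For $T(v)\neq v$, the map $\mu\mapsto\norm{S_\mu(v)-\bar v}^2$ is a strictly convex quadratic. By Lemma~\ref{lem:mu-optimal} its unique minimizer is $\mu_v$. By Theorem~\ref{thm:linesearch}, $C_T(v)=S_{\mu_v}(v)$. Together these give \eqref{eq:CT-vs-S-mu}, with equality at $\mu=\mu_v$.

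\textbf{Step 2: chain \eqref{eq:CT-vs-S-mu-star}.} The first inequality is Step~1 specialized to $\mu=\mu^\star$, divided by $\norm{v-\bar v}$; assume $v\neq\bar v$, since otherwise there is nothing to prove.

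For the second inequality, write $w=v-\bar v\in V\cap(U\cap V)^\perp$. Since $S_\mu$ is linear and fixes $\bar v$, we have $S_{\mu^\star}(v)-\bar v=S_{\mu^\star}(w)=(\Id-\mu^\star M)w$. Hence
\[
\norm{S_{\mu^\star}(v)-\bar v}\le r(\mu^\star)\norm{w}=\rho_V\norm{w}
\]
by Theorem~\ref{thm:BBNPW-optimal}.

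\textbf{Step 3: the worst-case ray.} Set $x^\star=v^\star-\bar v=\sin\theta_p f_{s+1}+\sin\theta_F f_p$. By Lemma~\ref{lem:spectrum}, $Mx^\star=\sin\theta_p\,a f_{s+1}+\sin\theta_F\,b f_p$. From this:
\begin{itemize}
\item $\scal{x^\star}{Mx^\star}=2ab$ after using $\sin^2\theta_p=b$ and $\sin^2\theta_F=a$, with the care that $\sin\theta_p\cdot\sin\theta_p\cdot a=ab$;
\item $\norm{Mx^\star}^2=a^2b+ab^2=ab(a+b)$.
\end{itemize}
By Lemma~\ref{lem:M} and \eqref{eq:CT-via-M}, $\mu_{v^\star}=2ab/(ab(a+b))=2/(a+b)=\mu^\star$. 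Therefore $C_T(v^\star)=S_{\mu^\star}(v^\star)$, and both ratios coincide. Theorem~\ref{thm:rate} says this common ratio equals $\rho_V$.

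As a direct cross-check, the eigencomponents of $(\Id-\mu^\star M)x^\star$ are $\sin\theta_p(1-\mu^\star a)=\sin\theta_p\,\rho_V$ and $\sin\theta_F(1-\mu^\star b)=-\sin\theta_F\,\rho_V$. Hence $\norm{S_{\mu^\star}(x^\star)}=\rho_V\norm{x^\star}$.

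\textbf{Main obstacle.} Steps~1 and~2 are direct citations, so the only delicate point is the bookkeeping in Step~3. I expect the genuine subtlety to be the degenerate case $a=b$, where $f_{s+1}$ and $f_p$ may coincide, i.e.\ $p=s+1$. There $v^\star-\bar v=2\sin\theta_F f_p$ is still an eigenvector, $\mu_{v^\star}=1/a=\mu^\star$, and both ratios equal $0=\rho_V$. So the statement survives, but this case should be noted separately.
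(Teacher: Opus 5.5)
Your proposal is correct and follows the paper's proof essentially step for step. The paper also proceeds by (i) the minimizer identity of Lemma~\ref{lem:mu-optimal} together with $C_T(v)=S_{\mu_v}(v)$, (ii) Theorem~\ref{thm:BBNPW-optimal} applied to $(\Id-\mu^\star M)(v-\bar v)$, and (iii) the explicit computation $\mu_{v^\star}=2ab/(ab(a+b))=\mu^\star$ with the eigencomponent check $1-\mu^\star a=\rho_V$, $1-\mu^\star b=-\rho_V$.

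Your separate treatment of $p=s+1$ is a small genuine refinement. There $f_{s+1}=f_p$, so the paper's appeal to orthonormality to get $\scal{w^\star}{Mw^\star}=2ab$ is literally false. As you note, the conclusion still stands: $\mu_{v^\star}=1/a=\mu^\star$ and both ratios equal $0=\rho_V$.
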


\begin{proof}
The first inequality is Lemma~\ref{lem:mu-optimal}: the function
$\mu\mapsto\norm{S_\mu(v)-\bar v}$ is minimized at $\mu=\mu_v$,
and Theorem~\ref{thm:linesearch} identifies
$C_T(v)=S_{\mu_v}(v)$. (When $T(v)=v$, $\mu_v=1$ by the
convention of Theorem~\ref{thm:linesearch}; then
$S_1(v)=v=C_T(v)$ and both sides of~\eqref{eq:CT-vs-S-mu}
vanish.) Specializing at $\mu=\mu^\star$ gives the first
inequality in~\eqref{eq:CT-vs-S-mu-star}. For the second,
$v-\bar v\in V\cap(U\cap V)^\perp$ (since $v,\bar v\in V$ and
$v-\bar v\perp U\cap V$ by definition of $\bar v$), and
$S_{\mu^\star}\bar v=\bar v$ gives
$S_{\mu^\star}(v)-\bar v=(\Id-\mu^\star M)(v-\bar v)$, to which
Theorem~\ref{thm:BBNPW-optimal} applies.

It remains to verify that both inequalities saturate at
$v^\star$. Write
$w^\star=v^\star-\bar v=\sqrt{b}\,f_{s+1}+\sqrt{a}\,f_p$ (using
$\sqrt{\sin^2\theta}=\sin\theta$ on $[0,\pi/2]$). With
$Mf_{s+1}=af_{s+1}$ and $Mf_p=bf_p$ from
Lemma~\ref{lem:spectrum}, orthonormality of the $f_k$ yields
\[
   \scal{w^\star}{Mw^\star}=2ab,\quad
   \scal{w^\star}{M^2w^\star}=ab(a+b).
\]
Since $ab>0$,
$\mu_{v^\star}=\scal{w^\star}{Mw^\star}/\scal{w^\star}{M^2w^\star}=2/(a+b)=\mu^\star$,
so $C_T(v^\star)=S_{\mu^\star}(v^\star)$. Using
$1-\mu^\star a=\rho_V$ and $1-\mu^\star b=-\rho_V$,
$(\Id-\mu^\star M)w^\star=\rho_V(\sqrt b\,f_{s+1}-\sqrt a\,f_p)$,
with squared norm $\rho_V^2(a+b)=\rho_V^2\norm{w^\star}^2$.
Therefore $\norm{C_T(v^\star)-\bar v}=\rho_V\norm{v^\star-\bar v}$,
and both ratios in~\eqref{eq:CT-vs-S-mu-star} equal $\rho_V$ at
$v^\star$.
\end{proof}

\begin{remark}[Adaptivity and ignorance of $\theta_F,\theta_p$]\label{rem:adaptive}
The constant $\mu^\star$ in~\eqref{eq:r-mu-star} depends
explicitly on $(\theta_F,\theta_p)$, whereas $\mu_v$
in~\eqref{eq:linesearch} is computed from $v$, $T(v)$, and
$\dist^2(v,U)$ alone. When the principal angles are unknown,
$C_T$ realizes the optimal rate $\rho_V$ without requiring their
computation. The same observation underlies $B_T$
in~\citep[Section~4]{BBNPW2016}; see
Section~\ref{ssec:bt-comparison}.
\end{remark}

\subsection{Comparison with the linesearch maps $A_T$ and $B_T$}\label{ssec:bt-comparison}

The linesearch maps $A_T$ and $B_T$ of the Gubin--Polyak--Raik
lineage~\citep{GPR1967,GearhartKoshy1989,BDHP2003,BCK2006,BBNPW2016},
restated in our order
in~\eqref{eq:AT-our-order}--\eqref{eq:BT-our-order}, and the
geometric circumcenter $C_T$ of~\citep{BBS2018} are three
constructions of an iteration with rate $\rho_V$. By
Theorem~\ref{thm:three-agree-on-V}, the three coincide on $V$;
this subsection records what happens off $V$.

On $V$ all three reduce to $v+\mu_v(T(v)-v)$ with
$\mu_v=\dist^2(v,U)/\norm{T(v)-v}^2$. Two mechanisms cause the
coincidence: $P_Vv=v$ collapses the outer projection in $B_T$,
making $A_T(v)=B_T(v)$; and the linesearch step
$\lambda_v=\mu_v^{\mathrm B}=\mu_v$ is the unique minimizer of
$\norm{S_\mu(v)-\bar v}^2$ over $\mu$
(Lemma~\ref{lem:mu-optimal}), which $C_T$ produces from the
equidistance property of the circumcenter
(Theorem~\ref{thm:linesearch}).

Off $V$ the three operators behave differently. The map $A_T$
relaxes between $x$ and $P_VP_Ux$ on $\RR^n$ with no constraint
on the result; the orbit is generically not in $V$, and $\rho_V$
is achieved only asymptotically. The map $B_T$ composes the
same linesearch step with the outer projection $P_V$, so
$B_T(x)\in V$ for every $x$; from the second iterate onward
$B_T$ acts on $V$ as the on-$V$ linesearch, and
\citep[Theorem~4.5]{BBNPW2016} gives, in our order,
\begin{equation}\label{eq:BT-rate-our-order}
   \norm{B_T^{k+1}(x)-\bar x}\le\rho_V^k\,c_F^2\,\norm{x-\bar x}\quad(x\in\RR^n),
\end{equation}
sharpening to $\rho_V^{k+1}\norm{x-\bar x}$ for $x\in V$. The
map $C_T$, finally, is a circumcenter of three reflection
points; off $V$ it is a different object from a
projection-linesearch, contracts on $\RR^n$ at the slower rate
$c_F$~\citep{BBS2018}, and attains $\rho_V$ from the first
iterate when started in $V$ (Theorem~\ref{thm:main}).
Table~\ref{tab:three-maps} summarizes.

\begin{table}[h]
\centering
\begin{tabular}{lccc}
\toprule
                          & $A_T$         & $B_T$         & $C_T$ \\
\midrule
Construction              & linesearch   & linesearch + $P_V$ & circumcenter \\
Operators used            & $P_U,P_V$     & $P_U,P_V$     & $R_U,R_V$ \\
Iterate in $V$ if $x\notin V$ & no        & after $1$ step & no \\
Rate on $V$               & $\rho_V$      & $\rho_V$      & $\rho_V$ \\
Rate from $\RR^n$         & $\rho_V$ asymptotic & $\rho_V$ from $2$nd iter & $c_F$ uniformly \\
\bottomrule
\end{tabular}
\caption{The three linesearch maps in CRM's order. On $V$ they
coincide (Theorem~\ref{thm:three-agree-on-V}); off $V$ they
differ.}
\label{tab:three-maps}
\end{table}

A similar coincidence appears in the AAMR/GAP analysis of
Arag\'on Artacho and
Campoy~\citep[Theorem~4.1]{AragonCampoy2019}. With
$\alpha_1=\alpha_2=2\beta$, the shadow sequence
$P_U(x_k+z)$ of AAMR matches the iterate of generalized
alternating projections (GAP) at every step, and the shared
rate $(1-\sin\theta_F)/(1+\sin\theta_F)$ follows. The on-$V$
identity~\eqref{eq:three-agree-on-V} is the analogous statement
for $A_T$, $B_T$, $C_T$: the three operators share $\rho_V$
because they produce the same iterate on $V$, however
differently they behave off $V$. What each identity connects is
different, and the rates show it. AAMR and GAP are tuned to
$\theta_F$ alone, and their rate depends on $\theta_F$ alone.
The maps $A_T$, $B_T$, $C_T$ use no parameter, and the rate
they share, $\rho_V$, depends on both $\theta_F$ and $\theta_p$;
it agrees with $(1-\sin\theta_F)/(1+\sin\theta_F)$ exactly at
$\theta_p=\pi/2$, and vanishes at $\theta_p=\theta_F$.

\section{Sharpness and possible improvements of $\rho_V$}\label{sec:improvement}

Theorem~\ref{thm:main} raises the question of whether the bound
$\rho_V$ can be improved. The answer is mostly no, and this
section traces out why. The rate $\rho_V$ is the sharp Lipschitz
contraction factor of $C_T$ on $V$ relative to $U\cap V$
(Section~\ref{ssec:rho-sharp}), so no analysis can lower it on
the orbits where it is attained. On orbits supported on a proper
spectral subset of $M$, $C_T$ does contract faster, at the
local Kantorovich rate of that subinterval
(Section~\ref{ssec:non-extremal}); but this is a property of
special starting points, not a smaller universal rate. Within
the class of parameter-free single-step methods
$v\mapsto v+c(v)(T(v)-v)$, the constant $\rho_V$ is already
optimal (Section~\ref{ssec:single-step-opt}). A strict
improvement on the universal rate therefore requires giving up
one of these structural constraints.

We examine three ways to do so. Chebyshev semi-iteration
(Section~\ref{ssec:chebyshev}) gives up the single-step
structure and attains $\rho_{\mathrm{Cheb}}<\rho_V$, at the cost
of two-step memory and prior knowledge of $\theta_F,\theta_p$.
The linear two-parameter CDR family
(Section~\ref{ssec:parametric-CDR}) gives up parameter-freeness
and has a different floor on $\RR^n$. The third option keeps
both properties: replace $R_U,R_V$ inside the circumcenter
construction
\[
   C_T(x)  =  \circum\{x,\,R_Ux,\,R_VR_Ux\}
\]
by the over-reflected variants
$R_U^{(\lambda)}=(1-\lambda)\Id+\lambda R_U$ and
$R_V^{(\mu)}=(1-\mu)\Id+\mu R_V$. The resulting two-parameter
family is distinct from the linear CDR family and is still a
circumcenter, but the Kantorovich extremizer of
Theorem~\ref{thm:main} obstructs the rate for every choice of
$(\lambda,\mu)$, and the standard CRM $(\lambda,\mu)=(1,1)$ is a
strict global minimum (Remark~\ref{rem:over-reflection-CRM} and
Proposition~\ref{prop:over-reflection-double}).

\subsection{$\rho_V$ is sharp}\label{ssec:rho-sharp}

The bound~\eqref{eq:main-rate} of Theorem~\ref{thm:main} cannot
be lowered. The next proposition gives the sharp Lipschitz
contraction factor of $C_T$ on $V$ and identifies the rays where
it is attained.

\begin{proposition}[Sharp contraction factor of $C_T$ on $V$]\label{prop:rho-sharp}
The Lipschitz contraction factor of $C_T$ on $V$ relative to its
fixed-point set $U\cap V$ equals $\rho_V$:
\begin{equation}\label{eq:rho-sharp}
   \sup_{\substack{v\in V \\ v\notin U\cap V}}
      \frac{\norm{C_T(v)-P_{U\cap V}(v)}}{\norm{v-P_{U\cap V}(v)}}
     =  \rho_V.
\end{equation}
The supremum is attained, and any maximizer $v$ satisfies
$v-P_{U\cap V}(v)\in\Span\{f_{s+1},f_p\}$ with both extreme
eigencomponents non-zero. An explicit maximizer is the
$\bar v=0$ instance of the worst-case ray~\eqref{eq:worst-ray}:
\begin{equation}\label{eq:rho-sharp-witness}
   v^\star  \coloneqq  \sin\theta_p\,f_{s+1}+\sin\theta_F\,f_p
     \in  V\cap(U\cap V)^\perp,
\end{equation}
where $f_{s+1}$ and $f_p$ are the principal-angle frame vectors
of Theorem~\ref{lem:frame} corresponding to the smallest and
largest positive principal angles, respectively.
\end{proposition}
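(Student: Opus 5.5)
The upper bound in \eqref{eq:rho-sharp} is Theorem~\ref{thm:rate}: for every $v\in V\setminus(U\cap V)$, \eqref{eq:rate-bound} gives the ratio $\le\rho_V$. Attainment at $v^\star$ of \eqref{eq:rho-sharp-witness} is the equality case of the same theorem with $\bar v=0$ (also verified directly in Theorem~\ref{thm:linear-match}). Note that $v^\star\in V\cap(U\cap V)^\perp$, so $P_{U\cap V}(v^\star)=0$ and $v^\star\notin U\cap V$. Hence the supremum equals $\rho_V$ and is attained. The degenerate case $a=b$ needs a remark: then $\rho_V=0$ and every $v$ is a maximizer, while $f_{s+1},f_p$ span the same eigenspace. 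So I would treat $a<b$ for the classification and note that the claim is trivially consistent (or vacuous for $p=s+1$) when $a=b$.

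For the classification of maximizers, I would reduce by translation invariance to $w=v-\bar v\in V\cap(U\cap V)^\perp\setminus\{0\}$ and use the residual identity \eqref{eq:residual}. Equality in the ratio means equality in Kantorovich's inequality. So I would trace the equality cases in the proof of Lemma~\ref{lem:kantorovich}. That proof used two inequalities: $q\le(a+b)m-ab$, obtained by averaging $(\lambda_i-a)(\lambda_i-b)\le0$, and $[(a+b)m-2ab]^2\ge0$. Equality in the first forces every $\lambda_i$ with $\pi_i>0$ to lie in $\{a,b\}$. Thus $w$ is supported on the eigenspaces of $M$ for the extreme eigenvalues $a=\sin^2\theta_F$ and $b=\sin^2\theta_p$. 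Equality in the second forces $m=2ab/(a+b)$, which lies strictly between $a$ and $b$ when $a<b$. Therefore both extreme components are nonzero, with weights $\pi_a=b/(a+b)$ and $\pi_b=a/(a+b)$.

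The main obstacle is multiplicity. If $\sin^2\theta_F$ or $\sin^2\theta_p$ is a repeated eigenvalue, the extreme eigenspaces are larger than $\Span\{f_{s+1}\}$ and $\Span\{f_p\}$. A maximizer then lies in (extreme-$a$ eigenspace)$\oplus$(extreme-$b$ eigenspace), not literally in $\Span\{f_{s+1},f_p\}$. I would handle this by noting that the principal-angle frame is not unique within eigenspaces. I would read the statement as holding after choosing $f_{s+1}$ and $f_p$ to be the normalized extreme components of $w$, which is a legitimate frame choice by Theorem~\ref{lem:frame} and the spectral theorem. Alternatively, I would state the span condition with the eigenspaces explicitly and remark that it reduces to the literal statement when the extreme angles are simple.
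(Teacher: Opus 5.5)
Your proof is correct. For the upper bound and the witness it is the paper's own argument: Theorem~\ref{thm:rate}, plus its equality case at $\bar v=0$.

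Where you differ is the classification of maximizers. The paper's proof only asserts that the Kantorovich ratio is extremal \emph{exactly} on $\{\xi\sqrt b\,u_a+\eta\sqrt a\,u_b:\xi\eta\neq0\}$. It cites Lemma~\ref{lem:kantorovich} for this, but that lemma proves the inequality and exhibits one equality point without characterizing equality. It also tacitly takes $u_a=f_{s+1}$ and $u_b=f_p$ to be \emph{the} extreme eigenvectors.

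Your tracing of equality through the two inequalities in that lemma's proof supplies the missing argument, and it yields more. It shows the maximizer is supported on the two extreme eigenspaces, with weights $\pi_a=b/(a+b)$ and $\pi_b=a/(a+b)$, i.e.\ $|\xi|=|\eta|$ in the paper's parametrization. This shows the paper's displayed set is too large. On that plane the Kantorovich ratio exceeds $4ab/(a+b)^2$ by a positive multiple of $(\xi^2-\eta^2)^2(b-a)^2$. The paper only uses the inclusion ``maximizers $\subset$ set'', so the proposition's conclusion is unaffected.

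Your two caveats are genuine omissions in the paper's proof. First, suppose $\theta_{s+2}=\theta_F<\theta_p$. Then $\sqrt b\,(f_{s+1}+f_{s+2})/\sqrt2+\sqrt a\,f_p$ is a maximizer lying outside $\Span\{f_{s+1},f_p\}$ for that fixed frame. Your re-choice of frame is legitimate: any orthonormal eigenbasis of $M|_V$, ordered by eigenvalue, yields a frame as in Theorem~\ref{lem:frame} with $e_k\coloneqq P_Uf_k/\cos\theta_k$, because $\scal{P_Uf_j}{P_Uf_k}=\scal{f_j}{P_VP_Uf_k}=\cos^2\theta_k\,\delta_{jk}$.

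One small correction on the second caveat. For $a=b$ and $p-s\ge2$, the classification clause is not ``trivially consistent'' in a fixed frame. Every nonzero $w$ is then a maximizer, so $v=f_{s+1}$ qualifies while having zero $f_p$-component. The clean formulation is to state the classification only for $\theta_F<\theta_p$, which your argument covers completely.
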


\begin{proof}
\smallskip\noindent Upper bound.
For any $v\in V\setminus(U\cap V)$, set
$\bar v\coloneqq P_{U\cap V}(v)$. Theorem~\ref{thm:rate} applied
to $v$ gives $\norm{C_T(v)-\bar v}\le\rho_V\norm{v-\bar v}$, and
dividing by the non-zero denominator produces a ratio
$\le\rho_V$.

\smallskip\noindent Witness.
The vector $v^\star$ in~\eqref{eq:rho-sharp-witness} lies in
$V\cap(U\cap V)^\perp$ by Theorem~\ref{lem:frame}, so
$P_{U\cap V}(v^\star)=0$. The equality case of
Theorem~\ref{thm:rate} at $\bar v=0$ gives
$\norm{C_T(v^\star)}=\rho_V\norm{v^\star}$, so the ratio
in~\eqref{eq:rho-sharp} equals $\rho_V$ at $v=v^\star$.

\smallskip\noindent Form of any maximizer.
The argument of Theorem~\ref{thm:rate} reduces the ratio
in~\eqref{eq:rho-sharp} to a Kantorovich ratio for
$A=M|_{V\cap(U\cap V)^\perp}$ (see~\eqref{eq:residual}), whose
supremum is attained exactly on the Kantorovich extremizer set
\[
   \bigl\{x\in V\cap(U\cap V)^\perp: x=\xi\sqrt b\,u_a+\eta\sqrt a\,u_b,\ \xi,\eta\in\RR,\ \xi\eta\ne 0\bigr\},
\]
with $u_a=f_{s+1}$ and $u_b=f_p$ the unit eigenvectors of $M$ at
$a=\sin^2\theta_F$ and $b=\sin^2\theta_p$
(Lemma~\ref{lem:kantorovich}). Hence
$v-\bar v\in\Span\{f_{s+1},f_p\}$ with both components non-zero.
\end{proof}

\begin{remark}[Comparison with $v^\star$ in Theorem~\ref{thm:rate}]\label{rem:two-vstars}
The witness in~\eqref{eq:rho-sharp-witness} is the special case
$\bar v=0$ of the worst-case ray
$v^\star=\bar v+\sin\theta_p\,f_{s+1}+\sin\theta_F\,f_p$ in
Theorem~\ref{thm:rate}. By translation invariance of $C_T$ on
$U\cap V$ (Theorem~\ref{thm:main}), every
$v\in V\setminus(U\cap V)$ on the affine ray
$\bar v+\Span\{w^\star\}$ saturates the bound; $\bar v=0$ is the
cleanest representative.
\end{remark}

\subsection{Faster contraction off the worst-case ray}\label{ssec:non-extremal}

The bound $\rho_V$ in~\eqref{eq:rho-sharp} is attained only on
the two-dimensional eigenplane spanned by the extreme
eigenvectors $f_{s+1}$ and $f_p$. On orbits supported on a
narrower spectral range, $C_T$ contracts strictly faster, with
rate determined by the Kantorovich factor of that range.

\begin{proposition}[Local rate on a spectral interval]\label{prop:local-rate}
Let
$\lambda_{-},\lambda_{+}\in\{\sin^2\theta_{s+1},\ldots,\sin^2\theta_p\}$
with $\lambda_{-}\le\lambda_{+}$, and let $E$ be the sum of all
eigenspaces of $M|_V$ with eigenvalues in $[\lambda_-,\lambda_+]$.
For every $v\in\bar v+E\setminus\{\bar v\}$,
\begin{equation}\label{eq:local-rate}
   \norm{C_T(v)-\bar v}  \le  \rho(\lambda_-,\lambda_+)\,\norm{v-\bar v},
   \quad
   \rho(\lambda_-,\lambda_+)  \coloneqq  \frac{\lambda_+-\lambda_-}{\lambda_++\lambda_-},
\end{equation}
and the bound is sharp on the eigenplane $\Span\{f_i,f_j\}$ for
indices $i,j$ with $\sin^2\theta_i=\lambda_-$,
$\sin^2\theta_j=\lambda_+$. In particular,
$\rho(\lambda_-,\lambda_+)\le\rho_V$, with equality iff
$\lambda_-=\sin^2\theta_F$ and $\lambda_+=\sin^2\theta_p$.
\end{proposition}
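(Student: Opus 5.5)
The plan is to rerun the proof of Theorem~\ref{thm:rate} with $M$ restricted to the invariant subspace $E$ in place of $M|_{V\cap(U\cap V)^\perp}$. First, translation invariance of $C_T$ under shifts by $U\cap V$ (used in the proof of Theorem~\ref{thm:rate}) lets us take $\bar v=0$, so $w=v\in E\setminus\{0\}$. For this reduction to be consistent we need $P_{U\cap V}(\bar v+w)=\bar v$. Since $\lambda_-\ge\sin^2\theta_F>0$, Lemma~\ref{lem:spectrum} shows that $E$ is spanned by frame vectors $f_k$ with $k>s$. Hence $E\subset V\cap(U\cap V)^\perp$ and the reduction holds. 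If $Mw=0$ we would have $w\in U\cap V$, which is impossible, so $Mw\neq0$ and the linesearch formula~\eqref{eq:CT-via-M} applies. The residual identity~\eqref{eq:residual} then gives
\[
\frac{\norm{C_T(w)}^2}{\norm{w}^2}=1-\frac{\scal{w}{Mw}^2}{\norm{w}^2\scal{w}{M^2w}}.
\]

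Next, $E$ is a sum of eigenspaces of $M$, so it is $M$-invariant. The restriction $A\coloneqq M|_E$ is therefore self-adjoint and positive definite, with spectrum in $[\lambda_-,\lambda_+]$, and both quadratic forms $\scal{w}{Mw}$ and $\scal{w}{M^2w}$ are computed within $E$. Lemma~\ref{lem:kantorovich}, applied on $E$ with $a=\lambda_-$, $b=\lambda_+$ and $\ker A=\{0\}$, bounds the Kantorovich ratio below by $4\lambda_-\lambda_+/(\lambda_-+\lambda_+)^2=1-\rho(\lambda_-,\lambda_+)^2$. Substituting this into the residual identity gives~\eqref{eq:local-rate}.

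For sharpness, choose $f_i,f_j$ with $Mf_i=\lambda_-f_i$ and $Mf_j=\lambda_+f_j$; both lie in $E$. The equality clause of Lemma~\ref{lem:kantorovich} at $x^\star=\sqrt{\lambda_+}f_i+\sqrt{\lambda_-}f_j$ makes the ratio exactly $\rho(\lambda_-,\lambda_+)$, and the same holds after translating by $\bar v$. In the degenerate case $\lambda_-=\lambda_+$, the rate is $0$ and equality is trivial, since then $C_T(v)=\bar v$ on $E$.

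Finally, for the comparison with $\rho_V$, I would use the function $\phi(x,y)=(y-x)/(y+x)$. On $0<x\le y$ it is decreasing in $x$, since $\partial_x\phi=-2y/(x+y)^2<0$, and increasing in $y$, since $\partial_y\phi=2x/(x+y)^2>0$, as in the proof of Proposition~\ref{prop:worst-case}. With $a\le\lambda_-\le\lambda_+\le b$ this gives $\phi(\lambda_-,\lambda_+)\le\phi(a,\lambda_+)\le\phi(a,b)=\rho_V$. Strict monotonicity gives equality only when $\lambda_-=a$ and $\lambda_+=b$, except in the case $a=b$. In that case all values coincide, $\rho_V=0$, and the characterization holds trivially. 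No step is a serious obstacle. The only care needed is to check that $E$ avoids $\ker M$, so that the Kantorovich lower bound uses $\lambda_-$ and not $0$.
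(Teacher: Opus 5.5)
Your proposal is correct and follows the paper's proof. Both restrict $M$ to the invariant subspace $E$, rerun the residual identity from Theorem~\ref{thm:rate}, apply Lemma~\ref{lem:kantorovich} with $a=\lambda_-$, $b=\lambda_+$ (equality at $\sqrt{\lambda_+}f_i+\sqrt{\lambda_-}f_j$), and get $\rho(\lambda_-,\lambda_+)\le\rho_V$ from the monotonicity of $(y-x)/(y+x)$. Your extra checks make explicit what the paper leaves implicit: $E\subset V\cap(U\cap V)^\perp$ because $\lambda_->0$, the degenerate case $\lambda_-=\lambda_+$, and the strict-monotonicity argument for the equality clause. The separate exception you make for $a=b$ is unnecessary, since strict monotonicity already handles that case.
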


\begin{proof}
Set $w=v-\bar v\in E$. Since $E$ is invariant under $M$, the
proof of Theorem~\ref{thm:rate} applies with $A=M|_E$, whose
spectrum is contained in $[\lambda_-,\lambda_+]$.
Lemma~\ref{lem:kantorovich} with $a=\lambda_-$, $b=\lambda_+$
gives $\norm{C_T(v)-\bar v}/\norm{w}\le\rho(\lambda_-,\lambda_+)$,
sharp at $\sqrt{\lambda_+}f_i+\sqrt{\lambda_-}f_j$. The inequality
$\rho(\lambda_-,\lambda_+)\le\rho_V$ follows from monotonicity:
increasing $\lambda_+$ or decreasing $\lambda_-$ widens the
interval and increases the Kantorovich gap.
\end{proof}

\begin{corollary}[Single eigendirection: one-step convergence]\label{cor:single-eigendir}
If
$v-\bar v\in\ker(M|_V-\lambda\,\Id)$ for some
$\lambda\in\{\sin^2\theta_{s+1},\ldots,\sin^2\theta_p\}$, then
$C_T(v)=\bar v$.
\end{corollary}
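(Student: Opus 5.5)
The plan is to reduce to the representation \eqref{eq:CT-via-M} and compute directly. Set $w\coloneqq v-\bar v$. As in the proof of Theorem~\ref{thm:rate}, $w\in V\cap(U\cap V)^\perp$, so $w\in\Span\{f_{s+1},\ldots,f_p\}$. The hypothesis says moreover that $Mw=\lambda w$ with $\lambda=\sin^2\theta_k$ for some $k\ge s+1$, hence $\lambda\ge\sin^2\theta_F>0$.

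First dispose of the trivial case $w=0$. Then $v=\bar v\in U\cap V$, and Theorem~\ref{thm:linesearch} (the degenerate branch) gives $C_T(v)=v=\bar v$. Otherwise $Mv=M(v-\bar v)=Mw=\lambda w\ne0$, using $M\bar v=0$ from Lemma~\ref{lem:spectrum}. Hence $T(v)\ne v$, and the linesearch formula applies in the form \eqref{eq:CT-via-M}.

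Next compute the step. The two moments are $\scal{w}{Mw}=\lambda\norm{w}^2$ and $\scal{w}{M^2w}=\lambda^2\norm{w}^2$, so the step length is $\mu_v=1/\lambda$. Substituting into \eqref{eq:CT-via-M} gives
\[
C_T(v)=v-\tfrac1\lambda\,\lambda w=v-w=\bar v.
\]
Equivalently, \eqref{eq:residual} yields a residual ratio of $1-\lambda^2\norm{w}^4/(\norm{w}^2\cdot\lambda^2\norm{w}^2)=0$. One could also obtain the result as the degenerate case $\lambda_-=\lambda_+=\lambda$ of Proposition~\ref{prop:local-rate}, where $E$ is the $\lambda$-eigenspace and $\rho(\lambda,\lambda)=0$. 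The direct computation avoids any fuss about the choice of interval endpoints, so I will use it.

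There is no real obstacle here. The only points needing care are verifying $\lambda>0$, so that the nondegenerate branch of Theorem~\ref{thm:linesearch} applies, and noting that $M\bar v=0$, so that $Mv=Mw$. Both follow immediately from Lemma~\ref{lem:spectrum}.
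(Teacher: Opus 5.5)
Your proof is correct, but it reaches the result by a different route from the paper. The paper disposes of the corollary in one line: it takes $\lambda_-=\lambda_+=\lambda$ in Proposition~\ref{prop:local-rate}, so the Kantorovich bound with $a=b=\lambda$ gives $\norm{C_T(v)-\bar v}\le\rho(\lambda,\lambda)\norm{v-\bar v}=0$. You instead evaluate the step in \eqref{eq:CT-via-M} directly, find $\mu_v=1/\lambda$, and obtain $C_T(v)=v-w=\bar v$ as an identity.

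The paper's route is shorter and presents the result as the degenerate endpoint of the spectral-interval bound. However, it reaches what is really an equality by passing through an inequality (Kantorovich). It also leaves the case $v=\bar v$ implicit, even though Proposition~\ref{prop:local-rate} explicitly excludes that case.

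Your route avoids the Kantorovich inequality entirely and treats $w=0$ explicitly. It also checks that $\lambda>0$, so the nondegenerate branch of Theorem~\ref{thm:linesearch} applies. Finally, it exhibits the mechanism behind the result (the adaptive step equals $1/\lambda$), which the paper mentions only informally in the sentence after the corollary.
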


\begin{proof}
Apply Proposition~\ref{prop:local-rate} with
$\lambda_-=\lambda_+=\lambda$, giving rate
$\rho(\lambda,\lambda)=0$.
\end{proof}

The corollary explains a feature of $C_T$ that the worst-case
bound $\rho_V$ obscures: when $w=v-\bar v$ lies entirely in a
single eigenspace of $M$, the linesearch
formula~\eqref{eq:linesearch} selects $\mu_v=1/\lambda$ and the
iterate lands on $\bar v$ in one step.

\subsection{Optimality among single-step methods}\label{ssec:single-step-opt}

The constant-step relaxations $S_\mu$ and the adaptive CRM $C_T$
both belong to a natural family of iterations on $V$: the
single-step methods along $T(v)-v$. The next definition fixes
the class against which we measure $C_T$'s optimality. The step
size $c(v)$ is required to be computable from the geometry of
$(v,U,V)$ at the current iterate, with no prior knowledge of
the principal angles.

\begin{definition}[Single-step parameter-free methods on $V$]\label{def:single-step}
A single-step method on $V$ along $T(v)-v$ is an operator
$\Phi_c\colon V\to V$ of the form
\begin{equation}\label{eq:single-step-class}
   \Phi_c(v)  \coloneqq  v+c(v)\bigl(T(v)-v\bigr),
\end{equation}
where $c\colon V\to\RR$ is a Borel-measurable scalar field
satisfying $c\equiv 0$ on $U\cap V$ (so $\Phi_c|_{U\cap V}=\Id$
and every element of $U\cap V$ is a fixed point). The method is
parameter-free if $c(v)$ depends only on the data
$(v,P_U(v),P_V(v),\norm{v},\dist(v,U),\dist(v,V))$ associated
with the current iterate, that is, on quantities computable from
$v$ together with the projections $P_U,P_V$, without explicit
dependence on the principal angles
$(\theta_1,\ldots,\theta_p)$ of the pair $(U,V)$.
\end{definition}

The class~\eqref{eq:single-step-class} contains the constant-step
relaxations $S_\mu$ (parameter-free, $c\equiv\mu$) and the
adaptive CRM $C_T$ on $V$ (parameter-free, $c=\mu_v$ by
Theorem~\ref{thm:linesearch}); the optimal-relaxation
$S_{\mu^\star}$ at
$\mu^\star=2/(\sin^2\theta_F+\sin^2\theta_p)$ also belongs but
is not parameter-free, since $\mu^\star$ depends on
$\theta_F,\theta_p$. The next proposition shows that no element
of~\eqref{eq:single-step-class} beats $\rho_V$ in worst case,
parameter-free or not.

\begin{proposition}[Optimality of $\rho_V$ in the single-step class]\label{prop:single-step-opt}
For every measurable $c\colon V\to\RR$ with
$c|_{U\cap V}\equiv 0$,
\begin{equation}\label{eq:single-step-opt}
   \sup_{v\in V,\,v\neq\bar v}  \frac{\norm{\Phi_c(v)-\bar v}}{\norm{v-\bar v}}
     \ge  \rho_V,
\end{equation}
and this bound is attained by $C_T$, which is parameter-free in
the sense of Definition~\ref{def:single-step}. Hence $C_T$ is
optimal in the class~\eqref{eq:single-step-class}, even when
compared against members that use full prior knowledge of
$(\theta_F,\theta_p)$.
\end{proposition}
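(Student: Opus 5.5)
The plan is to restrict attention to the two-dimensional invariant eigenplane $\Span\{f_{s+1},f_p\}$ and show that no choice of scalar $c(v)$ can do better than $\rho_V$ at a suitable point of this plane. Work with $\bar v=0$, i.e.\ $v\in V\cap(U\cap V)^\perp$, where $\Phi_c(v)=v-c(v)Mv=(\Id-c(v)M)v$ by Lemma~\ref{lem:M}. The key observation is that for a \emph{fixed} $v$, the quantity $\norm{(\Id-cM)v}$ is minimized over all real $c$ at $c=\mu_v$ (Lemma~\ref{lem:mu-optimal}). Consequently, for every scalar field $c$ and every $v$ with $Mv\ne0$,
\[
   \norm{\Phi_c(v)-\bar v}\ \ge\ \norm{C_T(v)-\bar v}.
\]
This holds regardless of measurability or parameter-freeness, since only the value $c(v)$ at the single point matters.

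Next, evaluate this at the worst-case point $v^\star=\sin\theta_p\,f_{s+1}+\sin\theta_F\,f_p$ of Proposition~\ref{prop:rho-sharp}. There $v^\star\notin U\cap V$ and $\bar v^\star=0$, and Theorem~\ref{thm:rate} gives $\norm{C_T(v^\star)}=\rho_V\norm{v^\star}$. Hence the ratio for $\Phi_c$ at $v^\star$ is at least $\rho_V$, and the supremum in~\eqref{eq:single-step-opt} is at least $\rho_V$. In the degenerate case $\theta_F=\theta_p$ we have $\rho_V=0$ and the bound is trivial, so no separate care is needed.

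For attainment, $C_T$ belongs to the class with $c=\mu_v$, set to $0$ on $U\cap V$; on $U\cap V$ the convention gives $C_T(v)=v$, consistent with $c\equiv0$. Measurability holds because $\mu_v$ is a ratio of continuous quadratic forms on the open set $V\setminus(U\cap V)$. The step is parameter-free because $\mu_v=\dist^2(v,U)/\norm{P_VP_Uv-v}^2$ uses only $v$, $P_Uv$, and $P_V$. Its supremum equals $\rho_V$ by Proposition~\ref{prop:rho-sharp}, so equality holds in~\eqref{eq:single-step-opt}.

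The only subtle point is conceptual rather than computational: the inequality must hold for arbitrary $c$, including ones tuned to the angles, and the pointwise optimality of $\mu_v$ delivers this at once. I expect the main care to be needed in checking that $C_T$ fits Definition~\ref{def:single-step} exactly, including the convention on $U\cap V$ and the fact that the data listed there suffice to compute $T(v)=P_VP_Uv$.
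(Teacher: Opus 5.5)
Your proof is correct and is essentially the paper's argument. Both evaluate at the worst-case point $v^\star$ and observe that no scalar step $c(v^\star)$ can make $\norm{(\Id-c(v^\star)M)w^\star}$ smaller than $\rho_V\norm{w^\star}$, with $C_T$ attaining this bound because its step is the optimal one there. The only difference is bookkeeping: the paper minimizes the explicit quadratic $b(1-\mu a)^2+a(1-\mu b)^2$ directly, whereas you get the same minimum by chaining the pointwise optimality of $\mu_v$ (Lemma~\ref{lem:mu-optimal}) with the equality case of Theorem~\ref{thm:rate}. You are also a bit more careful than the paper about the degenerate case $\theta_F=\theta_p$ and about setting $c=0$ on $U\cap V$, rather than the convention $\mu_v=1$.
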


\begin{proof}
Restrict the supremum to the worst-case ray $v=v^\star$
from~\eqref{eq:worst-ray}, and write
$w^\star=v^\star-\bar v=\sqrt{b}\,f_{s+1}+\sqrt{a}\,f_p$. For any
scalar $\mu\in\RR$, the eigendecomposition gives
\[
   \norm{(\Id-\mu M)w^\star}^2=b(1-\mu a)^2+a(1-\mu b)^2.
\]
This convex quadratic in $\mu$ is minimized at $\mu^\star=2/(a+b)$,
with value
$b\rho_V^2+a\rho_V^2=(a+b)\rho_V^2=\rho_V^2\norm{w^\star}^2$.
Since $\Phi_c(v^\star)-\bar v=(\Id-c(v^\star)M)w^\star$, the
infimum
$\inf_{\mu\in\RR}\norm{(\Id-\mu M)w^\star}/\norm{w^\star}=\rho_V$
is attained at $\mu=\mu^\star$ and lower-bounds the ratio at
$v=v^\star$ for any choice of $c(v^\star)$. This
proves~\eqref{eq:single-step-opt}.
Theorem~\ref{thm:linear-match} shows $\mu_{v^\star}=\mu^\star$,
so $C_T$ realizes the bound.
\end{proof}

\begin{remark}[Comparison with $\{S_\mu\}$ and $B_T$]\label{rem:tightness}
Proposition~\ref{prop:single-step-opt} sharpens
Theorem~\ref{thm:BBNPW-optimal}: the rate $\rho_V$ is optimal
not only within the linear sub-family
$\{S_\mu\}_{\mu\in\RR}$, but within the strictly larger
non-linear class~\eqref{eq:single-step-class}. The analogous
BBNPW map $B_T$ (Section~\ref{ssec:bt-comparison}) is
similarly optimal in its own analogous class on $U$.
\end{remark}

\subsection{Acceleration via memory: Chebyshev semi-iteration}\label{ssec:chebyshev}

Among single-step parameter-free methods, the rate $\rho_V$ is
optimal (Proposition~\ref{prop:single-step-opt}), and $C_T$,
$A_T$, $B_T$ all attain it
(Theorem~\ref{thm:three-agree-on-V}). A strictly faster rate
becomes available once we drop the single-step constraint and
let $v_{k+1}$ depend on the two preceding iterates $v_{k-1}$
and $v_k$. Chebyshev semi-iteration applied to $T$ on $V$ does
exactly this. The construction is classical
\citep[Chapter~5]{Hageman-Young}; we state it in our notation,
then formalize the strict improvement as
Theorem~\ref{thm:cheb-strict}.

\begin{proposition}[Chebyshev acceleration of MAP on $V$]\label{prop:chebyshev}
Let $w_0\in V\cap(U\cap V)^\perp$. Set $a=\sin^2\theta_F$,
$b=\sin^2\theta_p$, $d=(a+b)/2$, and
\[
   r\coloneqq\frac{a+b}{b-a},\quad
   \sigma\coloneqq\frac{\sqrt{b}+\sqrt{a}}{\sqrt{b}-\sqrt{a}},
\]
so that $\sigma+\sigma^{-1}=2r$. Define the three-term recursion
\begin{equation}\label{eq:chebyshev-recursion}
   w_1  =  w_0-d^{-1}M w_0,
   \quad
   w_{k+1}  =  \omega_{k+1}\bigl(w_k-d^{-1}Mw_k\bigr)+(1-\omega_{k+1})w_{k-1}
   \quad(k\ge 1),
\end{equation}
with $\omega_2=2r^2/(2r^2-1)$ and
$\omega_{k+1}=4r^2/(4r^2-\omega_k)$ for $k\ge 2$. Then
\begin{equation}\label{eq:chebyshev-rate}
   \norm{w_k}  \le  \frac{2}{\sigma^k+\sigma^{-k}}\,\norm{w_0}
     \le  2\sigma^{-k}\norm{w_0},
\end{equation}
and the asymptotic per-step rate is
\begin{equation}\label{eq:cheb-rate}
   \rho_{\mathrm{Cheb}}  =  \frac{1}{\sigma}
     =  \frac{\sqrt{b}-\sqrt{a}}{\sqrt{b}+\sqrt{a}}
     =  \frac{\sin\theta_p-\sin\theta_F}{\sin\theta_p+\sin\theta_F}.
\end{equation}
\end{proposition}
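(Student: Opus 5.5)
The plan is to show that each iterate has the form $w_k=p_k(M)w_0$ for a polynomial $p_k$ of degree $k$ with $p_k(0)=1$. We then identify $p_k$ with the normalized shifted Chebyshev polynomial
\[
   p_k(\lambda)=\frac{T_k\bigl((d-\lambda)/h\bigr)}{T_k(r)},\qquad h\coloneqq\frac{b-a}{2},
\]
so that $d/h=r$. Here the affine map $\lambda\mapsto(d-\lambda)/h$ sends $[a,b]$ onto $[-1,1]$ and sends $0$ to $r$. This is the degenerate case $a<b$; if $a=b$, then $w_1=0$ already by Corollary~\ref{cor:single-eigendir}, and the bound is trivial with $\sigma=\infty$.

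\textbf{Step 1: identify the polynomials.} Use the three-term recurrence $T_{k+1}(t)=2tT_k(t)-T_{k-1}(t)$ with $t=(d-\lambda)/h$, and note that $I-d^{-1}M$ corresponds to multiplication by $(d-\lambda)/d=(h/d)\,t=t/r$. Dividing the recurrence by $T_{k+1}(r)$ gives
\[
   p_{k+1}=\omega_{k+1}\,\frac{t}{r}\,p_k+(1-\omega_{k+1})\,p_{k-1},
   \qquad
   \omega_{k+1}=\frac{2r\,T_k(r)}{T_{k+1}(r)}.
\]
The case $k=0$ is just $w_1=(I-d^{-1}M)w_0$. The remaining task is to check that these $\omega$'s satisfy the stated recursion. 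For $k=1$ we have $\omega_2=2r^2/(2r^2-1)$, since $T_2(r)=2r^2-1$. For $k\ge2$, write
\[
   \frac{T_{k+1}(r)}{T_k(r)}=2r-\frac{T_{k-1}(r)}{T_k(r)}=2r-\frac{\omega_k}{2r}.
\]
Inverting this yields $\omega_{k+1}=4r^2/(4r^2-\omega_k)$. An induction on $k$ then closes the identification $w_k=p_k(M)w_0$.

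\textbf{Step 2: estimate the norm.} Since $w_0\in V\cap(U\cap V)^\perp=\Span\{f_{s+1},\dots,f_p\}$, Lemma~\ref{lem:spectrum} diagonalizes $M$ there with eigenvalues in $[a,b]$. Hence
\[
   \norm{w_k}\le\max_{\lambda\in[a,b]}|p_k(\lambda)|\,\norm{w_0}\le\frac{\norm{w_0}}{T_k(r)},
\]
because $|T_k|\le1$ on $[-1,1]$. For $r\ge1$ we have $T_k(r)=\tfrac12(\sigma^k+\sigma^{-k})$ with $\sigma=r+\sqrt{r^2-1}$, which gives both inequalities in \eqref{eq:chebyshev-rate}.

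\textbf{Step 3: identify $\sigma$.} It remains to verify that $r+\sqrt{r^2-1}$ equals the stated $\sigma$. This is a direct computation: $r^2-1=4ab/(b-a)^2$, so
\[
   r+\sqrt{r^2-1}=\frac{a+b+2\sqrt{ab}}{b-a}=\frac{(\sqrt b+\sqrt a)^2}{(\sqrt b-\sqrt a)(\sqrt b+\sqrt a)}=\sigma.
\]
The relation $\sigma+\sigma^{-1}=2r$ follows at once. Finally, $\lim_k\bigl(2/(\sigma^k+\sigma^{-k})\bigr)^{1/k}=\sigma^{-1}$, which gives \eqref{eq:cheb-rate}, and $\sqrt{\sin^2\theta}=\sin\theta$ on $[0,\pi/2]$ converts this to the trigonometric form.

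The main obstacle is the index bookkeeping in Step 1, namely matching the weight $\omega_{k+1}$ to the ratio $T_k(r)/T_{k+1}(r)$ and its starting value. Everything else is standard Chebyshev theory.
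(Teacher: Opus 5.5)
Your proposal is correct and follows the paper's proof essentially step for step. You write $w_k=p_k(M)w_0$ and identify $p_k$ with the normalized shifted Chebyshev polynomial through $\omega_{k+1}=2rT_k(r)/T_{k+1}(r)$, where the $p_{k-1}$-coefficient equals $1-\omega_{k+1}$ by the Chebyshev recurrence at $r$, a step you use implicitly. You then bound $\norm{w_k}$ by $\norm{w_0}/T_k(r)$ using that the spectrum of $M$ on $V\cap(U\cap V)^\perp$ lies in $[a,b]$.

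Your explicit check that $r+\sqrt{r^2-1}$ equals the stated $\sigma$ fills in a step the paper leaves to ``so that $\sigma+\sigma^{-1}=2r$''. There are two small slips. First, $a<b$ is the \emph{non}degenerate case. Second, for $a=b$ (where $r$ is undefined anyway), $w_1=0$ follows directly from $Mw_0=aw_0$ and $d=a$; it does not come from Corollary~\ref{cor:single-eigendir}, which concerns $C_T$.
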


\begin{proof}
By induction, $w_k=p_k(M)w_0$, with polynomials $p_k\in\RR[t]$
defined by $p_0(t)=1$, $p_1(t)=1-d^{-1}t$, and
$p_{k+1}(t)=\omega_{k+1}(1-d^{-1}t)p_k(t)+(1-\omega_{k+1})p_{k-1}(t)$:
the recursion~\eqref{eq:chebyshev-recursion} translates verbatim
under this substitution. We claim $p_k=q_k$, where
\[
   q_k(t)\coloneqq\frac{T_k(s(t))}{T_k(r)},\quad
   s(t)\coloneqq\frac{d-t}{\delta},\quad
   \delta\coloneqq\frac{b-a}{2},
\]
and $T_k$ is the Chebyshev polynomial of the first kind,
$T_0=1$, $T_1(s)=s$, $T_{k+1}(s)=2sT_k(s)-T_{k-1}(s)$, with
$T_k(\cosh\phi)=\cosh(k\phi)$ on $[1,\infty)$. The affine map
$s(\cdot)$ sends $[a,b]$ to $[-1,1]$ with $s(0)=d/\delta=r>1$,
and $\tau_k\coloneqq T_k(r)=\cosh(k\phi)>0$ for $\cosh\phi=r$,
so $q_k$ is well defined, with $q_k(0)=1$ and $\deg q_k\le k$.
Substituting $s(t)=r(1-d^{-1}t)$ into the three-term recurrence
and dividing by $\tau_{k+1}$ gives
\[
   q_{k+1}(t)=\frac{2r\,\tau_k}{\tau_{k+1}}\bigl(1-d^{-1}t\bigr)q_k(t)-\frac{\tau_{k-1}}{\tau_{k+1}}\,q_{k-1}(t),
\]
which matches the $p$-recursion at
$\omega_{k+1}=2r\tau_k/\tau_{k+1}$; the consistency requirement
$1-\omega_{k+1}=-\tau_{k-1}/\tau_{k+1}$ is the Chebyshev
recurrence at $s=r$. From $\tau_0=1$, $\tau_1=r$,
$\tau_2=2r^2-1$ this gives $\omega_2=2r^2/(2r^2-1)$, and
dividing $\tau_{k+1}=2r\tau_k-\tau_{k-1}$ by $\tau_k$ yields
$\omega_{k+1}=4r^2/(4r^2-\omega_k)$, the closed forms in the
statement. Hence $p_k=q_k$.

For the bound, diagonalize $w_0=\sum_{k=s+1}^p\xi_kf_k$ in the
frame of Lemma~\ref{lem:spectrum}; then
$\norm{w_j}\le\sup_{t\in[a,b]}|p_j(t)|\,\norm{w_0}$. Since
$s(t)\in[-1,1]$ for $t\in[a,b]$ and $|T_j|\le 1$ there,
$\sup_{[a,b]}|p_j|\le 1/\tau_j$, and
$\tau_j=\cosh(j\phi)=(\sigma^j+\sigma^{-j})/2$ with
$\sigma=e^{\phi}$, $\sigma+\sigma^{-1}=2r$. This
proves~\eqref{eq:chebyshev-rate}, and
$\norm{w_k}^{1/k}\to 1/\sigma=\rho_{\mathrm{Cheb}}$ follows from
$\sigma^k+\sigma^{-k}\sim\sigma^k$.
\end{proof}

\begin{theorem}[Chebyshev gives a strict improvement on $\rho_V$]\label{thm:cheb-strict}
Whenever $\theta_F<\theta_p$ (equivalently $a<b$),
\begin{equation}\label{eq:cheb-comparison}
   \rho_{\mathrm{Cheb}}  <  \rho_V,
\end{equation}
with explicit ratio
\begin{equation}\label{eq:cheb-ratio}
   \frac{\rho_V}{\rho_{\mathrm{Cheb}}}
     =  \frac{(\sqrt{a}+\sqrt{b})^2}{a+b}
     =  1+\frac{2\sqrt{ab}}{a+b}  \in  (1,2],
\end{equation}
attaining the upper limit $2$ as $a/b\to 1^-$ (where both rates
tend to~$0$). Equivalently,
\begin{equation}\label{eq:cheb-ratio-bound}
   \norm{w_k}\le 2\rho_{\mathrm{Cheb}}^k\norm{w_0}
   \quad\text{with}\quad
   \rho_{\mathrm{Cheb}}\le\rho_V\bigl(1+2\sqrt{ab}/(a+b)\bigr)^{-1}.
\end{equation}
\end{theorem}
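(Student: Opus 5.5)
The plan is to reduce everything to the closed forms $\rho_V=(b-a)/(a+b)$ and $\rho_{\mathrm{Cheb}}=(\sqrt b-\sqrt a)/(\sqrt b+\sqrt a)$ from Proposition~\ref{prop:chebyshev}, and to compute their ratio by elementary algebra. Factor $b-a=(\sqrt b-\sqrt a)(\sqrt b+\sqrt a)$. Since $a<b$, the factor $\sqrt b-\sqrt a$ is positive and cancels:
\[
   \frac{\rho_V}{\rho_{\mathrm{Cheb}}}
   =\frac{(\sqrt b-\sqrt a)(\sqrt b+\sqrt a)}{a+b}\cdot\frac{\sqrt b+\sqrt a}{\sqrt b-\sqrt a}
   =\frac{(\sqrt a+\sqrt b)^2}{a+b}.
\]
Expanding the square then gives $1+2\sqrt{ab}/(a+b)$, which is the explicit ratio~\eqref{eq:cheb-ratio}.

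For the range, I use $a>0$ (because $\theta_F>0$) and $b>0$. These give $2\sqrt{ab}/(a+b)>0$, so the ratio exceeds $1$. Since $\rho_{\mathrm{Cheb}}>0$ when $a<b$, this yields the strict inequality~\eqref{eq:cheb-comparison}. For the upper end, AM--GM gives $2\sqrt{ab}\le a+b$, with equality only at $a=b$. So for $a<b$ the ratio lies strictly below $2$ and tends to $2$ as $a/b\to1^-$. To see this, write $t=\sqrt{a/b}$, so the ratio is $1+2t/(1+t^2)$, which is continuous and increasing to $2$ as $t\to1^-$. Both rates tend to $0$ in this limit because their numerators vanish. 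The interval $(1,2]$ in the statement is read with $2$ as the limiting value.

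The equivalent form~\eqref{eq:cheb-ratio-bound} combines the bound $\norm{w_k}\le2\sigma^{-k}\norm{w_0}=2\rho_{\mathrm{Cheb}}^k\norm{w_0}$ from~\eqref{eq:chebyshev-rate} with the identity $\rho_{\mathrm{Cheb}}=\rho_V\bigl(1+2\sqrt{ab}/(a+b)\bigr)^{-1}$ just derived. The identity in particular gives the stated inequality.

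No step is a real obstacle; the argument is pure algebra. The only points needing care are the cancellation of $\sqrt b-\sqrt a$, which requires $a<b$, and the interpretation of the endpoint $2$ as a limit rather than an attained value. At $a=b$ both rates are $0$ and the ratio is undefined, which is precisely why the hypothesis $\theta_F<\theta_p$ is imposed.
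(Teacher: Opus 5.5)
Your proposal is correct and follows essentially the same route as the paper: factor $b-a=(\sqrt b-\sqrt a)(\sqrt b+\sqrt a)$, divide by $\rho_{\mathrm{Cheb}}$, bound the ratio with AM--GM, and read the final bound off the Chebyshev estimate using $\rho_{\mathrm{Cheb}}=1/\sigma$. You also note that under $a<b$ the ratio stays strictly below $2$, so the endpoint $2$ is only a limit. That is slightly more careful than the paper's ``equality iff $a=b$'' remark, since the case $a=b$ is excluded by hypothesis.
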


\begin{proof}
Factorize
$\rho_V=(b-a)/(b+a)=(\sqrt{b}-\sqrt{a})(\sqrt{b}+\sqrt{a})/(a+b)$.
Dividing by
$\rho_{\mathrm{Cheb}}=(\sqrt{b}-\sqrt{a})/(\sqrt{b}+\sqrt{a})$
gives
$\rho_V/\rho_{\mathrm{Cheb}}=(\sqrt{b}+\sqrt{a})^2/(a+b)=1+2\sqrt{ab}/(a+b)$.
AM--GM gives $2\sqrt{ab}/(a+b)\le 1$, with equality iff $a=b$,
so $1<\rho_V/\rho_{\mathrm{Cheb}}\le 2$. The
bound~\eqref{eq:cheb-ratio-bound} is~\eqref{eq:chebyshev-rate}
restated using $\rho_{\mathrm{Cheb}}=1/\sigma$.
\end{proof}

\begin{remark}[Trade-offs]\label{rem:chebyshev-tradeoffs}
The Chebyshev rate~\eqref{eq:cheb-rate} is strictly faster than
$\rho_V$, but the algorithm carries three drawbacks not present
in $C_T$: the parameters $d$, $\sigma$, $\omega_k$ require
explicit knowledge of $\theta_F$ and $\theta_p$; two previous
iterates must be stored at each step; and the recursion is
numerically delicate in finite precision when $a/b$ is small.
We view $C_T$ as the optimal parameter-free, memoryless method
matching $\rho_V$. Whether a Chebyshev-type acceleration of
$C_T$ itself, exploiting its nonlinearity, can produce a
strictly better rate than $\rho_{\mathrm{Cheb}}$ is an open
question.
\end{remark}

The Chebyshev recursion has another instructive interpretation:
its inner step is itself a relaxed alternating-projection step,
and the relaxation parameter matches the optimal one used by
$C_T$ on the worst-case ray.

\begin{remark}[Chebyshev's inner step is over-relaxed MAP, matching $C_T$ on the worst-case ray]\label{rem:chebyshev-over-relaxation}
Set $\tau\coloneqq d^{-1}=2/(a+b)>1$. The gradient step inside
the recursion~\eqref{eq:chebyshev-recursion} reads
$w_k-\tau Mw_k=(1-\tau)w_k+\tau\,Tw_k$, a Krasnoselski--Mann
relaxation of $T=P_VP_U$ pushed past $T(w_k)$; the value
$\tau=2/(a+b)$ uniformly minimizes
$\sup_{t\in[a,b]}|1-\tau t|$, with one-step rate $\rho_V$, so
setting $\omega_k\equiv 1$ reduces the recursion to the
optimally relaxed MAP and matches the rate of $C_T$, $A_T$,
$B_T$ on $V$. The maps are not identical on $V$:
by~\eqref{eq:three-agree-formula}, $C_T|_V$ uses the point-wise
Kantorovich step $\mu_v=\scal{v}{Mv}/\norm{Mv}^2$, Chebyshev the
uniform $\tau$, and the two agree exactly on the worst-case ray,
where $\mu_{v^\star}=2/(a+b)=\tau$ (proof of
Theorem~\ref{thm:linear-match}). The acceleration from $\rho_V$
to $\rho_{\mathrm{Cheb}}$ therefore comes entirely from the
polynomial filter, the factors $\omega_k$ and the two-step
memory $w_{k-1}$, not from a sharper choice of the inner step.
\end{remark}

This raises a complementary question. Instead of adding memory,
can one beat $\rho_V$ by pushing the over-relaxation directly
into the geometric construction of $C_T$, via over-reflected
reflections? The next two results show the answer is no: the
same Kantorovich extremizer that pins the rate of $C_T$ also
pins every member of the over-reflected family.

\begin{remark}[Over-reflecting only $R_U$ rescales the CRM step]\label{rem:over-reflection-CRM}
For $\lambda\in[0,2]$, set
$R_U^{(\lambda)}\coloneqq(1-\lambda)\Id+\lambda R_U$, keep $R_V$
standard, and define
\[
   C_T^{(\lambda)}(v)  \coloneqq  \circum\bigl\{v,\,R_U^{(\lambda)}(v),\,R_VR_U^{(\lambda)}(v)\bigr\}.
\]
Solving the equidistance system in the principal-angle frame
gives, for $v\in V$,
\begin{equation}\label{eq:overrefl-CRM-formula}
   C_T^{(\lambda)}(v)  =  v-\lambda\mu_vMv  =  v+\lambda\bigl(C_T(v)-v\bigr):
\end{equation}
the over-reflection rescales the standard CRM step by a factor
$\lambda$ along the same direction. The worst-case contraction
over $v\in V$ is
\[
   \rho_V^{(\lambda)}  =  \sqrt{\,1-\frac{4\lambda(2-\lambda)\,ab}{(a+b)^2}\,}  \ge  \rho_V,
\]
with equality iff $\lambda=1$, attained on the Kantorovich
extremizer of Lemma~\ref{lem:kantorovich}. Any other $\lambda$
either over-shoots ($\lambda>1$) or under-shoots ($\lambda<1$)
along the optimal CRM direction.
\end{remark}

If a single over-reflection only rescales the step, one might
still hope that over-reflecting both $R_U$ and $R_V$ opens up a
genuinely two-parameter family. The next proposition rules this
out.

\begin{proposition}[Double over-reflection cannot beat $\rho_V$]\label{prop:over-reflection-double}
For $\lambda,\mu\in\RR$ with $\lambda\mu\ne 0$, set
$R_U^{(\lambda)}\coloneqq(1-\lambda)\Id+\lambda R_U$ and
$R_V^{(\mu)}\coloneqq(1-\mu)\Id+\mu R_V$, and define
\[
   C_T^{(\lambda,\mu)}(v)  \coloneqq  \circum\bigl\{v,\,R_U^{(\lambda)}(v),\,R_V^{(\mu)}R_U^{(\lambda)}(v)\bigr\}.
\]
For $v\in V$,
\begin{equation}\label{eq:double-overrefl-formula}
   C_T^{(\lambda,\mu)}(v)  =  v-\lambda\bigl[2(1-\mu)+\mu_v(2\mu-1)\bigr]Mv+2\lambda(1-\mu)\,P_{V^\perp}P_Uv,
\end{equation}
with $\mu_v=\scal{v}{Mv}/\norm{Mv}^2$. The worst-case
contraction satisfies
\[
   \sup_{\substack{v\in V\cap(U\cap V)^\perp\\ v\ne 0}}\frac{\norm{C_T^{(\lambda,\mu)}(v)}}{\norm{v}}  \ge  \rho_V,
\]
with equality iff $(\lambda,\mu)=(1,1)$; the supremum is attained
on the Kantorovich extremizer
$v^\star=\sqrt{b}\,f_{s+1}+\sqrt{a}\,f_p$.
\end{proposition}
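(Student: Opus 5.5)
The plan is to compute $C_T^{(\lambda,\mu)}(v)$ explicitly in a two-dimensional frame adapted to $v$. Then we evaluate its norm at the Kantorovich extremizer and compare with the single-step lower bound of Proposition~\ref{prop:single-step-opt}.

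\emph{Vertices.} Set $d\coloneqq P_Uv-v$ and $q\coloneqq P_{V^\perp}P_Uv$. For $v\in V$ we have $P_Vd=T(v)-v=-Mv$ and $P_{V^\perp}d=q$, so
\[
d=-Mv+q,\qquad Mv\perp q .
\]
The second vertex is $y=R_U^{(\lambda)}v=v+2\lambda d$. Since $P_Vy-y=2\lambda(P_Vd-d)=-2\lambda q$, the third vertex is
\[
z=y+2\mu(P_Vy-y)=v+2\lambda\bigl(-Mv+(1-2\mu)q\bigr).
\]
Hence the side vectors are $s_1=2\lambda(-Mv+q)$ and $s_2=2\lambda(-Mv+(1-2\mu)q)$. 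Both lie in $\Span\{Mv,q\}$, and the affine hull of the vertices lies in $v+\Span\{Mv,q\}$.

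\emph{Circumcenter.} The reflections are no longer isometries, so the BBS identity (Lemma~\ref{lem:BBS}) is unavailable. We therefore work directly with Definition~\ref{def:circum}. Write the candidate as $c=v+\alpha Mv+\beta q$ and impose equidistance in the form $\scal{c-v}{s_i}=\tfrac12\norm{s_i}^2$ for $i=1,2$. Put $m\coloneqq\norm{Mv}^2$ and $n\coloneqq\norm{q}^2$, and use $\norm{d}^2=m+n=\scal{v}{Mv}=\dist^2(v,U)$ from Lemma~\ref{lem:M}. The conditions become a $2\times2$ linear system in $(\alpha,\beta)$ with coefficients in $m$, $n$, $\lambda$, $\mu$. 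When $\mu\ne1$ and $n>0$ the system is nonsingular. Solving it and substituting $\mu_v=(m+n)/m$ should give~\eqref{eq:double-overrefl-formula}, namely
\[
\alpha=-\lambda\bigl[2(1-\mu)+\mu_v(2\mu-1)\bigr],\qquad \beta=2\lambda(1-\mu).
\]
Two degenerate cases are handled separately, by continuity or by the projection convention as in Sub-case~2a of Theorem~\ref{thm:linesearch}: $\mu=1$, where $s_1$ and $s_2$ are mirror images, and $n=0$, where the vertices are collinear. At $\mu=1$ we recover Remark~\ref{rem:over-reflection-CRM}.

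\emph{Lower bound.} Because $Mv\in V$ and $q\in V^\perp$,
\[
\norm{C_T^{(\lambda,\mu)}(v)}^2=\norm{v+\alpha Mv}^2+\beta^2 n .
\]
Take $v=v^\star=\sqrt b\,f_{s+1}+\sqrt a\,f_p$. The first term is the squared residual of a single-step update $(\Id-c\,M)v^\star$, so the proof of Proposition~\ref{prop:single-step-opt} gives
\[
\norm{v^\star+\alpha Mv^\star}^2\ge\rho_V^2\norm{v^\star}^2,
\]
with equality iff $-\alpha=\mu^\star=\mu_{v^\star}$. This already yields the inequality $\sup\ge\rho_V$.

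\emph{Equality case.} At $v^\star$ we compute $n=\scal{v^\star}{Mv^\star}-\norm{Mv^\star}^2=2ab-ab(a+b)=ab(2-a-b)$, which is positive unless $a=b=1$. Equality therefore forces $\beta=0$, i.e.\ $\mu=1$ (as $\lambda\ne0$). It then forces $\lambda\mu_{v^\star}=\mu_{v^\star}$, i.e.\ $\lambda=1$. Conversely, $(1,1)$ is $C_T$, which attains $\rho_V$ by Proposition~\ref{prop:rho-sharp}.

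\emph{Expected obstacles.} The main obstacle is the explicit solution of the equidistance system, including checking that its determinant vanishes only in the degenerate cases, so that formula~\eqref{eq:double-overrefl-formula} is valid for all $\lambda\mu\ne0$. A second delicate point is the configuration $\theta_F=\theta_p=\pi/2$, where $n=0$ and $\rho_V=0$. There the "iff" must be rechecked directly: $C_T^{(\lambda,\mu)}(v)=(1-\lambda)v$ there, so equality still forces $\lambda=1$, but $\mu$ may be unconstrained. I would either note this exception or restrict to $\theta_F<\pi/2$.
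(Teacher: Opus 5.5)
Your proposal is correct. Its skeleton is the paper's: the vertices are written in the orthogonal pair $\{Mv,\,P_{V^\perp}P_Uv\}$, a $2\times2$ system yields the two coefficients, and at $v^\star$ the squared norm splits into two pieces. The $V$-part is bounded below by $\rho_V^2\norm{v^\star}^2$ exactly as in Proposition~\ref{prop:single-step-opt}, and the remaining term $\beta^2\norm{P_{V^\perp}P_Uv^\star}^2$ is nonnegative; this is the paper's separable quadratic in $(u,w)$.

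The genuine difference is how the circumcenter is characterized, and here your route is the sound one. The paper asserts that the BBS identity extends, $C_T^{(\lambda,\mu)}(v)=P_{\aff\{v,y,z\}}(\bar v)$, because $R_U^{(\lambda)}$ and $R_V^{(\mu)}$ fix $U\cap V$. Fixing $U\cap V$ does not give equidistance, however: $R_U^{(\lambda)}=\Id-2\lambda P_{U^\perp}$ is an isometry only for $\lambda\in\{0,1\}$. Moreover, projecting $\bar v=0$ onto $v+\Span\{Mv,P_{V^\perp}P_Uv\}$ would return $v-\mu_vMv$ regardless of $(\lambda,\mu)$, which is not \eqref{eq:double-overrefl-formula}. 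Your equidistance system, in your notation $-\alpha m+\beta n=\lambda(m+n)$ and $-\alpha m+(1-2\mu)\beta n=\lambda\bigl(m+(1-2\mu)^2n\bigr)$, does give $\beta=2\lambda(1-\mu)$ and $\alpha=-\lambda(m-n+2\mu n)/m$. This is \eqref{eq:double-overrefl-formula} once you substitute $n/m=\mu_v-1$.

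Your caveat at $\theta_F=\theta_p=\pi/2$ is also correct, and the paper misses it. The paper's minimizer $w_\star=0$ is unique only when $B>0$. In that configuration every $(1,\mu)$ attains $\rho_V=0$, so the ``iff'' requires $\theta_F<\pi/2$.

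One slip: the determinant of your system is $2\mu mn$, so $\mu=1$ is not degenerate (the mirror-image triangle is nondegenerate). Only $n=0$ is degenerate; there $y=z$, and the midpoint $v-\lambda Mv$ agrees with the formula because then $\mu_v=1$.

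One caveat: like the paper, you only show that $v^\star$ witnesses the lower bound. The literal clause that the supremum is attained at $v^\star$ is proved by neither argument, and it fails in general. For $\mu=1$, $\lambda=3$, $\theta_F<\theta_p$, the squared ratio is $1+3K$ with $K$ the Kantorovich quotient. This is maximized on eigenvectors of $M$, giving supremum $2$, not at $v^\star$.
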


\begin{proof}
Write
$y\coloneqq R_U^{(\lambda)}(v)=v-2\lambda P_{U^\perp}v$ and
$z\coloneqq R_V^{(\mu)}(y)=y-2\mu P_{V^\perp}y$. Since $v\in V$,
$P_{V^\perp}v=0$, so
$P_{V^\perp}y=-2\lambda P_{V^\perp}P_{U^\perp}v$ and
$z=y+4\lambda\mu P_{V^\perp}P_{U^\perp}v$. For $v\in V$,
\[
   P_Uv=P_VP_Uv+P_{V^\perp}P_Uv=(v-Mv)+P_{V^\perp}P_Uv,
\]
using $T(v)=P_VP_Uv$ and $Mv=v-T(v)$ from Lemma~\ref{lem:M}.
Hence $P_{U^\perp}v=Mv-P_{V^\perp}P_Uv$, and
$y-v=-2\lambda Mv+2\lambda P_{V^\perp}P_Uv$. For $z-v$, since
$Mv\in V$ and $P_{V^\perp}P_Uv\in V^\perp$,
$P_{V^\perp}P_{U^\perp}v=-P_{V^\perp}P_Uv$, giving
$z-v=-2\lambda Mv+2\lambda(1-2\mu)P_{V^\perp}P_Uv$. Both
displacements lie in
$E\coloneqq\Span\{Mv,P_{V^\perp}P_Uv\}$, with these two basis
vectors orthogonal ($Mv\in V$, $P_{V^\perp}P_Uv\in V^\perp$).

For $s\in U\cap V$, the BBS identity~\eqref{eq:BBS} extends to
$C_T^{(\lambda,\mu)}$: the convex combinations $R_U^{(\lambda)}$
and $R_V^{(\mu)}$ fix $U\cap V$, so the equidistance argument of
Lemma~\ref{lem:BBS} gives
$C_T^{(\lambda,\mu)}(v)=P_{\aff\{v,y,z\}}(\bar v)$. Translate to
$\bar v=0$; the linear part of $\aff\{v,y,z\}$ is
$\Span\{y-v,z-v\}\subset E$. Decomposing $-v$ along the
orthogonal basis $\{Mv,P_{V^\perp}P_Uv\}$ of $E$ (component
$-\scal{v}{Mv}/\norm{Mv}^2\cdot Mv$ along $Mv$, zero along
$P_{V^\perp}P_Uv$ since $v\in V$) and solving the $2\times 2$
linear system for the coefficients of
$C_T^{(\lambda,\mu)}(v)-v=u\,Mv+w\,P_{V^\perp}P_Uv$ yields
$u=-\lambda[2(1-\mu)+\mu_v(2\mu-1)]$ and $w=2\lambda(1-\mu)$,
which is~\eqref{eq:double-overrefl-formula}. The restriction
$\mu\ne 0$ ensures $z\ne y$ and $\lambda\ne 0$ keeps the
iteration non-trivial. At $(\lambda,\mu)=(1,1)$, $u=-\mu_v$ and
$w=0$, recovering $C_T(v)=v-\mu_v Mv$ from
Theorem~\ref{thm:linesearch}.

Evaluate at $v=v^\star$.
Three orthogonalities are at play:
$Mv^\star\perp P_{V^\perp}P_Uv^\star$,
$v^\star\perp P_{V^\perp}P_Uv^\star$, and the Pythagorean
identity
\begin{equation}\label{eq:pythag-V-overrefl}
   \scal{v^\star}{Mv^\star}  =  \norm{Mv^\star}^2+\norm{P_{V^\perp}P_Uv^\star}^2,
\end{equation}
obtained by Pythagoras on
$P_Uv=(v-Mv)+P_{V^\perp}P_Uv$ combined with
$\scal{v}{Mv}=\norm{v}^2-\norm{P_Uv}^2$ (for $v\in V$). Setting
$A=\norm{Mv^\star}^2=ab(a+b)$ and
$B=\norm{P_{V^\perp}P_Uv^\star}^2=ab(2-a-b)$, expansion gives
\[
   \norm{C_T^{(\lambda,\mu)}(v^\star)}^2
     =  \norm{v^\star}^2+2u(A+B)+u^2A+w^2B,
\]
a separable quadratic in $(u,w)$ whose minimum over $\RR^2$ is
at $u_\star=-(A+B)/A=-\mu_{v^\star}$ and $w_\star=0$, with value
\[
   \norm{v^\star}^2-\frac{(A+B)^2}{A}
   =(a+b)-\frac{(2ab)^2}{ab(a+b)}
   =\frac{(a-b)^2}{a+b}
   =\rho_V^2\norm{v^\star}^2.
\]
From $w_\star=0$ and $\lambda\ne 0$, $\mu=1$; substituting in
$u$ gives $u=-\lambda(A+B)/A$, equal to $u_\star$ iff
$\lambda=1$. Hence the supremum is bounded below by $\rho_V$
with equality only at $(\lambda,\mu)=(1,1)$.
\end{proof}

The proof exposes a structural feature worth flagging. The
three directions $v^\star$, $Mv^\star$, $P_{V^\perp}P_Uv^\star$
are mutually orthogonal on the Kantorovich extremizer, with the
Pythagorean identity~\eqref{eq:pythag-V-overrefl} tying their
squared norms together. Any single-step method built from the
points $\{v,R_U^{(\lambda)}v,R_V^{(\mu)}R_U^{(\lambda)}v\}$ —
circumcenter, affine combination, or any nonlinear selector —
admits the same separable-quadratic bound on $v^\star$, and
therefore the same floor $\rho_V$. This is a geometric reading
of Proposition~\ref{prop:single-step-opt}: $\rho_V$ is imposed
by Kantorovich extremality, not by any specific algebraic
feature of $C_T$.

\subsection{Two-parameter linearization: the linear CDR family}\label{ssec:parametric-CDR}

A different route to a faster rate, complementary to Chebyshev
acceleration, keeps the iteration single-step and memoryless
but linearizes the circumcenter. Replacing the circumcenter of
$\{x,R_Ux,R_VR_Ux\}$ by a fixed affine combination of these
three points produces the two-parameter linear family
\begin{equation}\label{eq:CDR-parametric}
   C_{T,\gamma,\beta}\coloneqq(1-\gamma-\beta)\,\Id+\beta\,R_VR_U+\gamma\,R_U,\quad
   \gamma,\beta\in\RR.
\end{equation}
Several classical operators are recovered as special cases:
$(\gamma,\beta)=(0,\tfrac12)$ gives the classical
Douglas--Rachford operator $T_{U,V}=\tfrac12(\Id+R_VR_U)$;
$\gamma=0$ alone gives the relaxed Douglas--Rachford
$T_{U,V,\beta}=(1-\beta)\Id+\beta R_VR_U$; and $\beta=0$ gives
the relaxed reflection $(1-\gamma)\Id+\gamma R_U$. Unlike the
geometric $C_T$, $C_{T,\gamma,\beta}$ is linear in its argument
for each fixed pair $(\gamma,\beta)$.

Rewriting~\eqref{eq:CDR-parametric} in terms of the
Douglas--Rachford operator clarifies its structure. Using
$R_U=2P_U-\Id$ and $T_{U,V}=\tfrac12(\Id+R_VR_U)$, a direct
calculation gives
\begin{equation}\label{eq:CDR-alt}
   C_{T,\gamma,\beta}=(1-2\beta)\,\Id+2\beta\,T_{U,V}-2\gamma\,P_{U^\perp}.
\end{equation}
This identifies $C_{T,\gamma,\beta}$ as the $\beta$-relaxed
Douglas--Rachford operator perturbed by a multiple of
$P_{U^\perp}$. The perturbation $\gamma>0$ is what distinguishes
$C_{T,\gamma,\beta}$ from Douglas--Rachford: it shrinks the
fixed-point set from $\Fix T_{U,V}=(U\cap V)\oplus(U^\perp\cap V^\perp)$
down to the desired $U\cap V$.

\begin{lemma}[Fixed-point set]\label{lem:cdr-fix}
Suppose $\beta,\gamma>0$. Then $\Fix C_{T,\gamma,\beta}=U\cap V$.
\end{lemma}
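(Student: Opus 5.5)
The plan is to prove the two inclusions $U\cap V\subseteq\Fix C_{T,\gamma,\beta}$ and $\Fix C_{T,\gamma,\beta}\subseteq U\cap V$ separately, working throughout with the rewritten form~\eqref{eq:CDR-alt}, which expresses $C_{T,\gamma,\beta}$ through $T_{U,V}$ and $P_{U^\perp}$.

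\emph{The easy inclusion.} If $x\in U\cap V$, then $R_Ux=x$ and $R_VR_Ux=x$. Plugging into~\eqref{eq:CDR-parametric} gives
\[
   C_{T,\gamma,\beta}x=(1-\gamma-\beta)x+\beta x+\gamma x=x,
\]
so every point of $U\cap V$ is fixed.

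\emph{The reverse inclusion.} Let $x$ be a fixed point. Using~\eqref{eq:CDR-alt}, the fixed-point equation $C_{T,\gamma,\beta}x=x$ becomes
\[
   2\beta\,(T_{U,V}x-x)=2\gamma\,P_{U^\perp}x .
\]
Since $T_{U,V}-\Id=\tfrac12(R_VR_U-\Id)$, this reads $\beta(R_VR_Ux-x)=2\gamma P_{U^\perp}x$. The aim is to show $P_{U^\perp}x=0$; then $x\in U$, and because $\beta>0$ the equation gives $R_VR_Ux=x$, hence $R_Vx=x$, so $x\in V$ as well.

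To force $P_{U^\perp}x=0$, take the inner product of the fixed-point equation with $x$. The left side is
\[
   \beta\bigl(\scal{R_VR_Ux}{x}-\norm{x}^2\bigr)\le 0,
\]
because $R_VR_U$ is an isometry and Cauchy--Schwarz gives $\scal{R_VR_Ux}{x}\le\norm{x}^2$. The right side is
\[
   2\gamma\scal{P_{U^\perp}x}{x}=2\gamma\norm{P_{U^\perp}x}^2\ge 0 .
\]
The two sides are equal, so both vanish. Since $\gamma>0$, this gives $P_{U^\perp}x=0$, which completes the argument.

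The only step needing care is the sign comparison: it must be the isometry of $R_VR_U$, not mere nonexpansiveness, that drives it. No principal-angle machinery is needed. The proof only invokes $\gamma>0$ and $\beta>0$, and it never uses a strict inequality in Cauchy--Schwarz.
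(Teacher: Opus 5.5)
Your proof is correct, and it takes a genuinely different, shorter route than the paper.

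The paper's argument runs as follows. It expands \eqref{eq:CDR-alt} using $T_{U,V}=P_VP_U+P_{V^\perp}P_{U^\perp}$. Projecting the fixed-point equation onto $V$ gives $P_{U^\perp}x\in V^\perp$. Applying $P_U$ and a Pythagorean norm comparison then gives $P_Ux\in U\cap V$. Together these place $x$ in $(U\cap V)\oplus(U^\perp\cap V^\perp)=\Fix T_{U,V}$, and substituting back kills $P_{U^\perp}x$.

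You instead write the equation as $\beta(\Id-R_VR_U)x+2\gamma P_{U^\perp}x=0$ and pair it with $x$. Both quadratic forms are nonnegative, so each vanishes. This is a one-line monotonicity argument that never needs the Douglas--Rachford fixed-point set.

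The price is that your argument uses the sign hypothesis essentially: it needs $\beta$ and $\gamma$ of the same sign. If $\beta<0<\gamma$ (or $\gamma<0<\beta$), the two sides of your inner-product identity have the same sign and nothing follows. The paper's projection argument only ever divides by $\beta+\gamma$, $\beta$, and $\gamma$, and its norm comparison is sign-free. So it in fact proves $\Fix C_{T,\gamma,\beta}=U\cap V$ for all real $\beta,\gamma$ with $\beta\gamma(\beta+\gamma)\neq 0$. That is the natural generality, given the eigenvalues $1-2\beta$, $1-2\gamma$, $1-2\gamma-2\beta$ of Proposition~\ref{prop:cdr-spectrum}. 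For the lemma as stated, with $\beta,\gamma>0$, this difference is immaterial.

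One small correction to your closing remark: isometry is not what drives the sign step. For the linear map $R_VR_U$, nonexpansiveness $\norm{R_VR_Ux}\le\norm{x}$ already yields $\scal{R_VR_Ux}{x}\le\norm{x}^2$ by Cauchy--Schwarz.
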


\begin{proof}
If $x\in U\cap V$, then $P_Ux=P_Vx=x$, so $T_{U,V}x=x$ and
$P_{U^\perp}x=0$; \eqref{eq:CDR-alt} gives
$C_{T,\gamma,\beta}x=x$.

Conversely, suppose $C_{T,\gamma,\beta}x=x$. Substituting
$T_{U,V}=P_VP_U+P_{V^\perp}P_{U^\perp}$ and $P_{U^\perp}=\Id-P_U$
into~\eqref{eq:CDR-alt} expands the fixed-point equation as
\[
   x=2\beta\,P_VP_Ux+2\beta\,P_{V^\perp}P_{U^\perp}x+2\gamma\,P_Ux+(1-2\beta-2\gamma)\,x.
\]
Project onto $V$: using $P_VP_{V^\perp}=0$,
\[
   P_Vx=2(\beta+\gamma)\,P_VP_Ux+(1-2\beta-2\gamma)\,P_Vx,
\]
which gives $2(\beta+\gamma)P_VP_{U^\perp}x=0$, hence
$P_VP_{U^\perp}x=0$, that is,
\begin{equation}\label{eq:perpU-in-perpV}
   P_{U^\perp}x\in V^\perp.
\end{equation}
Apply $P_U$ to the fixed-point equation. Using $P_UP_{U^\perp}=0$
and~\eqref{eq:perpU-in-perpV} (which gives
$T_{U,V}x=P_VP_Ux+P_{U^\perp}x$ and so
$P_UT_{U,V}x=P_UP_VP_Ux$), we obtain
$2\beta(P_UP_VP_Ux-P_Ux)=0$, hence $P_UP_VP_Ux=P_Ux$. The
Pythagorean decomposition
$P_VP_Ux=P_U(P_VP_Ux)+P_{U^\perp}(P_VP_Ux)$ combined with
$\norm{P_VP_Ux}\le\norm{P_Ux}$ forces
$P_{U^\perp}(P_VP_Ux)=0$, hence $P_VP_Ux=P_Ux$, i.e.,
\begin{equation}\label{eq:U-in-V}
   P_Ux\in U\cap V.
\end{equation}
Combining \eqref{eq:perpU-in-perpV} and~\eqref{eq:U-in-V},
$x\in(U\cap V)\oplus(U^\perp\cap V^\perp)=\Fix T_{U,V}$, so
$T_{U,V}x=x$. Substituting into~\eqref{eq:CDR-alt} reduces the
fixed-point equation to $-2\gamma P_{U^\perp}x=0$, hence
$P_{U^\perp}x=0$, hence $x=P_Ux\in U\cap V$.
\end{proof}

\begin{lemma}[Averaging property]\label{lem:cdr-averaged}
For $\gamma,\beta\ge 0$ with $0<\gamma+\beta<1$, the operator
$C_{T,\gamma,\beta}$ is $(\gamma+\beta)$-averaged in the sense
of \citep[Definition~4.33]{BC2017}: there is a nonexpansive map
$G$ with
$C_{T,\gamma,\beta}=(1-(\gamma+\beta))\Id+(\gamma+\beta)G$.
\end{lemma}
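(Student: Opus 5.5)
The natural choice is $G \coloneqq (C_{T,\gamma,\beta}-(1-\alpha)\Id)/\alpha$ with $\alpha\coloneqq\gamma+\beta\in(0,1)$. Since $C_{T,\gamma,\beta}=(1-\alpha)\Id+\alpha G$ then holds by construction, the whole task is to show that $G$ is nonexpansive. From \eqref{eq:CDR-parametric}, the identity cancels cleanly:
\[
   G=\frac{\beta}{\gamma+\beta}\,R_VR_U+\frac{\gamma}{\gamma+\beta}\,R_U.
\]
So $G$ is a convex combination of two linear isometries; the weights are nonnegative because $\gamma,\beta\ge 0$, and they sum to one.

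Nonexpansiveness then follows from the triangle inequality. Both $R_U$ and $R_VR_U$ are isometries, as already used in Lemmas~\ref{lem:BBS} and~\ref{lem:equal-sides}. Hence for all $x,y$,
\[
   \norm{Gx-Gy}\le\tfrac{\beta}{\gamma+\beta}\norm{R_VR_U(x-y)}+\tfrac{\gamma}{\gamma+\beta}\norm{R_U(x-y)}=\norm{x-y}.
\]
Thus $G$ is nonexpansive (indeed $\norm{G}\le 1$ as a linear operator), and $C_{T,\gamma,\beta}$ is $\alpha$-averaged with $\alpha=\gamma+\beta$, matching \citep[Definition~4.33]{BC2017}.

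The only point needing care is the range of $\alpha$. The definition of an averaged operator requires $\alpha\in(0,1)$, which is exactly the hypothesis $0<\gamma+\beta<1$. The lower bound $\gamma+\beta>0$ keeps the division defining $G$ well-posed, and nonnegativity of each parameter is what makes the combination convex. There is no real obstacle; the argument is a direct rewriting followed by the triangle inequality.

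If desired, one can add as a remark that averagedness of $C_{T,\gamma,\beta}$, combined with $\Fix C_{T,\gamma,\beta}=U\cap V$ from Lemma~\ref{lem:cdr-fix} and the Krasnosel'ski\u{\i}--Mann theorem, yields convergence of the iterates to a point of $U\cap V$.
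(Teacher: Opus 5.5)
Your proof is correct and matches the paper's argument: the paper also sets $G=\frac{\beta}{\gamma+\beta}R_VR_U+\frac{\gamma}{\gamma+\beta}R_U$ and gets nonexpansiveness from $G$ being a convex combination of the isometries $R_VR_U$ and $R_U$. Your remarks on the range of $\gamma+\beta$ and on the nonnegativity of the weights spell out what the paper leaves implicit.
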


\begin{proof}
Set
$G\coloneqq\frac{\beta}{\gamma+\beta}R_VR_U+\frac{\gamma}{\gamma+\beta}R_U$.
Substituting into~\eqref{eq:CDR-parametric} gives the claimed
form. $G$ is a convex combination of $R_VR_U$ and $R_U$, both
isometries, hence nonexpansive.
\end{proof}

The spectrum of $C_{T,\gamma,\beta}$ admits a closed-form
description in terms of the principal angles.

\begin{proposition}[Spectrum of $C_{T,\gamma,\beta}$]\label{prop:cdr-spectrum}
Let $U,V$ be subspaces of $\RR^n$ with $\dim V=p$ and
$\dim(U\cap V)=s$. Let $\theta_1,\ldots,\theta_p$ denote the
principal angles between $U$ and $V$ (with
$\theta_1=\cdots=\theta_s=0$ and $\theta_{s+1}=\theta_F$). For
$r\in\{0,1\}$, define
\begin{equation}\label{eq:f0-f1}
   f_r(\theta)\coloneqq 1-\gamma-2\beta\sin^2\theta+(-1)^r\sqrt{\gamma^2-\beta^2\sin^2(2\theta)},
\end{equation}
where the square root takes its complex value when the radicand
is negative. The spectrum of $C_{T,\gamma,\beta}$ is
\begin{equation}\label{eq:cdr-spectrum}
   \sigma(C_{T,\gamma,\beta})=\{1\}\cup\bigcup_{k=s+1}^{p^\star}\bigl\{f_0(\theta_k),f_1(\theta_k)\bigr\}\cup\mathcal{R},
\end{equation}
where $p^\star$ is the largest index with $\theta_k<\pi/2$ and
\[
   \mathcal{R}\subseteq\{1-2\gamma,\,1-2\gamma-2\beta,\,1-2\beta\}
\]
is determined by the dimensions of $V\cap U^\perp$ (contributing
$1-2\gamma-2\beta$), $U\cap V^\perp$ (contributing $1-2\beta$),
and $(U+V)^\perp$ (contributing $1-2\gamma$). Each eigenvalue
in~\eqref{eq:cdr-spectrum} is semisimple outside the
discriminant-vanishing locus
$\gamma^2=\beta^2\sin^2(2\theta_k)$, on which a two-dimensional
block can degenerate to a Jordan block.
\end{proposition}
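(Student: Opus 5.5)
The plan is to block-diagonalize $C_{T,\gamma,\beta}$ with the principal-angle frame of Theorem~\ref{lem:frame}, extended to an orthogonal decomposition of all of $\RR^n$ (the CS decomposition), and then read off the spectrum block by block. Throughout I use the form \eqref{eq:CDR-parametric}, since it involves only $R_U$ and $R_VR_U$, and both reflections act very simply on each block.

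\emph{Step 1 (decomposition).} I would show that $\RR^n$ is the orthogonal direct sum
\[
(U\cap V)\oplus(V\cap U^\perp)\oplus(U\cap V^\perp)\oplus(U^\perp\cap V^\perp)\oplus\bigoplus_{k\in K,\ 0<\theta_k<\pi/2}\Pi_k,\qquad \Pi_k\coloneqq\Span\{f_k,e_k\}.
\]
The argument runs as follows.
\begin{itemize}
\item By Theorem~\ref{lem:frame}, $V$ is the span of the $f_k$. The $f_k$ with $\theta_k=0$ span $U\cap V$, those with $\theta_k=\pi/2$ span $V\cap U^\perp$, and the remaining ones are the generic $f_k$.
\item Applying the same SVD argument to $P_V|_U$, the $e_k$ with $0<\theta_k<\pi/2$ together with $U\cap V$ and $U\cap V^\perp$ span $U$. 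Here the cosines coincide, because $P_Ve_k=\cos\theta_k f_k$.
\item The planes $\Pi_k$ are two-dimensional and pairwise orthogonal. This follows from $\scal{e_j}{f_k}=\cos\theta_k\delta_{jk}$ and the orthonormality of each family.
\item What remains of $\RR^n$ is $(U+V)^\perp=U^\perp\cap V^\perp$.
\end{itemize}
Each summand is invariant under $P_U$ and $P_V$, hence under $R_U$, $R_V$ and $C_{T,\gamma,\beta}$.

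\emph{Step 2 (the four trivial blocks).} On each of these blocks $R_U$ and $R_V$ act as $\pm\Id$:
\begin{itemize}
\item On $U\cap V$ both are $\Id$, so $C_{T,\gamma,\beta}=\Id$ and the eigenvalue is $1$.
\item On $V\cap U^\perp$ we have $R_U=-\Id$ and $R_VR_U=-\Id$, so $C_{T,\gamma,\beta}=(1-\gamma-\beta)-\beta-\gamma=1-2\gamma-2\beta$.
\item On $U\cap V^\perp$ we have $R_U=\Id$ and $R_VR_U=-\Id$, giving $1-2\beta$.
\item On $(U+V)^\perp$ we have $R_U=-\Id$ and $R_VR_U=\Id$, giving $1-2\gamma$.
\end{itemize}
These eigenvalues occur exactly when the corresponding spaces are nonzero, which produces $\{1\}$ and $\mathcal R$. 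Every operator here is diagonal, so all of these eigenvalues are semisimple.

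\emph{Step 3 (the generic planes, the main computation).} Fix $k$ with $0<\theta_k<\pi/2$. Write $e_k=\cos\theta\,f_k+\sin\theta\,g_k$ with $g_k$ a unit vector in $\Pi_k$ orthogonal to $f_k$. Since $e_k-P_Ve_k=e_k-\cos\theta\,f_k\in V^\perp$, we get $g_k\in V^\perp$. In the basis $(f_k,g_k)$:
\begin{itemize}
\item $R_V=\operatorname{diag}(1,-1)$;
\item $R_U=2e_ke_k^\top-\Id$ is the reflection matrix with rows $(\cos2\theta,\sin2\theta)$ and $(\sin2\theta,-\cos2\theta)$;
\item $R_VR_U$ is therefore the rotation with rows $(\cos2\theta,\sin2\theta)$ and $(-\sin2\theta,\cos2\theta)$.
\end{itemize}
The half-trace of the block is $c=1-\gamma-\beta+\beta\cos2\theta=1-\gamma-2\beta\sin^2\theta$. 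The traceless part $N=C-c\Id$ has rows $(\gamma\cos2\theta,(\beta+\gamma)\sin2\theta)$ and $((\gamma-\beta)\sin2\theta,-\gamma\cos2\theta)$. Its determinant is $\beta^2\sin^22\theta-\gamma^2$, so the eigenvalues are $c\pm\sqrt{\gamma^2-\beta^2\sin^2 2\theta}$, which are $f_0(\theta_k)$ and $f_1(\theta_k)$.

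For semisimplicity: if the discriminant is nonzero, the block has two distinct eigenvalues and is diagonalizable. If the discriminant vanishes, $N$ is nilpotent, so the block is either scalar or a Jordan block. It is scalar only if $N=0$, i.e.\ $\gamma=0$ and $\beta\sin2\theta=0$. This gives the stated degeneration on the locus $\gamma^2=\beta^2\sin^2(2\theta_k)$.

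\emph{Main obstacle.} I expect the delicate step to be Step 1, namely certifying that the listed pieces exhaust $\RR^n$ with the correct multiplicities. In particular one must check that the $U$-side SVD pairs each generic $e_k$ with exactly one $f_k$, and that no part of $U$ is missed beyond $U\cap V^\perp$. The rest is bookkeeping.
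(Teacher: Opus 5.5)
Your proposal is correct and follows essentially the paper's proof: the same orthogonal decomposition into $U\cap V$, $V\cap U^\perp$, $U\cap V^\perp$, $(U+V)^\perp$ and the planes $\Span\{f_k,e_k\}$, the same scalar eigenvalues on the four trivial pieces, and a $2\times 2$ eigenvalue computation on each plane. The paper works in the non-orthonormal basis $(f_k,e_k)$ via trace and determinant, whereas you use the orthonormal basis $(f_k,g_k)$ and the traceless part, which is only a cosmetic difference; your version is slightly more complete, since you justify that the pieces exhaust $\RR^n$ (the paper only asserts it) and your nilpotent-$N$ argument shows the block is a genuine Jordan block on the whole locus $\gamma^2=\beta^2\sin^2(2\theta_k)$ unless $\gamma=\beta=0$, where the paper gives just the example $\theta_k=\pi/4$, $\gamma=\beta>0$.
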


\begin{proof}
Recall the principal-angle frame from Theorem~\ref{lem:frame}:
an orthonormal basis $\{f_1,\ldots,f_p\}$ of $V$ and an
orthonormal family $\{e_k\}_{k:\cos\theta_k>0}$ in $U$ with
$P_Uf_k=\cos\theta_k\,e_k$ and $P_Ve_k=\cos\theta_k\,f_k$. We
compute the action of $C_{T,\gamma,\beta}$ on five orthogonal
pieces of $\RR^n$.

(i) $U\cap V=\Span\{f_1,\ldots,f_s\}$. For $k\le s$,
$\theta_k=0$ and $f_k\in U\cap V$, so $R_Uf_k=R_Vf_k=f_k$ and
$C_{T,\gamma,\beta}f_k=f_k$. Eigenvalue $1$ with multiplicity
at least $s$.

(ii) Two-dimensional pairs $\Span\{f_k,e_k\}$ for
$s+1\le k\le p^\star$ (i.e., $0<\theta_k<\pi/2$). Set
$c\coloneqq\cos\theta_k$. Direct computation in $(f_k,e_k)$
gives $R_U\colon f_k\mapsto 2ce_k-f_k$, $e_k\mapsto e_k$ and
$R_V\colon f_k\mapsto f_k$, $e_k\mapsto 2cf_k-e_k$. Composing
and substituting into~\eqref{eq:CDR-parametric}:
\[
   C_{T,\gamma,\beta}\big|_{\Span\{f_k,e_k\}}=\begin{pmatrix}
      1-2\gamma+2\beta\cos(2\theta_k) & 2\beta\cos\theta_k\\[1pt]
      2(\gamma-\beta)\cos\theta_k & 1-2\beta
   \end{pmatrix}.
\]
Trace $2-2\gamma-4\beta\sin^2\theta_k$, determinant
$(1-2\gamma)+4\beta(\beta+\gamma-1)\sin^2\theta_k$, discriminant
$4(\gamma^2-\beta^2\sin^2(2\theta_k))$ (using
$\sin^2(2\theta)=4\sin^2\theta\cos^2\theta$). Hence the
eigenvalues are
$\tfrac12(2-2\gamma-4\beta\sin^2\theta_k\pm 2\sqrt{\gamma^2-\beta^2\sin^2(2\theta_k)})=f_0(\theta_k)$
or $f_1(\theta_k)$.

(iii) $V\cap U^\perp=\Span\{f_k:\theta_k=\pi/2\}$. $P_Uf_k=0$,
so $R_Uf_k=-f_k$, $R_Vf_k=f_k$, $R_VR_Uf_k=-f_k$, hence
$C_{T,\gamma,\beta}f_k=(1-2\gamma-2\beta)f_k$.

(iv) $U\cap V^\perp$. $P_Uu=u$, $P_Vu=0$, so $R_Uu=u$, $R_Vu=-u$,
$R_VR_Uu=-u$, hence $C_{T,\gamma,\beta}u=(1-2\beta)u$.

(v) $(U+V)^\perp=U^\perp\cap V^\perp$. $R_Uw=R_Vw=-w$,
$R_VR_Uw=w$, hence $C_{T,\gamma,\beta}w=(1-2\gamma)w$.

The five subspaces are mutually orthogonal and span $\RR^n$, so
the union of eigenvalues is~\eqref{eq:cdr-spectrum}. Blocks
(i), (iii), (iv), (v) are scalar (diagonal). Each block (ii)
has distinct eigenvalues whenever the discriminant
$\gamma^2-\beta^2\sin^2(2\theta_k)$ is nonzero. On the
discriminant-vanishing locus the two eigenvalues collide; a
direct check shows the block degenerates to a Jordan block in
some configurations (e.g., $\theta_k=\pi/4$ with
$\gamma=\beta>0$ gives $\bigl(\begin{smallmatrix}1-2\beta &
2\beta\\0 & 1-2\beta\end{smallmatrix}\bigr)$ in an orthonormal
basis of the block, non-diagonalizable). The eigenvalue
formula~\eqref{eq:f0-f1} continues to give the correct
(repeated) value on this locus; only diagonalizability fails.
\end{proof}

\begin{lemma}[Monotonicity of $f_0,f_1$]\label{lem:f-monotone}
Suppose $\gamma,\beta>0$ with $\gamma+\beta<1$, and let
$f_0,f_1\colon[0,\pi/2]\to\CC$ be defined by~\eqref{eq:f0-f1}.
\begin{enumerate}[leftmargin=2em,topsep=2pt,itemsep=2pt]
\item\label{it:f-case-A}
\textbf{Case $\beta<\gamma$.} The radicand
$\gamma^2-\beta^2\sin^2(2\theta)$ is positive on $[0,\pi/2]$, so
$f_0$ and $f_1$ are real-valued. Both are strictly decreasing,
with
\[
   1=f_0(0)>f_0(\theta)>f_0(\pi/2)=1-2\beta,\quad
   1-2\gamma=f_1(0)>f_1(\theta)>f_1(\pi/2)=1-2\gamma-2\beta.
\]
\item\label{it:f-case-B}
\textbf{Case $\beta\ge\gamma$.} Set
$\widehat\theta\coloneqq\tfrac12\arcsin(\gamma/\beta)\in(0,\pi/4]$.
The radicand is non-negative iff
$\theta\in[0,\widehat\theta\,]\cup[\pi/2-\widehat\theta,\pi/2]$.
On $[0,\widehat\theta\,]$, $f_0$ is decreasing and $f_1$
increasing; on $[\pi/2-\widehat\theta,\pi/2]$, $f_0$ is
increasing and $f_1$ decreasing. On
$(\widehat\theta,\pi/2-\widehat\theta)$, both $f_0,f_1$ are
complex with
\[
   |f_0(\theta)|=|f_1(\theta)|=\sqrt{1-2\gamma-4\beta(1-\beta-\gamma)\sin^2\theta},
\]
strictly decreasing in $\theta$.
\end{enumerate}
\end{lemma}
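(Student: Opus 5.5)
The plan is to reparametrize by $t\coloneqq\sin^2\theta\in[0,1]$. This map is strictly increasing in $\theta$ on $[0,\pi/2]$, so every monotonicity claim in $\theta$ follows from the same claim in $t$. Using $\sin^2(2\theta)=4t(1-t)=1-(1-2t)^2$, the radicand becomes
\[
   D(t)\coloneqq\gamma^2-\beta^2\sin^2(2\theta)=(\gamma^2-\beta^2)+\beta^2(1-2t)^2 ,
\]
and $f_r=1-\gamma-2\beta t+(-1)^r\sqrt{D(t)}$. Wherever $D>0$ the derivatives are
\[
   \frac{d f_0}{dt}=-2\beta-\frac{2\beta^2(1-2t)}{\sqrt{D}},\qquad
   \frac{d f_1}{dt}=-2\beta+\frac{2\beta^2(1-2t)}{\sqrt{D}} ,
\]
since $D'(t)=-4\beta^2(1-2t)$. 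Everything then reduces to comparing $|\beta(1-2t)|$ with $\sqrt D$, and this comparison is governed by the sign of $\gamma^2-\beta^2$.

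\textbf{Case $\beta<\gamma$.} Here $D\ge\gamma^2-\beta^2>0$, so both $f_0$ and $f_1$ are real and smooth. The inequality $\beta^2(1-2t)^2<D$ is equivalent to $0<\gamma^2-\beta^2$. Hence $\bigl|2\beta^2(1-2t)/\sqrt D\bigr|<2\beta$, and both derivatives are strictly negative on $[0,1]$. The endpoint values follow from $D(0)=D(1)=\gamma^2$:
\begin{itemize}
\item at $t=0$: $f_0=1$ and $f_1=1-2\gamma$;
\item at $t=1$: $f_0=1-2\beta$ and $f_1=1-2\gamma-2\beta$.
\end{itemize}
Strict decrease then gives the strict chains in the statement for interior $\theta$.

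\textbf{Case $\beta\ge\gamma$, real branches.} Write $1-2t=\cos(2\theta)$. Then $D\ge0$ if and only if $|\cos 2\theta|\ge\sqrt{1-\gamma^2/\beta^2}=\cos(2\widehat\theta)$, where $\widehat\theta=\tfrac12\arcsin(\gamma/\beta)\in(0,\pi/4]$ because $\gamma/\beta\in(0,1]$. This is exactly the condition $\theta\in[0,\widehat\theta]\cup[\pi/2-\widehat\theta,\pi/2]$.

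On these sets we now have $D\le\beta^2(1-2t)^2$, so the term $2\beta^2(1-2t)/\sqrt D$ dominates $2\beta$ in absolute value. Its sign is the sign of $1-2t$.
\begin{itemize}
\item For $\theta\le\widehat\theta$ we have $t<1/2$, so $f_0'<0$ and $f_1'\ge0$.
\item For $\theta\ge\pi/2-\widehat\theta$ the signs reverse, so $f_0$ is increasing and $f_1$ is decreasing.
\end{itemize}
At the branch points, where $D=0$, the derivative blows up. I would handle this by using continuity of $\sqrt{D}$ and applying monotonicity on the closed intervals. The main delicate point is the boundary case $\beta=\gamma$. There $\sqrt D=\beta|1-2t|$, and $f_1$ (respectively $f_0$) is constant on the relevant side. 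So the monotonicity there is only weak, and I would state it as non-strict in that subcase.

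\textbf{Case $\beta\ge\gamma$, complex branch.} On $(\widehat\theta,\pi/2-\widehat\theta)$ we have $D<0$, so $f_0$ and $f_1$ are complex conjugates. Their common squared modulus equals their product, which is the determinant of the $2\times2$ block in Proposition~\ref{prop:cdr-spectrum}:
\[
   |f_0|^2=|f_1|^2=(1-\gamma-2\beta t)^2-D=1-2\gamma-4\beta(1-\beta-\gamma)t .
\]
Since $\beta>0$ and $\beta+\gamma<1$, this expression is strictly decreasing in $t$, and therefore in $\theta$. That gives the stated modulus formula and its strict decrease.
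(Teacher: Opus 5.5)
Your proof is correct and is essentially the paper's argument. It uses the same derivative computation, the same key comparison of $|\beta\cos 2\theta|$ against $\sqrt{\gamma^2-\beta^2\sin^2(2\theta)}$ decided by the sign of $\gamma^2-\beta^2$, and the same modulus identity $|f_r|^2=(1-\gamma-2\beta\sin^2\theta)^2-(\gamma^2-\beta^2\sin^2(2\theta))$ in the complex region. Working in $t=\sin^2\theta$, invoking continuity at the branch points where the derivative blows up, and flagging the degenerate case $\beta=\gamma$ (where $f_1$, resp.\ $f_0$, is constant, so the monotonicity is only weak) are minor refinements that the paper's proof leaves implicit.
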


\begin{proof}
The radicand $\gamma^2-\beta^2\sin^2(2\theta)$ is non-negative
iff $\sin(2\theta)\le\gamma/\beta$. If $\beta<\gamma$ the
right-hand side exceeds $1$ and the inequality is automatic.
Otherwise, the inequality holds iff
$\theta\in[0,\widehat\theta\,]\cup[\pi/2-\widehat\theta,\pi/2]$.

For monotonicity in the real-valued region:
\[
   f_r'(\theta)=-2\beta\sin(2\theta)\biggl(1+(-1)^r\frac{\beta\cos(2\theta)}{\sqrt{\gamma^2-\beta^2\sin^2(2\theta)}}\biggr).
\]
For $\theta\in(0,\pi/2)$, $\sin(2\theta)>0$. The inequality
$|\beta\cos(2\theta)/\sqrt{\gamma^2-\beta^2\sin^2(2\theta)}|<1$
is equivalent to
$\beta^2\cos^2(2\theta)<\gamma^2-\beta^2\sin^2(2\theta)$, i.e.,
$\beta<\gamma$. In Case~\ref{it:f-case-A}, the bracketed factor
is positive for both $r=0,1$, so $f_0,f_1$ are strictly
decreasing. In Case~\ref{it:f-case-B}, the bracketed factor
changes sign with $\cos(2\theta)$, giving the stated
monotonicities on $[0,\widehat\theta\,]$ and
$[\pi/2-\widehat\theta,\pi/2]$.

In the complex-radicand region,
$|f_r(\theta)|^2=(1-\gamma-2\beta\sin^2\theta)^2-(\gamma^2-\beta^2\sin^2(2\theta))$.
Expanding and using
$\sin^2(2\theta)=4\sin^2\theta(1-\sin^2\theta)$:
\[
   |f_r|^2=1-2\gamma+4\beta\sin^2\theta(\beta+\gamma-1).
\]
Since $\beta>0$ and $\beta+\gamma<1$, the coefficient of
$\sin^2\theta$ is negative, so $|f_r|$ is strictly decreasing.
\end{proof}

\begin{theorem}[Linear convergence of $C_{T,\gamma,\beta}$]\label{thm:cdr-rate}
Let $\gamma,\beta>0$ with $\gamma+\beta<1$, and assume
$V\not\subset U$. For every $x\in\RR^n$,
\begin{equation}\label{eq:cdr-conv}
   C_{T,\gamma,\beta}^k(x)  \longrightarrow  P_{U\cap V}(x)
   \quad\text{as }k\to\infty,
\end{equation}
with optimal asymptotic linear rate
\begin{equation}\label{eq:cdr-rate}
   \Gamma(\gamma,\beta)\coloneqq\max\bigl\{|\lambda|:\lambda\in\sigma(C_{T,\gamma,\beta})\setminus\{1\}\bigr\}<1.
\end{equation}
\end{theorem}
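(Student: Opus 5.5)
The plan is to exploit linearity: $C\coloneqq C_{T,\gamma,\beta}$ is a fixed linear operator on $\RR^n$. The orthogonal decomposition used in Proposition~\ref{prop:cdr-spectrum} splits $\RR^n$ into $C$-invariant pieces: (i) $U\cap V$; (ii) the planes $\Span\{f_k,e_k\}$ for $0<\theta_k<\pi/2$; (iii) $V\cap U^\perp$; (iv) $U\cap V^\perp$; (v) $(U+V)^\perp$. If I can show that every eigenvalue of $C$ restricted to pieces (ii)--(v) has modulus strictly less than $1$, the theorem follows. Indeed, on piece (i) $C$ is the identity, and the complement $Z$ of (i) is $C$-invariant. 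So $C^k = P_{U\cap V}+C^k|_Z P_Z$ with spectral radius $\Gamma<1$ on $Z$. Gelfand's formula then gives $\|C^k|_Z\|^{1/k}\to\Gamma$, which is the convergence \eqref{eq:cdr-conv} at asymptotic rate $\Gamma$.

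Optimality of $\Gamma$ as the asymptotic rate comes from testing on eigenvectors. Take an eigenvector (or real/imaginary part of a complex eigenvector, or a Jordan chain vector) of $C|_Z$ for an eigenvalue $\lambda$ with $|\lambda|=\Gamma$. Along it, $\|C^kx-P_{U\cap V}x\|^{1/k}\to\Gamma$, so no smaller rate holds uniformly.

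The main work is bounding the moduli on $Z$ using $\gamma,\beta>0$ and $\gamma+\beta<1$.

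\emph{Scalar pieces.} The values $1-2\beta$, $1-2\gamma$, $1-2\gamma-2\beta$ all lie in $(-1,1)$, since each of $2\beta$, $2\gamma$, $2\gamma+2\beta$ lies in $(0,2)$.

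\emph{$2\times 2$ blocks.} For $\theta=\theta_k\in(0,\pi/2)$ I will use Lemma~\ref{lem:f-monotone}.
\begin{itemize}
\item[] \emph{Case $\beta<\gamma$.} Strict monotonicity places $f_0(\theta)$ strictly between $1-2\beta$ and $1$. It places $f_1(\theta)$ strictly between $1-2\gamma-2\beta>-1$ and $1-2\gamma<1$. Both are therefore inside $(-1,1)$.
\item[] \emph{Case $\beta\ge\gamma$, complex region.} The modulus is $\sqrt{1-2\gamma-4\beta(1-\beta-\gamma)\sin^2\theta}$. This is strictly below $\sqrt{1-2\gamma}<1$ and is nonnegative, being a modulus.
\item[] \emph{Case $\beta\ge\gamma$, real regions.} On $[0,\widehat\theta\,]$, $f_0$ decreases from $1$ and $f_1$ increases from $1-2\gamma$. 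At $\widehat\theta$ they meet at the value $1-\gamma-2\beta\sin^2\widehat\theta$. Hence for $\theta\in(0,\widehat\theta\,]$ both lie in $[1-2\gamma,1)$. On $[\pi/2-\widehat\theta,\pi/2]$ the endpoint values $1-2\beta$ and $1-2\gamma-2\beta$ together with the monotonicity confine both values to $[\,1-2\gamma-2\beta,\,1-2\beta)$. I must also check that the common value at the junction $\pi/2-\widehat\theta$ exceeds $-1$. This follows from continuity of the modulus, which is already $<1$ there.
\end{itemize}

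I expect the main obstacle to be two delicate points. The first is strictness at the open endpoints: the value $1$ at $\theta=0$ must be excluded, which holds because $\theta_k>0$ for $k>s$. The second is handling the Jordan-block locus $\gamma^2=\beta^2\sin^2(2\theta_k)$. There the spectral radius still governs the asymptotic rate via Gelfand's formula, but $\|C^k|_Z\|$ may carry a polynomial factor $k\,\Gamma^{k}$. So I will state the rate asymptotically, as $\limsup\|C^kx-P_{U\cap V}x\|^{1/k}\le\Gamma$, rather than as a per-step contraction.
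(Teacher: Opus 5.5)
Your proposal is correct and follows the paper's own proof: the same invariant orthogonal decomposition (i)--(v), eigenvalue-modulus bounds on the $2\times 2$ blocks via Lemma~\ref{lem:f-monotone}, splitting off $U\cap V$ as the eigenvalue-$1$ piece, and a spectral-radius argument on the complement, with your Gelfand-formula and eigenvector arguments making explicit the optimality of $\Gamma$ that the paper leaves implicit. One cosmetic slip: when $\beta=\gamma$ one has $f_0\equiv 1-2\beta$ on $[\pi/4,\pi/2]$, so your interval $[1-2\gamma-2\beta,1-2\beta)$ must be closed on the right; this is harmless, since $|1-2\beta|<1$.
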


\begin{proof}
The five subspaces (i)--(v) in the proof of
Proposition~\ref{prop:cdr-spectrum} are mutually orthogonal and
sum to $\RR^n$, with $C_{T,\gamma,\beta}$ leaving each
invariant. The $2\times 2$ blocks (ii) are diagonalizable for
generic parameters and degenerate to Jordan blocks only on the
measure-zero discriminant-vanishing locus; on a Jordan block
with eigenvalue $|\lambda|<1$, the asymptotic per-step rate is
still $|\lambda|$.

Lemma~\ref{lem:cdr-fix} gives $\Fix C_{T,\gamma,\beta}=U\cap V$,
of dimension $s$, and this is the eigenspace at $\lambda=1$.

All other eigenvalues have modulus $<1$. The scalar eigenvalues:
$|1-2\gamma|<1$ since $0<\gamma<1$; $|1-2\beta|<1$ since
$0<\beta<1$; $|1-2\gamma-2\beta|<1$ since $0<\gamma+\beta<1$.
For $f_r(\theta_k)$ with $0<\theta_k<\pi/2$: by
Lemma~\ref{lem:f-monotone}, $f_r$ is real-valued near the
boundaries (with modulus $\le 1$, strict except at $\theta=0$
which is excluded) and complex with modulus
$\sqrt{1-2\gamma+4\beta\sin^2\theta(\beta+\gamma-1)}<1$ in
between under our hypotheses.

Hence $\RR^n=(U\cap V)\oplus(U\cap V)^\perp$ is an invariant
decomposition for $L\coloneqq C_{T,\gamma,\beta}$, and writing
$L=P_{U\cap V}+\widetilde L$ with
$\widetilde L\coloneqq L|_{(U\cap V)^\perp}$, the spectral
radius of $\widetilde L$ is $\Gamma(\gamma,\beta)<1$. So
$L^k=P_{U\cap V}+\widetilde L^k$ and $\widetilde L^k\to 0$ at
rate $\Gamma(\gamma,\beta)$.
\end{proof}

\begin{corollary}[Explicit rate]\label{cor:cdr-rate-explicit}
Under the hypotheses of Theorem~\ref{thm:cdr-rate},
\begin{equation}\label{eq:cdr-rate-explicit}
   \Gamma(\gamma,\beta)=\max\{|f_0(\theta_F)|,\,|f_1(\theta_p)|,\,|1-2\gamma|,\,\delta\},
\end{equation}
where $\delta=|1-2\gamma-2\beta|$ if $V\cap U^\perp\ne\{0\}$
and $\delta=0$ otherwise. The eigenvalue $|1-2\beta|$ does not
enter the maximum since
$|1-2\beta|\le\max\{|1-2\gamma|,|f_1(\theta_p)|\}$ under
$\beta,\gamma>0$, $\beta+\gamma<1$.
\end{corollary}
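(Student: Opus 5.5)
Starting from Theorem~\ref{thm:cdr-rate}, $\Gamma(\gamma,\beta)$ is the largest modulus among the eigenvalues listed in Proposition~\ref{prop:cdr-spectrum}, excluding the eigenvalue $1$ on $U\cap V$. The plan is to collapse this maximum over all blocks to the four quantities in~\eqref{eq:cdr-rate-explicit}. The candidates are:
\begin{itemize}
\item the pairs $f_0(\theta_k),f_1(\theta_k)$ for $s+1\le k\le p^\star$;
\item the scalars in $\mathcal R$: $1-2\gamma-2\beta$ (present iff $V\cap U^\perp\ne\{0\}$), $1-2\beta$ (present iff $U\cap V^\perp\ne\{0\}$), and $1-2\gamma$ (present iff $(U+V)^\perp\ne\{0\}$).
\end{itemize}
Including $|1-2\gamma|$ and $|1-2\beta|$ unconditionally is harmless once each is shown to be dominated by a quantity that is genuinely attained, or else to be realized as a limit value. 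I will treat the formula as an upper bound that is attained, and flag the presence caveat as a point to check.

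The core step is to show that $\theta\mapsto\max\{|f_0(\theta)|,|f_1(\theta)|\}$ on $[\theta_F,\theta_p]$ is bounded by $\max\{|f_0(\theta_F)|,|f_1(\theta_p)|,|1-2\gamma|\}$. I will split by the cases of Lemma~\ref{lem:f-monotone}.

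\emph{Case $\beta<\gamma$.} Both branches are real and strictly decreasing, with ordering $f_1\le f_0$, $f_0\le 1$, and $f_1\ge 1-2\gamma-2\beta>-1$. Hence on $[\theta_F,\theta_p]$ the maximum of $f_0$ is $f_0(\theta_F)$ and the minimum of $f_1$ is $f_1(\theta_p)$. Every eigenvalue in the interval therefore lies in $[f_1(\theta_p),f_0(\theta_F)]$, so its modulus is at most $\max\{|f_0(\theta_F)|,|f_1(\theta_p)|\}$.

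\emph{Case $\beta\ge\gamma$.}
\begin{itemize}
\item In the complex region, $|f_r|$ is strictly decreasing. At the left edge $\widehat\theta$ the discriminant vanishes and $|f_r(\widehat\theta)|=|f_0(\widehat\theta)|$; by continuity this is dominated by $|f_0|$ at the left end of the real region.
\item On $[0,\widehat\theta]$, $f_0$ decreases and $f_1$ increases, both staying inside $[f_1(0),f_0(0)]=[1-2\gamma,1]$ with $f_1\le f_0$. Restricted to $[\theta_F,\widehat\theta]$, all values lie between $f_1(\theta_F)\ge 1-2\gamma$ and $f_0(\theta_F)$. Their moduli are thus bounded by $\max\{|1-2\gamma|,|f_0(\theta_F)|\}$; this is exactly why $|1-2\gamma|$ appears.
\item On $[\pi/2-\widehat\theta,\pi/2]$, $f_0$ increases toward $1-2\beta$ and $f_1$ decreases toward $1-2\gamma-2\beta$. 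The extreme value at the right end of $[\theta_F,\theta_p]$ is $f_1(\theta_p)$ on the negative side. The positive values are at most $\max\{f_0,\ldots\}\le 1-2\beta$, which is handled in the last step.
\item Chaining monotone pieces, the overall modulus maximum is attained at $\theta_F$, at $\theta_p$, or is bounded by the boundary value $|1-2\gamma|$.
\end{itemize}
Whether the pieces chain correctly depends on where $\theta_F$ and $\theta_p$ fall relative to $\widehat\theta$ and $\pi/2-\widehat\theta$, so I will enumerate those subcases explicitly.

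The scalars come next. $\delta$ accounts for block (iii) and is included only when that block is present. To drop $|1-2\beta|$ I will prove $|1-2\beta|\le\max\{|1-2\gamma|,|f_1(\theta_p)|\}$. If $1-2\beta\ge0$, use $f_0(\theta)\le\ldots$ together with $f_0(\pi/2)=1-2\beta$, or compare directly via $f_1(\theta_p)+f_0(\theta_p)=2-2\gamma-4\beta\sin^2\theta_p$. If $1-2\beta<0$, then $\beta>1/2>\gamma$, which puts us in Case~\ref{it:f-case-B}, where $f_1$ decreases to $1-2\gamma-2\beta<1-2\beta<0$. I expect this inequality to be the main obstacle: it is not monotone-obvious when $\theta_p$ is small and $\beta\approx\gamma$. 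My first attempt will be an algebraic comparison of $|f_1(\theta_p)|^2$ (or of the complex modulus formula) with $(1-2\beta)^2$, using $\beta+\gamma<1$. If that fails, I will retain $|1-2\beta|$ under the presence condition $U\cap V^\perp\ne\{0\}$. Finally, attainment of each listed term follows because $\theta_F$ and $\theta_p$ are genuine principal angles, so the corresponding eigenvalues occur in the spectrum.
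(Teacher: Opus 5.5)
Your monotonicity analysis can give only the upper bound $\Gamma(\gamma,\beta)\le\max\{|f_0(\theta_F)|,|f_1(\theta_p)|,|1-2\gamma|,\delta\}$. The equality cannot be proved: $1-2\gamma$ is an eigenvalue only when $(U+V)^\perp\ne\{0\}$, and in general the genuine eigenvalues do not dominate it.

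Take $V=\Span\{(1,0)\}$ and $U=\Span\{(1,1)\}$ in $\RR^2$, so that $\theta_F=\theta_p=\pi/4$ and only one block of type (ii) is present. Let $\gamma=1/10$, $\beta=1/2$. Then $C_{T,\gamma,\beta}=\bigl(\begin{smallmatrix}0.4&0.6\\-0.4&0.4\end{smallmatrix}\bigr)$ has eigenvalues $0.4\pm i\sqrt{0.24}=f_{0,1}(\pi/4)$. Hence $\Gamma=\sqrt{0.4}\approx0.63$, while the formula returns $|1-2\gamma|=0.8$.

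Neither escape in your plan survives this example. A maximum over a finite spectrum cannot be ``realized as a limit value'', and your closing attainment sentence covers only $f_0(\theta_F)$ and $f_1(\theta_p)$.

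Within the blocks, $|1-2\gamma|$ is needed in only one place: the right real branch when $\beta\ge\gamma$, as a majorant of $f_0(\theta_p)\le1-2\beta\le1-2\gamma$. It is not needed on $[\theta_F,\widehat\theta\,]$ as you suggest. There all values lie in $[1-2\gamma,f_0(\theta_F)]\subset(0,1]$, because $\gamma<1/2$. So when $(U+V)^\perp=\{0\}$, including $|1-2\gamma|$ overstates $\Gamma$.

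The paper states this corollary without proof. As written it needs the extra hypothesis $(U+V)^\perp\ne\{0\}$, together with $\theta_F<\pi/2$ so that $f_0(\theta_F)$ is an actual eigenvalue. Otherwise it must be read as an inequality.

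Second, the inequality $|1-2\beta|\le\max\{|1-2\gamma|,|f_1(\theta_p)|\}$ that you set out to prove (it is asserted in the statement) is false when $\beta<\gamma$. For $\gamma=0.3$, $\beta=0.2$ one has $|1-2\beta|=0.6$ and $|1-2\gamma|=0.4$. Meanwhile $f_1$ decreases from $0.4$ to $0$ on $[0,\pi/2]$, so $|f_1(\theta_p)|\le0.4$ in every configuration.

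Your fallback of keeping $|1-2\beta|$ is nevertheless unnecessary, because a different listed term dominates it:
\begin{itemize}
\item If $\beta<\gamma$, then $f_0$ decreases from $1$ to $1-2\beta>0$, so $|1-2\beta|\le f_0(\theta_F)$.
\item If $\beta\ge\gamma$, then $\gamma<1/2$ and $|1-2\beta|\le1-2\gamma$. This is trivial for $\beta\le1/2$; for $\beta>1/2$, the inequality $2\beta-1\le1-2\gamma$ is just $\beta+\gamma\le1$.
\end{itemize}

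With these two corrections, and with the subcases for the positions of $\theta_F,\theta_p$ relative to $\widehat\theta$ and $\pi/2-\widehat\theta$ written out, your case analysis proves the upper bound in general. It also proves equality when $(U+V)^\perp\ne\{0\}$ and $\theta_F<\pi/2$.
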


\begin{remark}[Optimal $(\gamma^\star,\beta^\star)$]\label{rem:cdr-opt-params}
The problem
$\Gamma^\star(\theta_F,\theta_p)\coloneqq\inf_{\gamma,\beta}\Gamma(\gamma,\beta)$
is a tractable two-dimensional convex problem in
$(\gamma,\beta)$. At the optimum, at least two of
$|f_0(\theta_F)|$, $|f_1(\theta_p)|$, $|1-2\gamma|$,
$|1-2\gamma-2\beta|$ equal $\Gamma^\star$. The saturated regime
$\gamma^\star=\beta^\star=1/3$ produces the floor
$\Gamma^\star\ge 1/3$ visible in
Figure~\ref{fig:rates-comparison}.
\end{remark}

\begin{remark}[$\Gamma^\star$ versus $\rho_V$ is non-monotone]\label{rem:cdr-vs-rhov}
For any admissible $(\gamma,\beta)$,
$\Gamma^\star\ge\max\{|1-2\gamma|,|1-2\gamma-2\beta|\}$, and
balancing these forces $\Gamma^\star\ge 1/3$, with the
saturating choice $\gamma^\star=\beta^\star=1/3$. Whenever the
principal angles satisfy $\rho_V<1/3$, equivalently $b<2a$, the
geometric $C_T$ outperforms the optimal linear CDR family. In
the ill-conditioned regime where $a/b$ is small, the spectrum
of $C_{T,\gamma,\beta}$ has more flexibility and $\Gamma^\star$
can be substantially smaller than $\rho_V$. The two methods are
therefore complementary: neither dominates uniformly.
\end{remark}

\begin{remark}[Comparison of features]\label{rem:cdr-tradeoffs}
The family~\eqref{eq:CDR-parametric} is single-step and linear,
like the relaxed-MAP family $\{S_\mu\}$. The gain over
$\{S_\mu\}$ comes from the second parameter, which places the
spectrum more flexibly inside the unit disk. The cost is that
$(\gamma^\star,\beta^\star)$ depends on the principal angles,
so the family is not parameter-free. Whether an adaptive
variant of $C_{T,\gamma,\beta}$ that chooses
$(\gamma_v,\beta_v)$ from local information at each iterate can
match the parameter-free convergence of $C_T$ is an open
direction.
\end{remark}

The family~\eqref{eq:CDR-parametric} is built from the
composition $R_VR_U$, in which $U$ is reflected across first.
The mirror image uses $R_UR_V$:
\begin{equation}\label{eq:CDR-swapped}
   C'_{T,\gamma,\beta}\coloneqq(1-\gamma-\beta)\,\Id+\beta\,R_UR_V+\gamma\,R_V.
\end{equation}
The classical Douglas--Rachford analysis of~\citep{BCNPW2014}
gives a clean way to think about the order. The two
Douglas--Rachford operators
\begin{equation}\label{eq:T-Tstar}
   T_{U,V}=\tfrac{1}{2}(\Id+R_VR_U)=P_VP_U+P_{V^\perp}P_{U^\perp},
   \quad
   T_{V,U}=\tfrac{1}{2}(\Id+R_UR_V)=P_UP_V+P_{U^\perp}P_{V^\perp}
\end{equation}
are adjoints, $T_{V,U}=T_{U,V}^*$. Setting $T\coloneqq T_{U,V}$,
\citep[Proposition~3.5]{BCNPW2014} establishes
\begin{equation}\label{eq:TT-star-key}
   2\,T\,T^*  =  T+T^*,\quad TT^*=T^*T,\quad TT^*\text{ self-adjoint and firmly nonexpansive,}
\end{equation}
and \citep[Theorem~4.1]{BCNPW2014} gives
\begin{equation}\label{eq:T-Tstar-rates}
   \norm{T^n-P_{\Fix T}}=c_F^n,
   \quad
   \norm{(T\,T^*)^n-P_{\Fix T}}=c_F^{2n}.
\end{equation}
Two observations from these bear on the discussion.
First, symmetrization halves the asymptotic exponent in the
Douglas--Rachford case: the arithmetic mean of $T$ and $T^*$ is
precisely $TT^*$ by~\eqref{eq:TT-star-key}, and the asymmetric
operators contract at rate $c_F$ while $TT^*$ contracts at
$c_F^2$, a clean classical instance of asymmetric ordering
costing a factor in the rate. Second, the asymmetry produces the
parasitic floor: for $\gamma\ne 0$ the swap $R_VR_U\to R_UR_V$
gives a structurally distinct operator with the same spectrum
(since $R_UR_V=(R_VR_U)^*$), $1-2\gamma$ remaining on
$(U+V)^\perp$ and $1-2\beta$, $1-2\gamma-2\beta$ exchanging
between $U\cap V^\perp$ and $V\cap U^\perp$; the parasitic
eigenvalues come from subspaces on which $R_VR_U$ acts as
$\pm\Id$, so $\gamma$ acts unopposed, and the optimal rate of
either ordering is the same $\Gamma^\star$, with the
$1/3$-floor unchanged.

\smallskip
The symmetrization idea~\eqref{eq:TT-star-key} suggests forming
the arithmetic mean of~\eqref{eq:CDR-parametric} with its
swap~\eqref{eq:CDR-swapped}. A short calculation using
$R_VR_U=2T-\Id$, $R_UR_V=2T^*-\Id$,
$\tfrac12(R_U+R_V)=P_U+P_V-\Id$, and~\eqref{eq:TT-star-key},
gives
\begin{equation}\label{eq:CDR-symmetrized}
   \tilde C_{\gamma,\beta}
   \coloneqq\tfrac12\bigl(C_{T,\gamma,\beta}+C'_{T,\gamma,\beta}\bigr)
   =(1-2\gamma-2\beta)\,\Id+2\beta\,TT^*+\gamma(P_U+P_V),
\end{equation}
self-adjoint by construction. On the five invariant subspaces:
\[
   \tilde C_{\gamma,\beta}|_{(U+V)^\perp}=(1-2\gamma)\,\Id,
   \quad
   \tilde C_{\gamma,\beta}|_{U\cap V^\perp}=\tilde C_{\gamma,\beta}|_{V\cap U^\perp}=(1-\gamma-2\beta)\,\Id.
\]
Symmetrization has merged the asymmetric eigenvalues from
$U\cap V^\perp$ and $V\cap U^\perp$ into the single value
$1-\gamma-2\beta$ on both, but the parasitic floor remains: the
$(U+V)^\perp$ contribution $1-2\gamma$ and the merged
contribution $1-\gamma-2\beta$ compete on $\RR^n$. In the
special case $(\gamma,\beta)=(0,\tfrac12)$,
$\tilde C_{0,1/2}=TT^*$ and~\eqref{eq:T-Tstar-rates} gives the
strict improvement $c_F\to c_F^2$. For generic
$(\gamma,\beta)$, the symmetrized operator inherits its own
parasitic structure, and symmetrization on $\RR^n$ is not in
itself a route to floor-free rates.

Averaging is one of two natural ways to combine the two
orderings; composing them is the other. The next remark records
that it fares no better, completing the case that no algebraic
combination of $C_{T,\gamma,\beta}$ and $C'_{T,\gamma,\beta}$ on
$\RR^n$ removes the floor.

\begin{remark}[Composing the two orderings]\label{rem:cdr-composition}
One may also compose the two orderings rather than average them.
Both $C_{T,\gamma,\beta}$ and $C'_{T,\gamma,\beta}$ preserve the
five invariant subspaces (i)--(v) of
Proposition~\ref{prop:cdr-spectrum}, so the spectrum of the
product is computed block by block. The factors do not commute
on the two-dimensional blocks; writing
$D_k\coloneqq f_0(\theta_k)f_1(\theta_k)$ for the determinant of
the single $C_{T,\gamma,\beta}$ on the block, a direct
computation gives the composed block eigenvalues
$\Lambda_k^{\pm}=A_k\pm\sqrt{A_k^2-D_k^2}$ with
$A_k=D_k+2\gamma^2\cos^2\theta_k$, hence
$\Lambda_k^+\Lambda_k^-=D_k^2$. The parasitic eigenvalues
compose to $(1-2\gamma)^2$ on $(U+V)^\perp$ and to the common
value $(1-2\beta)(1-2\beta-2\gamma)$ on $U\cap V^\perp$ and
$V\cap U^\perp$: the asymmetric pair merges but does not vanish,
and the $(U+V)^\perp$ contribution again competes with the block
eigenvalues, leaving the $1/3$-type floor of
Figure~\ref{fig:rates-comparison} in place. Composition thus
costs two reflection pairs per step without improving on
iterating $C_{T,\gamma,\beta}$ twice; the obstruction is the
parasitic spectrum on $V^\perp$, removed only by the
projection-based Strategy~2 below.
\end{remark}

Two distinct ways remain to remove the floor.

\smallskip\noindent Strategy 1: adapt the order at each step.
The distances $\dist(x^k,U)$ and $\dist(x^k,V)$ measure how
far $x^k$ is from each subspace. A natural heuristic — that the
set with the larger distance should be addressed first — gives
an order-adaptive iteration
\begin{equation}\label{eq:adaptive-order}
   C_T^{\mathrm{ad}}(x)  \coloneqq  
   \begin{cases}
      \circum\{x,R_Ux,R_VR_Ux\}, & \dist(x,U)\ge\dist(x,V),\\[2pt]
      \circum\{x,R_Vx,R_UR_Vx\}, & \dist(x,U)<\dist(x,V),
   \end{cases}
\end{equation}
parameter-free and symmetric in $U,V$ by construction. Inside
$V$, $\dist(x,V)=0\le\dist(x,U)$ selects the upper branch
throughout, so $C_T^{\mathrm{ad}}=C_T$ with rate $\rho_V$ by
Theorem~\ref{thm:main}; symmetrically inside $U$. For initial
points outside both, the orbit alternates between branches; the
expectation, in light of $T+T^*\mapsto 2TT^*$, is that adaptive
ordering yields a strictly better rate than either fixed
ordering on $\RR^n$, but the precise statement depends on the
branch-selection pattern along the orbit. A full convergence
analysis is beyond the scope of this paper.

\smallskip\noindent Strategy 2: fix the side of the asymmetry
once and for all.
Composing $C_{T,\gamma,\beta}$ with $P_V$ sends every point in
$V^\perp$ to zero before the linear step acts, eliminating the
parasitic eigenvalues $1-2\gamma$ and $1-2\beta$ from the
spectrum and letting the rate descend below the $1/3$-floor
(Theorem~\ref{thm:cdr-PV-rate}). The projection $P_V$ plays for
$C_{T,\gamma,\beta}$ the role that the iterate space plays for
$C_T$ in Theorem~\ref{thm:main}: it selects, once and for all,
the side of the $(U\leftrightarrow V)$-asymmetry from which to
work, and absorbs the parasitic subspaces
$U\cap V^\perp,(U+V)^\perp\subseteq V^\perp$.

\subsection{Composing with the projection}\label{ssec:cdr-with-projection}

We now carry out Strategy~2. The floor for $C_{T,\gamma,\beta}$
on $\RR^n$ came from the parasitic subspaces $U\cap V^\perp$ and
$(U+V)^\perp$, both contained in $V^\perp$, on which $R_VR_U$
acts as $\pm\Id$ and $\gamma$ runs unopposed (observation~(ii)
above). Precomposing with $P_V$ annihilates $V^\perp$ before the
linear step acts, so these subspaces drop out of the spectrum.
The resulting operator is, on $V$, the optimal-relaxation family
$S_{2\mu}$ extended to $\RR^n$
(Remark~\ref{rem:cdr-PV-vs-S-mu}); its floor is gone and its
optimal rate is $\rho_V$ (Theorem~\ref{thm:cdr-PV-rate}), at the
cost of one extra projection per step. Define
\begin{equation}\label{eq:cdr-with-PV}
   L_{\gamma,\beta}\coloneqq C_{T,\gamma,\beta}\circ P_V.
\end{equation}

\begin{proposition}[Spectrum of $L_{\gamma,\beta}$]\label{prop:cdr-PV-spectrum}
Let $\gamma,\beta\in\RR$. The operator $L_{\gamma,\beta}$ is
linear, and the principal-angle decomposition (i)--(v) of the
proof of Proposition~\ref{prop:cdr-spectrum} reduces it block
by block as follows.
\begin{enumerate}[label={\rm(\arabic*)},leftmargin=2.5em,topsep=2pt,itemsep=2pt]
\item On $U\cap V=\Span\{f_1,\ldots,f_s\}$, $L_{\gamma,\beta}$
acts as the identity.
\item On each pair $\Span\{f_k,e_k\}$ with $s+1\le k\le p^\star$,
$L_{\gamma,\beta}$ has rank one, with eigenvalues $0$ and
\begin{equation}\label{eq:cdr-PV-eigenvalue}
   \nu_k  \coloneqq  1-2(\gamma+\beta)\sin^2\theta_k.
\end{equation}
\item On $V\cap U^\perp=\Span\{f_k:\theta_k=\pi/2\}$,
$L_{\gamma,\beta}$ acts as $(1-2\gamma-2\beta)\,\Id$.
\item On $U\cap V^\perp$ and on $(U+V)^\perp$,
$L_{\gamma,\beta}$ is zero.
\end{enumerate}
The spectrum is
\begin{equation}\label{eq:cdr-PV-spectrum}
   \sigma(L_{\gamma,\beta})  =  \{1\}\cup\{0\}\cup\bigl\{\nu_k:s+1\le k\le p^\star\bigr\}\cup\mathcal{R}',
\end{equation}
with $\mathcal{R}'=\{1-2\gamma-2\beta\}$ if
$V\cap U^\perp\ne\{0\}$ and $\mathcal{R}'=\emptyset$ otherwise.
The operator is diagonalizable in the generic case $\nu_k\ne 0$
for $s+1\le k\le p^\star$.
\end{proposition}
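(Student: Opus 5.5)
Linearity is immediate: $L_{\gamma,\beta}=C_{T,\gamma,\beta}\circ P_V$ composes two linear maps. For the block structure I will reuse the orthogonal decomposition of $\RR^n$ from the proof of Proposition~\ref{prop:cdr-spectrum}, namely (i) $U\cap V$, (ii) the planes $\Span\{f_k,e_k\}$ for $s+1\le k\le p^\star$, (iii) $V\cap U^\perp$, (iv) $U\cap V^\perp$, and (v) $(U+V)^\perp$. I first check that each piece is invariant under $P_V$: $P_V$ fixes (i) and (iii), kills (iv) and (v), and on a plane (ii) sends $f_k\mapsto f_k$ and $e_k\mapsto\cos\theta_k f_k$ by Theorem~\ref{lem:frame}. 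Since $C_{T,\gamma,\beta}$ leaves every piece invariant, so does $L_{\gamma,\beta}$, and the spectrum is the union of the block spectra.

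The scalar blocks are handled directly. On (i), $P_V=\Id$ and $C_{T,\gamma,\beta}=\Id$, which gives item (1). On (iii), $P_V=\Id$ and case (iii) of Proposition~\ref{prop:cdr-spectrum} gives $1-2\gamma-2\beta$, which is item (3). On (iv) and (v), $P_V=0$, so $L_{\gamma,\beta}=0$, which is item (4); this is what removes the parasitic eigenvalues.

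The main computation is on a plane (ii), with $c=\cos\theta_k$. In the basis $(f_k,e_k)$ the map $P_V$ has matrix $\begin{pmatrix}1&c\\0&0\end{pmatrix}$. Multiplying by the $2\times2$ matrix of $C_{T,\gamma,\beta}$ from Proposition~\ref{prop:cdr-spectrum}, whose first column is $(1-2\gamma+2\beta\cos2\theta_k,\;2(\gamma-\beta)c)^\top$, gives a product whose columns are proportional to $(1,c)$ times that first column. Hence the product has rank at most one. The eigenvalues are therefore $0$ and the trace, which is $(1-2\gamma+2\beta\cos2\theta_k)+2(\gamma-\beta)c^2$.

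Simplifying the trace with $\cos2\theta=1-2\sin^2\theta$ and $c^2=1-\sin^2\theta_k$ gives
\[
1-2\gamma+2\beta-4\beta\sin^2\theta_k+2\gamma-2\beta-2(\gamma-\beta)\sin^2\theta_k=1-2(\gamma+\beta)\sin^2\theta_k=\nu_k,
\]
which is item (2). I expect this step to be the main obstacle, but only in bookkeeping. The possible failure point is the orientation of the matrix in Proposition~\ref{prop:cdr-spectrum}, namely whether its first column is the image of $f_k$. I will double-check this independently: for $v\in V$, $L_{\gamma,\beta}f_k$ is computable from $R_Uf_k=2ce_k-f_k$ and $R_VR_Uf_k=2c^2f_k-2ce_k+f_k$. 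Then $C_{T,\gamma,\beta}f_k=(1-\gamma-\beta)f_k+\beta(2c^2f_k-2ce_k+f_k)+\gamma(2ce_k-f_k)$. Its coefficient on $f_k$ is $1-2\gamma+2\beta c^2$, and reading the $f_k$-component of the image of $P_V$ reproduces $\nu_k$ via $L_{\gamma,\beta}e_k=cL_{\gamma,\beta}f_k$.

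Collecting the blocks yields~\eqref{eq:cdr-PV-spectrum}. The eigenvalue $0$ always occurs, either from the rank-one planes or from (iv) and (v); in the degenerate case where none of these pieces is present I will state the set union as written in the proposition. For diagonalizability, a rank-one $2\times2$ block with nonzero trace $\nu_k$ has two distinct eigenvalues $0\ne\nu_k$ and is therefore diagonalizable. All other blocks are scalar, so $L_{\gamma,\beta}$ is diagonalizable whenever every $\nu_k\ne0$.
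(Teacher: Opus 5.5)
Your main line is the paper's proof: the same five invariant pieces, the same matrix $\bigl(\begin{smallmatrix}1&c\\0&0\end{smallmatrix}\bigr)$ for $P_V$ on $\Span\{f_k,e_k\}$, the same rank-one product with the first column of the $2\times2$ matrix from Proposition~\ref{prop:cdr-spectrum}, eigenvalues $0$ and the trace, and the same simplification of the trace to $\nu_k$. That part is correct. Your worry about orientation is also unfounded: the first column of that matrix is indeed the coordinate vector of $C_{T,\gamma,\beta}f_k$ in the basis $(f_k,e_k)$.

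The ``independent double-check'' you add, however, contains an arithmetic slip. As written, it would contradict $\nu_k$ rather than confirm it.
\begin{itemize}
\item Since $R_Vf_k=f_k$ and $R_Ve_k=2cf_k-e_k$, one has $R_VR_Uf_k=R_V(2ce_k-f_k)=(4c^2-1)f_k-2ce_k$, not $2c^2f_k-2ce_k+f_k$.
\item Your vector has squared norm $1+4c^2\sin^2\theta_k>1$. That is impossible for the image of a unit vector under the isometry $R_VR_U$.
\item Hence the $f_k$-coefficient of $C_{T,\gamma,\beta}f_k$ is $1-2\gamma+2\beta\cos2\theta_k$, not $1-2\gamma+2\beta c^2$.
\item The nonzero eigenvalue is the $f_k$-coefficient of $L_{\gamma,\beta}f_k$ plus $c$ times its $e_k$-coefficient, using $L_{\gamma,\beta}e_k=cL_{\gamma,\beta}f_k$. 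With your value it comes out as $1-2\gamma\sin^2\theta_k\neq\nu_k$. With the corrected value it is exactly $\nu_k$, which also confirms the orientation.
\end{itemize}
The main computation does not rely on this check, so the proof stands once you fix or delete it.

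Your side observation about the eigenvalue $0$ is also right, and you should say it rather than ``state the union as written.'' The value $0$ is guaranteed to lie in $\sigma(L_{\gamma,\beta})$ only when some piece (ii), (iv) or (v) is nontrivial. Likewise, $1$ is guaranteed only when $s\ge1$. For example, if $V=\RR^n$ then $L_{\gamma,\beta}=C_{T,\gamma,\beta}$ and $\sigma(L_{\gamma,\beta})\subseteq\{1,1-2\gamma-2\beta\}$. The paper's proof leaves the same caveat implicit.
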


\begin{proof}
We compute the action of $P_V$ followed by $C_{T,\gamma,\beta}$
on each block of Proposition~\ref{prop:cdr-spectrum}.

\smallskip\noindent Block (i).
$P_V$ is the identity on $V\supset U\cap V$, and
$C_{T,\gamma,\beta}$ is the identity on $U\cap V$
(Lemma~\ref{lem:cdr-fix}).

\smallskip\noindent Block (ii).
Set $c\coloneqq\cos\theta_k\in(0,1)$. $P_V$ acts by
$f_k\mapsto f_k$, $e_k\mapsto cf_k$, so
$P_V|_{\Span\{f_k,e_k\}}=\binom{1}{0}(1  c)$ in $(f_k,e_k)$.
With the matrix
\[
   B_k=\begin{pmatrix}
      1-2\gamma+2\beta\cos(2\theta_k) & 2\beta\,c\\[1pt]
      2(\gamma-\beta)c & 1-2\beta
   \end{pmatrix}
\]
of $C_{T,\gamma,\beta}|_{\Span\{f_k,e_k\}}$ from the proof of
Proposition~\ref{prop:cdr-spectrum},
\[
   L_{\gamma,\beta}|_{\Span\{f_k,e_k\}}=B_k\binom{1}{0}(1  c)=\binom{A_k}{D_k}(1  c),
\]
with $A_k=1-2\gamma+2\beta\cos(2\theta_k)$ and
$D_k=2(\gamma-\beta)c$. This is a rank-1 outer product with
eigenvalues $0$ and the trace $A_k+cD_k$. Substituting
$\cos(2\theta_k)=2c^2-1$ and simplifying,
\[
   A_k+cD_k
   =1-2(\gamma+\beta)+2(\gamma+\beta)c^2
   =1-2(\gamma+\beta)(1-c^2)
   =1-2(\gamma+\beta)\sin^2\theta_k=\nu_k.
\]

\smallskip\noindent Block (iii).
For $f_k$ with $\theta_k=\pi/2$, $f_k\in V$, so $P_Vf_k=f_k$,
and $C_{T,\gamma,\beta}f_k=(1-2\gamma-2\beta)f_k$.

\smallskip\noindent Blocks (iv), (v).
Both are subsets of $V^\perp$, so $P_V=0$ on each, and
$L_{\gamma,\beta}=0$.

On blocks (i), (iii), (iv), (v), $L_{\gamma,\beta}$ is a scalar
multiple of the identity. On each block (ii), $L_{\gamma,\beta}$
is rank one with eigenvalues $0$ and $\nu_k$; when $\nu_k\ne 0$
the block is diagonalizable, and when $\nu_k=0$ the block is
nilpotent of index at most $2$.
\end{proof}

The spectrum~\eqref{eq:cdr-PV-spectrum} depends on
$(\gamma,\beta)$ only through the sum
$\mu\coloneqq\gamma+\beta$, since
$\nu_k=1-2\mu\sin^2\theta_k$ and $1-2\gamma-2\beta=1-2\mu$.
Composing with $P_V$ collapses the two-parameter family to an
effectively one-parameter family in $\mu$.

\begin{theorem}[Convergence rate of $L_{\gamma,\beta}$]\label{thm:cdr-PV-rate}
Let $\gamma,\beta\in\RR$ with
$\mu\coloneqq\gamma+\beta\in(0,1)$, and assume $V\not\subset U$.
The iterates of $L_{\gamma,\beta}$ converge to $P_{U\cap V}$
from every $x\in\RR^n$, with optimal asymptotic linear rate
\begin{equation}\label{eq:cdr-PV-rate}
   \Gamma_\circ(\mu)\coloneqq\max\bigl\{|1-2\mu\sin^2\theta_F|,\,|1-2\mu\sin^2\theta_p|\bigr\}.
\end{equation}
The minimum over $\mu\in(0,1)$ is attained at
\begin{equation}\label{eq:cdr-PV-mu-star}
   \mu^\star=\frac{1}{\sin^2\theta_F+\sin^2\theta_p}=\frac{1}{a+b},
\end{equation}
with $\min_{\mu\in(0,1)}\Gamma_\circ(\mu)=\rho_V$. Hence any
$(\gamma,\beta)$ with $\gamma+\beta=\mu^\star=1/(a+b)$ yields
the optimal-relaxation rate $\rho_V$; in particular,
$\gamma^\star=\beta^\star=1/(2(a+b))$ does so.
\end{theorem}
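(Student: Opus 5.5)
The plan is to reduce everything to the spectral description of Proposition~\ref{prop:cdr-PV-spectrum} and then repeat the equioscillation argument of Theorem~\ref{thm:BBNPW-optimal} with $\mu$ replaced by $2\mu$.

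\textbf{Step 1: convergence.} Write $L\coloneqq L_{\gamma,\beta}$. The blocks (i)--(v) are mutually orthogonal, invariant under $L$, and span $\RR^n$.
\begin{itemize}
\item On block (i), $U\cap V$, the operator $L$ is the identity, and so is $P_{U\cap V}$.
\item On blocks (iv) and (v), $L=0$, and $P_{U\cap V}$ also vanishes there.
\item On block (iii), $L=(1-2\mu)\Id$, with $|1-2\mu|<1$ because $\mu\in(0,1)$.
\item Each block (ii) is rank one with nonzero eigenvalue $\nu_k=1-2\mu\sin^2\theta_k$. Since $\sin^2\theta_k\in(0,1)$ and $2\mu\in(0,2)$, we get $\nu_k\in(-1,1)$.
\end{itemize}
Hence $L=P_{U\cap V}+\widetilde L$, where $\widetilde L$ acts on $(U\cap V)^\perp$ with spectral radius $<1$, and $L^k\to P_{U\cap V}$. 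A nilpotent block with $\nu_k=0$ vanishes after two steps, so it does not affect the asymptotic rate.

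\textbf{Step 2: the rate.} The asymptotic rate is the spectral radius of $\widetilde L$. The relevant eigenvalues are $1-2\mu\lambda$ with $\lambda$ ranging over $\{\sin^2\theta_{s+1},\dots,\sin^2\theta_p\}$, where $\lambda=1$ is the block (iii) eigenvalue $1-2\mu$. For fixed $\mu$, the map $\lambda\mapsto|1-2\mu\lambda|$ is convex, so its maximum over this set is attained at the extremes $a=\sin^2\theta_F$ and $b=\sin^2\theta_p$. Both extremes are actual eigenvalues: $b=1$ exactly when $V\cap U^\perp\neq\{0\}$, which is the case where $\mathcal R'$ contributes. This gives $\Gamma_\circ(\mu)$ exactly.

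The rate is optimal (not just an upper bound) because starting vectors along the extreme eigenvectors realize it. Take $f_{s+1}$, or $f_p$, or the nonzero-eigenvalue eigenvector of the rank-one block. By Proposition~\ref{prop:cdr-PV-spectrum}, this eigenvector is $(A_k,D_k)^\top$ in the $(f_k,e_k)$ coordinates.

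\textbf{Step 3: minimization.} Substitute $\tilde\mu=2\mu$. Then $\Gamma_\circ(\mu)=r(2\mu)$ with $r$ as in~\eqref{eq:r-mu}. By Theorem~\ref{thm:BBNPW-optimal}, $r$ is uniquely minimized at $2\mu=2/(a+b)$, so $\mu^\star=1/(a+b)$ and $\Gamma_\circ(\mu^\star)=\rho_V$.

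\textbf{Main obstacle.} It remains to check that $\mu^\star$ lies in the admissible interval $(0,1)$, which requires $a+b>1$. This can fail: if $a+b\le1$, then $\mu^\star\ge1$. In that case I would note that the formula $\Gamma_\circ(\mu^\star)=\rho_V$ still holds, and $|1-2\mu^\star\lambda|\le\rho_V<1$ on $[a,b]$, so convergence persists at $\mu^\star$. I would therefore state the minimization over $\mu>0$, or equivalently over $\mu\in(0,2/(a+b))$, which is the natural convergence range, rather than over $(0,1)$. The hypothesis $\mu<1$ is only needed to guarantee convergence uniformly in the angles.

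With either reading, the last claim follows immediately. Any $(\gamma,\beta)$ with $\gamma+\beta=\mu^\star$ gives rate $\rho_V$, since the spectrum depends only on $\gamma+\beta$; in particular this holds for $\gamma^\star=\beta^\star=\mu^\star/2$.
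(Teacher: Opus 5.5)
Your proof follows the paper's route. You compute the spectrum block by block via Proposition~\ref{prop:cdr-PV-spectrum}, use convexity of $\lambda\mapsto|1-2\mu\lambda|$ to reduce to the endpoints $a,b$, and run the equioscillation argument of Theorem~\ref{thm:BBNPW-optimal} after substituting $\tilde\mu=2\mu$. Your explicit witness for optimality, the eigenvector $(A_k,D_k)^\top$ of the rank-one block, is a small addition that the paper leaves implicit.

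The obstacle you flag is genuine, and the paper's proof passes over it silently. The value $\mu^\star=1/(a+b)$ lies in $(0,1)$ only when $a+b>1$.
\begin{itemize}
\item If $a+b<1$, then $\Gamma_\circ(\mu)=1-2\mu a$ on all of $(0,1)$. The infimum over $(0,1)$ is $1-2a=\cos 2\theta_F$, it is not attained, and it exceeds $\rho_V$ by $2a(1-a-b)/(a+b)$.
\item If $a+b=1$, the infimum equals $\rho_V$ but is not attained.
\end{itemize}
Angle pairs used elsewhere in the paper fall into both cases. For example, $(\theta_F,\theta_p)=(\pi/6,\pi/3)$ gives $\mu^\star=1$, and $(\pi/12,\pi/6)$ gives $\mu^\star\approx 3.15$.

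Your repair, minimizing over $\mu>0$, is correct. Your side claim that $(0,2/(a+b))$ is the natural convergence range is not. The eigenvalue $1-2\mu b$ leaves $(-1,1)$ once $\mu>1/b$, and $1/b<2/(a+b)$ whenever $a<b$. So the true convergence range is $\mu\in(0,1/\sin^2\theta_p)$.

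The repair still works because $\mu^\star=1/(a+b)<1/b$ (using $a>0$), so $\mu^\star$ always lies in that range. On $[a,b]$ the affine function $1-2\mu^\star\lambda$ runs from $\rho_V$ to $-\rho_V$. Moreover, when $V\cap U^\perp\neq\{0\}$ one has $b=1$, hence $a+b>1$ and $\mu^\star\in(0,1)$ automatically.
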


\begin{proof}
Under $\mu\in(0,1)$, every eigenvalue
of~\eqref{eq:cdr-PV-spectrum} other than $1$ has modulus $<1$:
$|\nu_k|=|1-2\mu\sin^2\theta_k|<1$ for
$\sin^2\theta_k\in(0,1]$ and $\mu\in(0,1)$; $|1-2\mu|<1$; and
$0<1$. The fixed-point eigenspace at $\lambda=1$ is $U\cap V$.
The blocks (iv), (v) give $L_{\gamma,\beta}=0$, contracting in
one step; each block (ii) contracts at rate $|\nu_k|$ (or
vanishes in two steps when $\nu_k=0$). Hence
$L_{\gamma,\beta}^k\to P_{U\cap V}$ in operator norm.

From~\eqref{eq:cdr-PV-spectrum}, the second-largest eigenvalue
modulus is the maximum of $|1-2\mu\sin^2\theta_k|$ over
$k=s+1,\ldots,p$, possibly extended by $|1-2\mu|$ when
$V\cap U^\perp\ne\{0\}$. Since
$|1-2\mu\sin^2\theta|$ is piecewise affine convex in
$\sin^2\theta$, its maximum over $[a,b]$ is attained at an
endpoint; when $V\cap U^\perp\ne\{0\}$, $\sin^2\theta_p=1$
makes the extra eigenvalue an endpoint anyway. So
\eqref{eq:cdr-PV-rate} captures the rate.

The function $\mu\mapsto\Gamma_\circ(\mu)$ is the maximum of
$|1-2\mu a|$ and $|1-2\mu b|$, vanishing at $\mu=1/(2a)$ and
$\mu=1/(2b)$. By the argument of the proof of
Theorem~\ref{thm:BBNPW-optimal}, the unique minimizer is
determined by $1-2\mu^\star a=2\mu^\star b-1$, giving
$\mu^\star=1/(a+b)$ and
$\Gamma_\circ(\mu^\star)=1-2\mu^\star a=(b-a)/(a+b)=\rho_V$.
\end{proof}

\begin{remark}[Comparison with Section~\ref{ssec:parametric-CDR}]\label{rem:cdr-PV-vs-cdr}
Composing with $P_V$ kills the parasitic subspaces $(U+V)^\perp$
and $U\cap V^\perp$ that produced the $1/3$-floor for
$C_{T,\gamma,\beta}$ on $\RR^n$, and the optimal rate of
$L_{\gamma,\beta}$ is $\rho_V$ — the same as the geometric
$C_T$, with no lower bound other than zero. Two prices: the
rate $\rho_V$ of $L_{\gamma^\star,\beta^\star}$ requires angle
knowledge to set $\mu^\star=1/(a+b)$, whereas $C_T$ achieves
$\rho_V$ parameter-free via the linesearch; and every step of
$L_{\gamma,\beta}$ requires an additional projection $P_V$.
\end{remark}

\begin{remark}[Equivalence with the relaxed-MAP family]\label{rem:cdr-PV-vs-S-mu}
On $V$, $L_{\gamma,\beta}$ coincides with $S_{2\mu}$ for
$\mu=\gamma+\beta$. For $v\in V$, $P_Vv=v$, and the proof of
Proposition~\ref{prop:cdr-PV-spectrum} gives
$L_{\gamma,\beta}(f_k)=\nu_k f_k=(1-2\mu\sin^2\theta_k)f_k$,
matching
$S_{2\mu}(f_k)=(1-2\mu\sin^2\theta_k)f_k$. The composition with
$P_V$ extends $S_{2\mu}$ from $V$ to $\RR^n$ by killing the
$V^\perp$-component first.
Theorem~\ref{thm:cdr-PV-rate} recovers
Theorem~\ref{thm:BBNPW-optimal} for $S_\mu$ at $\mu=2/(a+b)$,
exhibiting it as the optimal one-parameter linear method on $V$
at any parametrization $(\gamma,\beta)$ with
$\gamma+\beta=\mu^\star$.
\end{remark}

\subsection{Summary}\label{ssec:improvement-summary}

The contents of this section are summarized in
Table~\ref{tab:improvement}.

\begin{table}[htbp]
\centering
\renewcommand{\arraystretch}{1.2}
\begin{tabular}{lcl}
\toprule
Question & Answer & Reference\\
\midrule
Is $\rho_V$ sharp on $V$? & yes, attained at $v^\star$ & Prop.~\ref{prop:rho-sharp}\\
Faster rate on a strict spectral subset? & yes, $\rho(\lambda_-,\lambda_+)$ & Prop.~\ref{prop:local-rate}\\
Single eigendirection? & rate $0$ (one step) & Cor.~\ref{cor:single-eigendir}\\
Better single-step $c(v)$? & no, $\rho_V$ is optimal & Prop.~\ref{prop:single-step-opt}\\
Over-reflect $R_U$ only? & no, rate $\ge\rho_V$ & Rem.~\ref{rem:over-reflection-CRM}\\
Over-reflect both reflections? & no, rate $\ge\rho_V$ & Prop.~\ref{prop:over-reflection-double}\\
Better with two linear parameters on $\RR^n$? & floor $\Gamma^\star\ge 1/3$ & Prop.~\ref{prop:cdr-spectrum}, Rem.~\ref{rem:cdr-vs-rhov}\\
Same family pre-projected by $P_V$? & yes, rate $=\rho_V$ & Thm.~\ref{thm:cdr-PV-rate}\\
Better with two-step memory? & yes, $\rho_{\mathrm{Cheb}}<\rho_V$ & Prop.~\ref{prop:chebyshev}\\
\bottomrule
\end{tabular}
\caption{Summary of the improvability results of
Section~\ref{sec:improvement}.}
\label{tab:improvement}
\end{table}

In short: $C_T$ is the optimal CRM-type method for the
bound~\eqref{eq:main-rate} within the parameter-free,
single-step class, and no over-reflection of $R_U$, $R_V$ inside
the circumcenter construction can improve $\rho_V$
(Proposition~\ref{prop:over-reflection-double}). Strictly
better rates require either two-step memory
($\rho_{\mathrm{Cheb}}$, with prior angle knowledge) or
restriction to a smaller spectral subset
(Proposition~\ref{prop:local-rate}). Among parametric
alternatives, the linear family $C_{T,\gamma,\beta}$ has a
$1/3$-floor on $\RR^n$ but matches $\rho_V$ when composed with
$P_V$ (Section~\ref{ssec:cdr-with-projection}).

\section{Starting outside $V$: warm-start and parametric alternatives}\label{sec:outside-V}

Theorem~\ref{thm:main} requires the iteration to be initialized
in $V$. To handle a general starting point $x_0\in\RR^n$, we
examine four strategies.

The first three are based on the geometric $C_T$ and differ in
their warm-start. Strategy~A projects onto $V$ before iterating:
$v_0=P_V(x_0)$, $v_{k+1}=C_T(v_k)$. Strategy~B iterates $C_T$
directly from $x_0$: $x_{k+1}=C_T(x_k)$. Strategy~C inserts a
single CRM step before the projection:
$\tilde v_0=P_V(C_T(x_0))$, $\tilde v_{k+1}=C_T(\tilde v_k)$.
After the warm-start, A and C produce orbits in $V$ and contract
at rate $\rho_V$ by Theorem~\ref{thm:main}, whereas B remains
off $V$ and contracts at rate $c_F$ from~\citep{BBS2018}.

The fourth strategy abandons the geometric $C_T$ for the
parametric linear family $C_{T,\gamma,\beta}$ of
Section~\ref{ssec:parametric-CDR}. This removes the
$V$-asymmetry, but at the cost of two scalar parameters that
depend on the principal angles.

\subsection{Strategy A: project first}\label{ssec:strategy-A}

The most direct way to handle a starting point outside $V$ is
to project it onto $V$ once and then apply the in-$V$ theory
unchanged. Because $P_V$ is nonexpansive and fixes $\bar x_0$,
the warm-start projection cannot increase the distance to the
solution, and Theorem~\ref{thm:main} governs the resulting
orbit.

\begin{proposition}[Strategy A]\label{prop:strategy-A}
For every $x_0\in\RR^n$ and every $k\in\NN$, the iterates
$v_0=P_V(x_0)$, $v_{k+1}=C_T(v_k)$ satisfy
\begin{equation}\label{eq:strategy-A}
   \norm{v_k-\bar x_0}  \le  \rho_V^k\,\norm{P_V(x_0)-\bar x_0}
     \le  \rho_V^k\,\norm{x_0-\bar x_0}.
\end{equation}
\end{proposition}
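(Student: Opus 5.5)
The plan is to treat the two inequalities in \eqref{eq:strategy-A} separately. The first comes from iterating the per-step contraction of Theorem~\ref{thm:main}. The second is a one-line nonexpansiveness estimate for $P_V$.

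For the first inequality, Theorem~\ref{thm:main} applies directly to the sequence $v_0=P_V(x_0)$, $v_{k+1}=C_T(v_k)$. It gives two facts: $v_k\in V$ and $P_{U\cap V}(v_k)=\bar x_0$ for every $k$, and the bound $\norm{v_{k+1}-\bar x_0}\le\rho_V\norm{v_k-\bar x_0}$. A straightforward induction on $k$ then yields $\norm{v_k-\bar x_0}\le\rho_V^k\norm{v_0-\bar x_0}=\rho_V^k\norm{P_V(x_0)-\bar x_0}$. The base case $k=0$ is an equality, and the inductive step multiplies the per-step bound by $\rho_V\ge 0$.

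For the second inequality, I use $\bar x_0\in U\cap V\subset V$, so $P_V(\bar x_0)=\bar x_0$ and by linearity $P_V(x_0)-\bar x_0=P_V(x_0-\bar x_0)$. Since $P_V$ is an orthogonal projection, it is nonexpansive, and Pythagoras gives $\norm{P_V(x_0-\bar x_0)}\le\norm{x_0-\bar x_0}$. Multiplying by $\rho_V^k$ finishes the argument. The degenerate case $P_V(x_0)\in U\cap V$ is covered automatically: then $v_0=\bar x_0$ and Theorem~\ref{thm:linesearch} makes every iterate equal to $\bar x_0$.

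I expect no genuine obstacle. The only point needing care is that the reference point $\bar x_0$ stays the same along the orbit. This invariance, $P_{U\cap V}(v_k)=\bar x_0$, is precisely the projection-invariance already established in Theorem~\ref{thm:main}, so I will cite it rather than reprove it.
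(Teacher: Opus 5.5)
Your proposal is correct and follows the paper's proof essentially verbatim. You iterate the per-step contraction of Theorem~\ref{thm:main}, relying on its projection invariance $P_{U\cap V}(v_k)=\bar x_0$, to get the first inequality, and you use $P_V(\bar x_0)=\bar x_0$ together with Pythagoras for the second.
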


\begin{proof}
By Theorem~\ref{thm:main},
$\norm{v_k-\bar x_0}\le\rho_V^k\norm{v_0-\bar x_0}$. For the
second inequality, $\bar x_0\in U\cap V\subset V$, so
$\bar x_0=P_V(\bar x_0)$, and Pythagoras gives
\[
   \norm{x_0-\bar x_0}^2
     =  \norm{x_0-P_V x_0}^2+\norm{P_Vx_0-\bar x_0}^2
     \ge  \norm{P_Vx_0-\bar x_0}^2.
\]
\end{proof}

The first inequality in~\eqref{eq:strategy-A} is sharp, but the
contraction
$\norm{P_V(x_0)-\bar x_0}/\norm{x_0-\bar x_0}\in[0,1]$ admits no
uniform bound below~$1$: it can equal~$1$ when $x_0\in V$ (no
contraction) and can be arbitrarily small when $x_0$ is mostly
orthogonal to $V$.

\subsection{Strategy B: direct iteration without projection}\label{ssec:strategy-B}

Strategy~B avoids the auxiliary projection altogether. It is
correspondingly harder to analyze, since $C_T(x_0)$ is generally
not in $V$ and $C_T^k(x_0)$ remains off $V$ for every $k$.

\begin{proposition}[Strategy B]\label{prop:strategy-B}
For every $x_0\in\RR^n$ and every $k\in\NN$,
\begin{equation}\label{eq:strategy-B}
   \norm{C_T^k(x_0)-\bar x_0}  \le  c_F^k\,\norm{x_0-\bar x_0}.
\end{equation}
\end{proposition}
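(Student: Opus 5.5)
The plan is to reduce to a single-step estimate $\norm{C_T(x)-\bar x}\le c_F\norm{x-\bar x}$ and then iterate. Iterating requires the invariance $P_{U\cap V}(C_T(x))=P_{U\cap V}(x)$. For this I will redo the translation argument in the proof of Theorem~\ref{thm:rate} for general $x\in\RR^n$. $C_T$ commutes with translations by $U\cap V$, because $R_U,R_V$ fix $U\cap V$ pointwise. Every side vector $s_U=2(P_Ux-x)$ and $s_V=R_VR_Ux-x$ is orthogonal to $U\cap V$ whenever $x\perp U\cap V$; for $s_V$ this holds because $R_VR_U$ is an isometry fixing $U\cap V$, so it preserves $(U\cap V)^\perp$. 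Hence $W_x-x\perp U\cap V$, and $C_T(x)-x\perp U\cap V$. After translating we may therefore assume $\bar x=0$, $x\in(U\cap V)^\perp$, and every iterate stays in $(U\cap V)^\perp$. The estimate then follows by induction once the one-step bound $\norm{C_T(x)}\le c_F\norm{x}$ is proved on $(U\cap V)^\perp$.

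For the one-step bound I will use the BBS identity, Lemma~\ref{lem:BBS} with $s=0$: $C_T(x)=P_{W_x}(0)$. Consequently $\norm{C_T(x)}\le\norm{q}$ for every point $q\in W_x$, and the task becomes exhibiting, for each $x$, a comparison point $q(x)\in W_x$ with $\norm{q(x)}\le c_F\norm{x}$. Natural candidates are the affine combinations $\alpha x+\beta R_Ux+\gamma R_VR_Ux$ with $\alpha+\beta+\gamma=1$. The first choice to test is the Douglas--Rachford point $T_{\mathrm{DR}}x=\tfrac12(x+R_VR_Ux)\in W_x$. By Fact~\ref{fact:friedrichs-norms}, $\norm{T_{\mathrm{DR}}x-P_{\Fix T_{\mathrm{DR}}}x}\le c_F\norm{x}$, and this settles the bound whenever $x$ has no component in $U^\perp\cap V^\perp$, since $\Fix T_{\mathrm{DR}}=(U\cap V)\oplus(U^\perp\cap V^\perp)$.

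The main obstacle is the component $r=P_{U^\perp\cap V^\perp}x$, on which $T_{\mathrm{DR}}$ acts as the identity. A block computation in the decomposition (i)--(v) of Proposition~\ref{prop:cdr-spectrum} shows the limitation of fixed weights. An affine combination with coefficients $(\alpha,\beta,\gamma)$ acts as $1-2\beta$ on $U^\perp\cap V^\perp$, as $1-2\beta-2\gamma$ on $V\cap U^\perp$, and as $1-2\gamma$ on $U\cap V^\perp$. No fixed choice makes all three at most $c_F$; this is the $1/3$-floor of Remark~\ref{rem:cdr-vs-rhov}. So a single linear comparator cannot work, and the comparison point must depend on $x$ in a nonlinear way. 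I would first try the following. Split $x=x'+r$ with $x'\perp U^\perp\cap V^\perp$. Check that $R_U$, $R_V$ and $R_VR_U$ act blockwise, with $R_Ur=-r$ and $R_VR_Ur=r$. Then minimize $\norm{q}^2$ over $q\in W_x$ as a quadratic in the two affine coordinates, using the orthogonality of the blocks. The target is a bound of the form $\norm{C_T(x)}^2\le c_F^2\norm{x'}^2+\kappa\norm{r}^2$ with $\kappa\le c_F^2$, which would come from exploiting the equidistance relations~\eqref{eq:equidistance-inner} for $s_U,s_V$ with $\bar v=0$.

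If this two-block minimization does not close, the fallback is to invoke the original argument of~\citep{BBS2018,BBS2018-ORL}. That argument is exactly the statement that the circumcenter of $\{x,R_Ux,R_VR_Ux\}$ contracts at rate $c_F$ toward $P_{U\cap V}(x)$, so citing it completes the proof once the translation and invariance reduction above is in place.
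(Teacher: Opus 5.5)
Your reduction is sound and matches the paper. Translating by $U\cap V$ and showing $C_T(x)-x\perp U\cap V$ is exactly the projection-invariance step the paper proves; it uses $(R_VR_U)^*=R_UR_V$ where you use that $R_VR_U$ preserves $(U\cap V)^\perp$. Your Douglas--Rachford comparison also correctly gives the one-step bound when $x\perp U^\perp\cap V^\perp$.

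The gap is the component $r=P_{U^\perp\cap V^\perp}x$, and it cannot be closed. Both your two-block target and the proposition itself fail as soon as $x_0\notin U+V$. Take $U=\Span\{e_1\}$, $V=\Span\{e_2\}$ in $\RR^3$, so that $U\cap V=\{0\}$, $\theta_F=\pi/2$ and $c_F=0$, and let $x_0=(1,-1,1)$. Then $R_Ux_0=(1,1,-1)$ and $R_VR_Ux_0=(-1,1,1)$. The three vertices are affinely independent, all have norm $\sqrt3$, and lie on the plane $\xi_1+\xi_2+\xi_3=1$. By Lemma~\ref{lem:BBS}, $C_T(x_0)=P_{W_{x_0}}(0)=\tfrac13(1,1,1)\neq 0=\bar x_0$, whereas \eqref{eq:strategy-B} at $k=1$ forces $C_T(x_0)=0$. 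One more step gives $C_T^2(x_0)=x_0/9$, so this orbit contracts at exactly $1/3$ per step. Your fixed-weight block computation was therefore detecting a genuine obstruction, and the nonlinear circumcenter does not escape it.

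The failure is not special to $c_F=0$. Take $V=\Span\{(\cos\theta_F,\sin\theta_F,0)\}$ and $x_0=(\cos\tfrac{\theta_F}{2},-\sin\tfrac{\theta_F}{2},\sin\tfrac{\theta_F}{2})$. Then $\norm{C_T(x_0)}/\norm{x_0}=(1+c_F)/(3-c_F)>c_F$; for instance this is $3/5$ against $c_F=1/2$ at $\theta_F=\pi/3$.

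Your fallback is precisely the paper's proof, which quotes \citep[Theorem~1]{BBS2018} for every $x\in\RR^n$ and then iterates via projection invariance. No citation can supply a false inequality, so the quoted one-step bound can only hold under a hypothesis that removes the $U^\perp\cap V^\perp$ component.

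The true statement is for $x_0\in U+V$ (in particular $x_0\in U$ or $x_0\in V$), or for all $x_0$ when $U+V=\RR^n$. For it, your own argument is already complete once you add that $C_T$ maps $U+V$ into itself: $x,R_Ux,R_VR_Ux\in U+V$ gives $W_x\subset U+V$. For such $x$ one has $P_{\Fix T_{\mathrm{DR}}}x=\bar x$. Hence $\norm{C_T(x)-\bar x}=\dist(\bar x,W_x)\le\norm{T_{\mathrm{DR}}x-\bar x}=\norm{(T_{\mathrm{DR}}-P_{\Fix T_{\mathrm{DR}}})(x-\bar x)}\le c_F\norm{x-\bar x}$ by Fact~\ref{fact:friedrichs-norms}, and the bound iterates.
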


\begin{proof}
The original CRM analysis~\citep[Theorem~1]{BBS2018} establishes
the single-step contraction
$\norm{C_T(x)-P_{U\cap V}(x)}\le c_F\norm{x-P_{U\cap V}(x)}$ for
every $x\in\RR^n$. To iterate, we need the
projection-invariance property
\begin{equation}\label{eq:projection-invariance}
   P_{U\cap V}(C_T(x))=P_{U\cap V}(x)\quad(x\in\RR^n).
\end{equation}
The displacement $C_T(x)-x$ lies in $\Span\{s_U,s_V\}$, so it
suffices to show $s_U,s_V\perp U\cap V$. For any
$u\in U\cap V$, $P_Uu=P_Vu=u$, and self-adjointness of $P_U$
gives $\scal{s_U}{u}=2\scal{P_Ux-x}{u}=2\scal{x}{P_Uu-u}=0$. For
$s_V$, the adjoint identity $(R_VR_U)^*=R_UR_V$ together with
$R_Vu=R_Uu=u$ gives
$\scal{R_VR_Ux}{u}=\scal{x}{R_UR_Vu}=\scal{x}{u}$, hence
$\scal{s_V}{u}=0$. So
$C_T(x)-x\perp U\cap V$, which is~\eqref{eq:projection-invariance}.

Granting~\eqref{eq:projection-invariance}, the bound iterates:
with $\bar x_k\coloneqq P_{U\cap V}(C_T^k(x_0))=\bar x_0$ for
every $k$,
\[
   \norm{C_T^{k+1}(x_0)-\bar x_0}=\norm{C_T(C_T^k(x_0))-\bar x_k}\le c_F\norm{C_T^k(x_0)-\bar x_0}.
\]
\end{proof}

The bound~\eqref{eq:strategy-B} is the sharpest known uniform
rate for Strategy~B; the warm-start strategies A and~C recover
the on-$V$ rate $\rho_V$ at the cost of a single projection onto
$V$.

\begin{remark}[Empirical behavior of Strategy~B]\label{rem:strategy-B-empirical}
The experiments of Section~\ref{ssec:warm-start-numerics}
clarify the asymptotics of Strategy~B. Over $600$ random starts
$x_0\in\RR^n$, the empirical asymptotic per-step ratio of
$\norm{C_T^k(x_0)-\bar x_0}$ never exceeds the bound $c_F$
of~\eqref{eq:strategy-B} (sample mean $0.89\,c_F$), and it does
not approach $\rho_V$: its mean is $2.2\,\rho_V$, with fewer
than $1\%$ of the starts within $5\%$ of $\rho_V$. The reason is
structural: for $x_0\notin V$ the orbit stays off $V$
(Table~\ref{tab:three-maps}), so the in-$V$ contraction of
Theorem~\ref{thm:main} never takes over, and a uniform bound of
the form
$\norm{C_T^k(x_0)-\bar x_0}\le C(x_0)\,\rho_V^k\norm{x_0-\bar x_0}$
does not hold. Attaining $\rho_V$ from a general starting point
requires the warm-start of Strategy~A or~C.
\end{remark}

The contrast between the uniform $c_F$ bound for Strategy~B and
the in-$V$ rate $\rho_V$ has a clean geometric explanation: the
two iterations move in different dimensional regimes.

\begin{remark}[Why a 1D linesearch beats a 2D circumcenter in the worst case]\label{rem:1D-vs-2D}
A natural objection: $C_T(x)$ minimizes the distance to
$\bar x_0$ over the generically two-dimensional
$W_x=\aff\{x,R_Ux,R_VR_Ux\}$, while $C_T(v)$ for $v\in V$
minimizes over the one-dimensional line $W_v\cap V$; optimizing
over a larger set gives a smaller one-step error, so why does
the iterated rate from $\RR^n$ ($c_F$) exceed the iterated rate
from $V$ ($\rho_V<c_F^2$)? Because per-step optimality is not
per-orbit optimality: a greedy step can place the iterate where
subsequent contraction is slow, for the same reason steepest
descent is asymptotically slower than conjugate gradients on a
quadratic with the same per-step cost. On $V$ the search
direction $T(v)-v=-M(v-\bar v)$ is a spectral image of the
error, and the linesearch step
$\mu_v=\scal{w}{Mw}/\scal{w}{M^2w}$ with $w=v-\bar v$ is the
Kantorovich-optimal relaxation on $w$;
Theorem~\ref{thm:main} exploits exactly this alignment. For
general $x\in\RR^n$ the plane $W_x$ is not aligned with the
eigenstructure of $M$, and the circumcenter minimizes a
Euclidean distance in $\RR^n$, not a spectral functional of $M$.
Finally, the two rates are suprema over different sets: the
worst-case adversary in $\RR^n$ is a higher-dimensional set that
exploits the extra freedom to misalign $W_x$ with $\bar x_0$, so
$\rho_V<c_F$ does not contradict the per-step optimality of the
$\RR^n$ circumcenter.
\end{remark}

\subsection{Strategy C: one CRM step then project}\label{ssec:strategy-C}

Strategy~C performs a single geometric $C_T$ step from $x_0$
before projecting onto $V$. That preliminary step contracts the
error by the factor $c_F$ of~\citep{BBS2018}, after which the
orbit lies in $V$ and contracts at rate $\rho_V$ as in
Strategy~A; the bound below combines the two.

\begin{proposition}[Strategy C]\label{prop:strategy-C}
For every $x_0\in\RR^n$ and every $k\in\NN$, the iterates
$\tilde v_0=P_V(C_T(x_0))$,
$\tilde v_{k+1}=C_T(\tilde v_k)$ satisfy
\begin{equation}\label{eq:strategy-C}
   \norm{\tilde v_k-\bar x_0}  \le  \rho_V^k\,c_F\,\norm{x_0-\bar x_0}.
\end{equation}
\end{proposition}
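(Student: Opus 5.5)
The plan chains three contractions: the one-step $c_F$ bound of Strategy~B for the preliminary CRM step, the nonexpansiveness of $P_V$ around $\bar x_0$ as in Strategy~A, and the in-$V$ rate $\rho_V$ of Theorem~\ref{thm:main}. The one point that needs care is that the point of $U\cap V$ being tracked stays equal to $\bar x_0$ throughout the warm start.

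First, set $x_1\coloneqq C_T(x_0)$. The projection-invariance identity~\eqref{eq:projection-invariance}, established in the proof of Proposition~\ref{prop:strategy-B}, gives $P_{U\cap V}(x_1)=\bar x_0$. The case $k=1$ of~\eqref{eq:strategy-B} then gives
\[
   \norm{x_1-\bar x_0}\le c_F\norm{x_0-\bar x_0}.
\]

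Next, set $\tilde v_0=P_V(x_1)$. Since $U\cap V\subset V$, we have $P_{U\cap V}=P_{U\cap V}\circ P_V$, so $P_{U\cap V}(\tilde v_0)=P_{U\cap V}(x_1)=\bar x_0$. As in the proof of Proposition~\ref{prop:strategy-A}, $\bar x_0=P_V(\bar x_0)$ and Pythagoras give
\[
   \norm{\tilde v_0-\bar x_0}\le\norm{x_1-\bar x_0}.
\]

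Finally, apply Theorem~\ref{thm:main} with starting point $x_1$ in place of $x_0$. Its iteration begins at $P_V(x_1)=\tilde v_0$, and its best-approximation point is $P_{U\cap V}(x_1)=\bar x_0$. The theorem therefore yields $\tilde v_k\in V$, $P_{U\cap V}(\tilde v_k)=\bar x_0$, and
\[
   \norm{\tilde v_k-\bar x_0}\le\rho_V^k\norm{\tilde v_0-\bar x_0}.
\]
Chaining the three inequalities gives~\eqref{eq:strategy-C}.

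None of these steps is hard. The place where care is needed is the bookkeeping of the target point: both the $C_T$ step off $V$ and the projection $P_V$ must leave $P_{U\cap V}$ unchanged. Otherwise Theorem~\ref{thm:main} would bound the distance to a different point of $U\cap V$. The invariance identity~\eqref{eq:projection-invariance} and the factorization $P_{U\cap V}=P_{U\cap V}\circ P_V$ settle both requirements.
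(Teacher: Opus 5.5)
Your proof is correct and follows the paper's argument. It chains the same three bounds: the one-step $c_F$ contraction of the preliminary CRM step, the Pythagoras bound for $P_V$ from Proposition~\ref{prop:strategy-A}, and the in-$V$ rate of Theorem~\ref{thm:main}. Your explicit check that $P_{U\cap V}(C_T(x_0))=\bar x_0$ via~\eqref{eq:projection-invariance} is a useful addition, since the paper's proof applies Theorem~\ref{thm:main} at $\tilde v_0$ without saying that the target point is still $\bar x_0$.
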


\begin{proof}
Three contractions chain.
By~\citep[Theorem~1]{BBS2018},
$\norm{C_T(x_0)-\bar x_0}\le c_F\norm{x_0-\bar x_0}$. By
Pythagoras (as in the proof of
Proposition~\ref{prop:strategy-A}),
$\norm{\tilde v_0-\bar x_0}\le\norm{C_T(x_0)-\bar x_0}\le c_F\norm{x_0-\bar x_0}$.
Since $\tilde v_0\in V$, Theorem~\ref{thm:main} gives
$\norm{\tilde v_k-\bar x_0}\le\rho_V^k\norm{\tilde v_0-\bar x_0}\le\rho_V^k c_F\norm{x_0-\bar x_0}$.
\end{proof}

\begin{remark}[Comparison of A and C]\label{rem:A-vs-C}
Strategy~A produces the bound $\rho_V^k\norm{x_0-\bar x_0}$
after a mere projection onto $V$. Strategy~C inserts an extra
$C_T$ evaluation up front and gains a factor $c_F<1$, securing
the strictly smaller bound $\rho_V^k c_F\norm{x_0-\bar x_0}$.
Strategy~C is preferable whenever the savings from this factor
outweigh the cost of the extra step, which is the case for
sufficiently large $k$. The structure parallels the BBNPW
result~\citep[Theorem~4.5]{BBNPW2016}, which gives the bound
$\norm{B_T^{k+1}(x)-\bar x}\le\rho_V^k c_F^2\norm{x-\bar x}$.
The factor $c_F^2$ there (rather than $c_F$) reflects that
$B_T$ uses a projection onto $U$ rather than a reflection.
\end{remark}

\subsection{Strategy D: linear CDR family on $\RR^n$}\label{ssec:strategy-D}

A fourth strategy abandons the geometric circumcenter $C_T$ in
favor of the linear two-parameter family $C_{T,\gamma,\beta}$
introduced in Section~\ref{ssec:parametric-CDR}. Since
$C_{T,\gamma,\beta}$ is linear and, under $\gamma>0$, $\beta>0$,
$\gamma+\beta<1$, converges from any starting point
$x_0\in\RR^n$ to $P_{U\cap V}(x_0)$ at the rate
$\Gamma(\gamma,\beta)$ of Theorem~\ref{thm:cdr-rate}, no
warm-start is required:
\begin{equation}\label{eq:strategy-D}
   \norm{C_{T,\gamma,\beta}^k(x_0)-\bar x_0}
   \le C(\gamma,\beta)\,\Gamma(\gamma,\beta)^k\,\norm{x_0-\bar x_0}
\end{equation}
for some constant $C(\gamma,\beta)\ge 1$ depending only on
$(\gamma,\beta)$.

The strategy has three features distinguishing it from~A--C. It
removes the $V$-asymmetry: from any $x_0\in\RR^n$, the rate is
$\Gamma(\gamma,\beta)$, with no transient $c_F$-factor. It is,
however, not parameter-free: the optimal
$(\gamma^\star,\beta^\star)$ depends on the principal angles
$\theta_F$ and $\theta_p$. The angle-independent choice
$\gamma=\beta=1/3$, the saturated optimum from
Remark~\ref{rem:cdr-vs-rhov}, yields $\Gamma=1/3$ uniformly,
giving a robust but suboptimal default. And the comparison with
Strategy~A goes both ways: for $\theta_F$ above a crossover near
$\theta_F^{\mathrm{cross}}\approx 0.78$ (at $\theta_p=\pi/2$),
one has $\rho_V<\Gamma^\star$ and Strategy~A or~C is faster;
below this threshold, Strategy~D wins.

\subsection{Summary of warm-start strategies}\label{ssec:warm-start-summary}

The four strategies are summarized in
Table~\ref{tab:strategies}.

\begin{table}[htbp]
\centering
\renewcommand{\arraystretch}{1.2}
\begin{tabular}{llll}
\toprule
Strategy & Method & Asymptotic rate & Parameters\\
\midrule
A & project, then iterate $C_T$ & $\rho_V$ & none\\
B & iterate $C_T$ from $x_0$ & $c_F$ uniform & none\\
C & one $C_T$ step, project, iterate $C_T$ & $\rho_V$ (with prefactor $c_F$) & none\\
D & iterate $C_{T,\gamma^\star,\beta^\star}$ from $x_0$ & $\Gamma^\star(\theta_F,\theta_p)$ & $\theta_F,\theta_p$\\
\bottomrule
\end{tabular}
\caption{Summary of the four warm-start strategies analyzed in
Section~\ref{sec:outside-V}.}
\label{tab:strategies}
\end{table}

Among the four strategies, A and C deliver the sharp rate
$\rho_V$ of Theorem~\ref{thm:main} from any starting point in
$\RR^n$ at the cost of a single projection; both are
parameter-free, and C trades one extra $C_T$ evaluation for the
prefactor $c_F$. Against B the comparison is a worst-case one:
Strategy~A guarantees $\rho_V<c_F^2$ from step one, attained on
the worst-case ray, while B carries only the uniform bound $c_F$
of~\eqref{eq:strategy-B}, and its empirical asymptotic rate
stays of order $c_F$ rather than $\rho_V$
(Remark~\ref{rem:strategy-B-empirical}); omitting the warm-start
costs roughly the factor $c_F/\rho_V$ in the rate. When $P_V$ is
a precomputed matrix--vector product the warm-start is
essentially free, and even when $P_V$ costs as much as a $C_T$
step, the single projection pays for itself on all but the
shortest runs. The choice of starting subspace, $V$ or $U$, does
not affect the rate, since the principal angles are symmetric in
the pair and the analysis applies under the swap
$R_VR_U\leftrightarrow R_UR_V$; it should be driven by whichever
projection is cheaper, typically the lower-dimensional subspace.
In short: whenever the sharp rate matters, project once onto the
cheaper of $\{U,V\}$, then iterate $C_T$; the linesearch
identity of Theorem~\ref{thm:linesearch}, the Kantorovich-tight
bound of Theorem~\ref{thm:main}, and the finite-convergence
corollaries of Section~\ref{sec:special} all require starting on
the subspace.

\section{Analytic comparisons, a sharp instance, and numerical verification}\label{sec:numerics}

This section illustrates the rate hierarchy from
Theorem~\ref{thm:main} and Proposition~\ref{prop:worst-case}
and verifies the bound $\rho_V$ numerically. The rate curves
of Sections~\ref{ssec:rate-curves} and~\ref{ssec:cdr-comparison}
are plots of closed-form rate functions, including the
additional rates $\rho_{\mathrm{Cheb}}$ and $\Gamma^\star$ from
Theorems~\ref{thm:cheb-strict} and~\ref{thm:cdr-rate}, as
continuous functions of the Friedrichs angle.
Section~\ref{ssec:sharp-instance} gives an explicit sharp
instance in $\RR^4$ with a closed-form orbit. The verification
in Section~\ref{ssec:verification} evaluates the
circumcentered--reflection operator itself, in finite-precision
arithmetic, and recovers $\rho_V$ to unit roundoff.

\subsection{Rate curves as functions of $\theta_F$}\label{ssec:rate-curves}

Figure~\ref{fig:rates-theoretical} plots four contraction rates
as continuous functions of the Friedrichs angle
$\theta_F\in(0,\pi/2)$: $c_F=\cos\theta_F$, the rate shared by
the Douglas--Rachford method~\citep{BCNPW2014} and the original
CRM analysis~\citep{BBS2018}; $c_F^2$, the asymptotic MAP
rate~\citep[Theorem~9.31]{Deutsch}; $c_F^2/(2-c_F^2)$, the
worst-case CRM rate on $V$ at fixed $\theta_F$ from
Proposition~\ref{prop:worst-case}; and
$\rho_V(\theta_F,\theta_p)$ from Theorem~\ref{thm:main} for
three representative values of $\theta_p$. The plot exhibits the
hierarchy $\rho_V\le c_F^2/(2-c_F^2)<c_F^2<c_F$ predicted
by~\eqref{eq:rate-hierarchy} and
Proposition~\ref{prop:worst-case}.

\begin{figure}[h]
\centering
\includegraphics[width=11cm]{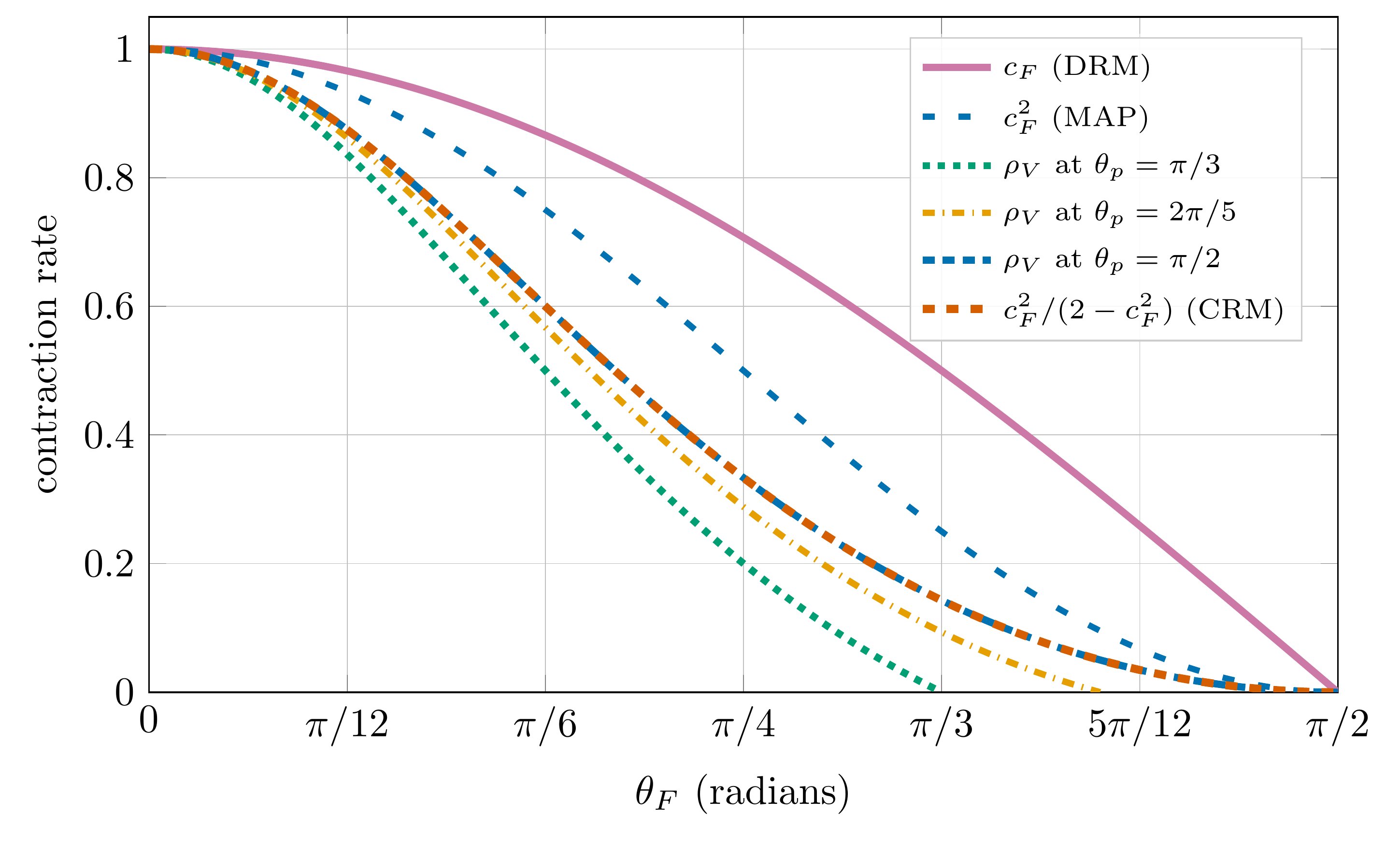}
\caption{Theoretical contraction rates as functions of $\theta_F$.
The CRM rate $\rho_V$ at $\theta_p=\pi/2$ coincides exactly with
the worst-case envelope $c_F^2/(2-c_F^2)$, since
$\rho_V|_{\theta_p=\pi/2}
=(1-\sin^2\theta_F)/(1+\sin^2\theta_F)
=c_F^2/(2-c_F^2)$.
For $\theta_p<\pi/2$ the rate is strictly smaller.
Moreover, $\rho_V$ vanishes at $\theta_F=\theta_p$
(one-step convergence, Corollary~\ref{cor:one-step}).}
\label{fig:rates-theoretical}
\end{figure}

Table~\ref{tab:method-comparison} in the introduction places
$C_T$ on $V$ alongside six other projection methods on two
subspaces, split between parameter-free worst-case rates and
rates achieved only with parameters tuned to the principal
angles. The two parameter-tuned rates
$(1-\sin\theta_F)/(1+\sin\theta_F)$ of GAP and AAMR are smaller
than $\rho_V$ at $\theta_p=\pi/2$ but larger than $\rho_V$ when
$\theta_p$ is close to $\theta_F$, since $\rho_V\to 0$ as
$\theta_p\to\theta_F$ (Corollary~\ref{cor:one-step}) while the
parameter-tuned rates depend only on $\theta_F$. The numerical
comparison in Section~\ref{ssec:aamr-comparison} maps out the
crossover in the $(\theta_F,\theta_p)$ plane.

\subsection{Comparison with $\rho_{\mathrm{Cheb}}$ and $\Gamma^\star$}\label{ssec:cdr-comparison}

Figure~\ref{fig:rates-comparison} adds two further rates from
Sections~\ref{ssec:chebyshev}--\ref{ssec:parametric-CDR} to the
picture, both at fixed $\theta_p=\pi/2$ (the worst case for
$\rho_V$): the Chebyshev semi-iteration rate
$\rho_{\mathrm{Cheb}}=(\sin\theta_p-\sin\theta_F)/(\sin\theta_p+\sin\theta_F)$
of~\eqref{eq:cheb-rate}, and the optimal rate
$\Gamma^\star(\theta_F,\theta_p)=\inf_{\gamma,\beta}\Gamma(\gamma,\beta)$
of the parametric family $C_{T,\gamma,\beta}$ characterized in
Theorem~\ref{thm:cdr-rate} and Remark~\ref{rem:cdr-opt-params}.

\begin{figure}[h]
\centering
\includegraphics[width=11cm]{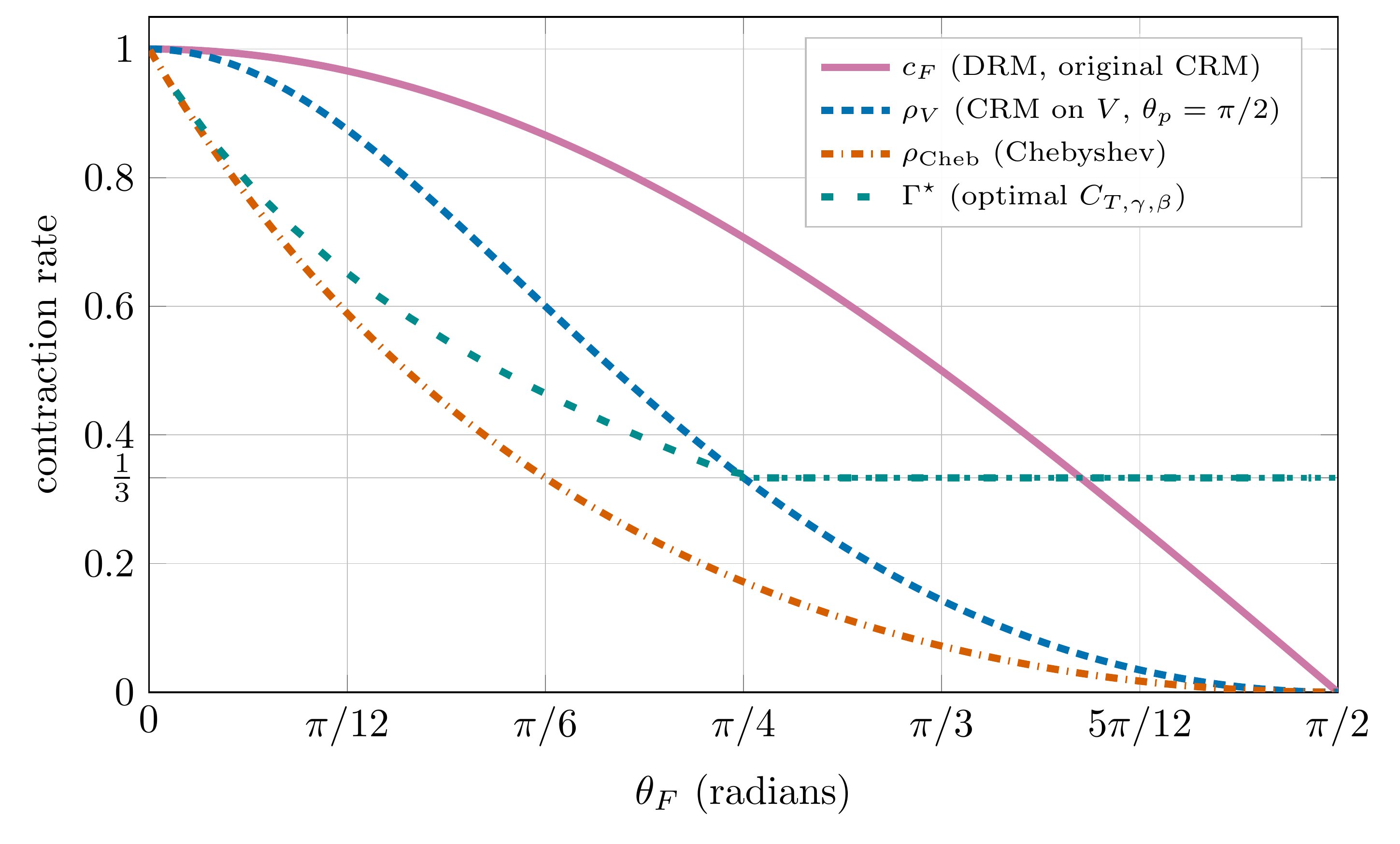}
\caption{Contraction rates at $\theta_p=\pi/2$.
The Chebyshev rate $\rho_{\mathrm{Cheb}}$ is uniformly below
$\rho_V$, with ratio
$\rho_V/\rho_{\mathrm{Cheb}}=(\sin\theta_p+\sin\theta_F)^2/(\sin^2\theta_p+\sin^2\theta_F)$
(Theorem~\ref{thm:cheb-strict}).
The optimal parametric CDR rate $\Gamma^\star$ is bounded below
by $1/3$ (Remark~\ref{rem:cdr-vs-rhov}); for
$\theta_F\gtrsim 0.78$ the floor binds and $\Gamma^\star=1/3$,
while $\rho_V$ continues decreasing to zero at
$\theta_F=\theta_p=\pi/2$.}
\label{fig:rates-comparison}
\end{figure}

\noindent
The horizontal floor $\Gamma^\star=1/3$ visible for $\theta_F$
large corresponds to the saturated optimum
$(\gamma^\star,\beta^\star)=(1/3,1/3)$, where the three
eigenvalues $|1-2\gamma|=|1-2\beta|=|1-2\gamma-2\beta|=1/3$
simultaneously bind (Remark~\ref{rem:cdr-opt-params}). For each
$\theta_F\in(0,\pi/2)$ the value of
$\Gamma^\star(\theta_F,\pi/2)$ is the unique value at which the
active constraints of Corollary~\ref{cor:cdr-rate-explicit}
balance. The crossover with $\rho_V$ occurs at the unique
$\theta_F^{\mathrm{cross}}\in(0,\pi/2)$ where
$\Gamma^\star(\theta_F^{\mathrm{cross}},\pi/2)=\rho_V(\theta_F^{\mathrm{cross}},\pi/2)$;
for $\theta_F$ above this value the geometric $C_T$ on $V$ has
the smaller rate, and for smaller $\theta_F$ the parametric
family does.

\subsection{A concrete sharp instance}\label{ssec:sharp-instance}

We close with an explicit instance of $(U,V)$ in $\RR^4$ for
which the bound of Theorem~\ref{thm:main} is realized at every
step of the orbit and every rate in the hierarchy admits a
closed form.

\begin{example}\label{ex:sharp-instance}
Let $V=\Span\{f_1,f_2\}$ in $\RR^4$ with $\{f_1,f_2,g_1,g_2\}$
an orthonormal basis, set
\[
   u_1  \coloneqq  \cos\tfrac{\pi}{6}\,f_1+\sin\tfrac{\pi}{6}\,g_1
     =  \tfrac{\sqrt 3}{2}\,f_1+\tfrac12\,g_1,
   \quad
   u_2  \coloneqq  \cos\tfrac{\pi}{3}\,f_2+\sin\tfrac{\pi}{3}\,g_2
     =  \tfrac12\,f_2+\tfrac{\sqrt 3}{2}\,g_2,
\]
and $U=\Span\{u_1,u_2\}$. Then $p=\dim V=2$, $s=\dim(U\cap V)=0$,
and the principal angles are $(\theta_F,\theta_p)=(\pi/6,\pi/3)$.
The principal-angle frame of Theorem~\ref{lem:frame} is
$\{f_1,f_2\}\subset V$ paired with $\{u_1,u_2\}\subset U$, and
$Mf_k=\sin^2\theta_k\,f_k$ with eigenvalues
$a=\sin^2(\pi/6)=1/4$ and $b=\sin^2(\pi/3)=3/4$.

Let $v^\star\coloneqq\tfrac{\sqrt 3}{2}\,f_1+\tfrac12\,f_2$, the
$\bar v=0$ instance of the worst-case
ray~\eqref{eq:rho-sharp-witness} with
$\sqrt b\,f_{s+1}+\sqrt a\,f_p=\sqrt{3/4}\,f_1+\sqrt{1/4}\,f_2$.
Then $\norm{v^\star}=1$, $\bar v^\star=0$, and
\begin{equation}\label{eq:sharp-iterate}
   C_T(v^\star)  =  \rho_V\,(\sqrt b\,f_1-\sqrt a\,f_2)
     =  \tfrac12\bigl(\tfrac{\sqrt 3}{2}\,f_1-\tfrac12\,f_2\bigr).
\end{equation}
In particular $\norm{C_T(v^\star)}=\rho_V=1/2$, so equality
in~\eqref{eq:rate-bound} holds.
\end{example}

\begin{proof}[Verification of~\eqref{eq:sharp-iterate}]
With $w^\star=v^\star=\sqrt{b}\,f_1+\sqrt{a}\,f_2$ and
$Mf_k=\sin^2\theta_k\,f_k$, the Kantorovich extremizer
computation from the proof of Theorem~\ref{thm:linear-match}
gives $\mu_{v^\star}=\scal{w^\star}{Mw^\star}/\scal{w^\star}{M^2w^\star}=2/(a+b)=\mu^\star$,
so by~\eqref{eq:CT-via-M} (with $\bar v=0$),
$C_T(v^\star)=v^\star-\mu^\star Mw^\star=\sqrt b(1-\mu^\star a)f_1+\sqrt a(1-\mu^\star b)f_2$
with $1-\mu^\star a=\rho_V$ and $1-\mu^\star b=-\rho_V$, giving
\eqref{eq:sharp-iterate}.
\end{proof}

\begin{proposition}[Closed-form orbit on the sharp instance]\label{prop:sharp-orbit}
For the instance of Example~\ref{ex:sharp-instance}, the orbit
of $v^\star$ under $C_T$ has the closed form
\begin{equation}\label{eq:sharp-orbit}
   C_T^k(v^\star)  =  \rho_V^k\bigl(\sqrt b\,f_1+(-1)^k\sqrt a\,f_2\bigr),
   \quad
   \norm{C_T^k(v^\star)}=\rho_V^k\quad(k\ge 0),
\end{equation}
where $a=1/4$, $b=3/4$, and $\rho_V=1/2$. In particular,
\eqref{eq:main-rate} holds with equality at every step.
\end{proposition}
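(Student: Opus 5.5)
The plan is induction on $k$, using the closed form~\eqref{eq:CT-via-M} of $C_T$ on $V$ together with the eigen-relations $Mf_1=af_1$ and $Mf_2=bf_2$ from Lemma~\ref{lem:spectrum}. Here $s=0$, so $\bar v=0$ for every point of $V$, and the only assumption needed is that the iterates stay in $V$; this holds by Theorem~\ref{thm:linesearch}.

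The base case $k=0$ is the definition of $v^\star$. The case $k=1$ is~\eqref{eq:sharp-iterate}, already verified in Example~\ref{ex:sharp-instance}.

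For the inductive step, suppose $C_T^k(v^\star)=\rho_V^k w_k$ with $w_k\coloneqq\sqrt b\,f_1+\epsilon\sqrt a\,f_2$ and $\epsilon=(-1)^k$. I will use two facts.
\begin{enumerate}[label=(\roman*)]
\item $C_T$ is positively homogeneous on $V$. By~\eqref{eq:CT-via-M}, the step ratio $\mu_v$ is invariant under scaling $v\mapsto tv$ for $t\neq0$, so $C_T(tv)=tC_T(v)$, including $t<0$. Hence $C_T^{k+1}(v^\star)=\rho_V^k C_T(w_k)$.
\item The step at $w_k$ does not depend on the sign $\epsilon$. Since $f_1\perp f_2$, the sign enters the quadratic forms only through $\epsilon^2=1$:
\[
\scal{w_k}{Mw_k}=ab+ab=2ab,\qquad \scal{w_k}{M^2w_k}=a^2b+ab^2=ab(a+b).
\]
Therefore $\mu_{w_k}=2/(a+b)=\mu^\star$.
\end{enumerate}

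With this step,
\[
C_T(w_k)=\sqrt b\,(1-\mu^\star a)f_1+\epsilon\sqrt a\,(1-\mu^\star b)f_2 .
\]
Using $1-\mu^\star a=\rho_V$ and $1-\mu^\star b=-\rho_V$, this equals $\rho_V\bigl(\sqrt b\,f_1-\epsilon\sqrt a\,f_2\bigr)=\rho_V\,w_{k+1}$, which closes the induction.

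The norm identity follows from orthonormality: $\norm{w_k}^2=a+b=1$, so $\norm{C_T^k(v^\star)}=\rho_V^k$. Consequently $\norm{C_T^{k+1}(v^\star)}=\rho_V\norm{C_T^k(v^\star)}$, which gives equality in~\eqref{eq:main-rate} at every step. The numerical values follow from $a=1/4$ and $b=3/4$, which give $\rho_V=(b-a)/(a+b)=1/2$.

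The one step that needs care is (ii), the sign-independence of the step $\mu_{w_k}$: every iterate must remain a Kantorovich extremizer so that the step stays equal to $\mu^\star$. The identity $\epsilon^2=1$ together with the orthogonality of $f_1$ and $f_2$ settles it. Everything else is routine.
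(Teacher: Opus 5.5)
Your proposal is correct and follows essentially the same route as the paper: induction on $k$ via~\eqref{eq:CT-via-M}, where the sign-independent quadratic forms force the step to equal $\mu^\star=2/(a+b)$ at every iterate, and the identities $1-\mu^\star a=\rho_V$, $1-\mu^\star b=-\rho_V$ then flip the sign of the $f_2$-component. The only cosmetic difference is that you factor out $\rho_V^k$ using homogeneity of $C_T$ on $V$, whereas the paper carries the factor $\rho_V^{2k}$ through both quadratic forms, where it cancels in the ratio.
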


\begin{proof}
By induction on $k$. The case $k=0$ is the definition of
$v^\star$. Assume the formula holds at some $k\ge 0$, and write
$w_k\coloneqq C_T^k(v^\star)=\rho_V^k(\sqrt b\,f_1+\epsilon_k\sqrt a\,f_2)$
with $\epsilon_k=(-1)^k\in\{\pm 1\}$. Since $w_k\in V$ and
$\bar w_k=0$ (the orbit lies in $V\cap(U\cap V)^\perp=V$ here
because $s=0$), Theorem~\ref{thm:linesearch}
and~\eqref{eq:CT-via-M} give $C_T(w_k)=w_k-\mu_k Mw_k$ with
$\mu_k=\scal{w_k}{Mw_k}/\scal{w_k}{M^2w_k}$. Using
$Mf_1=af_1$, $Mf_2=bf_2$, and orthonormality of $f_1,f_2$ (with
the squared sign $\epsilon_k^2=1$ collapsing the cross terms),
\[
   \scal{w_k}{Mw_k}=2ab\,\rho_V^{2k},
   \quad
   \scal{w_k}{M^2w_k}=ab(a+b)\,\rho_V^{2k},
\]
so $\mu_k=2/(a+b)=\mu^\star$, independent of $k$. The arithmetic
identities $1-\mu^\star a=\rho_V$ and $1-\mu^\star b=-\rho_V$
then give
\[
   C_T(w_k)  =  \rho_V^{k+1}\bigl(\sqrt b\,f_1-\epsilon_k\sqrt a\,f_2\bigr)
     =  \rho_V^{k+1}\bigl(\sqrt b\,f_1+\epsilon_{k+1}\sqrt a\,f_2\bigr),
\]
using $-\epsilon_k=\epsilon_{k+1}$. This
is~\eqref{eq:sharp-orbit} at $k+1$. The norm identity follows
from $a+b=1$, and equality in~\eqref{eq:main-rate} at every step
is immediate from $\norm{w_{k+1}}/\norm{w_k}=\rho_V$.
\end{proof}

\begin{remark}[Rates at this instance]\label{rem:sharp-rates}
For the parameters $(\theta_F,\theta_p)=(\pi/6,\pi/3)$,
\[
\renewcommand{\arraystretch}{1.25}
\begin{array}{rclrcl}
   c_F &=& \sqrt{3}/2\approx 0.8660, &
   \rho_V &=& 1/2=0.5,\\
   c_F^2 &=& 3/4=0.75, &
   c_F^2/(2-c_F^2) &=& 3/5=0.6,\\
   \rho_{\mathrm{Cheb}} &=& 2-\sqrt 3\approx 0.2679, &
\end{array}
\]
the last by direct substitution into~\eqref{eq:cheb-rate} with
$\sqrt b=\sqrt 3/2$, $\sqrt a=1/2$. The hierarchy
$\rho_{\mathrm{Cheb}}<\rho_V<c_F^2/(2-c_F^2)<c_F^2<c_F<1$ holds
with every inequality strict, in agreement with
Theorem~\ref{thm:cheb-strict}, Propositions~\ref{prop:strict}
and~\ref{prop:worst-case}.
\end{remark}

\subsection{Numerical verification of the sharp rate}\label{ssec:verification}

By Theorem~\ref{thm:rate} the rate $\rho_V$ is the sharp
one-step contraction factor of $C_T$ on $V$: the bound
$\norm{C_T(v)-\bar v}\le\rho_V\,\norm{v-\bar v}$ holds for every
$v\in V$, and equality is attained at the worst-case ray
$v^\star=\bar v+\sin\theta_p\,f_{s+1}+\sin\theta_F\,f_p$. We
verify this identity directly, evaluating the geometric operator
$C_T(x)=\circum\{x,R_Ux,R_VR_Ux\}$ in double-precision arithmetic
and comparing the computed contraction factor with the closed
form of Theorem~\ref{thm:main}.

For prescribed angles $0<\theta_F<\theta_p\le\pi/2$, a pair of
subspaces realizing exactly those principal angles is obtained
in $\RR^4$ by setting
\[
   V=\Span\{e_1,e_2\},\quad
   U=\Span\{\cos\theta_F\,e_1+\sin\theta_F\,e_3,  
            \cos\theta_p\,e_2+\sin\theta_p\,e_4\},
\]
with $e_1,\dots,e_4$ the standard basis. Then $U\cap V=\{0\}$,
the principal vectors of $V$ are $f_1=e_1$ (Friedrichs angle
$\theta_F$) and $f_2=e_2$ (largest principal angle $\theta_p$),
and the worst-case ray reduces to
$v^\star=\sin\theta_p\,e_1+\sin\theta_F\,e_2$.
Table~\ref{tab:verification} reports, for six angle pairs across
the range, the closed-form rate $\rho_V$ of
Theorem~\ref{thm:main} and the contraction factor
$\norm{C_T(v^\star)}/\norm{v^\star}$ computed from the geometric
operator. The two agree to every digit displayed; over the table
the largest discrepancy is $1.1\times10^{-15}$, consistent with
double-precision unit roundoff. The companion rates
$\rho_{\mathrm{Cheb}}$ and $c_F$ are listed alongside, and the
ordering $\rho_{\mathrm{Cheb}}<\rho_V<c_F$ predicted
by~\eqref{eq:rate-hierarchy} and Theorem~\ref{thm:cheb-strict}
is visible in every row.

\begin{table}[htbp]
\centering
\small
\renewcommand{\arraystretch}{1.15}
\begin{tabular}{cc rrc rr}
\toprule
$\theta_F$ & $\theta_p$ &
\multicolumn{1}{c}{$\rho_V$ closed form} &
\multicolumn{1}{c}{$\norm{C_T(v^\star)}/\norm{v^\star}$} &
\multicolumn{1}{c}{$|\Delta|$} &
\multicolumn{1}{c}{$\rho_{\mathrm{Cheb}}$} &
\multicolumn{1}{c}{$c_F$} \\
\midrule
      $\pi/12$ & $\pi/6$ & $0.5773502692$ & $0.5773502692$ & $1.1\times 10^{-16}$ & $0.317837$ & $0.965926$ \\
      $\pi/12$ & $\pi/3$ & $0.8360138566$ & $0.8360138566$ & $0$ & $0.539814$ & $0.965926$ \\
      $\pi/6$ & $\pi/3$ & $0.5000000000$ & $0.5000000000$ & $0$ & $0.267949$ & $0.866025$ \\
      $\pi/6$ & $5\pi/12$ & $0.5773502692$ & $0.5773502692$ & $0$ & $0.317837$ & $0.866025$ \\
      $\pi/4$ & $5\pi/12$ & $0.3021694793$ & $0.3021694793$ & $0$ & $0.154701$ & $0.707107$ \\
      $\pi/6$ & $\pi/2 - 0.01$ & $0.5999679985$ & $0.5999679985$ & $0$ & $0.333311$ & $0.866025$ \\
\bottomrule
\end{tabular}
\caption{Numerical verification of the sharp rate $\rho_V$ of
Theorem~\ref{thm:main}. The closed-form rate (third column) and
the contraction factor of the geometric circumcenter $C_T$ at
the worst-case ray $v^\star$ (fourth column), computed in
double-precision arithmetic, agree to unit roundoff in every
row; the largest discrepancy $|\Delta|$ is $1.1\times 10^{-15}$.
The ordering
$\rho_{\mathrm{Cheb}}<\rho_V<c_F$ predicted
by~\eqref{eq:rate-hierarchy} and Theorem~\ref{thm:cheb-strict}
is visible in every row. The prescribed-angle subspaces
$(U,V)\subset\RR^4$ are constructed as described in
Section~\ref{ssec:verification}.}
\label{tab:verification}
\end{table}

The identity was confirmed beyond these explicit instances on
random subspaces. Over $400$ pairs $U,V$ drawn at random in
$\RR^n$ for $n\in\{20,30,40,60,80\}$ (random number generator
seeded at $2024$), with $\dim U$, $\dim V$, and
$\dim(U\cap V)$ varied, the worst-case ray, obtained from the
eigendecomposition of the self-adjoint operator
$M=(\Id-P_VP_U)|_V$, whose spectrum is $\{\sin^2\theta_k\}$ and
whose extreme eigenvectors furnish $f_{s+1}$ and $f_p$, 
attained the predicted rate $\rho_V$ with maximum error
$8.9\times10^{-16}$. Across $80{,}000$ randomly sampled rays
$v\in V$, none exceeded the bound: the largest recorded value of
$\norm{C_T(v)-\bar v}/\norm{v-\bar v}-\rho_V$ was
$-1.1\times10^{-4}<0$, in agreement with Theorem~\ref{thm:rate}.
These computations use only the construction above and the
definition of $C_T$; the companion code reproduces them from the
stated seed.

\begin{remark}[Finite-precision behavior and circumcenter conditioning]\label{rem:conditioning}
The agreement to unit roundoff reported above is not accidental.
Each evaluation of the geometric circumcenter reduces, by
Definition~\ref{def:circum}, to a single $2\times2$ symmetric
positive-definite system
$G\alpha=\tfrac12(G_{11},G_{22})^{\top}$, with $G$ the Gram
matrix of the edge vectors $R_Ux-x$ and $R_VR_Ux-x$, so the
computed coefficients carry a relative error
$O(\kappa_2(G)\,u)$, $u=2^{-53}\approx 1.1\times10^{-16}$, and
the computed contraction factor inherits it. On the six
instances of Table~\ref{tab:verification},
$\kappa_2(G)\in[1.0,5.3]$ at the worst-case ray, so the
predicted error $\le 6\times 10^{-16}$ accounts for every
observed discrepancy; over the $400$ random pairs,
$\kappa_2(G)$ remained below $11.4$. Neither potentially
degenerate regime harms the computation: as
$\theta_F\to\theta_p$ (the one-step limit $\rho_V\to 0$),
$\kappa_2(G)$ stays bounded, tending to $3$ in the
prescribed-angle family; and as $v\to U\cap V$ the Gram matrix
loses scale rather than conditioning, $\det G\to 0$ like
$\dist^2(v,U\cap V)$ with $\kappa_2(G)$ bounded, so the midpoint
fallback triggers only when the iterate is already at the
solution to working precision.
\end{remark}

\subsection{Iteration counts and the warm-start strategies}\label{ssec:warm-start-numerics}

The closed-form rates established above predict how the
projection methods compare in practice. We close the section
with two experiments, run from the companion code: an
iteration-count comparison of the five methods, and a
measurement of the empirical rate of Strategy~B of
Section~\ref{sec:outside-V}.

For the first experiment we take the prescribed-angle pairs
$(U,V)$ of Section~\ref{ssec:verification}, start every method
from the worst-case ray $v^\star$ of Theorem~\ref{thm:rate}, and
count the iterations needed to bring the relative residual
$\norm{\,\cdot-\bar x_0}/\norm{v^\star-\bar x_0}$ below
$10^{-12}$. The five methods are the Douglas--Rachford operator
$T_{\mathrm{DR}}$, alternating projections $T$, the optimally
relaxed map $S_{\mu^\star}$, the circumcentered--reflection
method $C_T$ on $V$ (Strategy~A), and the Chebyshev
semi-iteration of Section~\ref{ssec:chebyshev}.
Table~\ref{tab:iteration-counts} reports the counts; they track
the rates $c_F$, $c_F^2$, $\rho_V$, $\rho_V$, and
$\rho_{\mathrm{Cheb}}$. The counts for $C_T$ on $V$ and
$S_{\mu^\star}$ coincide, both are below the count for
alternating projections, and Chebyshev is lower still, exactly
the ordering $\rho_{\mathrm{Cheb}}<\rho_V<c_F^2<c_F$
of~\eqref{eq:rate-hierarchy} and Theorem~\ref{thm:cheb-strict}.

\begin{table}[htbp]
\centering
\small
\renewcommand{\arraystretch}{1.15}
\begin{tabular}{cc rrrrr c}
\toprule
$\theta_F$ & $\theta_p$ &
$T_{\mathrm{DR}}$ & MAP & $S_{\mu^\star}$ & $C_T$ on $V$ & Chebyshev &
$\rho_V$ \\
\midrule
      $\pi/12$ & $\pi/6$ & 794 & 397 & 51 & 51 & 25 & $0.577350$ \\
      $\pi/12$ & $\pi/3$ & 796 & 398 & 155 & 155 & 46 & $0.836014$ \\
      $\pi/6$ & $\pi/3$ & 192 & 96 & 40 & 40 & 22 & $0.500000$ \\
      $\pi/6$ & $5\pi/12$ & 192 & 96 & 51 & 51 & 25 & $0.577350$ \\
      $\pi/4$ & $5\pi/12$ & 80 & 40 & 24 & 24 & 16 & $0.302169$ \\
\bottomrule
\end{tabular}
\caption{Iteration counts to bring the relative residual
$\norm{\,\cdot\,-\bar x_0}/\norm{v^\star-\bar x_0}$ below
$10^{-12}$, starting from the worst-case ray $v^\star$ of
Theorem~\ref{thm:rate}, for the five methods of
Section~\ref{ssec:warm-start-numerics}. The counts track the
respective asymptotic rates $c_F$, $c_F^2$, $\rho_V$, $\rho_V$,
and $\rho_{\mathrm{Cheb}}$. The counts for $C_T$ on $V$ and
$S_{\mu^\star}$ coincide
(Theorem~\ref{thm:three-agree-on-V}), and Chebyshev is fastest
in every row, exactly the ordering
$\rho_{\mathrm{Cheb}}<\rho_V<c_F^2<c_F$
of~\eqref{eq:rate-hierarchy} and Theorem~\ref{thm:cheb-strict}.}
\label{tab:iteration-counts}
\end{table}

Figure~\ref{fig:traj-experiment} compares five methods on a
logarithmic scale, adding the parametric
$C_{T,\gamma^\star,\beta^\star}$ tuned to the principal angles
and the Chebyshev semi-iteration to the earlier three. The
slopes at large $k$ match the asymptotic rates
$\rho_{\mathrm{Cheb}}<\Gamma^\star<\rho_V<c_F^2<c_F$ predicted
by Theorems~\ref{thm:main}, \ref{thm:cheb-strict},
and~\ref{thm:cdr-rate}, and the ordering is visible from the
second iterate onward. Among the five methods, only $C_T$ is
parameter-free, and it attains $\rho_V$ from the first step
without using $\theta_F$ or $\theta_p$.

\begin{figure}[h]
\centering
\includegraphics[width=11cm]{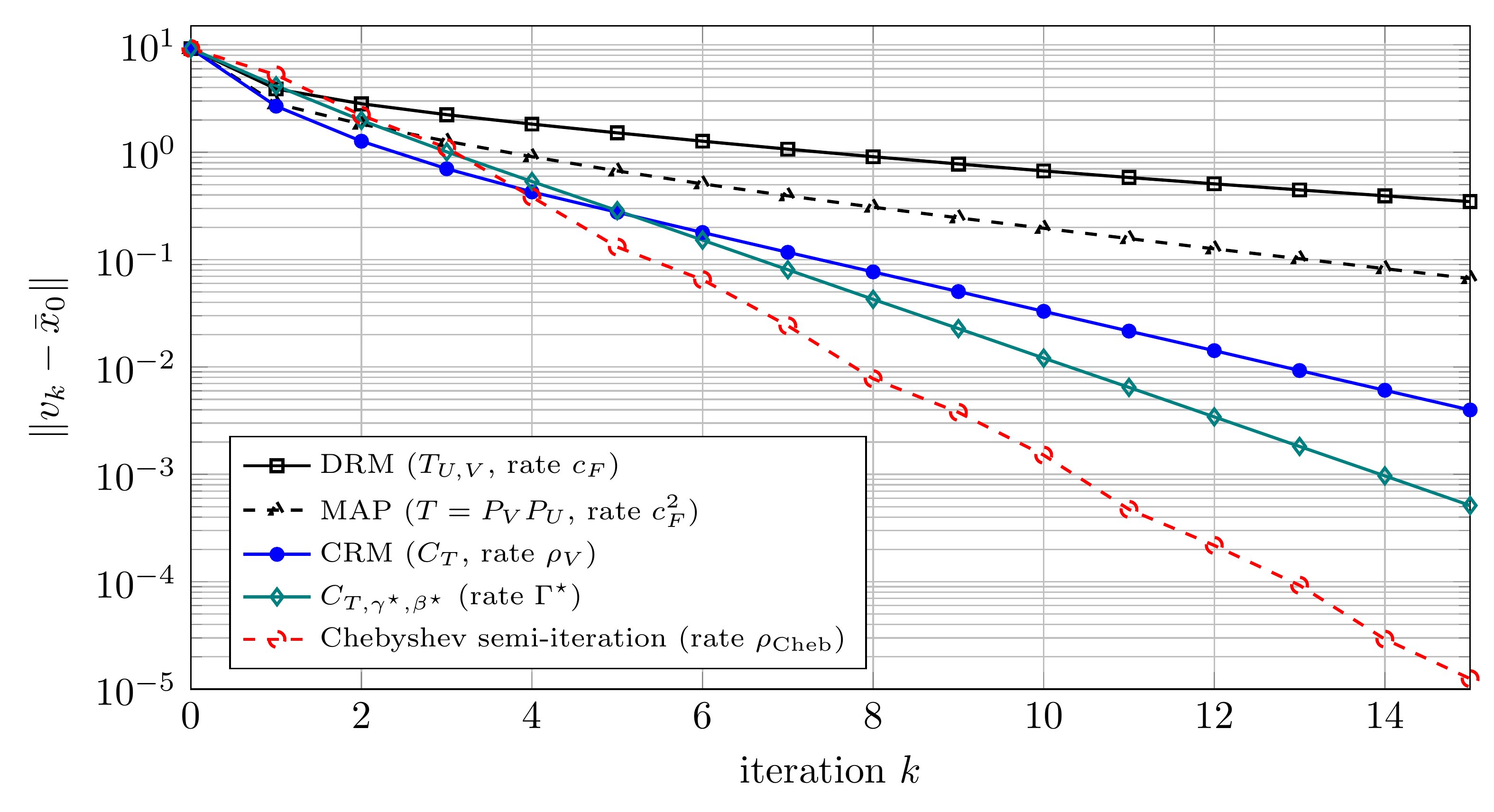}
\caption{Decay of $\norm{v_k-\bar x_0}$ for five methods, started at
$v_0\in V$ with $\norm{v_0-\bar x_0}=9.246$. Asymptotic per-step
rates (visible as the slopes of each curve at large $k$) match the
theoretical predictions: DRM at $c_F\approx 0.899$, MAP at
$c_F^2\approx 0.809$, CRM at $\rho_V\approx 0.679$,
$C_{T,\gamma^\star,\beta^\star}$ at $\Gamma^\star\approx 0.507$, and
Chebyshev at $\rho_{\mathrm{Cheb}}\approx 0.392$. Within the
parameter-free single-step class, $C_T$ dominates; the parametric
$C_{T,\gamma^\star,\beta^\star}$ requires the principal angles to be
known and tuned; Chebyshev requires both angle knowledge and two-step
memory.}
\label{fig:traj-experiment}
\end{figure}

The second experiment produces the Strategy~B statistics quoted
in Remark~\ref{rem:strategy-B-empirical}: the $600$ random
starts are drawn in $\RR^n$ for $n\in\{20,30,40\}$, with the
generator seeded at $20260519$; the proven uniform rate $c_F$ of
Proposition~\ref{prop:strategy-B} is respected by every start
and not improved upon, and the sharp in-$V$ rate $\rho_V$ of
Theorem~\ref{thm:main} is never seen without a warm-start.

\subsection{Parameter-free CRM vs.\ parameter-tuned
AAMR}\label{ssec:aamr-comparison}

The rate
$(1-\sin\theta_F)/(1+\sin\theta_F)$ of GAP~\citep{FaltGiselsson2017}
and AAMR~\citep{AragonCampoy2019} depends on the Friedrichs
angle alone; $\rho_V$ depends on both principal angles and
vanishes at $\theta_F=\theta_p$. The two rates therefore cross,
and the location of the crossover in the
$(\theta_F,\theta_p)$ plane is what determines which method
takes fewer iterations on a given problem. We measure this
directly. For every pair $(\theta_F,\theta_p)$ in a triangular
grid $\theta_F\le\theta_p$ of $66$ points in $(0,\pi/2)$, the
companion code builds the prescribed-angle pair $(U,V)$ in
$\RR^4$ realizing those angles, starts both methods from the
worst-case ray $v^\star$ of CRM, and counts iterations to
relative residual $10^{-10}$. The parameter-free $C_T$
contracts at exactly $\rho_V$ per step from this start
(Proposition~\ref{prop:rho-sharp}); AAMR is run at the
optimal parameters $\alpha^\star=1$,
$\beta^\star=1/(1+\sin\theta_F)$
of~\citep[Section~3]{AragonCampoy2019}, and its shadow sequence
$P_U(x_k+v^\star)$ contracts asymptotically at
$(1-\sin\theta_F)/(1+\sin\theta_F)$. The counts are reported
in Table~\ref{tab:aamr-comparison} for the worst-case slice
$\theta_p$ closest to $\pi/2$ in the grid; the full grid is
in the companion code.

\begin{table}[htbp]
\centering
\renewcommand{\arraystretch}{1.15}
\begin{tabular}{lcccccc}
\toprule
$\theta_F/\pi$ & $0.0417$ & $0.0833$ & $0.1250$ & $0.1667$ & $0.2083$ & $0.2500$\\
\midrule
$\rho_V$              & $0.9659$ & $0.8724$ & $0.7407$ & $0.5945$ & $0.4524$ & $0.3257$\\
$(1-s_F)/(1+s_F)$     & $0.7691$ & $0.5888$ & $0.4465$ & $0.3333$ & $0.2432$ & $0.1716$\\
iters $C_T$ on $V$    & $665$    & $169$    & $77$     & $45$     & $30$     & $21$   \\
iters AAMR opt.       & $100$    & $50$     & $33$     & $24$     & $19$     & $15$   \\
\midrule
$\theta_F/\pi$        & $0.2917$ & $0.3333$ & $0.3750$ & $0.4167$ & $0.4583$ & \\
\midrule
$\rho_V$              & $0.2193$ & $0.1344$ & $0.0705$ & $0.0261$ & $0.0000$ & \\
$(1-s_F)/(1+s_F)$     & $0.1152$ & $0.0718$ & $0.0396$ & $0.0173$ & $0.0043$ & \\
iters $C_T$ on $V$    & $16$     & $12$     & $9$      & $7$      & $1$      & \\
iters AAMR opt.       & $12$     & $10$     & $8$      & $7$      & $5$      & \\
\bottomrule
\end{tabular}
\caption{Iteration counts to relative residual $10^{-10}$ for
the parameter-free $C_T$ on $V$ versus parameter-tuned AAMR at
the optimum of \citep{AragonCampoy2019}, on the slice
$\theta_p=0.4583\,\pi$ of the experimental grid (close to the
worst case $\theta_p=\pi/2$ for $\rho_V$). At small $\theta_F$
the smaller AAMR rate produces sharply lower counts; at
moderate $\theta_F$ the methods are within a factor of $2$; the
diagonal $\theta_F=\theta_p$, where $C_T$ converges in one step,
is not reached in this slice.}
\label{tab:aamr-comparison}
\end{table}

Two patterns emerge across the full grid (companion code,
block~E). When $\theta_p$ is close to $\theta_F$, $\rho_V$ is
small and $C_T$ on $V$ is markedly faster; on the diagonal
$\theta_F=\theta_p$ it converges in a single step, while AAMR
still takes between $5$ and $100$ iterations depending on
$\theta_F$. When $\theta_p$ is close to $\pi/2$ and $\theta_F$
is small, the gap between $(1-\sin\theta_F)/(1+\sin\theta_F)$
and $\rho_V$ is largest and AAMR is faster, by up to a factor
of $6.6$ in iteration count on the slice of
Table~\ref{tab:aamr-comparison}. Over the $66$ pairs of the
grid, AAMR takes fewer iterations on $39$, $C_T$ on $V$ on
$24$, with $3$ ties: neither method dominates, and which one
wins is determined by the position of $(\theta_F,\theta_p)$
relative to the crossover. This is the trade-off recorded in
Table~\ref{tab:method-comparison}: the AAMR rate depends on
$\theta_F$ alone, buying a uniformly smaller worst-case rate at
the cost of having to know $\theta_F$ to set
$\beta^\star=1/(1+\sin\theta_F)$, while $C_T$ on $V$ uses no
parameter at all, paying for it with a larger rate when
$\theta_p\approx\pi/2$ but gaining substantially when $\theta_p$
is moderate and on the diagonal.
Figure~\ref{fig:rates-aamr} plots the two rates at three values
of $\theta_p$, making the dependence on the principal angles
concrete.

\begin{figure}[h]
\centering
\includegraphics[width=11cm]{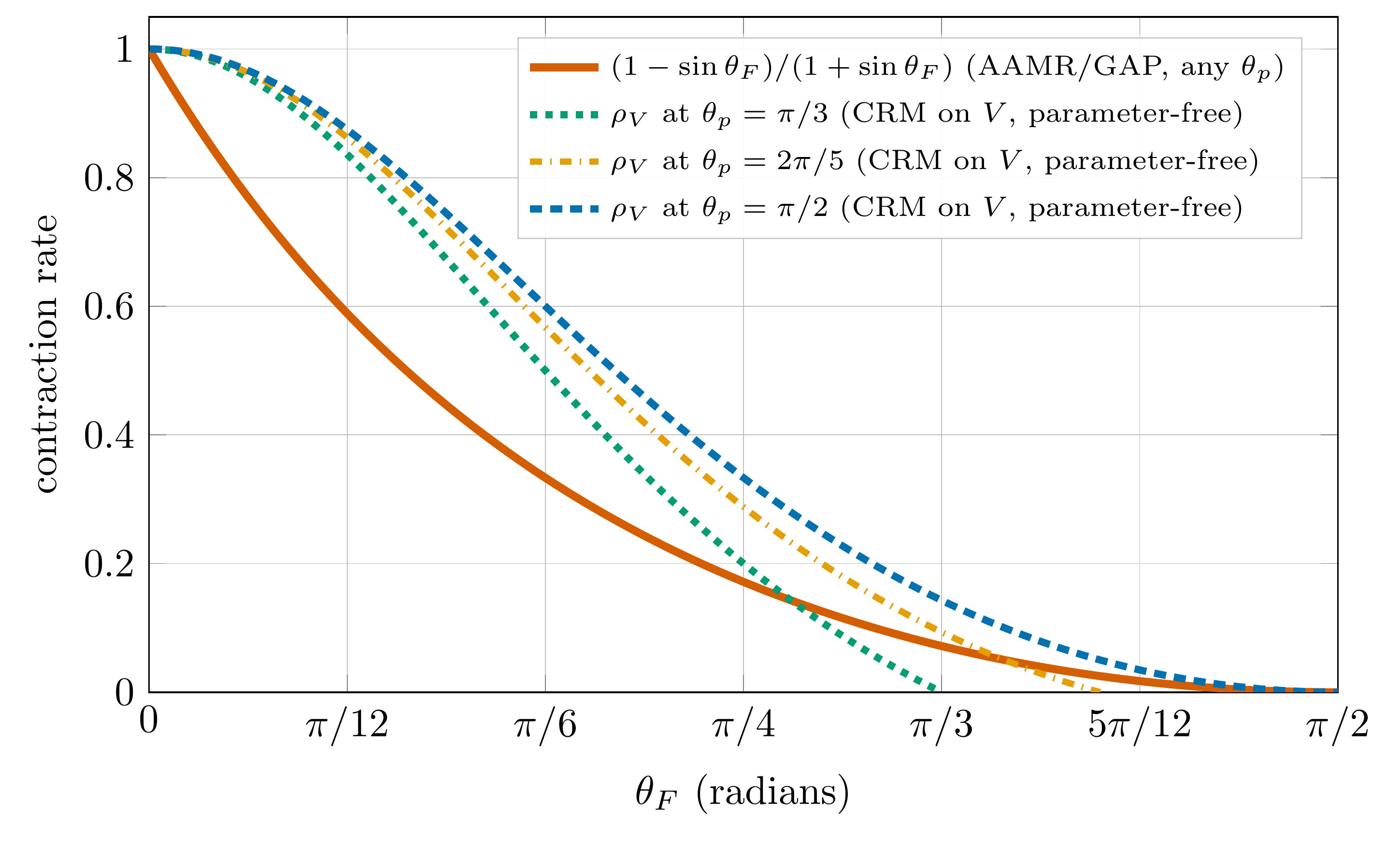}
\caption{Optimal AAMR/GAP rate of Arag\'on
Artacho--Campoy~\citep{AragonCampoy2018,AragonCampoy2019} and
F\"alt--Giselsson~\citep{FaltGiselsson2017} compared with the
parameter-free CRM rate $\rho_V(\theta_F,\theta_p)$ from
Theorem~\ref{thm:main} at three values of $\theta_p$. The AAMR
and GAP rate $(1-\sin\theta_F)/(1+\sin\theta_F)$, attained at
the optimal parameter $\beta^\star=1/(1+\sin\theta_F)$, depends
only on the Friedrichs angle; the parameter-free CRM rate
$\rho_V$ depends on both angles, vanishes at
$\theta_F=\theta_p$ (one-step convergence,
Corollary~\ref{cor:one-step}), and equals
$c_F^2/(2-c_F^2)$ at $\theta_p=\pi/2$
(Proposition~\ref{prop:worst-case}). The two crossover patterns
behind the AAMR$/$CRM iteration counts of
Table~\ref{tab:aamr-comparison} are visible: AAMR is faster
where $\theta_p\approx\pi/2$ and $\theta_F$ is small (the
$\theta_p=\pi/2$ curve of $\rho_V$ sits above the AAMR curve),
while $C_T$ on $V$ is faster on and near the diagonal
$\theta_F=\theta_p$, where $\rho_V$ dips below the AAMR rate
and ultimately vanishes.}
\label{fig:rates-aamr}
\end{figure}

\section{Concluding remarks and open questions}\label{sec:conclusion}

The main result is the sharp rate
$\rho_V=(\sin^2\theta_p-\sin^2\theta_F)/(\sin^2\theta_p+\sin^2\theta_F)$
attained by $C_T$ on $V$ on an explicit ray and optimal among
parameter-free single-step methods. The constant itself is not
new: it is the optimal rate of the relaxed family
$\{S_\mu\}_{\mu\in\RR}$ and of the linesearch map $B_T$, both
due to Bauschke, Bello-Cruz, Nghia, Phan, and
Wang~\citeyearpar{BBNPW2016}. What is new is that the geometric
circumcenter $C_T$, together with $A_T$ and $B_T$ restated in a
common alternating-projection order, agree pointwise on $V$
(Theorem~\ref{thm:three-agree-on-V}); the linesearch formula
for $C_T$ on $V$ is then a property of the circumcenter, not an
add-on. The on-$V$ rate inequality follows from this
identification, but the sharp value and the optimality statement
need the direct analysis given here. The spectral proof goes
through Kantorovich's inequality applied to
$M=(\Id-P_VP_U)|_V$, which exhibits the worst-case ray
$v^\star=\sin\theta_p\,f_{s+1}+\sin\theta_F\,f_p$ explicitly.
The strict inequality $\rho_V<c_F^2$ holds whenever
$\theta_F<\pi/2$, with worst-case envelope $c_F^2/(2-c_F^2)$,
and $C_T$ on $V$ terminates in one step exactly when
$\theta_F=\theta_p$. Within the class of single-step
parameter-free methods $\rho_V$ is optimal; the Chebyshev
semi-iteration applied to $T$ on $V$ attains the strictly
smaller rate
$\rho_{\mathrm{Cheb}}=(\sqrt b-\sqrt a)/(\sqrt b+\sqrt a)$ with
$\rho_V/\rho_{\mathrm{Cheb}}\in(1,2]$
(Theorem~\ref{thm:cheb-strict}). Within the same single-step
family, over-reflecting either or both reflections in the
defining circumcenter of $C_T$ cannot beat $\rho_V$
(Proposition~\ref{prop:over-reflection-double}): the Kantorovich
extremizer obstructs every choice of reflection coefficient,
and the standard CRM is a strict global minimum within the
two-parameter family. To beat $\rho_V$ on $V$ one needs either
memory (Chebyshev) or a full parametrization (the
$L_{\gamma,\beta}$ family of
Section~\ref{ssec:cdr-with-projection}).

One direction left open is whether the linesearch framework can
be pushed to a full Krylov-subspace method that uses the entire
spectrum of $M$. Conjugate gradients applied to $M$ on
$V\cap(U\cap V)^\perp$ would, in exact arithmetic, terminate in
at most $\dim V-\dim(U\cap V)$ steps, and would beat
$\rho_{\mathrm{Cheb}}$ when the spectrum of $M$ is
clustered~\citep[Chapter~6]{Saad-IterativeMethods}. Whether the
CG iterates can be produced from $C_T$-style operations, without
an explicit handle on $M$, is unclear.

A few related questions are also open. The exact worst-case
Strategy-B rate on $\RR^n$, in particular whether it equals
the uniform bound $c_F$ of~\eqref{eq:strategy-B}, we leave
open; the experiments of
Section~\ref{ssec:warm-start-numerics} suggest $c_F$ is
essentially the operative rate, and the sharp in-$V$ rate
$\rho_V$ requires the warm-start of Strategy~A or~C. Whether a
Chebyshev-type acceleration of $C_T$ itself, exploiting its
nonlinearity, can beat $\rho_{\mathrm{Cheb}}$, we do not know.
An adaptive variant of the linear two-parameter family
$C_{T,\gamma,\beta}$ with $(\gamma_v,\beta_v)$ chosen from local
information at each iterate, parameter-free in the sense of
$C_T$, would be worth exploring.

\section*{Statements and Declarations}

\paragraph{Funding.}
This work was supported by the NSF Grant No.~DMS-2307328.

\paragraph{Competing interests.}
The author declares no competing interests.

\paragraph{Data availability.}
No external data were used. The experiments of
Section~\ref{sec:numerics} use only the explicit and randomly
generated subspace constructions described there; all inputs
are produced by the companion code with fixed random seeds.

\paragraph{Data and Code Availability.}
No datasets were generated or analyzed during this study. The Julia
implementation used to generate all numerical results, figures, and
tables is available at \url{https://github.com/yunierbello/Sharp-linear-convergence-CRM}. Random seeds are fixed to
ensure exact reproducibility of the reported results.

\end{document}